\numberwithin{equation}{section}
\newcommand{\p}{\mathbb{P}} 
\newcommand{\E}{\mathbb{E}} 
\DeclareMathOperator{\Bin}{Bin} 
\newcommand{\maj}{\mathsf{maj}}
\newcommand\stleq{\stackrel{\mathclap{\normalfont\mbox{\small{st}}}}{\preceq}} 
\newcommand{\wh}{\widehat}  
\newcommand{\SBM}{\mathrm{SBM}} 
\newcommand{\CSBM}{\mathrm{CSBM}} 
\newcommand{\MAP}{\mathrm{MAP}} 
\newcommand{\overlap}{\mathsf{ov}}
\newcommand{\cut}[1]{}
\newcommand{\Luczak}{{\L}uczak~}
\newcommand{\dch}{\mathrm{D_{+}}}
\newcommand{\dchab}{\dch(\alpha,\beta)} 
\newcommand{\dconn}{\mathrm{T_{c}}}
\newcommand{\dconnab}{\dconn(\alpha,\beta)} 
\def\cA{{\mathcal A}}
\def\cB{{\mathcal B}}
\def\cC{{\mathcal C}}
\def\cE{{\mathcal E}}
\def\cF{{\mathcal F}}
\def\cG{{\mathcal G}}
\def\cH{{\mathcal H}}
\def\cI{{\mathcal I}}
\def\cM{{\mathcal M}}
\def\cN{{\mathcal N}}
\def\cS{{\mathcal S}}
\newtheorem{theorem}{Theorem}[section]
\newtheorem{lemma}[theorem]{Lemma}
\newtheorem{corollary}[theorem]{Corollary}
\newtheorem{definition}[theorem]{Definition}
\newtheorem{remark}[theorem]{Remark}
\newenvironment{breakablealgorithm}
  {
   \begin{center}
     \refstepcounter{algorithm}
     \hrule height.8pt depth0pt \kern2pt
     \renewcommand{\caption}[2][\relax]{
       {\raggedright\textbf{\ALG@name~\thealgorithm} ##2\par}%
       \ifx\relax##1\relax 
         \addcontentsline{loa}{algorithm}{\protect\numberline{\thealgorithm}##2}%
       \else 
         \addcontentsline{loa}{algorithm}{\protect\numberline{\thealgorithm}##1}%
       \fi
       \kern2pt\hrule\kern2pt
     }
  }{
     \kern2pt\hrule\relax
   \end{center}
  }
\begin{document}

\title{Exact Community Recovery\\ in Correlated Stochastic Block Models}
\author{
    Julia Gaudio
    \thanks{Northwestern University; \url{julia.gaudio@northwestern.edu}.}
    \and
	Mikl\'os Z.\ R\'acz
	\thanks{Princeton University; \url{mracz@princeton.edu}. Research supported in part by NSF grant DMS 1811724.} 
	\and
	Anirudh Sridhar
	\thanks{Princeton University; \url{anirudhs@princeton.edu}. Research supported in part by NSF grant DMS 1811724.} 
}
\date{\today}

\maketitle


\begin{abstract}
We consider the problem of learning latent community structure from multiple correlated networks. We study edge-correlated stochastic block models with two balanced communities, focusing on the regime where the average degree is logarithmic in the number of vertices. Our main result derives the precise information-theoretic threshold for exact community recovery using multiple correlated graphs. This threshold captures the interplay between the community recovery and graph matching tasks. In particular, we uncover and characterize a region of the parameter space where exact community recovery is possible using multiple correlated graphs, even though (1) this is information-theoretically impossible using a single graph and (2) exact graph matching is also information-theoretically impossible. In this regime, we develop a novel algorithm that carefully synthesizes algorithms from the community recovery and graph matching literatures. 
\end{abstract}


\section{Introduction} \label{sec:intro} 

Recovering communities in networks is a fundamental learning task that has myriad applications in sociology, biology, and beyond. 
Increasingly, network data is supplemented with further data that is correlated with the underlying communities, such as latent feature vectors (e.g., the interests of individuals in a social network) or further correlated networks (e.g., personal and professional social networks overlap, yet contain complementary information). 
Synthesizing information from these different data sources presents an opportunity to obtain improved community recovery algorithms and guarantees, 
yet this comes with algorithmic and statistical challenges. 
In particular, integrating information from correlated networks 
is often hindered because the graphs are not aligned, 
due to node labels that are missing, erroneous, anonymized, or otherwise unknown. 
This highlights the importance of graph matching, 
which is an important learning task in its own right.

Recently, R\'acz and Sridhar~\cite{RS21} determined the information-theoretic limits for exact graph matching in edge-correlated stochastic block models, 
and as an application they showed how to exactly recover communities from two correlated graphs in a regime where it is impossible to do so using just a single graph. 
The main contribution of our work is to go \emph{beyond} exact graph matching, and we determine the precise information-theoretic threshold for exact community recovery from two correlated block models. 
In particular, we uncover and characterize a region of the parameter space where exact community recovery is possible despite exact graph matching being impossible (and exact community recovery from a single graph also being impossible), 
positively resolving a conjecture 
of~\cite{RS21}. 
To do so, we develop a novel algorithm that carefully synthesizes 
community recovery and graph matching algorithms. 
Overall, our work highlights the subtle interplay between community recovery and graph matching, two canonical and widely-studied learning problems.


\subsection{Community recovery in correlated stochastic block models}\label{sec:CSBM}

\textbf{The stochastic block model (SBM).} 
The SBM is the canonical probabilistic generative model for networks with community structure. Introduced by Holland, Laskey, and Leinhardt~\cite{HLL83}, it has received enormous attention over the past decades; in particular, it serves as a natural theoretical testbed for evaluating and comparing clustering algorithms on average-case networks (see, e.g.,~\cite{dyer1989solution, bui1984graph, bopanna1987eigenvalues}). 
The SBM allows a precise understanding of when community information can be extracted from network data, due to the fact that it exhibits sharp information-theoretic phase transitions for various inference tasks. 
Such phase transitions were first conjectured by Decelle~et~al.~\cite{DKMZ11} and were subsequently proven rigorously by several authors~\cite{mossel2014reconstruction,massoulie2014community,mossel2018proof, abbe2016exact,mossel2016consistency,abbe2015community,bordenave2015nonbacktracking, Abbe_survey}. 

Here we focus on the SBM with two symmetric communities, arguably the simplest setting.  
For a positive integer $n$ and $p,q \in [0,1]$, we construct the graph $G \sim \mathrm{SBM}(n,p,q)$ as follows. 
The graph $G$ has $n$ vertices, 
labeled by the elements of $[n] : = \{1, \ldots, n \}$. 
Each vertex $i \in [n]$ has a community label $\sigma_{*}(i) \in \{ + 1, - 1 \}$; 
these are drawn i.i.d.\ uniformly at random across all $i \in [n]$. 
The vector of community labels is denoted by 
$\boldsymbol{\sigma_{*}} : = \{ \sigma_{*}(i) \}_{i = 1}^n$, 
with the two communities given by the sets 
$V^{+} : = \{ i \in [n]: \sigma_{*}(i) = + 1 \}$ and 
$V^{-} : = \{ i \in [n] : \sigma_{*}(i) = -1 \}$. 
Given the community labels $\boldsymbol{\sigma_{*}}$, 
the edges of $G$ are drawn independently across vertex pairs as follows. 
For distinct $i,j \in [n]$, if $\sigma_{*}(i) \sigma_{*}(j) = 1$ (i.e., $i$ and $j$ are in the same community), then the edge $(i,j)$ is in $G$ with probability~$p$; else, $(i,j)$ is in $G$ with probability~$q$.

\textbf{Community recovery.} 
In this setting, a community recovery algorithm takes as input the graph $G$, without knowledge of the community labels $\boldsymbol{\sigma_{*}}$, and outputs a community labeling $\wh{\boldsymbol{\sigma}}$. 
The success of an algorithm is measured by the \emph{overlap} between the estimated labeling and the ground truth, defined as 
\[
\overlap(\wh{\boldsymbol{\sigma}}, \boldsymbol{\sigma_{*}}) : = \frac{1}{n} \left|  \sum\limits_{i = 1}^n \wh{\sigma}(i) \sigma_{*}(i)  \right|.
\]
We take an absolute value in this formula since the labelings $\boldsymbol{\sigma_{*}}$ and $-\boldsymbol{\sigma_{*}}$ specify the same partition of communities, and it is only possible to recover $\boldsymbol{\sigma_{*}}$ up to its sign. Observe that 
$\overlap(\wh{\boldsymbol{\sigma}}, \boldsymbol{\sigma_{*}}) \in [0,1]$, 
with a larger value corresponding to a better estimate. 
In particular, the algorithm succeeds in exactly recovering the communities 
(i.e., $\wh{\boldsymbol{\sigma}} = \boldsymbol{\sigma_{*}}$ or $\wh{\boldsymbol{\sigma}} = - \boldsymbol{\sigma_{*}}$) 
if and only if 
$\overlap(\wh{\boldsymbol{\sigma}}, \boldsymbol{\sigma_{*}}) = 1$.

In the logarithmic degree regime---that is, when $p = \alpha \log (n) / n$ and $q = \beta \log (n) / n$ for some fixed constants $\alpha , \beta \ge 0$---it is well-known that there is a sharp information-theoretic threshold for exactly recovering communities in the SBM~\cite{abbe2016exact, mossel2016consistency,abbe2015community,Abbe_survey}. 
This is governed by the quantity 
\begin{equation}\label{eq:DCH_def}
\dchab := \left( \frac{\sqrt{\alpha} - \sqrt{\beta}}{\sqrt{2}} \right)^{2} = \frac{\alpha + \beta}{2} - \sqrt{\alpha \beta}. 
\end{equation}
In the general setting, this quantity is known as the \emph{Chernoff-Hellinger divergence}~\cite{abbe2015community,Abbe_survey}; in the specific setting of two balanced communities, it simplifies to the Hellinger divergence of the vectors $(\alpha/2,\beta/2)$ and $(\beta/2,\alpha/2)$, giving~\eqref{eq:DCH_def}. 
The information-theoretic threshold for exact community recovery is then given by 
\begin{equation}\label{eq:DCH_threshold}
\dchab = 1.
\end{equation}
If $\dchab > 1$, then exact community recovery is possible: 
there is a polynomial-time algorithm which outputs an  
estimator~$\wh{\boldsymbol{\sigma}}$ satisfying 
$\lim_{n \to \infty} \p( \overlap ( \wh{\boldsymbol{\sigma}}, \boldsymbol{\sigma_{*}}) = 1) = 1$. 
Moreover, 
if $\dchab < 1$, then 
this 
is impossible: 
for \emph{any} estimator $\widetilde{\boldsymbol{\sigma}}$, 
we have that 
$\lim_{n \to \infty} \p (\overlap(\widetilde{\boldsymbol{\sigma}}, \boldsymbol{\sigma_{*}}) = 1) = 0$.

\textbf{Correlated SBMs.} 
The goal of our work is to understand how the exact community recovery threshold given by~\eqref{eq:DCH_threshold} \emph{changes} when the input data consists of multiple \emph{correlated} SBMs. 
To this end, we study a natural model of correlated SBMs, which we describe next. 

We construct $(G_{1}, G_{2}) \sim \CSBM(n,p,q,s)$ as follows, 
where the additional parameter $s \in [0,1]$ controls the level of correlation between the two graphs. 
First, generate a parent graph $G \sim \SBM(n,p,q)$, 
and let $\boldsymbol{\sigma_{*}}$ denote the community labels. 
Next, given $G$, we construct $G_{1}$ by independent subsampling: 
each edge of $G$ is included in $G_{1}$ with probability $s$, 
independently of everything else, 
and non-edges of $G$ remain non-edges in $G_{1}$. 
We obtain a second graph, $G_{2}'$, independently in the same way. 
The graphs $G_{1}$ and $G_{2}'$ inherit both the vertex labels and the community labels $\boldsymbol{\sigma_{*}}$ from the parent graph $G$.  
Finally, we let $\pi_{*}$ be a uniformly random permutation of $[n]$, independently of everything else, and generate $G_{2}$ by relabeling the vertices of $G_{2}'$ according to $\pi_{*}$ (e.g., vertex $i$ in $G_{2}'$ is relabeled to $\pi_{*}(i)$ in $G_{2}$). 
This last step in the construction of $G_{2}$ reflects the fact that in applications, node labels are often obscured. 
To emphasize the effect of the vertex relabeling on the community labels, 
we define 
$\boldsymbol{\sigma_{*}^{1}} := \boldsymbol{\sigma_{*}}$ 
and 
$\boldsymbol{\sigma_{*}^{2}} := \boldsymbol{\sigma_{*}} \circ \pi_{*}^{-1}$, 
which are the community labels in~$G_{1}$ and $G_{2}$, respectively. 
This construction is visualized in Figure~\ref{fig:correlated_sbm}.

\begin{figure}[t]
    \centering
    \includegraphics[width=0.95\textwidth]{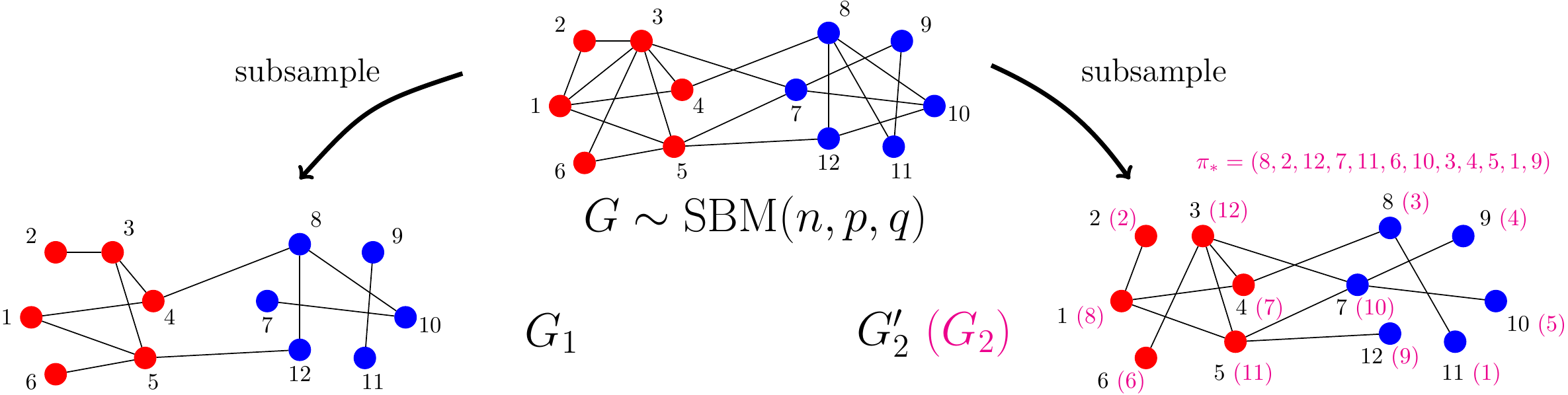}
    \caption{Schematic showing the construction of correlated SBMs (see text for details).}
    \label{fig:correlated_sbm}
\end{figure}

First studied by Onaran, Garg, and Erkip~\cite{onaran2016optimal}, this model of correlated SBMs is the natural generalization of correlated Erd\H{o}s-R\'enyi random graphs, which were introduced by Pedarsani and Grossglauser~\cite{pedarsani2011privacy} (see Section~\ref{sec:related} for discussion of further related work). 
In particular, marginally $G_{1}$ and $G_{2}$ are both SBMs. 
Specifically, since the subsampling probability is $s$, we have that $G_{1} \sim \SBM(n,ps,qs)$. 
Therefore, from~\eqref{eq:DCH_threshold} it follows that, 
in the logarithmic degree regime where 
$p = \alpha \log (n) / n$ and $q = \beta \log (n) / n$, 
the communities can be exactly recovered from $G_{1}$ \emph{alone} 
if $\dch(\alpha s, \beta s) > 1$. 
Since $\dch(\alpha s, \beta s) = s \dchab$, 
this condition is equivalent to $s \dchab > 1$, which we can also write as $\dchab > 1/s$. 

The central question of our work is how to go \emph{beyond} this single-graph threshold by incorporating the information in $G_{2}$. 
This question was initiated in recent work of R\'acz and Sridhar~\cite{RS21}, whose starting observation was the following. 
If $\pi_{*}$ were \emph{known}, 
then we can reconstruct $G_{2}'$ from $G_{2}$, 
and then ``overlay'' $G_{1}$ and $G_{2}'$ to obtain a new graph $H_{*}$ that combines the information in the two graphs. In particular, 
$(i,j)$ is an edge in $H_{*}$ if and only if $(i,j)$ is an edge in the parent graph $G$ and it is included in either $G_{1}$ or $G_{2}'$ in the subsampling process. 
It thus follows that $H_{*}$ is also an SBM, specifically, 
$H_* \sim \mathrm{SBM} \left( n, \alpha (1 - (1 - s)^2) \log(n)/n, \beta (1 - (1 - s)^2 ) \log(n)/n \right)$. 
In particular, this argument implies that if $\pi_{*}$ were \emph{known} 
and 
\begin{equation*}\label{eq:interesting_regime}
\frac{1}{1-(1-s)^{2}} < \dchab < \frac{1}{s}, 
\end{equation*}
then it is information-theoretically impossible to exactly recover $\boldsymbol{\sigma_{*}}$ from $G_{1}$ alone, 
but one can recover $\boldsymbol{\sigma_{*}}$ exactly by combining information from $G_{1}$ \emph{and} $G_{2}$.

\textbf{Graph matching.} 
Since $\pi_{*}$ is not known, the argument above raises the question of when can $\pi_{*}$ be exactly recovered from $(G_{1}, G_{2})$, 
a task known as \emph{graph matching}. 
The main result of R\'acz and Sridhar~\cite{RS21} answers this question (see also Section~\ref{sec:related} for discussion of related work). 
Specifically, they show that the information-theoretic threshold for exactly recovering $\pi_{*}$ is given by $s^{2} \left( \alpha + \beta \right) / 2 = 1$. 
Note that this is precisely the \emph{connectivity threshold} 
for the \emph{intersection graph} of $G_{1}$ and $G_{2}'$ (the edges of this intersection graph are the edges present in the parent graph $G$ that survived both subsampling processes). 
Letting 
\begin{equation}\label{eq:connectivity_threshold_def}
\dconnab := \frac{\alpha+\beta}{2}
\end{equation}
denote the connectivity threshold in $\SBM(n, \alpha \log(n)/n, \beta \log(n)/n)$, 
we can write the threshold for exactly recovering $\pi_{*}$ as 
$\dconn ( \alpha s^{2}, \beta s^{2} ) = 1$, 
or equivalently, 
$s^{2} \dconnab = 1$. 
Thus, if $s^{2} \dconnab > 1$, then $\pi_{*}$ can be exactly recovered from $(G_{1}, G_{2})$, 
while if $s^{2} \dconnab < 1$, then this is impossible. 

To summarize the two previous paragraphs, R\'acz and Sridhar~\cite{RS21} showed that if 
\begin{equation}\label{eq:prior_sufficient_condition}
s^{2} \dconnab > 1 
\qquad \qquad 
\text{ and }
\qquad \qquad 
\left( 1 - (1-s)^{2} \right) \dchab > 1, 
\end{equation}
then exact community recovery is possible, that is, 
it is possible to exactly recover $\boldsymbol{\sigma_{*}}$ using $(G_{1}, G_{2})$.

\textbf{The interplay between community recovery and graph matching.} 
The work of R\'acz and Sridhar~\cite{RS21} leaves open the question of what happens when exact graph matching is impossible. 
In particular, is there a parameter regime where exact community recovery is possible from $(G_{1}, G_{2})$, 
even though 
(1) this is information-theoretically impossible using a single graph and 
(2) exact graph matching is also impossible? 

We answer this question affirmatively, developing an algorithm that 
carefully combines community recovery and graph matching steps. 
Moreover, we determine the precise information-theoretic threshold for when exact community recovery is possible. 
If $\left( 1 - (1-s)^{2} \right) \dchab > 1$, 
then this threshold is given by 
\begin{equation}\label{eq:combined_threshold}
\dconn \left(\alpha s^{2}, \beta s^{2} \right) + \dch \left( \alpha s(1-s), \beta s(1-s) \right) = 1.
\end{equation}
The threshold in~\eqref{eq:combined_threshold} cleanly showcases the interplay between community recovery and graph matching: 
the first term in~\eqref{eq:combined_threshold} comes from graph matching, while the second term comes from community recovery.  
We now turn to describing our results formally. 

\subsection{Results}\label{sec:results}

We determine the information-theoretic threshold for exact community recovery 
from two correlated stochastic block models 
$(G_{1}, G_{2}) \sim \CSBM(n, \alpha \log(n) / n, \beta \log(n) / n, s)$. 
This result has two parts 
and we start with the positive one.

\begin{theorem} 
\label{thm:comm_recovery}
Fix constants $\alpha, \beta > 0$ and $s \in [0,1]$. 
Let $(G_1, G_2) \sim \mathrm{CSBM}\left( n,\frac{\alpha \log n}{n} ,\frac{\beta \log n}{n}, s \right)$.  
Suppose that 
\begin{equation}
\label{eq:community_achievability}
\left(1 - (1-s)^{2} \right) \dchab > 1 
\end{equation}
and that 
\begin{equation}
\label{eq:tradeoff_achievability}
s^{2} \dconnab + s(1-s) \dchab > 1. 
\end{equation}
Then there is an estimator $\wh{\boldsymbol{\sigma}} = \wh{\boldsymbol{\sigma}}(G_1, G_2)$ such that 
$
\lim\limits_{n \to \infty} \p \left( \overlap \left(\wh{\boldsymbol{\sigma}}, \boldsymbol{\sigma_{*}} \right) = 1 \right) = 1.
$
\end{theorem}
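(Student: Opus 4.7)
My plan is a three-stage algorithm that synthesizes community recovery and graph matching. In \emph{Stage 1 (almost-exact community recovery)}, I apply a standard spectral or SDP-based algorithm to each marginal $G_i \sim \SBM(n, \alpha s \log n / n, \beta s \log n / n)$, producing labelings $\wh{\boldsymbol{\sigma}}_1, \wh{\boldsymbol{\sigma}}_2$ that agree with $\boldsymbol{\sigma_*^1}, \boldsymbol{\sigma_*^2}$ on a $1-o(1)$ fraction of vertices (up to a global sign, aligned via edge correlations between $G_1$ and $G_2$); this is possible whenever $\alpha \neq \beta$, which is implied by condition~\eqref{eq:community_achievability}. In \emph{Stage 2 (partial graph matching)}, I adapt the matching procedure of~\cite{RS21}, using witness structures (e.g., high-degree stars) in the intersection graph $G_1 \cap G_2'$ with candidate matches filtered by the Stage-1 community labels, to produce a partial matching $\wh\pi$ that correctly identifies $\pi_*(v)$ for every non-isolated $v$ in the intersection graph and flags isolated vertices as \emph{unmatchable}. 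Since the intersection graph is marginally $\SBM(n, \alpha s^2 \log n / n, \beta s^2 \log n / n)$ with expected degree $s^2 \dconnab \log n$, the probability a fixed $v$ is isolated is $\p(v \text{ isolated}) = n^{-s^2 \dconnab + o(1)}$.

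\textbf{Refinement and error analysis.} In \emph{Stage 3 (per-vertex MLE refinement)}, for each $v$ I run a likelihood ratio test to decide $\sigma_*(v)$. For matchable $v$, the test combines $v$'s $G_1$-neighborhood with $\wh\pi(v)$'s $G_2$-neighborhood via $\wh\pi$, which is equivalent to an MLE test on the union graph $H_*$ --- an $\SBM$ with Chernoff-Hellinger divergence $(1-(1-s)^2)\dchab$. The per-vertex misclassification probability is $n^{-(1-(1-s)^2)\dchab + o(1)}$, and condition~\eqref{eq:community_achievability} ensures that a union bound over $n$ matchable vertices gives $o(1)$ total error. For unmatchable $v$, the test uses only $v$'s $G_1$-neighborhood. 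The key observation is that each edge of $G$ at $v$ independently falls into one of four subsampling categories ($G_1 \cap G_2'$, $G_1 \setminus G_2'$, $G_2' \setminus G_1$, neither) with probabilities $s^2, s(1-s), s(1-s), (1-s)^2$; conditioning on $v$ being isolated in the intersection graph eliminates the first category, so each potential $G_1$-neighbor is present with conditional probability $\approx p\, s(1-s)$ or $q\, s(1-s)$ according to its community. The conditional law of $v$'s $G_1$-neighborhood is therefore that of $\SBM(n, \alpha s(1-s) \log n / n, \beta s(1-s) \log n / n)$, yielding conditional misclassification probability $n^{-s(1-s)\dchab + o(1)}$. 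Multiplying by the isolation probability gives a per-vertex joint probability of $n^{-(s^2 \dconnab + s(1-s)\dchab) + o(1)}$, and summing over $n$ vertices is $o(1)$ precisely under condition~\eqref{eq:tradeoff_achievability}. A union bound over both vertex types then completes the proof.

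\textbf{Main obstacle.} The hardest step is to guarantee that the matching stage produces \emph{no} false matches on matchable vertices: even a polynomial number of spurious matches would cascade into misclassifications in Stage 3. This will require careful sanity-check subroutines on each candidate match, together with an argument that distinguishing witnesses (beyond mere non-isolation) exist at every non-isolated vertex with overwhelming probability, leveraging both the intersection-graph degree and the community labels from Stage 1. A secondary subtlety is the conditional-independence analysis in Stage 3 for unmatchable vertices, where the reduction of $v$'s $G_1$-neighborhood law to parameters $\alpha s(1-s), \beta s(1-s)$ must be carefully combined with the Stage-1 community labels of $v$'s neighbors so as to actually realize the large-deviation rate $s(1-s)\dchab$ in the conditional misclassification probability, rather than a strictly smaller rate due to the dependence between the stages.
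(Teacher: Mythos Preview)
Your high-level architecture—partial matching, then union-graph refinement on matched vertices, then $G_1$-only refinement on unmatched vertices—is exactly the paper's approach, and your error-rate calculus (isolation probability $n^{-s^2\dconnab+o(1)}$ times conditional misclassification $n^{-s(1-s)\dchab+o(1)}$) is the right one. But two of the steps you flag as obstacles are genuine gaps that your sketch does not close, and the paper fills them with specific machinery you should know about.

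\textbf{The matching step.} You propose to match \emph{every} non-isolated vertex in $G_1\wedge_{\pi_*}G_2$ via ``witness structures,'' but a vertex with intersection-degree one or two has no witness strong enough to rule out the $\Theta(n)$ candidate matches, and your sanity checks are unspecified. The paper sidesteps this entirely: it does \emph{not} try to match all non-isolated vertices. Instead it finds a maximal $k$-core matching (with $k=13$), i.e., a partial matching $(\widehat M,\widehat\mu)$ maximizing $|\widehat M|$ subject to $\mathrm{d}_{\min}(G_1\wedge_{\widehat\mu}G_2)\ge 13$. A permutation-union-bound argument (Lemma~\ref{lemma:k_core_correctness} and Lemma~\ref{lemma:k_core_sbm_general}) then shows that whp $\widehat\mu$ agrees with $\pi_*$ on \emph{all} of $\widehat M$, and a \L uczak argument shows $|[n]\setminus\widehat M|\le n^{1-s^2\dconnab+o(1)}$. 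So you get zero false matches and the same unmatched budget—no witnesses needed.

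\textbf{The dependency step.} Your Stage-3 refinement classifies $v$ by the Stage-1 labels of its neighbors, but those labels are functions of the same edges you are now voting over, and the unmatched set $F$ is itself a function of $G_1$. You correctly intuit the random-partition trick (condition on which of $\cE_{00},\cE_{01},\cE_{10},\cE_{11}$ each pair falls into, so that $G_1\setminus_{\pi_*}G_2$, $G_1\wedge_{\pi_*}G_2$, $G_2\setminus_{\pi_*}G_1$ become independent), but that alone is not enough: vertices in $F$ can have many $G_1$-neighbors in $F$, and the Stage-1 error set can cluster in neighborhoods. The paper handles this with two further devices: (i) it uses specifically the Mossel--Neeman--Sly almost-exact recovery algorithm, for which the error set is provably contained in the ``weak-majority'' set $I_\epsilon(G_1)$, and then shows every vertex has only $O(1)$ neighbors in $I_\epsilon(G_1)$; (ii) it enlarges $F$ to $\overline F$ via a \L uczak expansion in $G_2$, so that every $v\notin\overline F$ has at most one $G_2$-neighbor in $\overline F$, which decouples the union-graph majority from~$\overline F$. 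Only then does the per-vertex large-deviation rate you wrote down actually materialize.
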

In the prior work~\cite{RS21} it was shown that~\eqref{eq:community_achievability} is necessary for exact community recovery, 
and that the conditions in~\eqref{eq:prior_sufficient_condition} suffice. 
As described in Section~\ref{sec:CSBM}, \cite{RS21} focused on determining the exact graph matching threshold and then using community recovery algorithms as a black box. 
The main contribution of Theorem~\ref{thm:comm_recovery} is to go \emph{beyond} 
exact graph matching, to showcase how exact community recovery is possible from $(G_{1},G_{2})$ even in regimes where (1) this is impossible from $G_{1}$ alone and (2) exact graph matching is impossible. 
This necessitates developing algorithms that combine information from $G_{1}$ and $G_{2}$ in more delicate ways, 
integrating ideas from community recovery and graph matching algorithms. 
Indeed, at a high level, the algorithm we develop to prove Theorem~\ref{thm:comm_recovery} has four main steps: 
\begin{enumerate}[(1)]
\item\label{step:matching} Obtain a partial almost exact graph matching $\wh{\mu}$ between $G_{1}$ and~$G_{2}$; 
\item\label{step:labels1} Obtain an almost exact community labeling of vertices in $G_{1}$; 
\item\label{step:correcting} For vertices in $G_{1}$ that are part of the matching $\wh{\mu}$: refine the almost exact labeling obtained in Step~\eqref{step:labels1} via a majority vote in the (denser) graph consisting of edges that are either in $G_1$ or in $G_2$ (determined using $\widehat{\mu}$).
\item\label{step:final_maj} For vertices in $G_{1}$ that are not part of the matching $\wh{\mu}$: classify them according to a majority vote of the labels of their neighbors, where we use only the edges in $G_{1}$. 
\end{enumerate}
In order to make such an algorithm work, the devil is in the details, with careful choices in each step; we refer to Section~\ref{sec:overview} for a more detailed overview of the algorithm. 

The threshold in~\eqref{eq:tradeoff_achievability} highlights the interplay between the community recovery and graph matching tasks. 
Indeed, the first term in~\eqref{eq:tradeoff_achievability} comes from graph matching: $s^{2} \dconnab = 1$ is the threshold for exact graph matching; moreover, when $s^{2} \dconnab < 1$, the best possible almost exact graph matching makes $n^{1 - s^{2} \dconnab + o(1)}$ errors, which is relevant for Step~\eqref{step:matching} of the algorithm. 
On the other hand, the second term in~\eqref{eq:tradeoff_achievability} comes from community recovery;  
in particular, this term arises from the majority vote in Step~\eqref{step:final_maj}. 
Note that while we use all edges in $G_{1}$ for this step, 
the unmatched nodes are isolated in the intersection graph, 
and hence the relevant edges are not present in $G_{2}$, 
leading to the ``effective'' factor of $s(1-s)$. 
Since exact community recovery in 
$\SBM(n, \alpha s (1-s) \log(n) / n, \beta s(1-s) \log(n) / n)$ 
is governed by the quantity 
$\dch ( \alpha s (1-s), \beta s (1-s) ) = s(1-s) \dchab$, 
this leads to the second term in~\eqref{eq:tradeoff_achievability}.

As the following impossibility result shows, Theorem~\ref{thm:comm_recovery} is tight. 

\begin{theorem}
\label{thm:comm_recovery_impossibility} 
Fix constants $\alpha, \beta > 0$ and $s \in [0,1]$. 
Let $(G_1, G_2) \sim \mathrm{CSBM}\left( n,\frac{\alpha \log n}{n} ,\frac{\beta \log n}{n}, s \right)$.  
Suppose that 
\begin{equation}
\label{eq:community_impossibility}
\left(1 - (1-s)^{2} \right) \dchab < 1 
\end{equation}
or that 
\begin{equation}
\label{eq:tradeoff_impossibility}
s^{2} \dconnab + s(1-s) \dchab < 1. 
\end{equation} 
Then for any estimator $\widetilde{\boldsymbol{\sigma}} = \widetilde{\boldsymbol{\sigma}}(G_1, G_2)$, we have that
$
\lim\limits_{n \to \infty} \p ( \overlap( \widetilde{\boldsymbol{\sigma}} ,\boldsymbol{\sigma_{*}}) = 1) = 0.
$
\end{theorem}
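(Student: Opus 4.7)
The proof splits into two parts corresponding to the two sufficient conditions. For condition~\eqref{eq:community_impossibility}, the plan is to simply invoke the reduction used in prior work~\cite{RS21}: a genie that reveals $\pi_*$ transforms the problem into exact community recovery on the ``union graph'' $H_*$, which marginally is $\SBM(n, \alpha(1 - (1-s)^2)\log n/n, \beta(1 - (1-s)^2)\log n/n)$. The classical sharp threshold for this SBM \cite{abbe2016exact, mossel2016consistency} fails exactly when~\eqref{eq:community_impossibility} holds, so even the genie-aided estimator cannot succeed. The substance of the plan therefore concerns condition~\eqref{eq:tradeoff_impossibility}, the genuinely new half of the theorem.

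For~\eqref{eq:tradeoff_impossibility}, the plan is a different genie argument combined with a per-pair likelihood-ratio analysis. I would have the genie reveal $\boldsymbol{\sigma_*}$ and $\pi_*$ everywhere except at a specific pair of opposite-community vertices $(u, v)$, and then restrict attention to pairs $(u, v)$ that are both isolated in the intersection graph $G_1 \cap G_2'$. Under this genie the MAP estimator faces four joint hypotheses for $(\sigma_*(u), \sigma_*(v), \pi_*(u), \pi_*(v))$. The critical observation is that the ``combined swap'' which simultaneously interchanges $\sigma_*$ at $\{u,v\}$ and $\pi_*$ at $\{u,v\}$ preserves the labels induced in $G_2$ at the two swapped positions; hence the test reduces essentially to a binary discrimination driven by edge data incident to $u$ and $v$ in $G_1$ and $G_2$.

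To compute the per-pair failure probability, I would write the log-likelihood ratio for this binary test as a sum over $j \in [n] \setminus \{u,v\}$ of contributions comparing ``correctly paired'' versus ``crossed'' joint observations of the form $(A_1(u,j), A_2(\pi_*(u), \pi_*(j)))$ and $(A_1(v,j), A_2(\pi_*(v), \pi_*(j)))$. Conditional on the isolation of both $u$ and $v$, each such contribution is a bounded log-likelihood increment, and the sum is amenable to a Chernoff-Hellinger style large-deviations analysis in the spirit of~\cite{abbe2016exact, mossel2016consistency}. The resulting per-pair failure probability turns out to be $n^{-s(1-s)\dchab + o(1)}$; intuitively, the factor $s(1-s)$ reflects the density of parent-graph edges appearing in exactly one of $G_1, G_2'$, which are the edges carrying discriminating information in the isolated regime.

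Finally, one counts such ambiguous pairs. When $s^{2} \dconnab < 1$, the intersection graph has $n^{1 - s^{2}\dconnab + o(1)}$ isolated vertices with high probability, a fact established in~\cite{RS21}. Fixing any isolated $u$ with $\sigma_*(u) = +1$, the same order of opposite-community isolated vertices are available as swap partners, and a correlation-decay or second-moment argument shows that the per-$v$ failure events are approximately independent across $v$. Hence at least one swap pair fails the MAP test whenever $1 - s^{2}\dconnab - s(1-s)\dchab > 0$, which is precisely~\eqref{eq:tradeoff_impossibility}. The main technical obstacle will be the large-deviations rate computation in the previous paragraph: the four-way joint edge variables are correlated within ``pairs'' under the truth but correlated across ``pairs'' under the swap, and extracting the exact rate $s(1-s)\dchab$ requires a careful optimization of a cumulant generating function conditioned on the isolation event, together with verifying that the $s^{2}\dconnab$ contribution arises cleanly from the isolation probability rather than being entangled with the per-pair rate.
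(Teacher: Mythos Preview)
Your treatment of condition~\eqref{eq:community_impossibility} matches the paper. For~\eqref{eq:tradeoff_impossibility} you share with the paper the two essential ingredients --- restrict attention to vertices isolated in $G_1\land_{\pi_*}G_2$, and combine a per-vertex error rate of order $n^{-s(1-s)\dchab}$ with the $n^{1-s^2\dconnab+o(1)}$ count via a second-moment argument --- but your packaging of these ingredients leaves a real gap.

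The gap is in how you pass from a \emph{per-pair} genie to an overall impossibility statement. Your genie reveals $\boldsymbol{\sigma_*}$ and $\pi_*$ everywhere except at one fixed pair $(u,v)$; the resulting binary test H1 versus H4 is therefore defined relative to side information that \emph{depends on} $(u,v)$. Lower-bounding the failure probability of each such pair-specific MAP does not lower-bound the failure probability of any single estimator: you cannot run a second-moment argument across events that live under different conditionings. What is needed is a \emph{single} genie, independent of $(u,v)$, together with a proof that the existence of one bad swap pair forces \emph{that} genie's MAP to err. The paper does exactly this: it reveals $\boldsymbol{\sigma_*^2}$, a pruned singleton set $S_*$, and $\pi_*\{[n]\setminus S_*\}$, and then derives the resulting MAP in closed form (Theorem~\ref{thm:map}), showing that on $S_*$ the MAP ranks vertices purely by $\maj(i)=\sum_{j\in\cN_1(i)}\sigma_*^1(j)$, with $G_2$ playing no role whatsoever. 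The symmetry over permutations of $S_*$ (Lemmas~\ref{lemma:A_permutation}--\ref{lemma:nu_invariance} and Lemma~\ref{lemma:posterior_communities}) is precisely what licenses the step ``some swap beats the truth $\Rightarrow$ MAP fails'' (Corollary~\ref{cor:map}); your plan never articulates an analogue of this step.

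A second, related concern is your claimed rate $n^{-s(1-s)\dchab}$ for the four-variate test. With your stronger genie --- which hands over $\pi_*$ at all other singletons --- the H1/H4 likelihood ratio genuinely involves the $G_2$ edges incident to $\pi_*(u)$ and $\pi_*(v)$, and the Chernoff exponent of that test is not obviously $s(1-s)\dchab$; revealing more side information can only \emph{decrease} the per-pair failure probability, which works against an impossibility bound and could miss the tight threshold. In the paper the rate emerges cleanly precisely because the explicit MAP reduces everything to $G_1$ majorities, after which conditioning on isolation restricts those majorities to the $\cE_{10}$ edges (Lemma~\ref{lemma:Xi}). Your plan would have to either reproduce that reduction or carry out the full four-variate large-deviations computation you flag as the main obstacle; the paper's route avoids that computation entirely. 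Finally, note that fixing $u$ and varying $v$ makes the swap events heavily correlated through $u$'s neighborhood data, so even the second-moment step would need to be restructured (the paper decouples by separately finding some $u\in S_*\cap V_1^+$ with $\maj(u)<0$ and some $v\in S_*\cap V_1^-$ with $\maj(v)>0$).
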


Impossibility of exact community recovery under the condition~\eqref{eq:community_impossibility} was shown in~\cite{RS21}, 
so the contribution of Theorem~\ref{thm:comm_recovery_impossibility} is to show impossibility under the condition~\eqref{eq:tradeoff_impossibility}. 

As discussed above, the condition~\eqref{eq:tradeoff_impossibility} highlights the interplay between the community recovery and graph matching tasks. In particular, Theorem~\ref{thm:comm_recovery_impossibility} uncovers and characterizes a region of the parameter space where exact community recovery from $(G_{1}, G_{2})$ is impossible, despite the fact that if $\pi_{*}$ were known, then exact community recovery would be possible from the correctly matched union graph $G_{1} \vee_{\pi_{*}} G_{2}$.

Putting together Theorems~\ref{thm:comm_recovery} and~\ref{thm:comm_recovery_impossibility}, 
we obtain the information-theoretic threshold for exact community recovery in correlated SBMs, see~\eqref{eq:combined_threshold}. 
These results are illustrated in the phase diagrams 
of Figures~\ref{fig:phase_new_diagram_fixed_s} and~\ref{fig:phase_new_diagram_fixed_beta}.

\begin{figure}
    \centering
    \begin{subfigure}{0.3\textwidth}
        \centering
        \includegraphics[width=\textwidth]{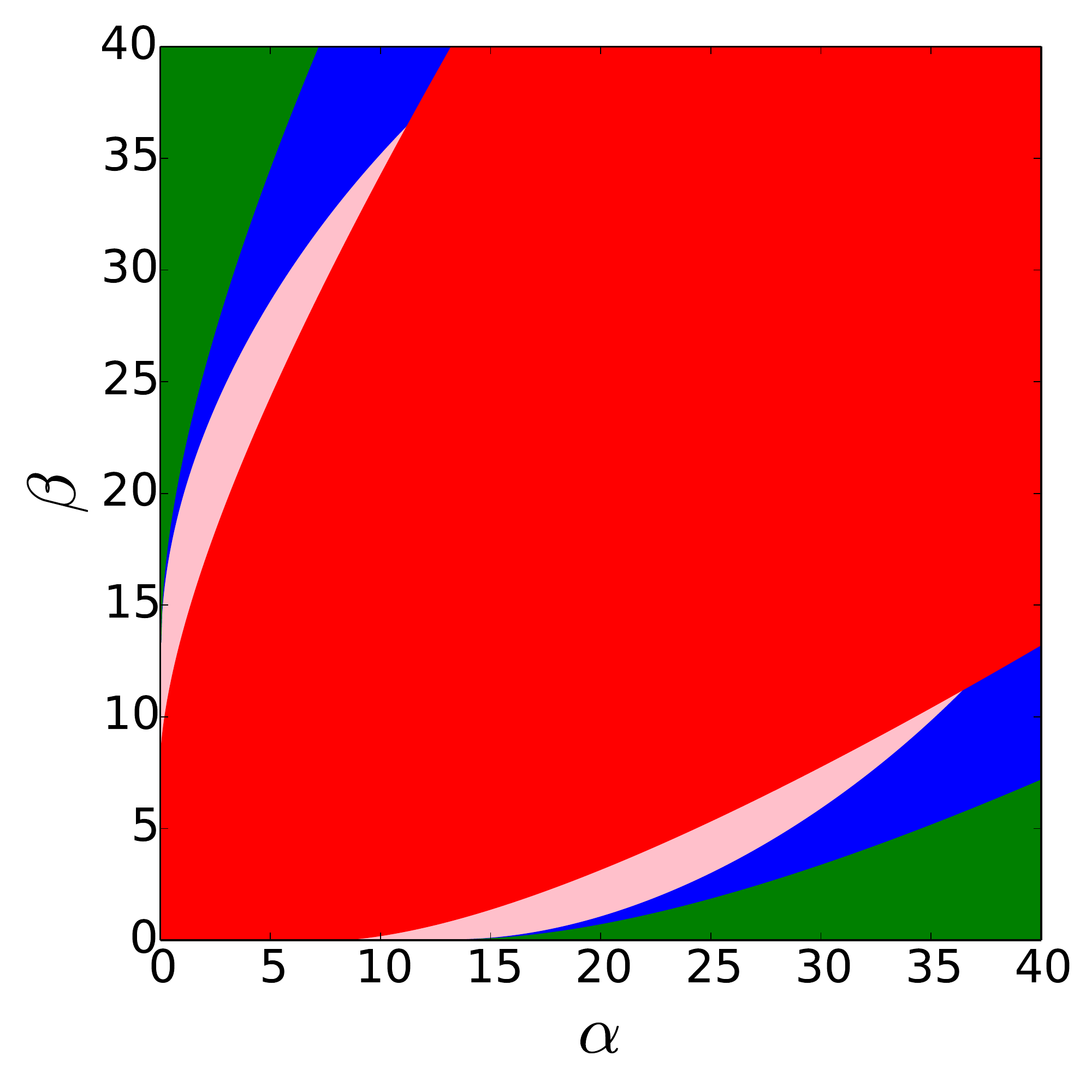}
        \caption{Fixed $s=0.15$.}
    \end{subfigure}
    \quad 
    \begin{subfigure}{0.3\textwidth}
        \centering
        \includegraphics[width=\textwidth]{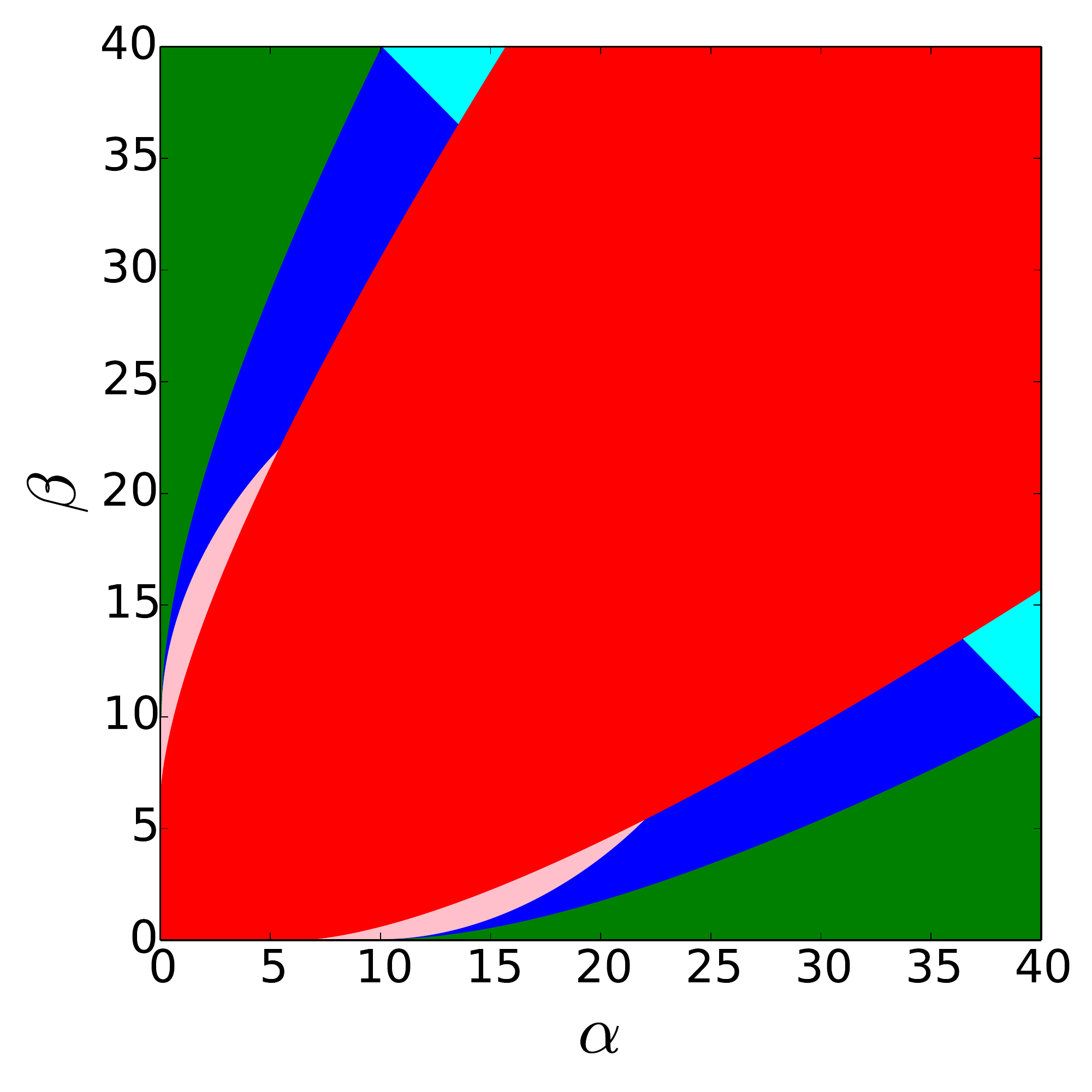}
        \caption{Fixed $s=0.2$.}
    \end{subfigure}
    \quad 
    \begin{subfigure}{0.3\textwidth}
        \centering
        \includegraphics[width=\textwidth]{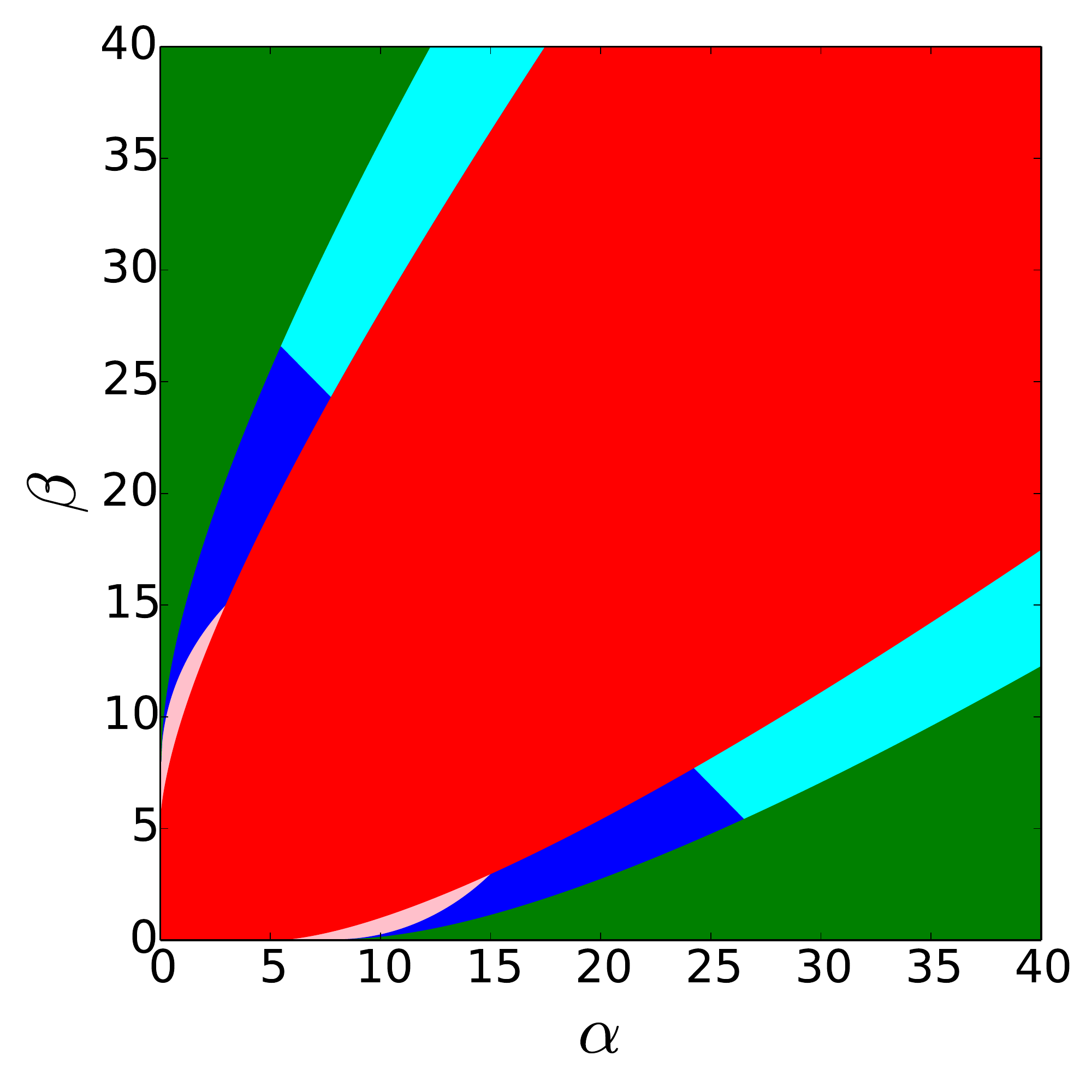}
        \caption{Fixed $s=0.25$.}
    \end{subfigure}
    \caption{Phase diagram for exact community recovery for fixed $s$, with $\alpha \in [0,40]$ and $\beta \in [0,40]$ on the axes. 
    \emph{Green region:} exact community recovery is possible from $G_{1}$ alone; 
    \emph{Cyan~region:} exact community recovery is impossible from $G_{1}$ alone, but exact graph matching is possible, and subsequently exact community recovery is possible from $(G_{1},G_{2})$; 
    \emph{Dark Blue region:} exact community recovery is impossible from $G_{1}$ alone, exact graph matching is also impossible, yet exact community recovery is nonetheless possible from $(G_{1},G_{2})$; 
    \emph{Pink region:} exact community recovery is impossible from $(G_{1},G_{2})$ (even though it would be possible if $\pi_{*}$ were known).
    \emph{Red region:} exact community recovery is impossible from $(G_{1},G_{2})$ (even if $\pi_{*}$ is known). 
    Characterizing the Dark Blue and Pink regions is the main result of this paper.}
    \label{fig:phase_new_diagram_fixed_s}
\end{figure}

\begin{figure}[t]
    \centering
    \begin{subfigure}{0.3\textwidth}
        \centering
        \includegraphics[width=\textwidth]{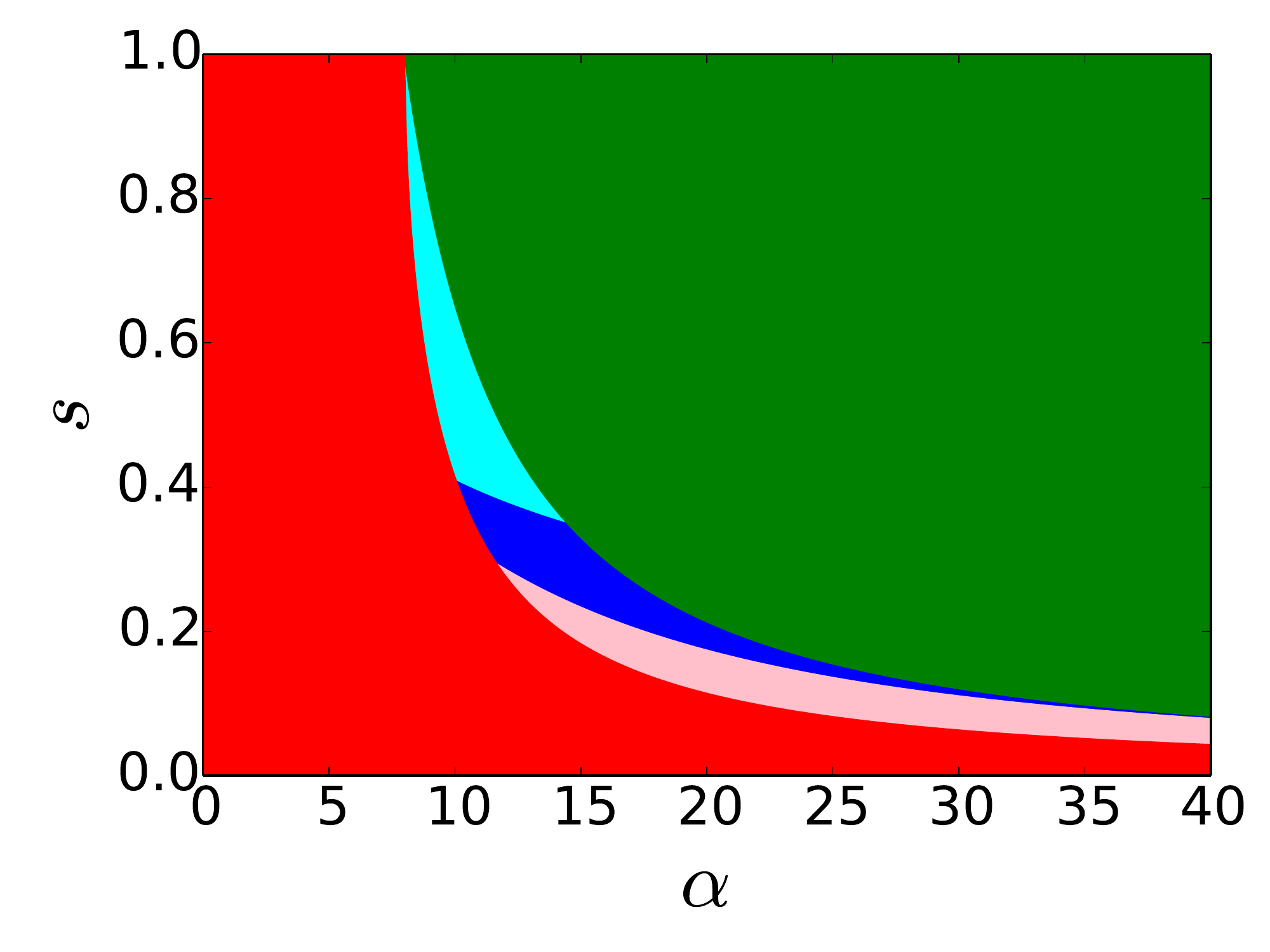}
        \caption{Fixed $\beta=2$.}
    \end{subfigure}
    \quad 
    \begin{subfigure}{0.3\textwidth}
        \centering
        \includegraphics[width=\textwidth]{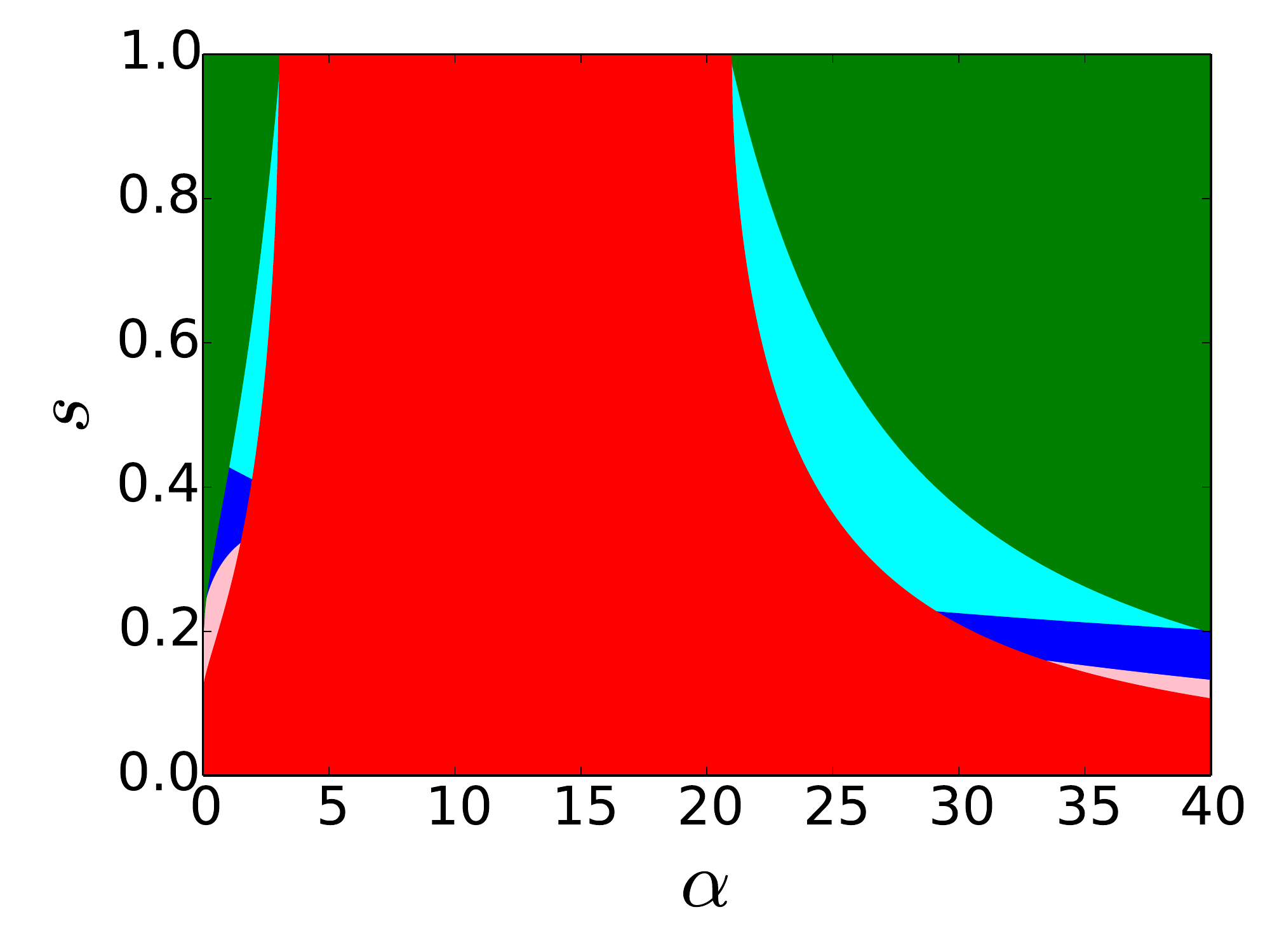}
        \caption{Fixed $\beta = 10$.}
    \end{subfigure}
    \quad 
    \begin{subfigure}{0.3\textwidth}
        \centering
        \includegraphics[width=\textwidth]{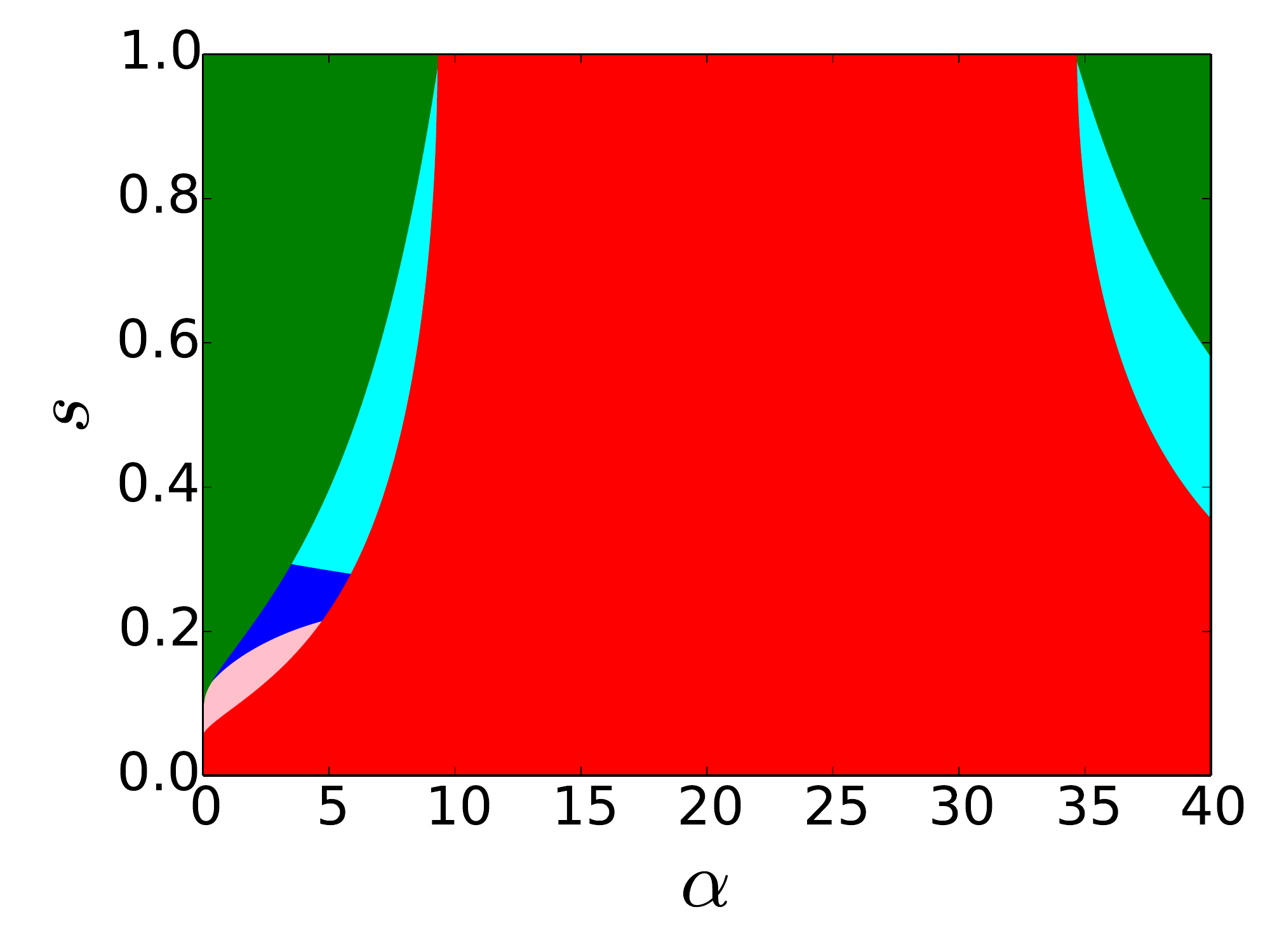}
        \caption{Fixed $\beta = 20$.}
    \end{subfigure}
    \caption{Phase diagrams for exact community recovery for fixed $\beta$, with $\alpha \in [0,40]$ and $s \in [0,1]$ on the axes. (Colors as in  Fig.~\ref{fig:phase_new_diagram_fixed_s}.)}
    \label{fig:phase_new_diagram_fixed_beta}
\end{figure}

\subsection{Overview of algorithms and proofs}\label{sec:overview}

We next expand upon the very high-level steps of the recovery algorithm presented in Section~\ref{sec:results}, 
detailing choices made in each step, and highlighting technical challenges that arise in the analysis. 
We also give an overview of the impossibility proof.  

\paragraph{Almost exact graph matching via the $k$-core estimator.} 
For a permutation $\pi : [n] \to [n]$, let $G_1 \land_{\pi} G_2$ be the corresponding intersection graph of $G_1$ and $G_2$, where $(i,j)$ is an edge in $G_1 \land_{\pi} G_2$ if and only if $(i,j)$ is an edge in $G_1$ and $(\pi(i), \pi(j))$ is an edge in $G_2$. The graph matching algorithm we study---called the $k$-core estimator---iterates over all permutations of $[n]$ and finds a permutation $\widehat{\pi}$ that induces the largest $k$-core\footnote{The $k$-core of a graph is the largest induced subgraph for which all vertices have degree at least $k$.} in the corresponding intersection graph. The output of the algorithm is a potentially incomplete vertex correspondence $\wh{\mu}$, which is the restriction of $\wh{\pi}$ to the vertex set of the $k$-core in $G_1 \land_{\wh{\pi}} G_2$. 
Figure~\ref{fig:k_core} provides an illustration.

While it is (information-theoretically) impossible that $\wh{\mu} = \pi_*$ with probability larger than $o(1)$ in the parameter regime that we consider, 
$\wh{\mu}$ has two important properties that we highlight. 
First, $\wh{\mu}$ matches almost all vertices; 
that is, if we denote by $F$ the set of unmatched vertices, then $|F| = o(n)$. 
Second, 
the vertices matched by $\wh{\mu}$ are all correct with probability $1 - o(1)$. This property is extremely important for downstream community recovery tasks, as it allows us to clearly leverage information from $G_1$ and $G_2$ in classifying a given vertex in the correspondence $\wh{\mu}$. 

Intuitively, 
graph matching algorithms tend to fail in aligning vertices that are not well-connected in $G_1 \land_{\pi_*} G_2$ (in particular, singletons), since such vertices have little common information across $G_1$ and $G_2$. 
On the other hand, the $k$-core is, by definition, a well-connected subgraph, 
explaining why the $k$-core estimator 
succeeds for an appropriate choice of $k$ (we choose $k = 13$ in this paper). 
Previously, 
Cullina, Kiyavash, Mittal, and Poor~\cite{cullina2020partial} 
proved the almost exact correctness of the $k$-core estimator for sparse correlated Erd\H{o}s-R\'{e}nyi graphs. Here, we extend their results to correlated SBMs, which requires a novel analysis, since the prior work of~\cite{cullina2020partial} utilized certain probability generating functions that can only be tractably computed for Erd\H{o}s-R\'{e}nyi graphs. Our approach circumvents this issue 
while also providing a tight analysis for the logarithmic degree regime.

\begin{figure}
    \centering
    \includegraphics[scale=0.25]{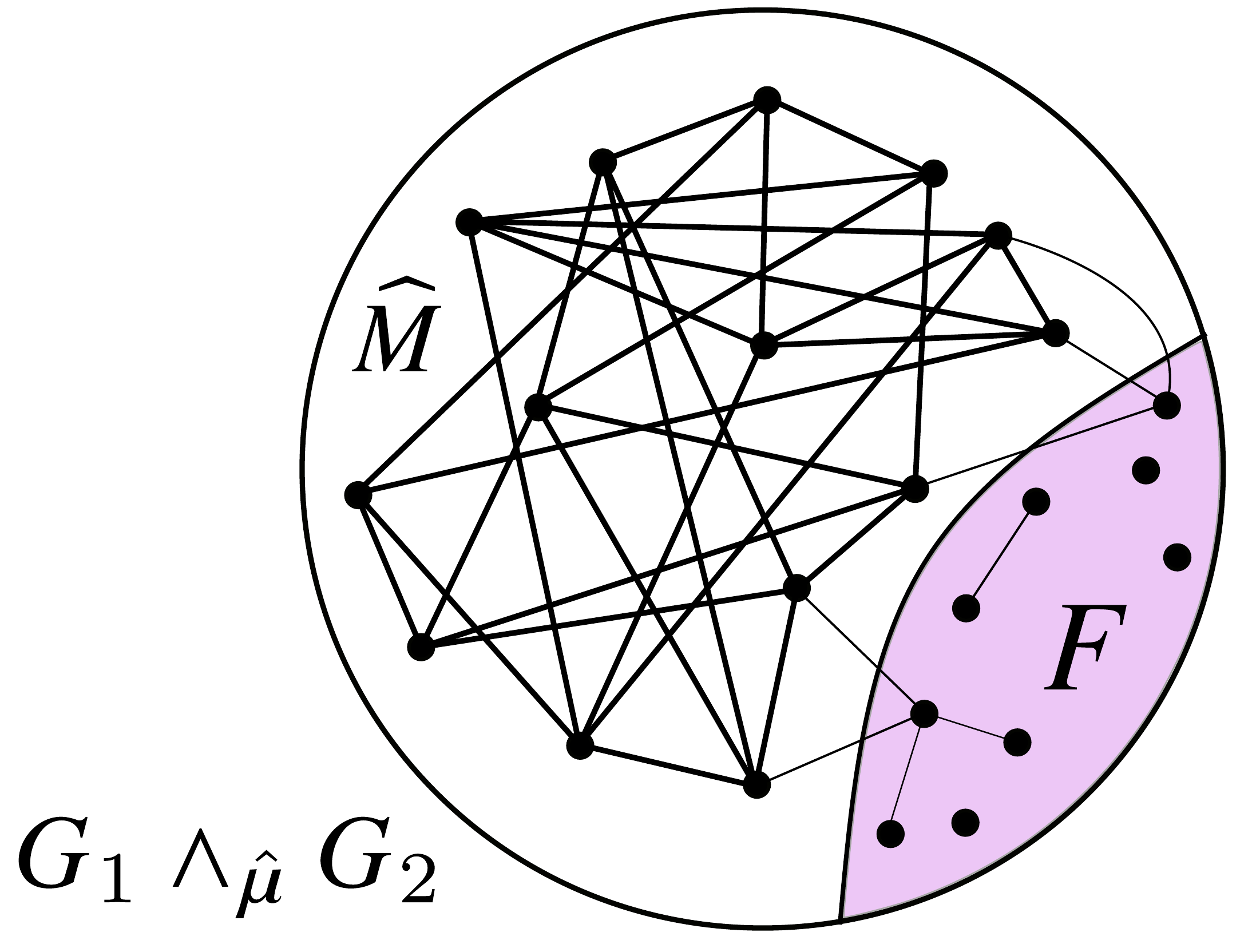}
    \caption{Illustration of the $k$-core matching (here, $k = 4$). Pictured here is the corresponding $k$-core of the intersection graph $G_1 \land_{\wh{\mu}} G_2$.}
    \label{fig:k_core}
\end{figure}

An important and novel consequence of our analysis is that for correlated SBMs in the logarithmic degree regime, the $k$-core estimator achieves \emph{optimal performance}. Specifically, we show that the $k$-core estimator fails to match at most $|F| \leq n^{1 - s^2 \dconnab + o(1)}$ vertex pairs, which is orderwise equal to the number of singletons of $G_1 \land_{\pi_*} G_2$, for which it is known that \emph{any} graph matching algorithm will fail \cite{RS21, cullina2016simultaneous}. This optimality of the $k$-core estimator is a fundamental reason why we utilize the $k$-core estimator to prove that the information-theoretic threshold can be achieved. 

\paragraph{Exact community recovery in the correctly matched region.} 
Since exact community recovery is impossible in $G_1$ or in $G_2$ alone, we will utilize the almost-exact vertex alignment $\wh{\mu}$ to combine information from both graphs to recover communities. The algorithm we design does so by recovering communities through multiple subroutines, each of which is executed carefully in order to de-couple the complex dependencies between $G_1$, $G_2$, and $\wh{\mu}$. 
Initially, we focus on recovering the community labels of vertices that are part of the (partial) matching $\wh{\mu}$.

First, we run a community recovery algorithm on $G_1$ to generate an almost-exact community labeling (i.e., with $o(n)$ errors). Using $\widehat{\mu}$, we then identify the graph $G_1 \lor_{\widehat{\mu}} G_2$, which consists of edges $(i,j)$ such that $(i,j)$ is an edge in $G_1$ or $(\widehat{\mu}(i), \widehat{\mu}(j))$ is an edge in $G_2$. Using $G_1 \lor_{\widehat{\mu}} G_2$, we then refine the almost-exact community labeling by re-classifying vertices in $\widehat{\mu}$ according to a majority vote among the labels of neighbors in $G_1 \lor_{\widehat{\mu}} G_2$. See Figure~\ref{fig:classifying_k_core} for an illustration.

\begin{figure}[t]
    \centering
    \includegraphics[scale=0.3]{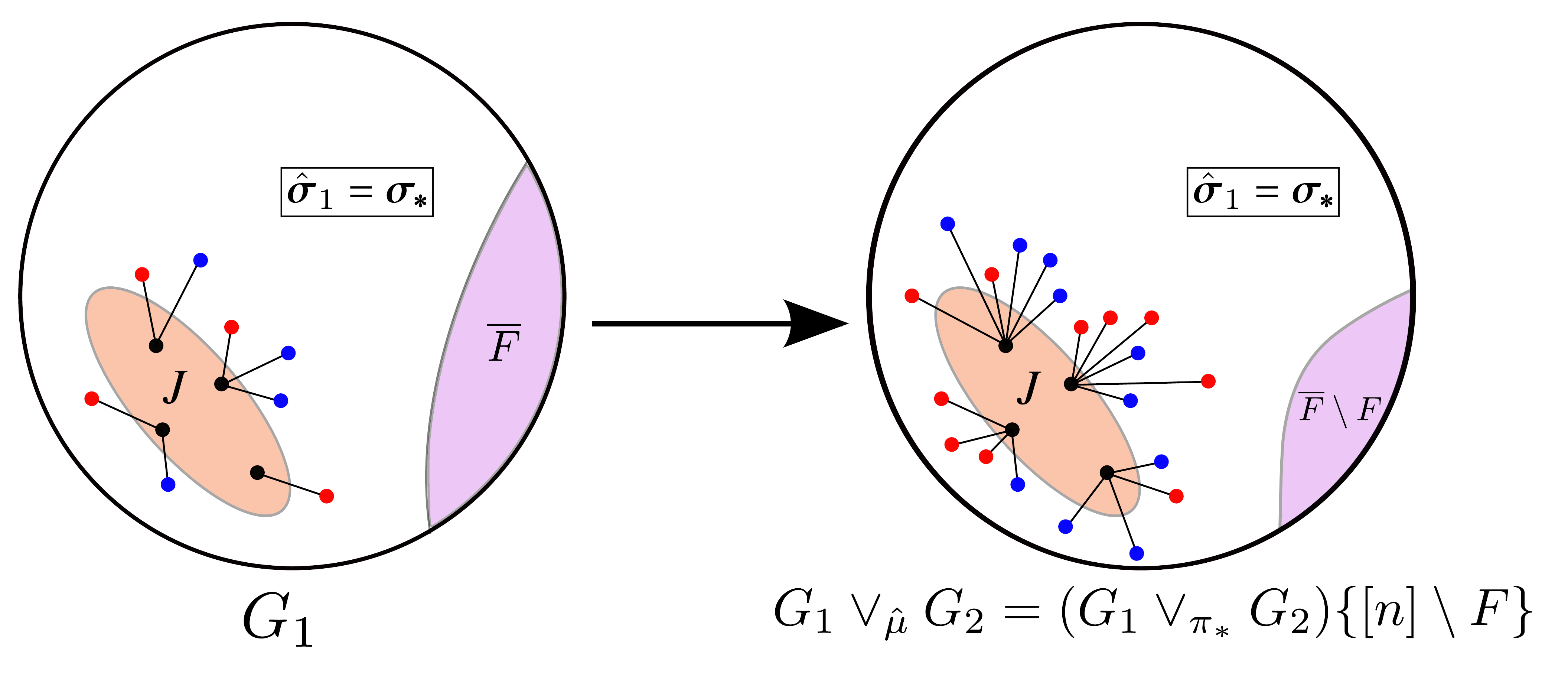}
    \caption{Illustration of how the recovery algorithm  classifies vertices in $[n] \setminus \overline{F}$ (see Algorithm~\ref{alg:labeling-k-core}). In each graph, the orange region denotes the set of incorrectly classified vertices resulting from applying Algorithm~\ref{alg:MNS} (almost exact community recovery) to $G_1$, the purple region denotes $\overline{F}$ or $\overline{F} \setminus F$ (which are classified later), and the white region denotes the set of correctly-classified vertices. When the two graphs are overlayed to form $G_1 \lor_{\wh{\mu}} G_2$, all vertices in $J$ have a neighborhood majority corresponding to the correct community labeling.}
    \label{fig:classifying_k_core}
\end{figure}

To make this algorithm work, there are several technical roadblocks which require novel 
ideas
to overcome. 
First, since the almost-exact community labels inferred from $G_1$ are subsequently analyzed in the context of $G_1 \lor_{\wh{\mu}} G_2$, it is critical that the incorrectly-classified vertices are not well-connected. If, for instance, a vertex had only a small number of correctly-classified neighbors in $G_1 \lor_{\wh{\mu}} G_2$, the final majority vote step would not be guaranteed to succeed. We therefore employ an algorithm previously developed by 
Mossel, Neeman and Sly~\cite{mossel2016consistency}, 
for which we can control the geometry of the misclassified vertices and show that the incorrectly-classified vertices are indeed only weakly connected. 
We remark that, as a consequence of our analysis, we show that the algorithm of 
\cite{mossel2016consistency} 
is \emph{optimal}, in the sense that it outputs a labeling which makes the smallest possible number of errors. We repeatedly leverage this property to show that our full algorithm works down to the information-theoretic threshold.  

The other key technical hurdle concerns the structure of $G_1 \lor_{\wh{\mu}} G_2$. Notice that, in the regime $\left(1 - (1 - s)^2 \right) \dchab > 1$, 
with high probability 
it is guaranteed that all vertices in $G_1 \lor_{\pi_*} G_2$ have the property that the majority of each community among their neighbors is the same as the community label of the vertex itself. However, it is unclear whether this is the case for $G_1 \lor_{\wh{\mu}} G_2$, since the removal of the vertices in $F$ that are not part of the (incomplete) vertex correspondence $\wh{\mu}$ may skew the neighborhood majority of nodes that have many neighbors in~$F$. To remedy this issue, we employ a method of 
\Luczak\cite{Luczak1991} 
to find a set $\overline{F} \supset F$ that is guaranteed to be only \emph{weakly connected} to vertices outside of $\overline{F}$ in $G_1 \lor_{\wh{\mu}} G_2$ (and not much greater in size than $F$). As a consequence, if $v \in [n] \setminus \overline{F}$ is incorrectly classified in $G_1$, the majority of its correctly labeled neighbors in $G_1 \lor_{\wh{\mu}} G_2$ will not be significantly affected by $\overline{F}$, resulting in the correct labeling. 

\paragraph{Classifying the vertices outside of the correctly matched region.} 
It now remains to classify vertices in $\overline{F}$. To classify $\overline{F} \setminus F$, it is useful to consider the graph $G_1 \setminus_{\wh{\mu}} G_2$, which consists of edges $(i,j)$ that are in $G_1$ such that $(\wh{\mu}(i), \wh{\mu}(j))$ is not in $G_2$. In our construction of $\overline{F}$, we are careful to only use the structure of $G_2$, not $G_1$, so that the neighbors of $\overline{F}$ in $G_1 \setminus_{\wh{\mu}} G_2$ do not depend strongly on the structure of $\overline{F}$. We then classify vertices in $\overline{F} \setminus F$ according to their majority among correctly-classified neighbors in $G_1 \setminus_{\wh{\mu}} G_2$, which are given by the previous step of the algorithm. Due to the approximate independence between $\overline{F}$ and its neighbors in $G_1 \setminus_{\wh{\mu}} G_2$, the probability that the majority vote fails can be computed in a straightforward manner and is shown to be $n^{- s(1 - s) \dch(\alpha, \beta) + o(1) }$. The factor of $s(1 - s)$ in the exponent reflects the fact that $G_1 \setminus_{\wh{\mu}} G_2$ is essentially constructed from edges in the parent graph $G$ that are sampled by $G_1$ but not by $G_2$. We show that $|\overline{F} | = n^{1 - s^2 \dconnab + o(1) }$ with probability $1 - o(1)$, which, along with the probability that majority fails, implies that we can informally bound the probability that the algorithm fails as 
\begin{multline*}
\p (\text{exists a vertex in $\overline{F} \setminus F$ such that the majority vote fails} ) \le | \overline{F} \setminus F | \cdot \p ( \text{majority vote fails} ) \\
\le n^{1 - s^2 \dconnab + o(1)} \cdot n^{- s(1 - s) \dchab + o(1)} 
 = n^{ 1 - ( s^2 \dconnab + s(1 - s) \dchab ) + o(1)}.
\end{multline*}
In particular, the final expression is $o(1)$ if \eqref{eq:tradeoff_achievability} holds. While the display above is quite informal, it captures the reason why our algorithm works in the regime \eqref{eq:tradeoff_achievability}. 

Finally, we classify vertices in $F$, which we recall is the set of vertices outside of the $k$-core of $G_1 \land_{\pi_*} G_2$, with high probability. We classify these vertices according to a majority vote with respect to their correctly-labeled neighbors in $G_1$ outside of $F$. By definition, vertices in $F$ can have at most $k - 1$ edges outside of $F$ in $G_1 \land_{\pi_*} G_2$, implying that the bulk of the neighbors will be in the graph $G_1 \setminus_{\pi_*} G_2$. Since $|F| \le n^{1 - s^2 \dconnab + o(1)}$ with high probability, we may repeat similar arguments as for the classification of $\overline{F} \setminus F$ above, to conclude that the majority vote will succeed in classifying $F$ provided that \eqref{eq:tradeoff_achievability} holds. 

\paragraph{Impossibility results.}
Since impossibility under the regime in \eqref{eq:community_impossibility} was proved by \cite{RS21}, we focus on the regime in \eqref{eq:tradeoff_impossibility}. 
To this end, notice that when $s^2 \dconnab < 1$, there are $n^{1 - s^2 \dconnab  + o(1)}$ singletons in $G_1 \land_{\pi_*} G_2$; these can be thought of as the vertices with non-overlapping information across $G_1$ and~$G_2$. This property makes such vertices 
impossible to match correctly. 
As a result, the maximum a posteriori (MAP) estimator for community labels of the singletons of $G_1 \land_{\pi_*} G_2$ almost completely disregards information from $G_2$ and classifies the singletons according to their neighborhood majority in $G_1$ alone. 
To make this rigorous, 
we give the MAP estimator additional information 
in the form of $\boldsymbol{\sigma_{*}^{2}}$ (the community labels in $G_{2}$) 
and the correct matching $\pi_{*}(i)$ for all nodes $i$ that are not singletons in $G_1 \land_{\pi_*} G_2$, 
and show that even with this additional information the MAP estimator fails. 
Specifically, we use the second moment method 
to show that under the condition \eqref{eq:tradeoff_impossibility}, with high probability, 
at least one of the majority votes will lead to the wrong classification. Since the MAP estimator fails in this regime, so too does any other estimator.

\subsection{Related work}\label{sec:related}

Since our work focuses on the interplay between community recovery and graph matching, 
it naturally connects with and builds upon the extensive literatures on these two topics. 
We highlight here the most relevant related work. 

\textbf{Community recovery in SBMs.} 
There is a vast literature on learning latent community structure in networks, 
and this question is by now understood well in SBMs~\cite{HLL83,dyer1989solution, bui1984graph, bopanna1987eigenvalues,DKMZ11,mossel2014reconstruction,massoulie2014community,mossel2018proof, abbe2016exact,mossel2016consistency,abbe2015community,bordenave2015nonbacktracking}; 
we refer the reader to Abb\'{e}'s survey~\cite{Abbe_survey} for an overview. 
We highlight in particular the works of Abb\'e, Bandeira, and Hall~\cite{abbe2016exact} and Mossel, Neeman, and Sly~\cite{mossel2016consistency}, which characterized the threshold for exact community recovery in the balanced two-community SBM. 
We build and expand upon their algorithms and analyses, 
in particular dealing with the uncertainties and dependencies arising from the partial, inexact matching between the correlated graphs.

\textbf{Beyond SBMs.} 
Roughly speaking, there are two main strands of literature that go beyond SBMs, 
incorporating various types of additional information to aid in recovering communities: 
contextual SBMs and multi-layer networks. 
In contextual SBMs, the idea is to leverage \emph{node-level} information (e.g., latent high-dimensional vectors) that is correlated with the community labels~\cite{bothorel2015clustering,kanade2016global,mossel2016local,zhang2016community,binkiewicz2017covariate,deshpande2018contextual,abbe2020ell_p,lu2020contextual,saad2020sideinfo,yan2021covariate,ma2021community}. 
In particular, the information-theoretic limits have recently been characterized for both community detection and exact community recovery~\cite{deshpande2018contextual,abbe2020ell_p,lu2020contextual}, 
and in both cases these limits shift due to the high-dimensional node covariates.

Multi-layer SBMs were 
introduced by Holland, Laskey, and Leinhardt, in 
the same
work that introduced SBMs~\cite{HLL83}. 
Here, given the underlying community structure, a collection of SBMs is generated on the same vertex set with the same latent community labels. Several variants 
have been explored~\cite{han2015consistent,arroyo2020inference, paul2020spectral, paul2021null, lei2019consistent, ali2019latent, bhattacharya2020consistent,chen2020global}, but typically the layers are \emph{conditionally independent} given the community labels. This is a major difference compared to the setting we consider, where the graphs are correlated through the formation of edges. Moreover, the node labels are assumed to be known in the multi-layer setting, which completely removes the need for graph matching.

The recent works~\cite{mayya2019mutual, ma2021community} jointly consider multi-layer networks and node-level information that is correlated with the latent community memberships, thus synthesizing these two strands of literature. 

\textbf{Graph matching: correlated Erd\H{o}s-R\'enyi model.} 
Arguably the simplest probabilistic generative model of correlated graphs 
is to consider two correlated Erd\H{o}s-R\'enyi random graphs. 
Consequently, this model, introduced by Pedarsani and Grossglauser~\cite{pedarsani2011privacy}, 
has been the focus of the theoretical literature on graph matching. 
The information-theoretic limits for recovering the latent vertex correspondence $\pi_{*}$ 
have been determined for exact recovery~\cite{cullina2016improved,cullina2018exact,wu2021settling} 
and almost exact recovery~\cite{cullina2020partial}, 
and significant progress has been made for weak recovery as well~\cite{ganassali2020tree,hall2020partial,ganassali2021impossibility,wu2021settling}. 

In particular, we highlight the work of Cullina~et~al.~\cite{cullina2020partial}, 
which is central to this paper and which we extend to correlated SBMs. 
They showed that the so-called \emph{$k$-core matching} achieves almost exact recovery when the average degree of the intersection graph diverges, 
and moreover, with high probability, all nodes in this partial matching are known to be correctly matched. 
This latter property is very useful, especially for downstream tasks such as combining a partial matching with community recovery steps. 
This directly motivates our choice of using a $k$-core matching in the algorithm that proves Theorem~\ref{thm:comm_recovery}. 

The quest for efficient algorithms for graph matching 
has led to numerous algorithmic advances~\cite{mossel2019seeded,barak2019,ding2021efficient,fan2020spectral,mao2021random}, 
culminating in the recent work of Mao, Rudelson, and Tikhomirov~\cite{mao2021exact}, 
who demonstrated an efficient algorithm for exact recovery in the constant noise regime.

\textbf{Graph matching: beyond Erd\H{o}s-R\'enyi.} 
A growing literature studies graph matching in models going beyond Erd\H{o}s-R\'enyi, including correlated SBMs~\cite{onaran2016optimal,cullina2016simultaneous,lyzinski2018information,RS21,shirani2021concentration} and more~\cite{korula2014efficient,RS22,yu2021power}. 
Closest to our work is that of R\'acz and Sridhar~\cite{RS21}, 
who determined the information-theoretic limits for exact graph matching in correlated SBMs, and subsequently leveraged this for exact community recovery. 
Our main contribution, discussed in detail in Sections~\ref{sec:CSBM} and~\ref{sec:results}, is to go beyond exact graph matching and to understand when exact community recovery is possible in the regime where exact graph matching is impossible.

\subsection{Discussion and future work}\label{sec:discussion}

Our work leaves open several important avenues for future work, which we now outline. 
\begin{itemize}
    \item \textbf{Efficient algorithms.} 
    In the parameter regime where exact community recovery is possible from $(G_{1}, G_{2})$ (see Theorem~\ref{thm:comm_recovery}), it is important to understand whether this is possible efficiently (in time polynomial in $n$). 
    The algorithm that we developed to prove Theorem~\ref{thm:comm_recovery} is not efficient; 
    specifically, the $k$-core matching step is inefficient, while the other steps are efficient. 
    Finding efficient algorithms for graph matching has been 
    the motivating force behind several 
    recent works (e.g.,~\cite{mossel2019seeded,barak2019,ding2021efficient,fan2020spectral,mao2021random}), 
    culminating in the recent breakthrough work of Mao, Rudelson, and Tikhomirov~\cite{mao2021exact}, who developed an efficient algorithm for graph matching in correlated Erd\H{o}s-R\'enyi random graphs with constant noise. 
    This promisingly suggests that efficient algorithms exist in the setting of the current paper as well. 
    
    We note, however, that using a $k$-core matching in this paper was a careful choice motivated by the desirable property that, with high probability, all nodes in this partial matching are known to be correctly matched. 
    This raises the possibility that developing efficient algorithms for the full regime of Theorem~\ref{thm:comm_recovery} may require significant new ideas beyond extending the work of~\cite{mao2021exact} to correlated SBMs (which, in itself, is an interesting open problem). 
    
    \item \textbf{Three or more correlated graphs.} What happens in the case of several correlated SBMs? 
    Achieving exact community recovery down to the threshold in Theorem~\ref{thm:comm_recovery} requires carefully passing information between the two correlated graphs. It would be interesting to understand how this generalizes to three or more graphs. 
    
    \item \textbf{Beyond exact community recovery.} 
    While here we focus on exact community recovery, it is of great interest to understand how multiple correlated SBMs can help with recovering communities in other parameter regimes. We conjecture that synthesizing information from a second, correlated graph can help in all settings. 
    
    For instance, when only almost exact community recovery is possible from $(G_{1}, G_{2})$, we conjecture that the optimal error rate is of smaller order than if only $G_{1}$ were known. 
    Similarly, in the partial recovery regime, we conjecture that a larger fraction of nodes can be recovered when given $(G_{1}, G_{2})$, as compared to when only $G_{1}$ is given. 
    Finally, we conjecture that the threshold for community detection decreases in the case of multiple correlated SBMs, compared to a single SBM. 
    Understanding all of these regimes quantitatively is an important direction for future work. 
    
    \item \textbf{General correlated stochastic block models.} 
    We focused here on the simplest setting of the SBM with two balanced communities. A natural future direction is to extend our results to more general SBMs with multiple communities, which are understood well in the single graph setting~\cite{Abbe_survey}. 
\end{itemize}


\subsection{Notation}\label{sec:notation}

Recall that the underlying vertex set is $V = [n] := \{1, 2, \ldots, n \}$. 
We denote by $\mathcal{S}_{n}$ the set of permutations of $[n]$. 
Recall that 
$V^{+} : = \{ i \in [n]: \sigma_{*}(i) = + 1 \}$ and 
$V^{-} : = \{ i \in [n] : \sigma_{*}(i) = -1 \}$ 
denote the vertices in the two communities. 
To emphasize the different vertex labels in $G_{1}$ and $G_{2}$, 
we define 
$\boldsymbol{\sigma_{*}^{1}} := \boldsymbol{\sigma_{*}}$ 
and 
$\boldsymbol{\sigma_{*}^{2}} := \boldsymbol{\sigma_{*}} \circ \pi_{*}^{-1}$, 
which are the community labels in~$G_{1}$ and $G_{2}$, respectively. 
Accordingly, we define $V_{1}^{+} := V^{+}$ and $V_{1}^{-} := V^{-}$, 
as well as 
$V_{2}^{+} : = \{ i \in [n]: \sigma_{*}^{2}(i) = + 1 \}$ and 
$V_{2}^{-} : = \{ i \in [n] : \sigma_{*}^{2}(i) = -1 \}$, 
to denote the two communities in the two graphs. 

Let $\binom{[n]}{2} : = \{ \{i,j\} : i,j \in [n], i \neq j \}$ denote the set of all unordered vertex pairs. We 
use $(i,j)$, $(j,i)$, and $\{i,j\}$ interchangeably to denote the unordered pair consisting of $i$ and $j$. 
Given 
a community labeling 
$\boldsymbol{\sigma}$, we define the sets 
$\cE^+(\boldsymbol{\sigma}) : = \left \{ (i,j) \in \binom{[n]}{2} : \sigma(i) \sigma(j) = +1 \right \}$ 
and 
$\cE^-(\boldsymbol{\sigma} ) : = \left \{ (i,j) \in \binom{[n]}{2} : \sigma(i) \sigma(j) = -1 \right \}$. 
In words, $\cE^+(\boldsymbol{\sigma})$ is the set of {\it intra-community} vertex pairs, and $\cE^-(\boldsymbol{\sigma})$ is the set of {\it inter-community} vertex pairs. 
Note 
that $\cE^+(\boldsymbol{\sigma})$ and $\cE^-(\boldsymbol{\sigma})$ partition~$\binom{[n]}{2}$. 

Let $A$ be the adjacency matrix of $G_{1}$, 
let $B$ be the adjacency matrix of $G_{2}$, 
and let $B'$ be the adjacency matrix of $G_{2}'$. 
Note that, by construction, we have that 
$B_{i,j}' = B_{\pi_{*}(i),\pi_{*}(j)}$ for every $i,j$. 
By the construction of the correlated SBMs, we have the following probabilities for every $(i,j) \in \binom{[n]}{2}$: 
\begin{align*}
\p \left( \left( A_{i,j}, B'_{i,j} \right) = (1,1) \, \middle| \, \boldsymbol{\sigma_{*}} \right) 
&= \begin{cases}
s^2 p &\text{if } \sigma_{*}(i) = \sigma_{*}(j), \\
s^2 q &\text{if } \sigma_{*}(i) \neq \sigma_{*}(j);
\end{cases}\\
\p \left( \left( A_{i,j}, B'_{i,j} \right) = (1,0) \, \middle| \, \boldsymbol{\sigma_{*}} \right) 
&= \begin{cases}
s(1-s) p &\text{if }  \sigma_{*}(i) = \sigma_{*}(j), \\
s(1-s) q &\text{if }  \sigma_{*}(i) \neq \sigma_{*}(j);
\end{cases}\\
\p \left( \left( A_{i,j}, B'_{i,j} \right) = (0,1) \, \middle| \, \boldsymbol{\sigma_{*}} \right) 
&= \begin{cases}
s(1-s) p &\text{if }  \sigma_{*}(i) = \sigma_{*}(j),  \\
s(1-s) q &\text{if }  \sigma_{*}(i) \neq \sigma_{*}(j);
\end{cases}\\
\p \left( \left( A_{i,j}, B'_{i,j} \right) = (0,0) \, \middle| \, \boldsymbol{\sigma_{*}} \right) 
&= \begin{cases}
1 - p(2s - s^2) &\text{if }  \sigma_{*}(i) = \sigma_{*}(j),  \\
1 - q(2s - s^2) &\text{if }  \sigma_{*}(i) \neq \sigma_{*}(j).
\end{cases}
\end{align*}
For brevity, for $i,j \in \{0,1\}$ we write 
\[
p_{ij} : = \p \left( \left( A_{1,2}, B'_{1,2} \right) = (i,j) \, \middle| \, \boldsymbol{\sigma_{*}} \right) \qquad \text{if $\sigma_{*}(1) = \sigma_{*}(2)$}
\]
and 
\[
q_{ij} : = \p \left( \left( A_{1,2}, B'_{1,2} \right) = (i,j) \, \middle| \, \boldsymbol{\sigma_{*}} \right) \qquad \text{if $\sigma_{*}(1) \neq \sigma_{*}(2)$}.
\] 

We also utilize some common notation for general graphs $G$. If $R$ is a subset of the vertex set of~$G$, we let $G \{R \}$ denote the induced subgraph of $G$ corresponding to $R$. 
For a vertex $i$ in $G$, 
we let 
$\cN_G(i)$ be the set of neighbors of $i$ in $G$. 
We abbreviate $\cN_{G_{1}}(i)$ as $\cN_{1}(i)$, 
and similarly $\cN_{G_{2}}(i)$ as~$\cN_{2}(i)$. 
We also let $\mathrm{deg}_G(i) = | \cN_G(i) |$ denote the degree of $i$ in $G$. 

For an event $\cA$, we denote by $\mathbf{1} \left( \cA \right)$ the indicator of $\cA$, which is $1$ if $\cA$ occurs and $0$ otherwise. Given a function $f$ and a subset $M$ of its domain, we let $f\{M \}$ denote the restriction of $f$ to $M$. We also let $f(M)$ denote the image of $M$ under $f$. 

Throughout the paper we use standard asymptotic notation 
and all limits are as $n \to \infty$.

\subsection{Organization}

The rest of the paper is devoted to the proofs of Theorems~\ref{thm:comm_recovery} and~\ref{thm:comm_recovery_impossibility} and is structured as follows. 
First, 
we describe the recovery algorithm in detail in Section~\ref{sec:recovery_algorithm}. 
Section~\ref{sec:proof_prelims} contains preliminary lemmas which are useful throughout, followed by three sections where the three main steps of the algorithm are analyzed: 
Section~\ref{sec:k-core_analysis} contains the analysis of the $k$-core estimator, 
Section~\ref{sec:labeling_proofs} proves the correctness of the estimated community labels for the matched vertices, 
and Section~\ref{sec:classify_rest} deals with classifying the remaining vertices. 
The different steps are combined into a proof of Theorem~\ref{thm:comm_recovery} in Section~\ref{sec:thm_proof_altogether}. 
Finally, Section~\ref{sec:impossibility_proof} contains the proof of the impossibility result, Theorem~\ref{thm:comm_recovery_impossibility}.


\section{The recovery algorithm}\label{sec:recovery_algorithm}
Our recovery algorithm begins by forming a matching between a subset of the vertices in $G_1$ and a subset of the vertices in $G_2$. Formally, we have the following definitions of a matching and a $k$-core matching.
\begin{definition}\label{def:matching}
Let $G_1$ and $G_2$ be two graphs with vertex set $[n]$. The pair $(M, \mu)$ is a matching between $G_1$ and $G_2$ if
\begin{itemize}
    \item $M \subseteq [n]$,
    \item $\mu : M \to [n]$, and
    \item $\mu$ is injective. 
\end{itemize}
\end{definition}

Given a matching $(M, \mu)$, we introduce the following related notation. We let $G_1 \lor_{\mu} G_2$ be the \emph{union graph}, whose vertex set is $M$, and whose edge set is $\{\{i,j\} : i, j \in M, A_{ij} + B_{\mu(i), \mu(j)} \geq 1 \}$. In other words, the union graph contains edges that appear in either graph, relative to the matching~$\mu$. Similarly, let $G_1 \land_{\mu} G_2$ be the \emph{intersection graph}, whose vertex set is $M$, and whose edge set is $\{\{i,j\} : i, j \in M, A_{ij} = B_{\mu(i), \mu(j)} = 1 \}$. The intersection graph contains all edges appearing in both graphs, relative to the matching~$\mu$. Let $G_1 \setminus_{\mu} G_2$ be the graph whose vertex set is $M$, and whose edge set is $\{\{i,j\} : i, j \in M, A_{ij} = 1, B_{\mu(i), \mu(j)} = 0 \}$. In other words, $G_1 \setminus_{\mu} G_2$ contains edges appearing in $G_1$ but not $G_2$, again, relative to the matching. Finally, let $G_2 \setminus_{\mu} G_1$ be the graph whose vertex set is $M$, and whose edge set is $\{\{i,j\} : i, j \in M, A_{ij} = 0, B_{\mu(i), \mu(j)} = 1 \}$. Note that all four definitions use vertex numbering relative to $G_1$. If $\pi: [n] \to [n]$ is a permutation, then the notation $G_1 \lor_{\pi} G_2$, $G_1 \land_{\pi} G_2$, $G_1 \setminus_{\pi} G_2$, and $G_2 \setminus_{\pi} G_1$ is defined according to the matching~$([n], \pi)$. 

In order to introduce our matching algorithm, we require the following definition. We let $\mathrm{d}_{\mathrm{min}}(G)$ be the minimal degree in a graph $G$.
\begin{definition}\label{def:k-core-matching}
A matching $(M, \mu)$ is a \emph{$k$-core matching} of $(G_1, G_2)$ if $\mathrm{d}_{\mathrm{min}}(G_1 \land_{\mu} G_2) \geq k$ (i.e., for every $i \in M$, the degree of $i$ in the graph $G_1 \land_{\mu} G_2$ is at least $k$). A matching $(M, \mu)$ is called a \emph{maximal $k$-core matching} if it involves the greatest number of vertices, among all $k$-core matchings. 
\end{definition}
The term \emph{$k$-core matching} comes from the notion of a \emph{$k$-core}. The $k$-core of a graph $G$ is the maximal subgraph with minimum degree at least $k$. 
Our first step is to produce a maximal $k$-core matching of the graphs $(G_1, G_2)$; see Figure \ref{fig:k_core} for an illustration.

\begin{breakablealgorithm}
\caption{$k$-core matching}\label{alg:k-core}
\begin{algorithmic}[1]
\Require{Pair of graphs $(G_1, G_2)$ on $n$ vertices, $k \in [n]$.}
\Ensure{A matching $(\widehat{M}, \wh{\mu})$  of $(G_1, G_2)$.}
\State By enumerating all possible matchings, find the maximal $k$-core matching $(\widehat{M}, \wh{\mu})$ of $G_1$ and~$G_2$. 
\end{algorithmic}
\end{breakablealgorithm}
Let $(\widehat{M}, \wh{\mu})$ be the matching found by Algorithm~\ref{alg:k-core} with $k=13$. We will show that $\wh{M}$ coincides with the maximal $k$-core of $G_1 \land_{\pi_*} G_2$, with high probability (see Lemma~\ref{lemma:k_core_sbm_v2}). Furthermore, we will show that a vanishing fraction of the vertices is excluded from this matching. Specifically, let $F := [n] \setminus \widehat{M}$ be the set of vertices which are excluded from the matching; we will show that $|F| \leq n^{1 - s^2 \dconnab + o(1)}$ with high probability (see Lemma~\ref{lemma:number-outside-core}). 

Our next step is to leverage the correctly matched region in order to find the correct communities. In particular, the goal of the following algorithm is to perfectly recover the communities in the subset of vertices found by the $k$-core matching (with $k=13$). The algorithm  
involves several intermediate steps which we detail below. 

\begin{breakablealgorithm}
\caption{Labeling the $k$-core}\label{alg:labeling-k-core}
\begin{algorithmic}[1]
\Require{Pair of graphs $(G_1, G_2)$ on $n$ vertices, a $k$-core matching $(M, \mu)$, parameters $\alpha, \beta, s, \epsilon > 0$.}
\Ensure{A labeling of $G_1 \{M\}$.}
\Statex\textbf{Finding a preliminary labeling}
\State\label{step:MNS1} Apply Algorithm~\ref{alg:MNS} to the graph $G_1$ and parameters $(s\alpha, s\beta, \epsilon)$, obtaining a labeling $\widehat{\boldsymbol{\sigma}}_1$.  
\vspace{12pt}
\Statex\textbf{Expanding $F := [n] \setminus M$}
\State Let $F := [n] \setminus M$. Apply Algorithm~\ref{alg:Luczak} to 
$(G_2, [n] \setminus \mu(M))$, 
obtaining the set 
$F' \supseteq [n] \setminus \mu(M)$. 
Let $\overline{F} := [n] \setminus \mu^{-1}([n] \setminus F')$, 
and note that $\overline{F} \supseteq F$. 
\vspace{12pt}
\Statex\textbf{Classifying $[n] \setminus {\overline{F}}$ and $\overline{F} \setminus F$}
\State\label{step:classify_bulk} For each $i \in [n] \setminus \overline{F}$, set $\wh{\sigma}(i) \in \{-1,1\}$ according to the neighborhood majority (resp., minority)~of  $\widehat{\boldsymbol{\sigma}}_1$ with respect to the graph $(G_1 \lor_{\mu} G_2) \{[n] \setminus \overline{F}\}$
if $\alpha > \beta$ (resp., $\alpha < \beta$). 
\State For each $i \in \overline{F} \setminus F$, set $\wh{\sigma}(i) \in \{-1,1\}$ according to the neighborhood majority (resp., minority) of $\widehat{\boldsymbol{\sigma}}$ with respect to the graph $(G_1 \setminus_{\mu} G_2) \{([n] \setminus \overline{F}) \cup \{i\} \}$ if $\alpha > \beta$ (resp.,~$\alpha < \beta$).  \label{step:alg2-9}
\vspace{12pt}
\State Return $\wh{\boldsymbol{\sigma}} : M \to \{-1,1\}$.
\end{algorithmic}
\end{breakablealgorithm}
Step \ref{step:MNS1} in Algorithm~\ref{alg:labeling-k-core} results in almost exact recovery of the community labels in $G_1$. We then need to transform the almost-exact labeling to an exact labeling. We would like to do this by a neighborhood vote in the union graph $G_1 \lor_{\wh{\mu}} G_2$; however, the contribution of edges from vertices in $\wh{M}$ to vertices in $F$ is hard to analyze. To remedy this difficulty, we expand the set $F$ into the set $\overline{F} \supseteq F$, such that any vertex of $G_2$ outside of $\pi_*(\overline{F})$ has at most one neighbor in $\pi_*(\overline{F})$, with high probability. This expansion, which is constructed by a method of \Luczak \cite{Luczak1991}, is guaranteed to satisfy $|\overline{F}| \leq 3 |F|$ with high probability (see Lemma~\ref{lemma:luczak}). Given the expanded set, we classify all $i \in [n] \setminus \overline{F}$ according to the majority of neighborhood values of $\wh{\boldsymbol{\sigma}}_1$ in the union graph $(G_1 \lor_{\wh{\mu}} G_2)\{[n] \setminus \overline{F}\}$. See Figure \ref{fig:classifying_k_core} for an illustration of this procedure.

In order to show that the majority vote correctly rectifies the labels, we first show a general result on almost-exact recovery in SBMs, which relates the set of incorrectly classified vertices to the set of vertices with weak majorities with respect to the ground-truth labeling (Lemma \ref{lemma:MNS}), adapted from \cite{mossel2016consistency}. Namely, let $J$ be the set of incorrectly classified vertices of a graph $G$ drawn from the SBM, and let $I_{\epsilon}(G)$ be the set of vertices in $G$ which do not have $\epsilon \log(n)$ majorities with respect to the ground-truth labeling $\boldsymbol{\sigma}_*$ (see Definitions~\ref{eq:I_epsilon_definition-1} and~\ref{eq:I_epsilon_definition-2} for formal definitions.) We will show that with high probability, for suitable $\epsilon> 0$, it holds that $J \subseteq I_{\epsilon}(G)$. Given this general result, it follows that the error set of $\widehat{\boldsymbol{\sigma}}_1$ is contained within the set $I_{\epsilon}(G_1)$. We then show that on the graphs $G_1$ and $(G_2 \setminus_{\wh{\mu}} G_1)\{[n] \setminus \overline{F}\}$, each vertex has few neighbors in $I_{\epsilon}(G_1)$ (Lemmas~\ref{lemma:I_internal} and~\ref{lemma:neighbors_g2_minus_g1_Iepsilon}). In turn, this allows us to show that on the union graph $(G_1 \lor_{\mu} G_2)\{[n] \setminus \overline{F}\}$, the neighborhood labels of a given vertex $i$ with respect to $\widehat{\boldsymbol{\sigma}}_1$ are close to the neighborhood labels with respect to $\boldsymbol{\sigma}_*$. Finally, we show that each vertex in $(G_1 \lor_{\mu} G_2)\{[n] \setminus \overline{F}\}$ has an $\epsilon \log n$ majority with respect to $\boldsymbol{\sigma}_*$ (Lemma \ref{lemma:union_graph_muhat_majority}). Therefore, taking a majority with respect to $\widehat{\boldsymbol{\sigma}}_1$ on $(G_1 \lor_{\mu} G_2)\{[n] \setminus \overline{F}\}$ transforms the almost-exact labeling to the correct labeling.

To complete the labeling of the $k$-core, we classify vertices in $\overline{F} \setminus F$ according to the majority of neighborhood values of $\boldsymbol{\widehat{\sigma}}\{[n] \setminus{\overline{F}}\}$, with respect to the graph $(G_1 \setminus_{\wh{\mu}} G_2)\{([n] \setminus \overline{F}) \cup \{i\} \}$. We are able to do so because the edges in the graph $(G_1 \setminus_{\wh{\mu}} G_2)\{([n] \setminus \overline{F}) \cup \{i\} \}$ are (nearly) independent of the construction of $\overline{F}$, conditioned on the random partition representation of correlated SBMs (see Section \ref{sec:alt_construction} for details).

We now provide the details of the subroutines used by Algorithm \ref{alg:labeling-k-core}. 
\begin{breakablealgorithm}
\caption{Almost-exact community recovery \cite[Algorithm 1]{mossel2016consistency}}\label{alg:MNS}
\begin{algorithmic}[1]
\Require{A graph $G$ on $n$ vertices, parameters $\alpha, \beta, \epsilon > 0$.}
\Ensure{A labeling on $G$ given by $\widehat{\boldsymbol{\sigma}}: [n] \to \{-1,1\}$.}
\State Choose a positive integer $m$ satisfying $( \log ( \epsilon m ( 200 \max \{ 1, \alpha, \beta \} )^{-1} ) - 1 ) \epsilon / 2 > 1$. Initialize two empty sets, $W_+$ and $W_-$.
\State Using the spectral method of \cite[Section 3.2]{Abbe2020}, find a community partition of $[n]$, denoted by~$(U_+, U_-)$.
\State Partition $[n]$ randomly into $\{U_{1}, \ldots, U_{m}\}$. 
\For{$i \in [m]$} 
\State Using the spectral method of \cite[Section 3.2]{Abbe2020}, find a community partition $(U_{i,+},U_{i,-})$ of $G \left\{ [n] \setminus U_{i} \right\}$. If $|U_{i, +} \Delta U_{+}| \ge n/2$, then swap $U_{i, +}$ and $U_{i,-}$. 
\State For $v \in U_i$, insert $v$ into $W_+$ or $W_-$ according to 
    its neighborhood majority (resp., minority) in $U_{i,+} \cup U_{i,-}$ if $\alpha > \beta$ (resp., $\alpha < \beta$). 
\EndFor
\State For $i \in W_+$, set $\wh{\sigma}(i) = 1$, and for $i \in W_-$, set $\wh{\sigma}(i) = -1$. Return $\wh{\boldsymbol{\sigma}}$.
\end{algorithmic}
\end{breakablealgorithm}

\begin{breakablealgorithm}
\caption{\Luczak expansion}\label{alg:Luczak}
\begin{algorithmic}[1]
\Require{A graph $G$ on $n$ vertices, a set $U \subseteq [n]$.}
\Ensure{A set $\overline{U} \subseteq [n]$ such that $\overline{U} \supseteq U$ and for all $i \in [n] \setminus \overline{U}$, $i$ has at most one neighbor in $\overline{U}$.}
\State Let $U_0 = U$.
\For{$i \in \{0, 1, \dots, n\}$}
        \State Let $U_{i+1}'$ be the set of vertices outside $U_i$ that have at least two neighbors in $U_i$.
        \If{$U_{i+1}' = \emptyset$}
        Return $U_i$.
        \Else 
        \State Set $U_{i+1} = U_i \cup \{v\}$, where $v$ is an arbitrarily chosen vertex in $U_{i+1}'$.
        \EndIf 
\EndFor
\end{algorithmic}
\end{breakablealgorithm}

The complete algorithm appears below. First, the $k$-core matching is found. Next, the vertices comprising the matching are labeled. Finally, the vertices excluded from the matching are labeled according to neighborhood labels in the graph $G_1$, restricted to the vertices comprising the matching.
\begin{breakablealgorithm}
\caption{Full community recovery}\label{alg:main}
\begin{algorithmic}[1]
\Require{Pair of graphs $(G_1, G_2)$ on $n$ vertices, $k \in [n]$, and $\epsilon > 0$.}
\Ensure{A labeling of $G_1$ given by $\widehat{\boldsymbol{\sigma}}: [n] \to \{-1,1\}$.}
\State Apply Algorithm \ref{alg:k-core} on input $(G_1, G_2, k)$, obtaining a matching $(\widehat{M}, \wh{\mu})$. \label{step:alg3-1}
\State Apply Algorithm \ref{alg:labeling-k-core} on input $(G_1, G_2, \widehat{M}, \wh{\mu}, \epsilon)$, obtaining a labeling $\wh{\boldsymbol{\sigma}} : \widehat{M} \to \{-1,1\}$.\label{step:alg3-2}
\State For $i \in [n] \setminus \widehat{M}$, classify $i$ according to its neighborhood majority (resp., minority) in the graph $G_1 \{\widehat{M} \cup \{i\}\}$ if $\alpha > \beta$ (resp., $\alpha < \beta$). \label{step:alg3-3}
\end{algorithmic}
\end{breakablealgorithm}

\begin{theorem}\label{thm:comm_recovery_reduction}
Fix constants $\alpha, \beta > 0$ and $s \in [0,1]$. 
Let $(G_1, G_2) \sim \mathrm{CSBM}\left( n,\frac{\alpha \log n}{n} ,\frac{\beta \log n}{n}, s \right)$.  
Suppose that~\eqref{eq:community_achievability} and~\eqref{eq:tradeoff_achievability} hold.
Let $\epsilon > 0$ satisfy 
\begin{equation}\label{eq:eps_condition}
\left(1 - (1 - s)^{2} \right) \dchab > 1 + 2\epsilon | \log(\alpha / \beta) | \hspace{1cm} \text{ and } \hspace{1cm} 0 < \epsilon \le \frac{s \dchab}{4 | \log (\alpha / \beta) |}.
\end{equation}
Then Algorithm~\ref{alg:main} on input $(G_1, G_2, 13, \epsilon)$ correctly labels all of the vertices in $G_1$, with high probability.
\end{theorem}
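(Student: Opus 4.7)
\bigskip
\noindent\textbf{Proof proposal for Theorem~\ref{thm:comm_recovery_reduction}.}
The plan is to control each of the three stages of Algorithm~\ref{alg:main} separately and combine the pieces by a union bound. For Step~\ref{step:alg3-1}, I would invoke the analysis of Algorithm~\ref{alg:k-core} (outlined in Section~\ref{sec:k-core_analysis}) to obtain two high-probability guarantees with $k=13$: (i) the matching $\wh{\mu}$ restricted to $\wh{M}$ agrees with $\pi_*$, and (ii) $|F|=|[n]\setminus\wh{M}|\le n^{1-s^2\dconnab+o(1)}$. Guarantee (i) lets me treat $G_1\vee_{\wh{\mu}}G_2$ and $G_1\setminus_{\wh{\mu}}G_2$ on $\wh{M}$ as though they were the correctly matched overlay/difference, which is the only thing that makes the downstream analyses tractable.

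For Step~\ref{step:alg3-2}, I would split the argument according to the three sub-regions of $[n]$. The preliminary labeling $\wh{\boldsymbol{\sigma}}_1$ from Algorithm~\ref{alg:MNS} is, by the Mossel--Neeman--Sly analysis, almost-exact; more importantly, I would use the optimality statement promised in Section~\ref{sec:overview} to confine the error set of $\wh{\boldsymbol{\sigma}}_1$ inside $I_\epsilon(G_1)$, the set of vertices lacking an $\epsilon\log n$ majority with respect to $\boldsymbol{\sigma_*}$ in $G_1$. With this containment, for each $i\in[n]\setminus\overline{F}$ I argue that (a) the neighborhood of $i$ in $(G_1\vee_{\wh{\mu}}G_2)\{[n]\setminus\overline{F}\}$ contains only $o(\log n)$ vertices of $I_\epsilon(G_1)$ (Lemmas of the flavor of~\ref{lemma:I_internal}/\ref{lemma:neighbors_g2_minus_g1_Iepsilon}), and (b) every vertex has an $\epsilon\log n$ majority with respect to $\boldsymbol{\sigma_*}$ in this union graph by~\eqref{eq:community_achievability} together with the Luczak bound $|\overline{F}|\le 3|F|$ and the fact that vertices of $[n]\setminus\overline{F}$ have at most one neighbor in $\overline{F}$ (in $G_2$). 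Combining (a) and (b) shows that the majority of $\wh{\boldsymbol{\sigma}}_1$ on the union graph agrees with $\boldsymbol{\sigma_*}$, so Step~\ref{step:classify_bulk} classifies $[n]\setminus\overline{F}$ correctly.

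The tight regime of the theorem is felt in Step~\ref{step:alg2-9} (and in Step~\ref{step:alg3-3}). For $i\in\overline{F}\setminus F$, I would use the alternative random-partition construction from Section~\ref{sec:alt_construction}, invoked implicitly by building $\overline{F}$ from $G_2$ only, to argue that the neighbors of $i$ in $G_1\setminus_{\wh{\mu}}G_2$ among the already-correctly-labeled set $[n]\setminus\overline{F}$ are (essentially) an independent draw from an $\SBM$ with edge densities $\alpha s(1-s)\log n/n$ and $\beta s(1-s)\log n/n$. The probability that the signed majority on such a neighborhood points the wrong way is the standard large-deviation bound $n^{-s(1-s)\dchab+o(1)}$. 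Union-bounding over $|\overline{F}\setminus F|\le 3|F|\le n^{1-s^2\dconnab+o(1)}$ vertices gives a total failure probability of $n^{1-(s^2\dconnab+s(1-s)\dchab)+o(1)}=o(1)$ by~\eqref{eq:tradeoff_achievability}. For Step~\ref{step:alg3-3}, an analogous majority-vote computation in $G_1\{\wh{M}\cup\{i\}\}$ works: vertices in $F$ have at most $k-1=12$ neighbors in $G_1\wedge_{\pi_*}G_2$, so the dominant contribution to their $G_1$-neighborhood inside $\wh{M}$ comes from $G_1\setminus_{\pi_*}G_2$, and the same $n^{-s(1-s)\dchab+o(1)}$ per-vertex bound applies.

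The main obstacle, as usual when mixing graph matching with community recovery, is dependence: the set $\overline{F}$, the labeling $\wh{\boldsymbol{\sigma}}_1$, and the edges used to vote are all functions of $(G_1,G_2)$, so the majority votes in Steps~\ref{step:classify_bulk}, \ref{step:alg2-9}, and~\ref{step:alg3-3} are not literally over fresh randomness. I expect the cleanest way to decouple is to condition on the alternative representation in Section~\ref{sec:alt_construction} (so that $\overline{F}$ is measurable with respect to $G_2$-side information while the edges of $G_1\setminus_{\wh{\mu}}G_2$ used for voting are drawn from independent random bits), together with the Luczak property that vertices outside $\overline{F}$ have at most one neighbor inside it, so any residual dependence only perturbs the count by $O(1)$ and cannot flip the $\epsilon\log n$ majority. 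The role of the choice $\epsilon$ in~\eqref{eq:eps_condition} is exactly to leave enough slack so that these $O(1)$ perturbations and the $\epsilon$-weak majorities are still dominated by the correct side; the first inequality in~\eqref{eq:eps_condition} is what makes Step~\ref{step:classify_bulk} go through, while the second ensures that the large-deviation rate $s(1-s)\dchab$ is preserved by the contribution from $I_\epsilon$.
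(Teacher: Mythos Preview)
Your proposal is correct and follows essentially the same route as the paper: you invoke the $k$-core matching analysis (Lemma~\ref{lemma:k_core_sbm_v2} and Lemma~\ref{lemma:number-outside-core}) for Step~\ref{step:alg3-1}, then treat the three regions $[n]\setminus\overline{F}$, $\overline{F}\setminus F$, and $F$ via Lemmas~\ref{lemma:alg_outside_Fbar}, \ref{lemma:C2_minus_C1}, and~\ref{lemma:classifying-F} respectively, with the random-partition construction (Lemma~\ref{lemma:random_partition}) handling the dependence. One minor sharpening: $\overline{F}$ is not measurable with respect to $G_2$ alone---it also depends on $F$, which is a function of $G_1\wedge_{\pi_*}G_2$---but the independence you need still holds because, conditioned on $\pi_*$, $\boldsymbol{\sigma_*}$, and the partition $\boldsymbol{\cE}$, the edges of $G_1\setminus_{\pi_*}G_2$ are independent of both $G_1\wedge_{\pi_*}G_2$ and $G_2$, which is exactly how the paper argues in the proofs of Lemmas~\ref{lemma:union_graph_muhat_majority} and~\ref{lemma:C2_minus_C1}.
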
 
Since~\eqref{eq:community_achievability} and~\eqref{eq:tradeoff_achievability} 
imply
the existence of $\epsilon > 0$ such that~\eqref{eq:eps_condition} holds, 
proving Theorem~\ref{thm:comm_recovery} reduces to proving Theorem~\ref{thm:comm_recovery_reduction}. 
The proof of Theorem~\ref{thm:comm_recovery_reduction} can be found in Section~\ref{sec:thm_proof_altogether}, with the supporting results proved in Sections~\ref{sec:k-core_analysis},~\ref{sec:labeling_proofs}, and~\ref{sec:classify_rest}.


\section{Preliminary results}\label{sec:proof_prelims}

\subsection{Binomial probabilities}
Since binomial differences appear frequently in our analysis, we include two useful results here. The first can be found in the proof of~\cite[Proposition~2.8]{mossel2016consistency}.
\begin{lemma}
\label{lemma:majority_probability}
Suppose that $\alpha \ge \beta$. Let $Y \sim \mathrm{Bin}(m^+, \alpha \log(n) / n)$ and $Z \sim \mathrm{Bin}(m^-, \beta \log(n) / n)$ be independent. 
If $m^+  = (1 + o(1)) n / 2$ and $m^- = (1 + o(1)) n / 2$, then 
\[
\p ( Y < Z) = n^{- \dch(\alpha, \beta) + o(1)}.
\]
\end{lemma}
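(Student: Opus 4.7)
The plan is to obtain matching upper and lower bounds of the form $n^{-\dchab + o(1)}$ on $\p(Y < Z)$ via a standard Chernoff-tilt argument; this is the calculation that underlies the classical exact-recovery threshold for the two-community SBM.

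For the upper bound, I will apply the exponential Markov inequality to $Z - Y$. For any $t > 0$, independence of $Y$ and $Z$ gives
\[
\p(Y < Z) = \p(Z - Y \ge 1) \le e^{-t}\, \E[e^{tZ}]\, \E[e^{-tY}],
\]
and the binomial moment generating function yields
\[
\E[e^{tZ}]\, \E[e^{-tY}] = \left(1 + \tfrac{\beta \log n}{n}(e^t - 1)\right)^{m^-} \left(1 + \tfrac{\alpha \log n}{n}(e^{-t} - 1)\right)^{m^+}.
\]
Taking logarithms, expanding $\log(1+x) = x + O(x^2)$ (valid because $(\log n)/n = o(1)$), and substituting $m^\pm = (1+o(1))n/2$ gives
\[
\log \E[e^{t(Z-Y)}] = \tfrac{\log n}{2}\bigl(\beta(e^t - 1) + \alpha(e^{-t} - 1)\bigr)(1 + o(1)).
\]
Differentiating in $t$, the minimum is attained at $t^* = \tfrac{1}{2}\log(\alpha/\beta) \ge 0$ (nonnegative since $\alpha \ge \beta$), at which $\beta e^{t^*} = \alpha e^{-t^*} = \sqrt{\alpha\beta}$. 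The exponent then evaluates to $(\log n)\bigl(2\sqrt{\alpha\beta} - \alpha - \beta\bigr)/2 = -\dchab \log n$, so $\p(Y < Z) \le n^{-\dchab + o(1)}$.

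For the matching lower bound, I will exhibit a single favorable configuration. Let $k := \lfloor \sqrt{\alpha\beta}\,(\log n)/2 \rfloor$, which is simultaneously the tilted mean of both $Y$ and $Z$ under the exponential changes of measure $e^{-t^* Y}/\E[e^{-t^* Y}]$ and $e^{t^* Z}/\E[e^{t^* Z}]$. Then
\[
\p(Y < Z) \ge \p(Y \le k - 1)\, \p(Z \ge k),
\]
and each factor can be lower-bounded by a single binomial term $\binom{m^\pm}{k}(p^\pm)^k(1 - p^\pm)^{m^\pm - k}$ (with $p^+ = \alpha\log(n)/n$, $p^- = \beta\log(n)/n$). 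Stirling's formula shows that the Poisson rate functions of these two events, $I_Y(\sqrt{\alpha\beta}/2)$ and $I_Z(\sqrt{\alpha\beta}/2)$, sum to exactly $\dchab = (\alpha+\beta)/2 - \sqrt{\alpha\beta}$, yielding the lower bound $n^{-\dchab + o(1)}$.

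The main obstacle is the sharpness of the lower bound: the Chernoff bound alone does not furnish tightness, and one must either carry out explicit Stirling-based asymptotics on the individual binomial probabilities at $k$, or invoke a local limit theorem under the tilted distribution in order to recover the exponent to leading order. Since this is precisely the computation that appears inside the exact-recovery analysis of~\cite{abbe2016exact} and~\cite{mossel2016consistency}, and the excerpt explicitly points to the latter for this statement, the cleanest route is to invoke or lightly adapt that calculation rather than redo it from scratch. The upper bound is entirely routine.
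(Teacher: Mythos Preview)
Your proposal is correct and aligns with the paper's treatment: the paper does not prove this lemma at all but simply cites \cite[Proposition~2.8]{mossel2016consistency} and remarks that the argument there extends to $m^\pm = (1+o(1))n/2$ and to a strict inequality. Your Chernoff upper bound plus single-term Stirling lower bound at $k \approx \tfrac{1}{2}\sqrt{\alpha\beta}\log n$ is exactly the computation that underlies that cited result, so you have essentially reconstructed what the paper defers to.
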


\begin{remark}
The precise result in \cite[Proposition 2.8]{mossel2016consistency} characterizes $\p( Y \le Z)$ when $m^+ = m^- = n/2$. It is readily seen from their proof of the result that it also holds for the slightly more general values of $m^+$ and $m^-$ that we consider and when the inequality between $Y$ and $Z$ is strict. 
\end{remark}

The following result follows from \cite[Lemma 8 of Supplement]{Abbe2020}.
\begin{lemma}\label{lemma:binomial-tail}
Suppose that $\alpha > \beta$. Let $Y \sim \mathrm{Bin}\left(m_+, \frac{\alpha \log n}{n}\right)$ and $Z \sim \mathrm{Bin}\left(m_-, \frac{\beta \log n}{n}\right)$ be independent. If $m^+  = (1 + o(1)) n / 2$ and $m^- = (1 + o(1)) n / 2$, then for any $\epsilon > 0$, 
\[\mathbb{P}\left(Y - Z \leq \epsilon \log n \right) \leq n^{-\left(\dchab - \frac{\epsilon \log( \alpha / \beta)}{2}\right) + o(1)}.\]
\end{lemma}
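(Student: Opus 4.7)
The plan is to prove Lemma~\ref{lemma:binomial-tail} via a direct Chernoff bound, using a well-chosen exponential tilt that recovers the Chernoff--Hellinger divergence in the leading order and produces the linear correction term in $\epsilon$ as the cost of tilting away from the optimum.

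First I would apply the Chernoff trick: for any $t > 0$,
\[
\p\left( Y - Z \le \epsilon \log n \right)
= \p\left( e^{t(Z - Y)} \ge e^{-t \epsilon \log n} \right)
\le e^{t \epsilon \log n} \, \E\left[ e^{-tY} \right] \E\left[ e^{tZ} \right],
\]
where I used independence of $Y$ and $Z$. For the binomial moment generating functions, the elementary identities
$\E[e^{-tY}] = \bigl( 1 + (\alpha \log n / n)(e^{-t} - 1) \bigr)^{m_+}$ and
$\E[e^{tZ}] = \bigl( 1 + (\beta \log n / n)(e^{t} - 1) \bigr)^{m_-}$,
combined with the hypothesis $m_{\pm} = (1 + o(1)) n/2$ and the standard approximation $\log(1+x) = x + O(x^2)$, give
\[
\E\left[ e^{-tY} \right] \E\left[ e^{tZ} \right]
= n^{(1 + o(1)) \left[ \frac{\alpha}{2}(e^{-t} - 1) + \frac{\beta}{2}(e^{t} - 1) \right]}.
\]
Hence the overall exponent (divided by $\log n$) is
$t \epsilon + (1 + o(1)) \bigl[ \tfrac{\alpha}{2}(e^{-t} - 1) + \tfrac{\beta}{2}(e^{t} - 1) \bigr]$.

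Next I would choose the tilt $t = \tfrac{1}{2} \log(\alpha / \beta)$, which is the unconstrained minimizer when $\epsilon = 0$ (this is exactly the tilt that produces the Hellinger divergence in Lemma~\ref{lemma:majority_probability}). At this value, $e^{t} = \sqrt{\alpha/\beta}$ and $e^{-t} = \sqrt{\beta/\alpha}$, so
\[
\frac{\alpha}{2}(e^{-t} - 1) + \frac{\beta}{2}(e^{t} - 1)
= \frac{1}{2}\bigl( \sqrt{\alpha \beta} - \alpha + \sqrt{\alpha \beta} - \beta \bigr)
= -\dchab.
\]
Substituting back yields the exponent
$\tfrac{\epsilon}{2} \log(\alpha/\beta) - \dchab + o(1)$,
which is precisely the claimed bound.

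The argument is essentially a one-parameter Chernoff bound, so there is no real obstacle beyond bookkeeping. The only two points that require a bit of care are (i) absorbing the $(1 + o(1))$ factor in $m_{\pm}$ into the $o(1)$ in the exponent (which is harmless since the bracketed expression is a bounded constant depending only on $\alpha, \beta$), and (ii) noting that the choice $t = \tfrac{1}{2} \log(\alpha/\beta)$ is suboptimal for $\epsilon > 0$ but nevertheless yields the correct form of the bound stated in the lemma; optimizing exactly in $t$ would only improve lower-order terms, which are hidden in the $o(1)$. Since the paper simply cites \cite[Lemma~8 of Supplement]{Abbe2020}, this sketch suffices as the underlying derivation.
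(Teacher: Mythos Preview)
Your proposal is correct. The paper does not give its own proof but simply cites \cite[Lemma~8 of Supplement]{Abbe2020} and remarks that passing from $m_\pm = n/2$ to $m_\pm = (1+o(1))n/2$ costs only an additional $o(1)$ in the exponent; your Chernoff-bound derivation with the tilt $t = \tfrac{1}{2}\log(\alpha/\beta)$ is exactly the standard argument underlying that cited result, and your handling of the $(1+o(1))$ factors matches the paper's remark.
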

\begin{remark}
Note that \cite[Lemma 8 of Supplement]{Abbe2020} treats the case where $m_+ = m_- = n/2$, obtaining a tail bound of $n^{-\left(\dchab - \frac{\epsilon \log(\alpha / \beta)}{2}\right)}$. Allowing for $m_+, m_- = (1+o(1)) n/2$ is reflected by an additional $o(1)$ in the exponent.
\end{remark}

\subsection{A useful construction of correlated SBMs}
\label{sec:alt_construction}

In this section, we detail a useful alternate construction of correlated SBMs that highlights the independent regions of $G_1$ and $G_2$. 

To begin, we construct a random partition $\{\cE_{00}, \cE_{01}, \cE_{10}, \cE_{11}\}$ of $\binom{[n]}{2}$ as follows. Independently for each $\{i,j \} \in \binom{[n]}{2}$, we add $\{i,j \}$ to $\cE_{00}$ with probability $(1 - s)^2$, to $\cE_{01}$ with probability $s(1 - s)$, to $\cE_{10}$ with probability $s(1 - s)$, and to $\cE_{11}$ with probability $s^2$. Subsequently, for each pair $\{i,j \}$, we construct an edge between $i$ and $j$ with probability $p$ if the two vertices are in the same community, else with probability $q$ if the two vertices are in different communities. The graph $G_1$ is constructed using the edges formed in $\cE_{10} \cup \cE_{11}$ and the graph $G_2'$ is constructed using the edges formed in~$\cE_{01} \cup \cE_{11}$. The graph $G_2$ is then generated from $G_2'$ and $\pi_*$ by relabeling the vertices of $G_2'$ according to~$\pi_*$. The usefulness of this construction is that it provides an alternate way to generate correlated SBMs and highlights regions of the two graphs that are independent of each other (see Figure~\ref{fig:random_partition} for an illustration). This is formally stated in the following result.

\begin{figure}
    \centering
    \includegraphics[scale=0.2]{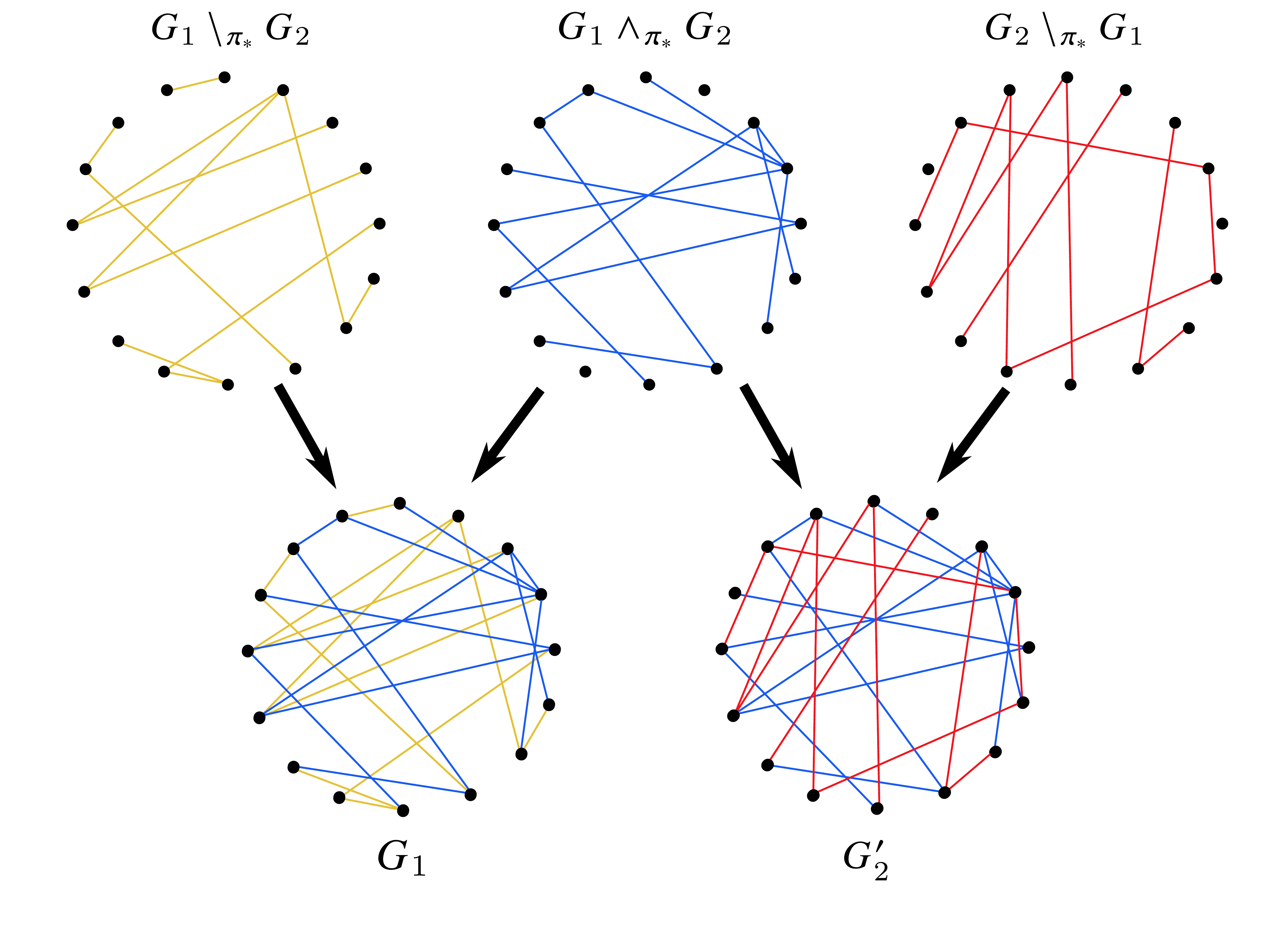}
    \caption{Decomposition of correlated $G_1$ and $G_2'$ into $G_1 \setminus_{\pi_*} G_2, G_1 \land_{\pi_*} G_2$, and $G_2 \setminus_{\pi_*} G_1$, which are conditionally independent given $\pi_{*}$, $\boldsymbol{\sigma_{*}}$, and the random partition $\{\cE_{00}, \cE_{10}, \cE_{11}, \cE_{01}\}$ (see Lemma~\ref{lemma:random_partition}).}
    \label{fig:random_partition}
\end{figure}

\begin{lemma}
\label{lemma:random_partition}
The random partition construction of correlated SBMs above is equivalent to the original construction described in Section~\ref{sec:CSBM}. 
Moreover, conditioned on $\pi_*$, $\boldsymbol{\sigma_{*}}$, $\cE_{00}$, $\cE_{01}$, $\cE_{10}$, and~$\cE_{11}$, the graphs $G_1 \setminus_{\pi_*} G_2$, $G_1 \land_{\pi_*} G_2$, and $G_2 \setminus_{\pi_*} G_1$ are mutually independent.
\end{lemma}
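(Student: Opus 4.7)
The proof splits into the two claims, both of which reduce to pair-by-pair book-keeping once the construction is spelled out. My plan is first to verify the equivalence by matching the four joint probabilities $\p((A_{ij}, B'_{ij}) = (a,b) \mid \boldsymbol{\sigma_*})$ pair by pair, and then to deduce the conditional independence from the observation that the three subgraphs are generated from independent Bernoulli trials on the disjoint pair sets $\cE_{10}$, $\cE_{11}$, and $\cE_{01}$.

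For the equivalence of constructions, I would first note that both the original and the random-partition construction generate $(A_{ij}, B'_{ij})$ independently across $\{i,j\} \in \binom{[n]}{2}$ given $\boldsymbol{\sigma_*}$, so it suffices to match the marginal distribution of a single pair. For a same-community pair in the new construction, the pair lands in $\cE_{ab}$ according to a multinomial with weights $((1-s)^2, s(1-s), s(1-s), s^2)$, and, conditional on this, the Bernoulli$(p)$ edge trial produces $(A_{ij}, B'_{ij}) = (1,1)$ iff the pair lies in $\cE_{11}$ and the trial succeeds, $(1,0)$ iff it lies in $\cE_{10}$ and the trial succeeds, and $(0,1)$ iff it lies in $\cE_{01}$ and the trial succeeds. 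Collecting the remaining mass into $(0,0)$ yields $p_{11} = s^2 p$, $p_{10} = p_{01} = s(1-s)p$, and $p_{00} = (1-s)^2 + [1 - (1-s)^2](1-p) = 1 - p(2s - s^2)$, which matches the formulas displayed in Section~\ref{sec:notation}; the different-community case is identical with $p$ replaced by $q$. Finally, both constructions form $G_2$ from $G_2'$ by the same relabeling according to $\pi_*$, so the full joint laws of $(G_1, G_2, \pi_*, \boldsymbol{\sigma_*})$ agree.

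For the independence claim, I would simply observe that, once we condition on $\pi_*$, $\boldsymbol{\sigma_*}$, and the partition $\{\cE_{00}, \cE_{01}, \cE_{10}, \cE_{11}\}$, the edges of $G_1 \land_{\pi_*} G_2$ are in bijection with the successful Bernoulli-edge trials indexed by $\cE_{11}$, those of $G_1 \setminus_{\pi_*} G_2$ with the successful trials indexed by $\cE_{10}$, and those of $G_2 \setminus_{\pi_*} G_1$ (after the relabeling by $\pi_*$) with the successful trials indexed by $\cE_{01}$. Since the trials are mutually independent across all pairs and the three pair sets $\cE_{10}, \cE_{11}, \cE_{01}$ are disjoint, mutual conditional independence of the three subgraphs follows immediately.

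I do not anticipate a serious obstacle here, as the result is essentially a verification. The one subtlety is to phrase everything in terms of $(A_{ij}, B'_{ij})$ rather than $(A_{ij}, B_{ij})$, so that the partition $\{\cE_{ab}\}$ decomposes the graphs cleanly \emph{before} the relabeling by $\pi_*$ is applied, after which the relabeling transports the statements to $G_1$ versus $G_2$ as stated.
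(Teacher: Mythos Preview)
Your proposal is correct and takes essentially the same approach as the paper: both verify the pairwise joint law of $(A_{ij}, B'_{ij})$ (or equivalently $(A_{ij}, B_{\pi_*(i),\pi_*(j)})$) under the two constructions, and then deduce conditional independence from the fact that the three subgraphs are determined by independent Bernoulli edge trials on the disjoint pair sets $\cE_{10}$, $\cE_{11}$, $\cE_{01}$. The only cosmetic difference is that the paper handles the $(0,0)$ case by complementarity rather than computing $p_{00}$ explicitly as you do.
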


\begin{proof}
To prove the first statement, 
it suffices to check that the marginal distribution of the pair $(A_{i,j}, B_{\pi_*(i), \pi_*(j)})$ is the same under both constructions, 
due to the independence over vertex pairs. 
Under the original construction, 
if $\sigma_*^1(i) = \sigma_*^1(j)$, then 
\begin{equation}
\label{eq:construction1_marginal}
\p \left( (A_{i,j}, B_{\pi_*(i), \pi_*(j)}) = (a,b) \, \middle| \, \pi_*, \boldsymbol{\sigma_{*}^{1}} \right) = \begin{cases}
s^2 p &\text{ if } (a,b) = (1,1), \\
s(1 - s) p &\text{ if } (a,b) \in \{ (1,0), (0,1) \}, \\
1 - \left( 1 - (1 - s)^{2} \right) p &\text{ if } (a,b) = (0,0).
\end{cases}
\end{equation}
When $\sigma_*^1(i) = - \sigma_*^1(j)$, the joint distribution is the same as in \eqref{eq:construction1_marginal}, with $p$ replaced by $q$. 
In the second construction, if $(a,b)  \neq (0,0)$, then we have, for $i,j$ satisfying $\sigma_*^1(i) = \sigma_*^1(j)$, that
\begin{align}
\p \left( (A_{i,j}, B_{\pi_*(i), \pi_*(j)} ) = (a,b) \, \middle| \, \pi_*, \boldsymbol{\sigma_{*}^{1}} \right) & = \p \left ( \{i,j \} \in \cE_{ab} \, \middle| \, \pi_*, \boldsymbol{\sigma_{*}^{1}} \right) \cdot p \nonumber  \\
\label{eq:construction2_marginal}
& = \begin{cases}
s^2 p &\text{ if } (a,b) = (1,1), \\
s(1 - s)p &\text{ if } (a,b) \in \{ (1,0), (0,1) \}.
\end{cases}
\end{align}
When $\sigma_*^1(i) = -\sigma_*^1(j)$, the joint distribution is the same as in~\eqref{eq:construction2_marginal}, with $p$ replaced by $q$. 
Since the probabilities agree when $(a,b) \neq (0,0)$, they must also agree when $(a,b) = (0,0)$, since all the probabilities sum to one. 

To prove the second statement, notice that 
the graph $G_1 \setminus_{\pi_*} G_2$ is comprised of edges in $\cE_{10}$, 
$G_1 \land_{\pi_*} G_2$ is comprised of edges in $\cE_{11}$, 
and $G_2 \setminus_{\pi_*} G_1$ is comprised of edges in $\cE_{01}$. 
Conditioned on $\pi_*$, $\boldsymbol{\sigma_{*}^{1}}$, and the random partition $\{\cE_{00}, \cE_{10}, \cE_{11}, \cE_{01}\}$, the formation of edges in $\cE_{10}$, $\cE_{11}$, and $\cE_{01}$ are mutually independent. 
\end{proof}

\begin{remark}
A useful consequence of the random partition construction is that marginally, the graphs 
$G_1 \setminus_{\pi_*} G_2$, 
$G_1 \land_{\pi_*} G_2$, 
and $G_2 \setminus_{\pi_*} G_1$ 
are SBMs. 
However, conditioned on the partition $\{ \cE_{00}, \cE_{01}, \cE_{10}, \cE_{11} \}$, these graphs are \emph{not} SBMs, since, for instance, the set of potential edges in $G_1 \setminus_{\pi_*} G_2$ is given by $\cE_{10}$ rather than $\binom{[n]}{2}$. 
\end{remark}

When we utilize the random partition construction in later proofs, it is useful to consider a high-probability event where the partition behaves in a nice manner. Formally, we introduce the following event. 

\begin{definition}[The event $\cF$]
\label{def:F}
For $a,b \in \{0,1\}$, define the constant
$$
s_{ab} : = \begin{cases}
s^2 &\text{ if } (a,b) = (1,1), \\
s(1 - s) &\text{ if } (a,b) \in \{(1,0), (0,1) \}, \\
(1 - s)^2 &\text{ if } (a,b) = (0,0).
\end{cases}
$$
The event $\cF$ holds if and only if 
$$n / 2 - n^{3/4} \le |V^+|, |V^-| \le n / 2 + n^{3/4}$$
and also 
the following conditions hold for all $a,b \in \{0,1 \}$ and all $i \in [n]$:
\begin{itemize}
    \item $s_{ab} \left( |V^{\sigma_*(i)}| - n^{3/4} \right) \le | \{ j : \{i,j \} \in \cE_{ab} \cap \cE^+(\boldsymbol{\sigma_{*}^{1}}) \}|  \le s_{ab} \left( |V^{\sigma_*(i)}| + n^{3/4} \right)$;
    \item $s_{ab} \left( |V^{- \sigma_*(i)}| - n^{3/4} \right) \le | \{ j : \{i,j \} \in \cE_{ab} \cap \cE^-(\boldsymbol{\sigma_{*}^{1}}) \}|  \le s_{ab} \left( |V^{-\sigma_*(i)}| + n^{3/4} \right)$. 
\end{itemize}
\end{definition}
The first condition ensures that the two communities are balanced.
The next two conditions stipulate that for each vertex $i \in [n]$, the number of potential neighbors to both communities in each component of the partition is balanced (and approximately the expected size, according to the weights $\{s_{ab}\}_{a,b \in \{0,1\}}$). 
The following result shows that $\cF$ holds with high probability. 

\begin{lemma}
\label{lemma:F}
Define $s_{\min} : = \min_{a,b \in \{0,1 \}} s_{ab}$. Then $\p(\cF^c) \le 100 n e^{ - \frac{1}{2} s_{\min}^2 \sqrt{n} }$. 
\end{lemma}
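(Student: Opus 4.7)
\textbf{Proof plan for Lemma~\ref{lemma:F}.} My plan is a routine concentration plus union bound argument. Decompose $\cF = \cF_0 \cap \cF_1$, where $\cF_0$ is the community-balance event $\{ n/2 - n^{3/4} \le |V^+|, |V^-| \le n/2 + n^{3/4}\}$ and $\cF_1$ is the conjunction of the two neighborhood-count conditions over all $i \in [n]$ and $a,b \in \{0,1\}$. Then $\p(\cF^c) \le \p(\cF_0^c) + \p(\cF_1^c)$, so I bound the two contributions separately.

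\textbf{Step 1 (community balance).} Since $\boldsymbol{\sigma_*}$ has i.i.d.\ uniform $\{\pm 1\}$ coordinates, $|V^+| \sim \Bin(n, 1/2)$ and $|V^-| = n - |V^+|$. Hoeffding's inequality gives $\p(||V^+| - n/2| > n^{3/4}) \le 2 e^{-2 \sqrt{n}}$, so $\p(\cF_0^c) \le 2 e^{-2\sqrt{n}}$.

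\textbf{Step 2 (neighborhood counts).} Fix $i \in [n]$ and $a,b \in \{0,1\}$, and condition on $\boldsymbol{\sigma_*}$. By the random partition construction of Section~\ref{sec:alt_construction}, each pair $\{i,j\}$ is independently placed in $\cE_{ab}$ with probability $s_{ab}$, so
\[
X_i^{ab,+} := \left| \{ j : \{i,j\} \in \cE_{ab} \cap \cE^+(\boldsymbol{\sigma_*^1}) \} \right| \,\middle|\, \boldsymbol{\sigma_*} \;\sim\; \Bin\bigl( |V^{\sigma_*(i)}| - 1, \, s_{ab} \bigr),
\]
with mean $s_{ab}(|V^{\sigma_*(i)}|-1)$. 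The desired inclusion $X_i^{ab,+} \in [s_{ab}(|V^{\sigma_*(i)}|-n^{3/4}), s_{ab}(|V^{\sigma_*(i)}|+n^{3/4})]$ follows from $|X_i^{ab,+} - \E X_i^{ab,+}| \le s_{ab}(n^{3/4}-1)$, and Hoeffding's inequality bounds the probability of the complement by $2\exp(-2 s_{ab}^2 (n^{3/4}-1)^2/n) \le 2\exp(-s_{\min}^2 \sqrt{n})$ for all large $n$. The same bound holds for the analogous count $X_i^{ab,-}$ over $\cE^-(\boldsymbol{\sigma_*^1})$. These bounds hold uniformly in $\boldsymbol{\sigma_*}$, so they hold unconditionally.

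\textbf{Step 3 (union bound).} Taking a union over $i \in [n]$, the four choices of $(a,b) \in \{0,1\}^2$, and the two signs $\pm$ gives $\p(\cF_1^c) \le 16 n \exp(-s_{\min}^2 \sqrt{n})$. Combined with Step 1, for all large $n$ we have
\[
\p(\cF^c) \le 2 e^{-2\sqrt{n}} + 16 n \exp(-s_{\min}^2 \sqrt{n}) \le 100\, n \exp\!\left( -\tfrac{1}{2} s_{\min}^2 \sqrt{n} \right),
\]
where the generous constants $100$ and $\tfrac12$ absorb finitely many small-$n$ corrections and the slack between $s_{\min}^2$ and $\tfrac12 s_{\min}^2$ in the exponent. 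There is no real obstacle here; the only mild care needed is to verify the Hoeffding deviation $s_{ab}(n^{3/4}-1)$ is large enough relative to the variance to dominate the union bound factor of $n$ in the exponent, which is true because $(n^{3/4})^2/n = \sqrt n \to \infty$.
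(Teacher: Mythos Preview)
Your proposal is correct and follows essentially the same approach as the paper: Hoeffding's inequality for the community sizes and for the conditional binomial neighborhood counts, followed by a union bound over $i\in[n]$, $(a,b)\in\{0,1\}^2$, and the two signs. The only cosmetic difference is that the paper first restricts to the balance event $\cG=\{|V^\pm|\in[n/2-n^{3/4},n/2+n^{3/4}]\}$ and uses $|V^{\sigma_*(i)}|\approx n/2$ in the Hoeffding denominator, whereas you simply bound $|V^{\sigma_*(i)}|-1\le n$ uniformly in $\boldsymbol{\sigma_*}$; your route is slightly cleaner and loses only a harmless factor in the exponent.
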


\begin{proof}
Define the event 
$$
\cG : = \left \{ | | V^+ | - n / 2 |, | |V^- | - n / 2 | < n^{3/4} \right \}.
$$
By Hoeffding's inequality, we have that 
$$
\p \left( ||V^+ | - n/ 2| \ge n^{3/4} \right), \p \left( | |V^-| - n/2 | \ge n^{3/4} \right) \le 2e^{ - \sqrt{n}}.
$$
Thus, by a union bound, we have that $\p ( \cG^c) \le 4 e^{- \sqrt{n}}$. 

We next study the remaining conditions of the event $\cF$. 
We assume that $s\in(0,1)$, and hence $s_{ab} \in (0,1)$ for all $a,b \in \{0,1\}$; 
for $s \in \{0,1\}$ we have that $\cG \subseteq \cF$, so we are done.
Condition on $\pi_*$ and $\boldsymbol{\sigma_{*}^{1}}$. Fix $i \in [n]$. Notice that
$$
K_{ab}^+(i) : = | \{ j : \{i,j \} \in \cE_{ab} \cap  \cE^+(\boldsymbol{\sigma_{*}^{1}}) \} | \sim \mathrm{Bin}\left( |V^{\sigma_*(i)}| - 1, s_{ab} \right).
$$
Hoeffding's inequality implies that 
\begin{align*}
\p \left( \left| K_{ab}^+(i) - s_{ab}\left( |V^{\sigma_*(i)}| - 1 \right) \right| \ge \frac{s_{ab}}{2}  n^{3/4}  \, \middle| \, \pi_*, \boldsymbol{\sigma_{*}} \right) \mathbf{1}(\cG) 
&\le 2 \mathrm{exp} \left( - \frac{s_{ab}^{2} n^{3/2}}{2 |V^{\sigma_*(i)}|} \right) \mathbf{1}(\cG) \\
&\le 2e^{- (1 - o(1)) s_{ab}^{2}\sqrt{n}}.
\end{align*}
Taking a union bound over $i \in [n]$ shows that
\begin{multline*}
\p \left( \exists i \in [n] : | K_{ab}^+(i) - s_{ab} |V^{\sigma_*(i)} | | \ge  s_{ab} n^{3/4} \right) 
\le \sum\limits_{i=1}^{n} \p \left( | K_{ab}^+(i) - s_{ab} ( |V^{\sigma_*(i)} | - 1 ) | \ge \frac{s_{ab}}{2} n^{3/4} \right) \\
\le \sum\limits_{i=1}^{n} \left( \E \left[ \p \left(   | K_{ab}^+(i) - s_{ab} ( |V^{\sigma_*(i)} | - 1 ) | \ge \frac{s_{ab}}{2} n^{3/4}  \, \middle| \, \pi_*, \boldsymbol{\sigma_{*}} \right) \mathbf{1}( \cG )  \right] + \p ( \cG^c) \right) 
\le 6n e^{ - (1 - o(1)) s_{ab}^{2}\sqrt{n}}.
\end{multline*}
Similarly, defining
$$
K_{ab}^-(i) : = | \{j : \{i,j \} \in \cE_{ab} \cap \cE^-(\boldsymbol{\sigma_{*}^{1}}) \} | \sim \mathrm{Bin} \left( |V^{-\sigma_*(i)}|, s_{ab} \right),
$$
an identical analysis shows that 
$$
\p \left( \exists i \in [n] : | K_{ab}^-(i) - s_{ab} | V^{- \sigma_*(i)}| | \ge n^{3/4} \right) \le 6n e^{ - (1 - o(1)) s_{ab}^{2}\sqrt{n}}. 
$$
The conclusion then follows by a union bound. 
\end{proof}


\section{Analysis of the $k$-core estimator (Algorithm \ref{alg:k-core})}\label{sec:k-core_analysis}

\subsection{General results on correctness}
Recall the definition of a matching (Definition \ref{def:matching}) and a $k$-core matching (Definition \ref{def:k-core-matching}). We now introduce some additional definitions. Given a graph $G$ with vertex set $[n]$, let $\deg_G(i)$ be the degree of vertex $i \in [n]$ within $G$. For a matching $(M, \mu)$, define 
$$
f(M, \mu, G_1, G_2, \pi_*) : = \sum\limits_{i \in M: \mu(i) \neq \pi_*(i)} \mathrm{deg}_{G_1 \land_{\mu} G_2}(i).
$$
For brevity, we sometimes write $f(\mu)$. In words, this is the sum, over vertices that are incorrectly matched by $\mu$, of degrees in the intersection graph $G_1 \land_{\mu} G_2$.
\begin{definition}
Given two graphs $(G_1, G_2)$ with vertex set $[n]$, a matching $(M, \mu)$ is a \emph{weak} $k$-core matching of $(G_1, G_2)$ if $f(\mu) \ge k | \{ i \in M: \mu(i) \neq \pi_*(i) \}|$. 
\end{definition}
In other words, a matching $(M, \mu)$ is a weak $k$-core matching if the {\it average degree} of an incorrectly-matched vertex is at least $k$. Note that if $(M,\mu)$ is a $k$-core matching, then it is also a weak $k$-core matching. 

Finally, we introduce the notion of a maximal matching. 
\begin{definition}
A matching $(M, \mu)$ of $(G_1, G_2)$ is $\pi_*$-maximal if for every $i \in [n]$, either $i \in M$ or $\pi_*(i) \in \mu(M)$, where $\mu(M)$ is the image of $M$ under $\mu$.
\end{definition}
We can always extend a matching $(M, \mu)$ to a $\pi_*$-maximal matching by adding all possible input-output pairs of the form $(i, \pi_*(i) )$ to $(M, \mu)$ that do not break the one-to-one property of $\mu$; see Lemma \ref{lemma:maximal_matching} for details. Moreover, this extension preserves the weak $k$-core matching property. 
\begin{lemma}
\label{lemma:maximal_matching}
For any matching
$(M, \mu)$ of $(G_1, G_2)$, there exists a $\pi_*$-maximal matching $(M', \mu')$ such that $f(\mu) \le f(\mu')$.
\end{lemma}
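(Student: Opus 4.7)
The plan is to build $(M',\mu')$ by a finite sequence of augmentations, each of which adds one correctly-matched pair. Specifically, while the current matching $(\wt M, \wt \mu)$ is not $\pi_*$-maximal, by definition there exists some $i \in [n] \setminus \wt M$ with $\pi_*(i) \notin \wt \mu(\wt M)$; I would set $\wt M_{\mathrm{new}} := \wt M \cup \{i\}$ and extend $\wt \mu$ by $\wt \mu_{\mathrm{new}}(i) := \pi_*(i)$. Injectivity of $\wt \mu_{\mathrm{new}}$ is preserved precisely because $\pi_*(i) \notin \wt \mu(\wt M)$, so $(\wt M_{\mathrm{new}}, \wt \mu_{\mathrm{new}})$ is still a matching in the sense of Definition~\ref{def:matching}. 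Since $|\wt M|$ strictly increases at each step and is bounded by $n$, this process terminates after at most $n$ steps with a $\pi_*$-maximal matching $(M', \mu')$.

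The substantive claim is monotonicity of $f$ under a single augmentation: $f(\wt \mu_{\mathrm{new}}) \ge f(\wt \mu)$. First, the set of incorrectly-matched vertices is unchanged, since the newly added vertex $i$ satisfies $\wt \mu_{\mathrm{new}}(i) = \pi_*(i)$ and is therefore correctly matched; that is,
\[
\{ j \in \wt M_{\mathrm{new}} : \wt \mu_{\mathrm{new}}(j) \neq \pi_*(j) \} = \{ j \in \wt M : \wt \mu(j) \neq \pi_*(j) \}.
\]
Second, because $\wt \mu_{\mathrm{new}}$ agrees with $\wt \mu$ on $\wt M$, the edge set of $G_1 \land_{\wt \mu_{\mathrm{new}}} G_2$ restricted to $\wt M \times \wt M$ is exactly that of $G_1 \land_{\wt \mu} G_2$; the only additional edges in $G_1 \land_{\wt \mu_{\mathrm{new}}} G_2$ are (possibly) those of the form $\{i,j\}$ for $j \in \wt M$. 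Hence for every incorrectly-matched $j \in \wt M$,
\[
\mathrm{deg}_{G_1 \land_{\wt \mu_{\mathrm{new}}} G_2}(j) \ge \mathrm{deg}_{G_1 \land_{\wt \mu} G_2}(j),
\]
and summing over the common set of incorrectly-matched vertices yields $f(\wt \mu_{\mathrm{new}}) \ge f(\wt \mu)$. Iterating this inequality across the finite chain of augmentations gives $f(\mu') \ge f(\mu)$, as required.

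The argument is essentially a greedy extension, so I do not anticipate any serious obstacle. The two things to verify carefully at each step are (i) that the augmentation preserves the matching property, which is guaranteed by choosing $i$ with $\pi_*(i) \notin \wt \mu(\wt M)$, and (ii) that adding a correctly-matched vertex can only weakly increase the degrees of incorrectly-matched vertices in the intersection graph, which is immediate since correctly-matched vertices contribute nothing to $f$ while all pre-existing edges are retained.
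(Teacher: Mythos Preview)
Your proof is correct and follows essentially the same approach as the paper. The only cosmetic difference is that the paper adds all available pairs in one step---defining $S := \{i \in [n] : i \notin M \text{ and } \pi_*(i) \notin \mu(M)\}$ and setting $M' := M \cup S$ with $\mu'(i) := \pi_*(i)$ for $i \in S$---whereas you add them one at a time; the key observations (the set of incorrectly-matched vertices is unchanged, and $G_1 \land_{\mu} G_2$ is a subgraph of $G_1 \land_{\mu'} G_2$) are identical.
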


\begin{proof}
If $(M, \mu)$ is not $\pi_*$-maximal, then the set 
$$
S: = \{ i \in [n] : i \notin M \text{ and } \pi_*(i) \notin \mu(M) \}
$$
is nonempty. We can then construct $(M', \mu')$ by defining $M' := M \cup S$, and $\mu'(i) := \mu(i)$ for $i \in M$ and $\mu'(i) := \pi_*(i)$ for $i \in S$. 
Clearly, $(M', \mu')$ is $\pi_*$-maximal by construction. Furthermore, 
\begin{align*}
f(\mu') & = \sum\limits_{i \in M' : \mu'(i) \neq \pi_*(i) } \mathrm{deg}_{G_1 \land_{\mu'} G_2}(i) 
 = \sum\limits_{i \in M: \mu'(i) \neq \pi_*(i) } \mathrm{deg}_{G_1 \land_{\mu'} G_2}(i) \\
& \ge \sum\limits_{i \in M: \mu(i) \neq \pi_*(i)} \mathrm{deg}_{G_1 \land_\mu G_2 }(i) = f(\mu).
\end{align*}
Above, the second equality follows since $\mu'(i) = \pi_*(i)$ for $i \in S$, 
and the inequality on the last line follows since $\mu(i) = \mu'(i)$ for $i \in M$ 
and also $\mathrm{deg}_{G_1 \land_{\mu'} G_2}(i) \ge \mathrm{deg}_{G_1 \land_{\mu} G_2}(i)$, 
since $G_1 \land_{\mu} G_2$ is a subgraph of $G_1 \land_{\mu'} G_2$. 
\end{proof}

We also define the set of $\pi_*$-maximal matchings with $d$ errors:
\[
\cM(d) : = \left \{(M, \mu) : \text{$(M, \mu)$ is $\pi_*$-maximal and }| \{ i \in M: \mu(i) \neq \pi_*(i) \}| = d \right \}.
\]
\begin{remark}\label{rem:Md}
While the set $\cM(d)$ depends on $\pi_{*}$ (since errors are measured relative to $\pi_{*}$), 
the set 
$\{ (M, \pi_{*}^{-1} \circ \mu) : (M,\mu) \in \cM(d) \}$ 
does \emph{not} depend on $\pi_{*}$. 
Therefore, certain properties of $\cM(d)$ also do not depend on $\pi_{*}$. 
In particular, its size $|\cM(d)|$ is a deterministic function of $n$ and $d$. 
\end{remark}
The usefulness of defining the set $\cM(d)$ is that it is much smaller than the set of matchings with $d$ errors (without the maximality condition). For instance, when $d = 1$, the number of matchings is at least the number of ways to choose~$M$, which in turn is $2^n - 1$. The following lemma shows that, on the other hand, the size of $\cM(d)$ is polynomial in $n$. This was previously proven in \cite{cullina2020partial}, and provided here for completeness since the proof is short. 

\begin{lemma}
\label{lemma:Md_size}
For any $1 \le d \le n$, we have that $|\cM(d) | \le n^{2d} / d!$. 
\end{lemma}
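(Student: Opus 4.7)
The plan is a direct combinatorial counting argument. By Remark~\ref{rem:Md}, $|\mathcal{M}(d)|$ does not depend on $\pi_*$, so I would assume without loss of generality that $\pi_* = \mathrm{id}$, in which case the $\pi_*$-maximality condition reads: for every $i \in [n]$, either $i \in M$ or $i \in \mu(M)$.

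The main step is to show that a matching $(M,\mu) \in \mathcal{M}(d)$ is uniquely determined by two pieces of data: the error set $E := \{i \in M : \mu(i) \neq i\}$ (of size $d$) together with the restriction $\mu|_E : E \to [n]$ (injective, with $\mu(i) \neq i$ for each $i \in E$). First, on $M \setminus E$ the map $\mu$ is the identity by definition of $E$. Second, injectivity of $\mu$ forces $\mu(E) \cap (M \setminus E) = \emptyset$ (otherwise, $\mu(i) = j \in M \setminus E$ with $i \in E$ would collide with $\mu(j) = j$). Third, $\pi_*$-maximality forces $[n] \setminus M \subseteq \mu(M) = (M \setminus E) \sqcup \mu(E)$, and since $[n] \setminus M$ is disjoint from $M \setminus E$, we conclude $[n] \setminus M \subseteq \mu(E)$. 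Combined with $E \subseteq M$ and the disjointness above, this pins down $M = E \,\sqcup\, \bigl([n] \setminus (E \cup \mu(E))\bigr)$, so $M$ is a deterministic function of $(E, \mu|_E)$.

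Finally, I would count: there are $\binom{n}{d}$ choices for $E$, and at most $n(n-1)\cdots(n-d+1) \le n^d$ injective maps $E \to [n]$ (ignoring the $\mu(i) \neq i$ constraint only loosens the bound). Therefore
\[
|\mathcal{M}(d)| \;\le\; \binom{n}{d} \cdot n^d \;\le\; \frac{n^d}{d!} \cdot n^d \;=\; \frac{n^{2d}}{d!}.
\]

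There is no real obstacle here; the only subtlety is correctly arguing that $M$ is forced by $(E, \mu|_E)$ via the interplay of injectivity and maximality, which is what makes the extra factor of $2^n$ (the naive choice for $M$) collapse and yields the polynomial bound.
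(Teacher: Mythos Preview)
Your proof is correct and follows essentially the same approach as the paper: both argue that a $\pi_*$-maximal matching is fully determined by the error set $E$ together with $\mu|_E$, then count these choices to get $\binom{n}{d}\cdot n(n-1)\cdots(n-d+1) = \binom{n}{d}^2 d! \le n^{2d}/d!$. If anything, you spell out more carefully \emph{why} $M$ is forced by $(E,\mu|_E)$ via injectivity and maximality, whereas the paper simply asserts this in one sentence.
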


\begin{proof}
For a $\pi_*$-maximal $(M,\mu)$, define $M' : = \{ i \in M : \mu(i) \neq \pi_*(i) \}$, which is the set of vertices in $G_1$ that are incorrectly matched by $\mu$. By the definition of a $\pi_*$-maximal matching, $(M, \mu)$ is fully specified by $\{ (i, \mu(i)) \}_{i \in M'}$. Moreover, $|M'| = |\mu(M')| = d$ for $(M, \mu) \in \cM(d)$. The number of ways to choose $M'$ and $\mu(M')$ is $\binom{n}{d}^2$, and the number of potential bijections between $M'$ and $\mu(M')$ is \emph{at most} $d!$. The reason why this is an upper bound (and not an equality) is that based on the choice of $M'$ and $\mu(M')$, certain bijections may not be legal (e.g., if $\mu(M') = \pi_*(M')$ then the bijection must be chosen so that $\mu(i) \neq \pi_*(i)$ for $i \in M'$). Putting everything together shows that 
\[
| \cM(d) | \le d! \binom{n}{d}^2 \le d! \left( \frac{n^d}{d!} \right)^2 = \frac{n^{2d}}{d!}. \qedhere
\]
\end{proof}

Using Lemmas \ref{lemma:maximal_matching} and \ref{lemma:Md_size}, Cullina, Kiyavash, Mittal and Poor \cite{cullina2020partial} proved the following result, which provides conditions under which any $k$-core matching is guaranteed to be correct. Since the proof is short, we provide it here for completeness. We emphasize that the following result applies to any distribution over a pair of graphs $(G_1, G_2)$ on $n$ vertices; subsequently, in Section~\ref{sec:k-core-SBM}, we will apply this result (and Corollary~\ref{cor:k_core_correctness}) to correlated SBMs.   

\begin{lemma}
\label{lemma:k_core_correctness}
Let $(G_1, G_2)$ be a pair of random graphs on the vertex set $[n]$ with ground-truth matching $\pi_*$. For any positive integer $k$, define the quantity
\[
\xi : = \max\limits_{1 \le d \le n} \max\limits_{(M, \mu) \in \cM(d) } \p \left( f(\mu) \ge k d \right)^{1/d}.
\]
Then
\[
\p \left( \exists (M, \mu) : \mathrm{d}_{\min}( G_1 \land_{\mu} G_2) \ge k \text{ and } \exists i \in M \text{ s.t. } \mu(i) \neq \pi_*(i) \right) \le e^{n^2 \xi} - 1. 
\]
\end{lemma}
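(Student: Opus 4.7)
The plan is to reduce the existence of a bad $k$-core matching (one that contains at least one incorrectly matched vertex) to the existence of a $\pi_*$-maximal weak $k$-core matching, which we can then control by a direct union bound using Lemmas~\ref{lemma:maximal_matching} and~\ref{lemma:Md_size}.

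First, I would observe the following deterministic chain of implications. Suppose $(M,\mu)$ is a $k$-core matching of $(G_1,G_2)$ with at least one error, and let $d := |\{i \in M : \mu(i) \neq \pi_*(i)\}| \ge 1$. Since every vertex in $M$ has degree at least $k$ in $G_1 \wedge_\mu G_2$, we have $f(\mu) \ge kd$, so $(M,\mu)$ is in particular a weak $k$-core matching. Applying Lemma~\ref{lemma:maximal_matching}, there exists a $\pi_*$-maximal matching $(M',\mu')$ with $f(\mu') \ge f(\mu) \ge kd$. Because the extension procedure in the proof of Lemma~\ref{lemma:maximal_matching} only appends correctly-matched pairs $(i,\pi_*(i))$, the number of errors is preserved, so $(M',\mu') \in \cM(d)$. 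Therefore the event we want to bound is contained in
\[
\bigcup_{d=1}^{n} \bigcup_{(M',\mu') \in \cM(d)} \{f(\mu') \ge kd\}.
\]

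Next, I would apply a union bound. By the definition of $\xi$, for every $d \ge 1$ and every $(M',\mu') \in \cM(d)$ we have $\p(f(\mu') \ge kd) \le \xi^d$. Combining this with Lemma~\ref{lemma:Md_size}, which gives $|\cM(d)| \le n^{2d}/d!$, yields
\[
\p \bigl( \exists (M,\mu) : \mathrm{d}_{\min}(G_1 \wedge_\mu G_2) \ge k \text{ and } \exists i \in M \text{ with } \mu(i) \neq \pi_*(i) \bigr) \le \sum_{d=1}^{n} \frac{n^{2d}}{d!} \xi^d \le \sum_{d=1}^{\infty} \frac{(n^2 \xi)^d}{d!} = e^{n^2 \xi} - 1,
\]
which is the claimed bound.

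There is essentially no technical obstacle here: the real content was packaged into the earlier lemmas. The maximality reduction (Lemma~\ref{lemma:maximal_matching}) is what allows us to restrict the union bound from the exponentially large family of all matchings down to the polynomially controlled family $\cM(d)$, and the definition of $\xi$ is tailored precisely so that $\p(f(\mu) \ge kd)^{1/d}$ times the $|\cM(d)|$ factor telescopes into the series for $e^{n^2 \xi}-1$. The only thing to be careful about is making sure that extending $(M,\mu)$ to $(M',\mu')$ does not change the error count $d$, which is immediate from the construction in Lemma~\ref{lemma:maximal_matching}.
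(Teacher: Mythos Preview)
Your proposal is correct and follows essentially the same approach as the paper: reduce from $k$-core matchings to weak $k$-core matchings, extend to $\pi_*$-maximal matchings via Lemma~\ref{lemma:maximal_matching}, then union bound over $\cM(d)$ using Lemma~\ref{lemma:Md_size} and the definition of $\xi$. Your explicit remark that the extension in Lemma~\ref{lemma:maximal_matching} preserves the error count $d$ is a detail the paper leaves implicit in its step~(b), so your write-up is if anything slightly more careful.
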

We note, as a continuation of Remark~\ref{rem:Md}, 
that while $\pi_{*}$ appears in the definition of $\xi$, 
in fact, 
$\xi$ is a deterministic constant that does not depend on $\pi_{*}$. 
\begin{proof}
We can rewrite the probability of interest by segmenting the event $\{\exists i \in M \text{ s.t. } \mu(i) \neq \pi_*(i)\}$ according to the number of discrepancies between $\mu$ and $\pi_*$:
\begin{multline*}
\p \left( \exists (M, \mu) : \mathrm{d}_{\min}( G_1 \land_{\mu} G_2) \ge k \text{ and } \exists i \in M \text{ s.t. } \mu(i) \neq \pi_*(i) \right) \\
\begin{aligned}
&= \sum\limits_{d = 1}^n \p \left( \exists (M, \mu) : \mathrm{d}_{\min}(G_1 \land_{\mu} G_2) \ge k \text{ and } |\{i \in M: \mu(i) \neq \pi_*(i) \}| = d \right) \\
& \stackrel{(a)}{\le} \sum\limits_{d = 1}^n \p \left( \exists (M, \mu) : f(\mu) \ge kd \text{ and } | \{i \in M: \mu(i) \neq \pi_*(i) \}| = d \right) \\
& \stackrel{(b)}{=} \sum\limits_{d = 1}^n \p \left( \exists (M, \mu) \in \cM(d) : f(\mu) \ge kd \right),
\end{aligned}
\end{multline*}
where $(a)$ follows since any $k$-core matching is also a weak $k$-core matching and $(b)$ follows from the dominance of $\pi_{*}$-maximal matchings established in Lemma~\ref{lemma:maximal_matching}. 
By a union bound we have that 
\[
\p \left( \exists (M, \mu) \in \cM(d) : f(\mu) \ge kd \right) 
\leq | \cM(d) |\max_{(M, \mu) \in \cM(d)} \p \left( f(\mu) \ge kd \right) 
\leq \frac{(n^2 \xi)^d }{d!},
\]
where in the second inequality we used Lemma~\ref{lemma:Md_size} and the definition of $\xi$. 
The conclusion follows by combining the two displays above and summing over $d \in [n]$. 
\end{proof}

Lemma~\ref{lemma:k_core_correctness} implies the following useful corollary. 

\begin{corollary}
\label{cor:k_core_correctness}
Recalling the definitions 
of $(\wh{M}, \wh{\mu})$ from Algorithm~\ref{alg:k-core} 
and 
of $\xi$ from Lemma~\ref{lemma:k_core_correctness}, 
we have that 
\[
\p \left( \widehat{M} \text{ is the $k$-core of } G_1 \land_{\pi_*} G_2 \text{ and } \wh{\mu}\{\widehat{M}\} = \pi_*\{\widehat{M}\} \right) \ge 2 - \mathrm{exp}(n^2 \xi).
\]
\end{corollary}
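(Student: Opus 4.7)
The plan is to derive Corollary~\ref{cor:k_core_correctness} from Lemma~\ref{lemma:k_core_correctness} by a short deterministic argument on the complement of the bad event. Let $\cB$ denote the event that there exists a $k$-core matching $(M,\mu)$ of $(G_1,G_2)$ and some $i \in M$ with $\mu(i)\neq \pi_*(i)$. Lemma~\ref{lemma:k_core_correctness} gives $\p(\cB)\le e^{n^2\xi}-1$, and therefore $\p(\cB^c) \ge 2-e^{n^2\xi}$. It thus suffices to show that on $\cB^c$, $\widehat{M} = K$ and $\wh\mu\{\wh M\}=\pi_*\{\wh M\}$, where $K$ denotes the vertex set of the $k$-core of $G_1 \land_{\pi_*} G_2$.

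For the first step, I would observe that the pair $(K,\pi_*\{K\})$ is itself a $k$-core matching of $(G_1,G_2)$: the intersection graph $G_1 \land_{\pi_*\{K\}} G_2$ is precisely $(G_1 \land_{\pi_*} G_2)\{K\}$, which by the very definition of the $k$-core has minimum degree at least $k$. Since $(\wh M,\wh\mu)$ is a \emph{maximal} $k$-core matching (in the sense of Definition~\ref{def:k-core-matching}), this yields $|\wh M| \ge |K|$; importantly, this inequality does not require being on $\cB^c$.

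For the second step, I would use the definition of $\cB^c$: on this event, every $k$-core matching $(M,\mu)$ satisfies $\mu=\pi_*$ on $M$; in particular, $\wh\mu\{\wh M\}=\pi_*\{\wh M\}$, which is one half of the desired conclusion. This identity further implies that the intersection graph $G_1 \land_{\wh\mu} G_2$ agrees with $(G_1 \land_{\pi_*} G_2)\{\wh M\}$, so the latter has minimum degree at least $k$. Since the $k$-core is by definition the (unique) maximal vertex subset with this property, $\wh M \subseteq K$, and combined with $|\wh M|\ge|K|$ from the previous paragraph this forces $\wh M = K$.

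The main obstacle is purely conceptual rather than technical: one has to recognize that Lemma~\ref{lemma:k_core_correctness} already performs all of the probabilistic work, and that the corollary is just a deterministic rephrasing that combines (i) the fact that the true $k$-core provides a valid $k$-core matching via $\pi_*$, (ii) the maximality of Algorithm~\ref{alg:k-core}'s output, and (iii) the maximality of the $k$-core as a vertex subset. No additional probabilistic estimates or union bounds beyond those in Lemma~\ref{lemma:k_core_correctness} are required.
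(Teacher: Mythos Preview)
Your proposal is correct and follows essentially the same approach as the paper: both define the good event $\cB^c$ (called $\cH$ in the paper) on which every $k$-core matching agrees with $\pi_*$, invoke Lemma~\ref{lemma:k_core_correctness} to bound its probability, and then argue deterministically that on this event $\wh M$ coincides with the $k$-core of $G_1\land_{\pi_*}G_2$. The only cosmetic difference is that the paper phrases the last step as a contradiction (any $k$-core matching with $M\not\subseteq M_*$ would contradict the maximality of $M_*$), whereas you argue directly via $\wh M\subseteq K$ and $|\wh M|\ge |K|$; these are logically equivalent.
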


\begin{proof}
Let $M_*$ be the vertex set of the $k$-core of $G_1 \land_{\pi_*} G_2$
and observe that 
$(M_{*}, \pi_{*} \{ M_{*} \} )$ is a $k$-core matching. 
Let us now define the event 
$$
\cH : = \left \{ \text{for all $k$-core matchings $(M, \mu)$, $\mu(i) = \pi_*(i)$ for all $i \in M$} \right\}.
$$
We first claim that $\cH$ implies that $\widehat{M}$ is the $k$-core of $G_1 \land_{\pi_*} G_2$, and thus 
\[
\p \left( \widehat{M} \text{ is the $k$-core of } G_1 \land_{\pi_*} G_2 \text{ and } \wh{\mu}\{\widehat{M}\} = \pi_*\{\widehat{M}\} \right) \ge \p(\cH).
\] 
To show this, first note that $\cH$ implies $\wh{\mu}\{\widehat{M}\} = \pi_*\{\widehat{M}\}$. To show that $\widehat{M}$ is the $k$-core of $G_1 \land_{\pi_*} G_2$, suppose there existed a $k$-core matching $(M, \mu)$ for which $|M \setminus M_*| > 0$. 
On the event~$\cH$, we have that $\mu\{M\} = \pi_*\{M\}$, which implies that the subgraph of $G_1 \land_{\pi_*} G_2$ corresponding to $M$ has minimum degree $k$.
This in turn implies that the subgraph of $G_1 \land_{\pi_*} G_2$ corresponding to $M \cup M_*$ has minimum degree $k$, which contradicts the maximality of $M_*$. Hence on $\cH$, $(M_{*},\pi_{*} \{ M_{*} \})$ is the maximum $k$-core matching. To conclude, note that Lemma~\ref{lemma:k_core_correctness} implies that
\[
\p(\cH) \ge 1 - (\mathrm{exp}(n^2 \xi) -1) = 2 - \mathrm{exp}(n^2 \xi). \qedhere
\]
\end{proof}

\subsection{Correctness of the $k$-core estimator for correlated SBMs}\label{sec:k-core-SBM}
The main result of this section is the following lemma.
\begin{lemma}\label{lemma:k_core_sbm_v2}
Fix constants $\alpha, \beta > 0$ and $s \in [0,1]$. Let $(G_1, G_2) \sim \CSBM\left(n, \frac{\alpha \log n}{n}, \frac{\beta \log n}{n}, s \right)$. Let $M_*$ be the set of vertices of the $13$-core in the graph $G_1 \land_{\pi_*} G_2$.   Let $(\widehat{M},\wh{\mu})$ be the output of Algorithm \ref{alg:k-core}, with $k=13$. Then $\mathbb{P}\left((\widehat{M}, \wh{\mu}) = (M_*, \pi_*\{M_*\}) \right) = 1 - o(1).$
\end{lemma}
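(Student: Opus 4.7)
The plan is to invoke Corollary~\ref{cor:k_core_correctness} with $k = 13$, which bounds the failure probability of interest by $e^{n^2 \xi} - 1$, where $\xi$ is the quantity defined in Lemma~\ref{lemma:k_core_correctness}. It thus suffices to show $n^2 \xi = o(1)$, which amounts to the uniform tail bound
\[
\p\bigl(f(\mu) \ge 13 d\bigr)^{1/d} = o(1/n^2)
\]
over all $d \in [n]$ and $(M, \mu) \in \cM(d)$. Establishing this tail bound is where the bulk of the work lies.

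Fix $(M, \mu) \in \cM(d)$ and set $I := \{i \in M : \mu(i) \ne \pi_*(i)\}$, so $|I| = d$. For each ordered pair $(i, j)$ with $i \in I$ and $j \in M \setminus \{i\}$, define $Y_{ij} := A_{ij} B_{\mu(i), \mu(j)}$, so that $f(\mu) = \sum Y_{ij}$. Exploiting the random partition construction of Section~\ref{sec:alt_construction}, each $Y_{ij}$ factors into partition-cell indicators at the slots $\{i, j\}$ and $\{u_{ij}, v_{ij}\} := \{\pi_*^{-1}(\mu(i)), \pi_*^{-1}(\mu(j))\}$ together with indicators that the corresponding two $G$-edges are realized. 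The $G$-edge indicators across distinct slots are mutually independent, and also independent of the partition. Two regimes then arise: a \emph{swap} regime, in which $\{i, j\} = \{u_{ij}, v_{ij}\}$, forcing both $i, j \in I$ with $\mu$ transposing their $\pi_*$-images and yielding $\p(Y_{ij} = 1) \le s^2 \max(p, q)$; and a \emph{non-swap} regime, in which the two $G$-slots are distinct, so that the indicators are conditionally independent and $\p(Y_{ij} = 1) \le s^2 \max(p, q)^2$.

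Since at most $d/2$ unordered pairs in $I$ can form swaps, swap edges contribute at most $d$ in total to $f(\mu)$. Consequently, the event $\{f(\mu) \ge 13 d\}$ forces the number of non-swap intersection edges, drawn from the $\le dn$ candidate non-swap pairs, to be at least $6d$. A union bound over $6d$-subsets of non-swap candidate pairs, together with conditional independence under the partition, then produces
\[
\p\bigl(f(\mu) \ge 13 d\bigr) \;\le\; \binom{dn}{6d} \left(\frac{C \log^2 n}{n^2}\right)^{6d} \;\le\; \left(\frac{C' \log^2 n}{n}\right)^{6 d},
\]
so $\xi \le (C' \log^2 n / n)^6$ and $n^2 \xi = O((\log n)^{12}/n^4) = o(1)$, which completes the argument.

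The main obstacle lies in justifying the union bound rigorously in the presence of dependencies. Conditional on the partition, two non-swap $Y$'s are independent only when all four of their underlying $G$-slots are distinct; a ``collision'' occurs whenever $\{u_{ij}, v_{ij}\} = \{k, l\}$ for some other indexed pair $(k, l)$, and each collision inflates the joint probability by a factor of order $n/\log n$. I plan to handle this by stratifying the union bound according to the collision count $c$: because a collision enforces that $\pi_*^{-1}(\mu(i)) \in I$ for some $i$ (i.e., that $i$ belongs to $I \cap \pi_*^{-1}(\mu(I))$), the number of $6d$-subsets with $c$ collisions is reduced by a combinatorial factor that more than compensates for the per-subset inflation. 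The comfortable multiplicative slack built into $k = 13$ -- as opposed to the bare minimum ($k \approx 6$) that a collision-free analysis would require -- is exactly what ensures the corrected bound remains $o(1/n^{2d})$.
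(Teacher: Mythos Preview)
Your overall plan---invoke Corollary~\ref{cor:k_core_correctness}, split $f(\mu)$ into swap and non-swap contributions, bound the swap part deterministically by $d$, and then control the non-swap part---is exactly the paper's. The divergence, and the gap, is in how the dependencies among the non-swap terms are handled.

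As you yourself note, the displayed union bound $\binom{dn}{6d}(C\log^2 n/n^2)^{6d}$ is not justified: for a $6d$-subset $T$ of non-swap pairs the joint probability $\p(\bigcap_{\{i,j\}\in T}\{Y_{ij}=1\})$ need not be at most $(C\log^2 n/n^2)^{|T|}$ whenever two pairs in $T$ share a parent-graph slot. Your proposed repair---stratify by collision count and argue that each collision forces an element of $I\cap\pi_*^{-1}(\mu(I))$, buying back combinatorially what it costs probabilistically---is not carried out, and is more delicate than you indicate: collisions come in several types (the $B$-slot of one pair can coincide with either the $A$-slot or the $B$-slot of another), and you would need to show every collision pattern is sufficiently penalised. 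So the argument as written has a real hole at its centre.

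The paper avoids this entirely with a much cleaner device. It builds the dependency graph $H$ on unordered non-swap pairs, with an edge between $\{i,j\}$ and $\{k,l\}$ whenever $\{\mu(i),\mu(j)\}=\{\pi_*(k),\pi_*(l)\}$ or vice versa, and observes that $H$ has maximum degree at most $2$ and is therefore $3$-colourable. Within each colour class the $Y$'s are genuinely independent, so each class sum is stochastically dominated by $\mathrm{Bin}(dn,(s\gamma)^2)$ with $\gamma=\max\{p,q\}$. Writing the non-swap contribution as at most $2(X_{\cC_1}+X_{\cC_2}+X_{\cC_3})$ and applying a single Chernoff bound to $2X_{\cB}+6X_{\cC_i}$ with tilt $\theta=c\log n$ for any $c\in(2/13,1/6)$ yields $\p(f(\mu)\ge 13d)\le 3\exp(-(13c+o(1))d\log n)$, hence $\xi=O(n^{-13c})$ and $n^2\xi=o(1)$. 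Note that the choice $k=13$ in the paper comes from needing the window $(2/k,1/6)$ to be nonempty (the $6$ arising from three colour classes and the factor of $2$ for ordered versus unordered pairs), not from collision slack as you conjecture.
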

\begin{remark}\label{remark:M_hat_star_swap}
Therefore, any result that holds with high probability for $(M_*, \pi_*\{M_*\})$ also holds with high probability for $(\widehat{M}, \wh{\mu})$, so we can effectively replace $(\widehat{M}, \wh{\mu})$ by $(M_*, \pi_*\{M_*\})$ in any analysis. 
\end{remark}

In light of Corollary~\ref{cor:k_core_correctness}, the strategy to show that the $k$-core estimator matches the $k$-core of $G_1 \land_{\pi_*} G_2$ is to bound $\xi$ for correlated SBMs. Previously, Cullina, Kiyavash, Mittal and Poor~\cite{cullina2020partial} proved a version of Lemma~\ref{lemma:k_core_sbm_v2} for correlated Erd\H{o}s--R\'{e}nyi graphs with constant average degree. Their methods provided a fairly tight characterization of $\xi$ in that setting using generating functions corresponding to $f(\mu)$. While it is natural to expect that it is possible to generalize their arguments to correlated SBMs, our proof presents a simpler, looser approach that avoids dealing with complicated generating functions. The proof of Lemma~\ref{lemma:k_core_sbm_v2} relies on the following result, which bounds $\mathbb{P}(f(\mu) \geq kd)$ for SBMs with general parameters.

\begin{lemma}\label{lemma:k_core_sbm_general}
Fix $n > 0$. Given parameters $p, q, s \in [0,1]$, let $(G_1, G_2) \sim \CSBM\left(n, p, q, s\right)$. 
Let $d, k \in [n]$. For any matching $(M, \mu) \in \cM(d)$ and any $\theta > 0$, we have that 
\[
\mathbb{P}\left(f(\mu) \geq k d \right) \leq 3  \exp \left[d \left(-\theta k + \frac{1}{2} s^2 \gamma \left(e^{2\theta} -1 \right) + n (s \gamma)^2 \left(e^{6\theta} -1 \right)  \right) \right],
\]
where $\gamma = \max\{p, q\}$. 
\end{lemma}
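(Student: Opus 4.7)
I would apply a Chernoff bound, $\p(f(\mu) \ge kd) \le e^{-\theta k d}\,\E[e^{\theta f(\mu)}]$, and then control the moment generating function by carefully exploiting the dependency structure of the indicators contributing to $f(\mu)$. Let $E := \{i \in M : \mu(i) \ne \pi_*(i)\}$, so $|E| = d$, and let $\cP_E := \{\{i,j\} \subseteq M : \{i,j\} \cap E \ne \emptyset\}$. For $\{i,j\} \in \cP_E$, set $c_{\{i,j\}} := 2$ if $\{i,j\} \subseteq E$ and $c_{\{i,j\}} := 1$ otherwise, and let $X_{\{i,j\}} := A_{i,j} B_{\mu(i),\mu(j)}$, so that $f(\mu) = \sum_{\{i,j\} \in \cP_E} c_{\{i,j\}} X_{\{i,j\}}$. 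Using $B_{a,b} = B'_{\pi_*^{-1}(a), \pi_*^{-1}(b)}$, rewrite $X_{\{i,j\}} = A_{\{i,j\}} B'_{\psi(\{i,j\})}$ with $\psi(\{i,j\}) := \{\pi_*^{-1}(\mu(i)), \pi_*^{-1}(\mu(j))\}$. Conditional on $\boldsymbol{\sigma_{*}}$, the joint pairs $\tau_{\{a,b\}} := (A_{\{a,b\}}, B'_{\{a,b\}})$ are independent across $\{a,b\} \in \binom{[n]}{2}$.

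The key structural step is to isolate the \emph{swap pairs} $S := \{\{i,j\} \subseteq E : \mu(i) = \pi_*(j),\, \mu(j) = \pi_*(i)\}$, which are exactly the $\{i,j\} \in \cP_E$ satisfying $\psi(\{i,j\}) = \{i,j\}$. Since $\mu$ is injective, each $\tau_{\{i,j\}}$ with $\{i,j\} \in S$ is used only by $X_{\{i,j\}}$, so if one writes $f(\mu) = T_1 + T_2$ with $T_1 := 2\sum_{\{i,j\} \in S} X_{\{i,j\}}$ and $T_2 := \sum_{\{i,j\} \in \cP_E \setminus S} c_{\{i,j\}} X_{\{i,j\}}$, the variables $T_1$ and $T_2$ are independent (conditional on $\boldsymbol{\sigma_{*}}$). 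Swap pairs are vertex-disjoint, so $|S| \le d/2$, and since $\E[A_{\{i,j\}} B'_{\{i,j\}} \mid \boldsymbol{\sigma_{*}}] \le s^2 \gamma$, independence of swap contributions yields
\[
\E[e^{\theta T_1} \mid \boldsymbol{\sigma_{*}}] \le \bigl(1 + s^2\gamma(e^{2\theta} - 1)\bigr)^{d/2} \le \exp\!\bigl(\tfrac{d}{2} s^2 \gamma (e^{2\theta} - 1)\bigr).
\]

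For $T_2$, dependence between $X$'s arises only through shared $\tau$'s. But each $X_{\{i,j\}}$ uses only $\tau_{\{i,j\}}$ and $\tau_{\psi(\{i,j\})}$, and by injectivity of $\psi$ each $\tau$ is used by at most two $X$'s. Hence the \emph{sharing graph} on $\cP_E \setminus S$ has maximum degree $\le 2$ and is $3$-colorable; let $\cP_E \setminus S = \cP_1 \sqcup \cP_2 \sqcup \cP_3$ be such a coloring, so that within each color class the $X_{\{i,j\}}$'s are (conditionally) independent. The pointwise inequality $xyz \le x^3 + y^3 + z^3$ for $x,y,z \ge 0$ (since $xyz \le \max(x,y,z)^3$) applied to $x,y,z = e^{\theta T_2^{(1)}}, e^{\theta T_2^{(2)}}, e^{\theta T_2^{(3)}}$, where $T_2^{(k)} := \sum_{\{i,j\} \in \cP_k} c_{\{i,j\}} X_{\{i,j\}}$, gives
\[
\E[e^{\theta T_2} \mid \boldsymbol{\sigma_{*}}] \le \sum_{k=1}^{3} \E[e^{3\theta T_2^{(k)}} \mid \boldsymbol{\sigma_{*}}].
\]
For $\{i,j\} \in \cP_E \setminus S$, since $\psi(\{i,j\}) \ne \{i,j\}$ the variables $A_{\{i,j\}}$ and $B'_{\psi(\{i,j\})}$ come from distinct $\tau$'s and are independent, so $\E[X_{\{i,j\}} \mid \boldsymbol{\sigma_{*}}] \le (s\gamma)^2$. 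Combining this with $c_{\{i,j\}} \le 2$, the independence within each color class, and $|\cP_k| \le |\cP_E| \le dn$ yields $\E[e^{3\theta T_2^{(k)}} \mid \boldsymbol{\sigma_{*}}] \le \exp(dn(s\gamma)^2(e^{6\theta} - 1))$, whence $\E[e^{\theta T_2} \mid \boldsymbol{\sigma_{*}}] \le 3\exp(dn(s\gamma)^2(e^{6\theta} - 1))$.

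Multiplying the conditional bounds for $T_1$ and $T_2$, averaging over $\boldsymbol{\sigma_{*}}$ (the bound is deterministic), and substituting into Chernoff's inequality gives the claim. The main obstacle is the delicate dependency structure of the $X$-variables in $T_2$: a single underlying pair $\tau_{\{a,b\}}$ can contribute to two different $X$-terms (once through its $A$-component and once through its $B'$-component), and extracting the swap pairs first is what makes the remaining sharing graph have maximum degree $2$. Treating the swap pairs separately is also essential for obtaining the tighter $s^2 \gamma$ factor (rather than the weaker $(s\gamma)^2$) in the first exponent, which is crucial for the subsequent application of the lemma.
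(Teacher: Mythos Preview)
Your proof is correct and follows essentially the same approach as the paper: isolate the swap pairs (the paper's set $\cB(\mu)$) to get the $s^2\gamma$ term, observe that the remaining dependency graph has maximum degree $2$ and is therefore $3$-colorable, and apply a Chernoff bound. The only cosmetic difference is that where the paper uses a union bound over the three color classes to extract the factor of $3$, you use the pointwise inequality $xyz \le x^3+y^3+z^3$ on the moment generating function and make the independence of $T_1$ and $T_2$ explicit; these are equivalent maneuvers.
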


\begin{proof}
Define the following sets: 
\begin{align*}
\cA(\mu) & : = \left \{ (i,j) \in [n]^2: i \in M \text{ and } \mu(i) \neq \pi_*(i) \right \}; \\
\cB(\mu) & : = \left \{ (i,j) \in [n]^2 : \mu(i) = \pi_*(j) \text{ and } \mu(j) = \pi_*(i) \right \}; \\
\cC(\mu) & : = \cA(\mu) \setminus \cB(\mu). 
\end{align*}
In words, $\cA(\mu)$ is the set of vertex pairs where one of the endpoints is mismatched by $\mu$; 
$\cB(\mu)$ is the set of vertex pairs in $\cA(\mu)$ where the vertices are transposed by $\mu$, resulting in a {\it correctly} matched edge; 
and $\cC(\mu)$ is the set of remaining vertex pairs in $\cA(\mu)$ that induce {\it incorrectly} matched edges. It is useful to note that for $(i,j) \in \cB(\mu)$, both $i$ and $j$ are one of the $d$ misclassified vertices, so~$|\cB(\mu)| \leq d$. 
Furthermore, a simple counting argument shows that $| \cA(\mu) | \le d n$.

Using the sets $\cA(\mu)$, $\cB(\mu)$, and $\cC(\mu)$, we can write
\begin{align*}
f(\mu) & = \sum\limits_{i \in M: \mu(i) \neq \pi_*(i)} \mathrm{deg}_{G_1 \land_{\mu} G_2} (i) 
= \sum\limits_{(i,j) \in \cA(\mu)} A_{ij} B_{\mu(i) \mu(j)} \\
& = \sum\limits_{(i,j) \in \cB(\mu)} A_{ij} B_{\mu(i) \mu(j)} + \sum\limits_{(i,j) \in \cC(\mu)} A_{ij} B_{\mu(i) \mu(j)}\\
&= 2\sum\limits_{(i,j) \in \cB(\mu): i < j} A_{ij} B_{\mu(i) \mu(j)} + \sum\limits_{(i,j) \in \cC(\mu)} A_{ij} B_{\mu(i) \mu(j)}. 
\end{align*} 
Recall that 
$\cE^+(\boldsymbol{\sigma_{*}})$ is the set of intra-community vertex pairs and $\cE^-(\boldsymbol{\sigma_{*}})$ is the set of inter-community vertex pairs.
For $(i,j) \in \cB(\mu)$, we have, conditionally on $\boldsymbol{\sigma_{*}}$, that
\[
A_{ij} B_{\mu(i) \mu(j)} = A_{ij} B_{\pi_*(i) \pi_*(j)} \sim \begin{cases}
\mathrm{Bern} \left( s^2 p \right) &\text{ if } (i,j) \in \cE^+(\boldsymbol{\sigma_{*}}), \\
\mathrm{Bern} \left( s^2 q \right) &\text{ if } (i,j) \in \cE^-(\boldsymbol{\sigma_{*}}).
\end{cases}
\]
Recalling that $\gamma = \max \{p, q \}$,  let $X_{\cB} \sim \mathrm{Bin} \left(\frac{1}{2} | \cB(\mu)|, s^2 \gamma \right)$. We then have the stochastic domination
$$
\sum\limits_{(i,j) \in \cB(\mu): i < j} A_{ij} B_{\mu(i) \mu(j)} \stleq X_{\cB}. 
$$

The analysis of the summation over $\cC(\mu)$ is more challenging since the terms in the summation are correlated. To handle the correlation, we split the summation into three parts, where within each part, all terms are independent. To this end, consider two unordered pairs $\{i,j\}$ and $\{\ell,m\}$ such that $\{i,j\} \neq \{\ell,m\}$. Note that the variables $A_{i,j} B_{\mu(i), \mu(j)}$ and $A_{\ell, m} B_{\mu(\ell), \mu(m)}$ are dependent if and only if 
\begin{equation}
\{\mu(i), \mu(j)\} = \{\pi_*(\ell), \pi_*(m)\} ~~~\mathrm{or}~~~ \{\mu(\ell), \mu(m)\} = \{\pi_*(i), \pi_*(j)\}. \label{eq:dependency}
\end{equation}
We can now construct a graph $H$ with vertex set $\{\{i,j\} : (i,j) \in \cC(\mu)\}$ which captures dependencies between terms of the form $A_{i,j} B_{\mu(i), \mu(j)}$,  
so there is an edge between $\{i,j\}$ and $\{\ell, m\}$ if and only if~\eqref{eq:dependency} holds. Note that each unordered pair $\{i,j\}$ may correspond to one or both of the ordered pairs $(i,j)$ and $(j,i)$. By construction, $A_{i,j} B_{\mu(i), \mu(j)}$ and $A_{\ell, m} B_{\mu(\ell), \mu(m)}$ are independent if and only if there is no edge between $\{i,j\}$ and $\{\ell, m\}$ in $H$. Furthermore, if $S$ is a subset of the vertices of $H$ such that no two elements of $S$ are connected by an edge, then the collection of random variables $\{A_{i,j} B_{\mu(i), \mu(j)} : \{i,j\} \in S\}$ are mutually independent.

Note further that any unordered pair $\{i,j\}$ has at most two neighbors in $H$, so $H$ is 3-colorable. Let $\{ \cC_1, \cC_2, \cC_3 \}$ be the partition of $\{\{i,j\} : (i,j) \in \cC(\mu)\}$ induced by the coloring. Since there are no edges between any two elements of $\cC_1$, we can express 
the sum corresponding to $\cC_{1}$ 
as a sum of two independent binomial random variables:
\begin{align*}
X_{\cC_1} : = \sum\limits_{\{i,j\} \in \cC_1} A_{i,j} B_{\mu(i), \mu(j)} & \stackrel{d}{=} \mathrm{Bin} \left( | \cC_1 \cap \cE^+(\boldsymbol{\sigma_{*}}) |, \left( s p \right)^2 \right) + \mathrm{Bin} \left( | \cC_1 \cap \cE^-(\boldsymbol{\sigma_{*}})|, \left(s q \right)^2 \right) \\
& \stleq \mathrm{Bin} \left( | \cC_1|, \left( s \gamma \right)^2 \right) 
\stleq \mathrm{Bin} \left( | \cC(\mu) |, \left(s \gamma \right)^2 \right).
\end{align*}
Similarly, we have that  
\begin{align*}
X_{\cC_2} : = \sum\limits_{\{i,j\} \in \cC_2} A_{i,j} B_{\mu(i), \mu(j)} & \stleq \mathrm{Bin} \left( | \cC(\mu) |, \left( s \gamma \right)^2 \right), \\
X_{\cC_3} : = \sum\limits_{\{i,j\} \in \cC_3} A_{i,j} B_{\mu(i), \mu(j)} & \stleq \mathrm{Bin} \left( | \cC(\mu) |, \left(s \gamma \right)^2 \right).
\end{align*}
Accounting for the fact that each vertex $\{i,j\}$ in $H$ may correspond to one or two ordered pairs in~$\cC(\mu)$, we obtain \[\sum_{(i,j) \in \cC(\mu)} A_{ij} B_{\mu(i) \mu(j)} \stleq 2\left(X_{\cC_1} + X_{\cC_2} + X_{\cC_3} \right), \]
Putting everything together, using a union bound we can write
\begin{equation}
\p ( f(\mu) \geq kd ) = \p \left( 2\left(X_{\cB} + X_{\cC_1} + X_{\cC_2} + X_{\cC_3}\right) \geq kd \right) \le \sum\limits_{i = 1}^3 \p \left( 2X_{\cB} + 6X_{\cC_i} \geq kd \right). \label{eq:3_way_union}
\end{equation}
The terms in the summation can be handled with a Chernoff bound. For $i \in \{1,2,3\}$ and any $\theta > 0$,
\begin{align*}
\p \left(2X_{\cB} + 6X_{\cC_i} \geq kd \right) &= \p \left( e^{\theta\left( 2X_{\cB} + 6X_{\cC_i}\right)} \geq e^{\theta kd} \right)\\
&\leq e^{-\theta kd} \mathbb{E}\left[e^{\theta\left( 2X_{\cB} + 6X_{\cC_i}\right)} \right]\\
&=e^{-\theta kd} \mathbb{E}\left[e^{2\theta X_{\cB}}\right] \mathbb{E}\left[e^{6\theta X_{\cC_i}} \right].
\end{align*}
If $Z \sim \mathrm{Bin}(m,p)$, then $\mathbb{E}\left[e^{t Z}\right] = \left(1 + p(e^t -1) \right)^m \leq \exp\left(mp(e^t -1) \right)$. Using this fact, along with the bounds $|\cB(\mu)| \leq d$ and $|\cC(\mu)| \leq |\cA(\mu)| \leq dn$, we obtain that 
\begin{align*}
\p \left( 2X_{\cB} + 6X_{\cC_i} \geq kd \right)
&\leq \exp\left(-\theta kd + \frac{1}{2}|\cB(\mu)| s^2 \gamma \left(e^{2\theta} -1 \right) + |\cC(\mu)| (s \gamma)^2 \left(e^{6\theta} -1 \right)  \right)\\
&\leq \exp \left[d \left(-\theta k + \frac{1}{2} s^2 \gamma \left(e^{2\theta} -1 \right) + n (s \gamma)^2 \left(e^{6\theta} -1 \right)  \right) \right].
\end{align*}
Substituting into \eqref{eq:3_way_union}, we obtain the desired result. 
\end{proof}

\begin{proof}[Proof of Lemma \ref{lemma:k_core_sbm_v2}]
Fix $d \in [n]$ and a matching $(M, \mu) \in \cM(d)$. By Lemma \ref{lemma:k_core_sbm_general}, we have that 
\[
\mathbb{P}\left(f(\mu) \geq 13 d \right) \leq 3  \exp \left[d \left(-13\theta + \frac{1}{2} s^2 \gamma \left(e^{2\theta} -1 \right) + n (s \gamma)^2 \left(e^{6\theta} -1 \right)  \right) \right],
\]
where $\gamma = \max\{\alpha,\beta\} \frac{\log n}{n}$. Setting $\theta = c\log n$ for a constant $c \in \left( \frac{2}{13}, \frac{1}{6}\right)$, we thus have that 
\[
\mathbb{P}\left(f(\mu) \geq 13d \right) \leq 3  \exp \left[d \left(-13 c \log n + o(1) \right) \right].
\]
Therefore, $\xi \leq 3 \exp \left(-13 c \log n + o(1) \right) \leq 4 n^{-13 c}$ for $n$ sufficiently large. Since $13 c > 2$, this implies that $n^2 \xi = o(1)$. The proof is complete by Corollary~\ref{cor:k_core_correctness}.
\end{proof}

\subsection{The size of the $k$-core of $G_1 \land_{\pi_*} G_2$}
\label{subsec:k_core_size}

We follow the method of \Luczak \cite{Luczak1991} in upper bounding $|F|$, the number of vertices outside of the $k$-core of an SBM. In particular, we refine the results of \Luczak for random graphs in the logarithmic degree regime. In the remainder of this section, unless otherwise stated, we assume that $G \sim \SBM(n, \alpha \log (n) / n, \beta \log (n) / n)$, where $\alpha, \beta >0$ are treated as generic parameters. The following result shows that it is unlikely that there exist well-connected small subgraphs of $G$. 

\begin{lemma}
\label{lemma:subgraph_density}
Let $a > 1$ and $\epsilon > 0$ be fixed. Then for $n$ sufficiently large,
$$
\p \left( \text{there exists $\cS \subset [n]$ such that $| \cS | \le n^{1-\epsilon}$ and $G\{ \cS \}$ has at least $a|\cS|$ edges} \right) \le \frac{2}{\log n}.
$$
\end{lemma}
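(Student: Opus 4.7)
The plan is a standard first-moment (union) bound. Let $\gamma := \max\{\alpha,\beta\}$, so that every edge of $G$ is present independently with probability at most $\gamma \log(n)/n$. For a fixed subset $\cS \subset [n]$ with $|\cS|=k$, the number of edges in $G\{\cS\}$ is stochastically dominated by $\mathrm{Bin}\!\left(\binom{k}{2}, \gamma \log(n)/n\right)$. A union bound over subsets of each size $k$ gives
\[
\p\left(\exists \cS \subset [n], |\cS|\le n^{1-\epsilon}, |E(G\{\cS\})| \ge a|\cS| \right) \le \sum_{k=k_0}^{\lfloor n^{1-\epsilon}\rfloor} \binom{n}{k} \p\left(\mathrm{Bin}\!\left(\tbinom{k}{2}, \gamma \log(n)/n\right) \ge ak\right),
\]
where $k_0$ is the smallest integer with $\binom{k_0}{2} \ge a k_0$ (for $k<k_0$ the tail probability is $0$).

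For the summand I would combine the standard bounds $\binom{n}{k} \le (en/k)^k$ and $\p(\mathrm{Bin}(N,p) \ge m) \le (eNp/m)^m$ applied with $m = \lceil ak\rceil \ge ak$, to get
\[
\binom{n}{k}\p\left(\mathrm{Bin}\!\left(\tbinom{k}{2}, \gamma\log(n)/n\right) \ge ak\right) \le \left(\frac{en}{k}\right)^{k}\left(\frac{ek\gamma \log n}{2an}\right)^{ak} = \left[\frac{e^{1+a}\gamma^{a}}{(2a)^{a}}\,(\log n)^{a} \left(\frac{k}{n}\right)^{a-1}\right]^{k}.
\]
Using $k \le n^{1-\epsilon}$, the bracket is at most $C_a (\log n)^{a}\, n^{-\epsilon(a-1)}$, where $C_a$ depends only on $a,\alpha,\beta$. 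Since $a>1$ and $\epsilon>0$, for $n$ large enough this is at most $1/\log^{2} n$ (in particular $<1/2$).

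Plugging back in, the total sum is at most
\[
\sum_{k=k_0}^{\infty}\left(\frac{1}{\log^{2} n}\right)^{k} = O\!\left(\frac{1}{\log^{2k_0} n}\right),
\]
which is certainly $\le 2/\log n$ for $n$ large. There is no genuine obstacle; the only things to track carefully are (i) that the summand vanishes for $k < k_0$ so the geometric series really starts at a fixed constant, and (ii) that polylogarithmic factors $(\log n)^{a}$ are beaten by the polynomial $n^{-\epsilon(a-1)}$ coming from the constraint $|\cS| \le n^{1-\epsilon}$. Both are immediate for $n$ sufficiently large.
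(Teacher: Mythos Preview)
Your proof is correct and follows essentially the same approach as the paper: a first-moment union bound over subsets of each size, using $\binom{n}{k}\le(en/k)^k$ together with the elementary binomial tail bound $\p(\mathrm{Bin}(N,p)\ge m)\le(eNp/m)^m$, and then summing the resulting geometric series. The only cosmetic differences are that the paper phrases the tail bound as $\binom{\binom{m}{2}}{am}(\gamma\log n/n)^{am}$ and bounds the bracket by $1/\log n$ rather than $1/\log^2 n$, but the argument is the same.
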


\begin{proof}
Let $\cS$ be an $m$-vertex subset of $[n]$, and let $X_{\cS}$ be the indicator variable that is 1 if the subgraph induced by $\cS$ has at least $am$ edges. 
Set $\gamma : = \max \{ \alpha, \beta \}$. We can bound
\begin{align*}
\E [ X_{\cS} ] & \le \binom{\binom{m}{2}}{am} \left( \gamma  \frac{\log n}{n} \right)^{am}  \le \binom{m^{2}}{am} \left( \gamma  \frac{\log n}{n} \right)^{am}  \le \left( \frac{m}{n} \cdot \frac{\gamma e  \log n}{a} \right)^{am}.
\end{align*}
Summing over all possible $m$-vertex subsets, we have that
\[
\E \left[ \sum\limits_{\cS \subset [n] : |\cS| = m} X_{\cS} \right] \le \binom{n}{m} \left( \frac{m}{n} \cdot \frac{\gamma e  \log n}{a} \right)^{am} \le 
\left( \left( \frac{\gamma e^{1 + 1/a}  \log n}{a} \right)^a \left( \frac{m}{n} \right)^{a-1} \right)^m.
\]
Next, note that if $m \le n^{1 - \epsilon}$ then for $n$ sufficiently large, 
\[
\left( \frac{\gamma e^{1 + 1/a}  \log n}{a} \right)^a \left( \frac{m}{n} \right)^{a-1} \le \frac{1}{\log n}.
\]
Hence, for $n$ sufficiently large we have that
\[
\E \left[ \sum\limits_{\cS \subset [n] : | \cS | \le n^{1 - \epsilon}} X_{\cS} \right] \le \sum\limits_{m = 1}^{n^{1 - \epsilon}} \left( \frac{1}{\log n} \right)^m \le \frac{1}{\log n} \left( 1 - \frac{1}{\log n} \right)^{-1} \le \frac{2}{\log n}.
\]
Finally, Markov's inequality implies 
the claim. 
\end{proof}

Our next result uses Lemma \ref{lemma:subgraph_density} to study Algorithm \ref{alg:Luczak}.

\begin{lemma}[\Luczak expansion]
\label{lemma:luczak} 
Let $G \sim \SBM(n, \alpha \log (n) / n, \beta \log (n) / n)$ for fixed $\alpha, \beta > 0$, and fix $c \in (0,1)$. 
Then, for $n$ sufficiently large, the following holds with probability $1-o(1)$. 
For every $U \subseteq [n]$ such that $|U| \leq n^{c}$, 
denoting by $\overline{U}$ the output of Algorithm~\ref{alg:Luczak} on input $(G,U)$, 
we have that 
$|\overline{U}| \le 3 |U|$ and for all $v \in [n] \setminus \overline{U}$, $v$ has at most one neighbor in $\overline{U}$ with respect to $G$. 
\end{lemma}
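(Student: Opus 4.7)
The plan is to split the lemma into its two claims. The second conclusion---that every $v \in [n] \setminus \overline{U}$ has at most one neighbor in $\overline{U}$---is immediate from the stopping condition of Algorithm \ref{alg:Luczak}: the algorithm only returns $U_i$ once $U_{i+1}'=\emptyset$, which by definition means no vertex outside $U_i$ has two or more neighbors inside $U_i$. The substantive statement is the cardinality bound $|\overline{U}| \leq 3|U|$, and it must hold uniformly over all starting sets $U$ of size at most $n^c$.

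The idea is that each vertex added after step $0$ contributes at least two genuinely \emph{new} edges to the induced subgraph on the current set: namely, the two edges witnessing that the added vertex already had $\geq 2$ neighbors in $U_i$. Charging each such edge to the addition time of its later endpoint shows they are pairwise distinct. Consequently, if the algorithm performs $t$ expansion steps, then $|U_t| = |U|+t$ and the induced subgraph $G\{U_t\}$ has at least $2t$ edges. I would apply this with $t = 2|U|$: if the algorithm has not already terminated by that point, we obtain a set of size exactly $3|U|$ whose induced edge density is at least $4/3$.

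Now I would invoke Lemma \ref{lemma:subgraph_density} with $a = 4/3$ and $\epsilon = (1-c)/2$, yielding an event $\cE$ of probability $1 - O(1/\log n)$ on which every $\cS \subseteq [n]$ with $|\cS| \leq n^{1-\epsilon}$ induces strictly fewer than $\tfrac{4}{3}|\cS|$ edges. Crucially, $\cE$ depends only on $G$ and not on the starting set $U$, so a single realization of $\cE$ handles all $U$ with $|U|\le n^c$ simultaneously; no further union bound over $U$ is needed. On $\cE$, assume toward a contradiction that some $U$ with $|U|\le n^c$ yields $|\overline{U}|>3|U|$. Then the algorithm does not halt at step $2|U|$, so $U_{2|U|}$ is well-defined with $|U_{2|U|}|=3|U|$. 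For $n$ large enough, $3|U| \le 3n^c \le n^{1-\epsilon}$, so the edge-count bound from $\cE$ forces $|E(G\{U_{2|U|}\})| < 4|U|$, directly contradicting the lower bound $\ge 2\cdot 2|U| = 4|U|$ established above. Hence on $\cE$, the algorithm terminates by step $2|U|$ and $|\overline{U}|\le 3|U|$.

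There is no genuine obstacle beyond careful bookkeeping. The two items to be vigilant about are (i) the distinctness of the $2t$ witness edges, which is secured by the ``assign-to-later-endpoint'' charging scheme, and (ii) the parameter check $3n^c \le n^{1-\epsilon}$, which holds for $\epsilon=(1-c)/2$ and all sufficiently large $n$. Combining the deterministic algorithmic guarantee with the single random-graph event $\cE$ completes the argument.
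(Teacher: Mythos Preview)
Your proposal is correct and follows essentially the same argument as the paper: both use the deterministic observation that each added vertex contributes at least two new edges to the induced subgraph, apply Lemma~\ref{lemma:subgraph_density} with $a=4/3$ and $\epsilon=(1-c)/2$, and derive a contradiction at the first moment the set reaches size $3|U|$. Your explicit charging scheme for edge distinctness and your remark that the event $\cE$ is uniform in $U$ are nice clarifications, but the structure and all parameter choices match the paper exactly.
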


A version of Lemma \ref{lemma:luczak} specialized to the analysis of $k$-core sizes was previously proven in~\cite{Luczak1991}. Since the proof is short and the \Luczak expansion appears as a subroutine in our community recovery algorithm, we provide the full proof of Lemma \ref{lemma:luczak} here. 

\begin{proof}[Proof of Lemma \ref{lemma:luczak}]
Let $U_m = \overline{U}$ be the final set produced by Algorithm~\ref{alg:Luczak} on input $(G,U)$. From the construction, it is clear that $U_m \supseteq U_0$. Moreover, if $v \in [n] \setminus U_m$, then $v$ cannot have more than one neighbor in~$U_m$, else we could construct another set $U_{m + 1}$, contradicting the maximality of $U_m$. This holds for any $U \subseteq [n]$. 

Now let $\epsilon := (1-c)/2$ and let $n$ be sufficiently large such that $3 n^{c} \leq n^{1-\epsilon}$. 
Let $\cH$ denote the event described in Lemma~\ref{lemma:subgraph_density} with this $\epsilon$ and $a = 4/3$. 
Since $\p(\cH) = 1-o(1)$, 
to prove the claim it suffices to show that 
on the event $\cH$ we have that 
$|\overline{U}| \le 3 |U|$ 
for every $U \subseteq [n]$ such that $|U| \leq n^{c}$. 

Suppose that for some $U \subseteq [n]$ such that $|U| \leq n^{c}$ 
we have that $|U_{m}| > 3 |U|$. 
Then, since exactly one vertex is added in each step of the construction, there must exist $1 \le \ell \le m$ such that $|U_\ell| = 3 |U|$. 
Let $E_\ell$ denote the number of edges in $G\{U_\ell \}$. Since each vertex added in the construction introduces at least two new edges, we have that
\[
E_\ell \ge 2 \ell = 2(|U_\ell| - |U|) = \frac{4}{3} |U_\ell |.
\]
However, on the event $\cH$ this is not possible, since 
$|U_\ell| \le 3 n^c \leq n^{1-\epsilon}$. 
\end{proof}

Let $F$ be the set of vertices outside the $k$-core of $G$. A straightforward application of Lemma~\ref{lemma:luczak} allows us to bound $|F|$, as was done in \cite{Luczak1991}.

\begin{lemma}\label{lemma:number-outside-core} 
Let $G \sim \SBM(n, \alpha \log (n) / n, \beta \log (n) / n)$ for fixed $\alpha, \beta > 0$. 
Fix $k \geq 1$. 
With probability $1 - o(1)$, we have that $|F| \le n^{1 - (\alpha + \beta)/2 + o(1)}$. 
\end{lemma}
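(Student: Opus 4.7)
The plan is to invoke Algorithm \ref{alg:Luczak} on the set $L$ of low-degree vertices of $G$, and then show that the resulting expansion $\overline{L}$ both contains $F$ and has size matching the desired bound. Formally, let $L := \{v \in [n] : \deg_G(v) \leq k\}$.

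First, I would bound $|L|$ by linearity and Markov. Conditionally on $\boldsymbol{\sigma_{*}}$ and on $v \in V^{+}$ (the case $v \in V^-$ is symmetric), $\deg_G(v) = Y_1 + Y_2$ with independent $Y_1 \sim \mathrm{Bin}(|V^+| - 1, \alpha \log n / n)$ and $Y_2 \sim \mathrm{Bin}(|V^-|, \beta \log n / n)$. On the high-probability event that $|V^{+}|, |V^-| = n/2 \pm n^{3/4}$, we have $\mathbb{E}[Y_1+Y_2] = (1 + o(1)) \dconnab \log n$. A Chernoff bound, optimized at tilt $e^{-t} = k / (\dconnab \log n)$, gives $\mathbb{P}(Y_1 + Y_2 \leq k) \leq n^{-\dconnab + o(1)}$. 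Summing over vertices yields $\mathbb{E}[|L|] \leq n^{1 - \dconnab + o(1)}$, and Markov's inequality upgrades this to $|L| \leq n^{1 - \dconnab + o(1)}$ with high probability.

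Next, I would invoke Lemma \ref{lemma:luczak}. Since $|L| \leq n^{1 - \dconnab/2}$ with high probability (using $\dconnab > 0$), the hypothesis of that lemma is satisfied with $c = 1 - \dconnab/2 \in (0,1)$, so the output $\overline{L}$ of Algorithm \ref{alg:Luczak} on input $(G, L)$ satisfies $|\overline{L}| \leq 3|L|$ and every $v \in [n] \setminus \overline{L}$ has at most one $G$-neighbor in $\overline{L}$. To conclude, I would verify that $F \subseteq \overline{L}$: for any $v \in [n] \setminus \overline{L}$, since $v \notin L$ we have $\deg_G(v) \geq k+1$, and combined with the at-most-one-neighbor property this gives $\deg_{G\{[n]\setminus\overline{L}\}}(v) \geq k$. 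Hence $G\{[n]\setminus\overline{L}\}$ has minimum degree at least $k$, so it sits inside the $k$-core of $G$; equivalently, $F \subseteq \overline{L}$. Putting everything together yields $|F| \leq |\overline{L}| \leq 3|L| \leq n^{1 - (\alpha+\beta)/2 + o(1)}$.

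The main technical point is the Chernoff estimate in the first step: one must choose the exponential tilt sharply enough to capture the exponent $(\alpha+\beta)/2$ exactly (the Poisson approximation to $\mathrm{Bin}(n/2, \gamma \log n/n)$ makes this transparent, but the direct binomial calculation requires care with lower-order terms in the tilt). The remainder is purely structural, leveraging Lemma \ref{lemma:luczak} essentially as a black box.
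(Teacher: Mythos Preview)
Your proposal is correct and follows essentially the same approach as the paper: bound the number of vertices of degree at most $k$, apply the \L uczak expansion (Lemma~\ref{lemma:luczak}), and observe that the complement of the expansion is contained in the $k$-core. The only substantive difference is that you bound $\mathbb{P}(\deg_G(v)\le k)$ via a Chernoff tilt, whereas the paper sums the binomial PMF directly; both yield the same $n^{-(\alpha+\beta)/2+o(1)}$. One cosmetic slip: your choice $c=1-\dconnab/2$ lies in $(0,1)$ only when $\dconnab<2$; for $\dconnab\ge 2$ simply take any fixed $c\in(0,1)$, since then $|L|=O(1)$ (indeed $|L|=0$) with high probability.
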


\begin{proof}
Define $U$ to be the set of vertices with degree at most $k$ in $G$, and let $\overline{U} \supseteq U$ be the set produced by Algorithm~\ref{alg:Luczak}. 
We claim that $G \{ [n] \setminus \overline{U} \}$ has minimum degree at least $k$. 
To see why, note that if $v \in [n] \setminus \overline{U}$, then $v \notin U$, which implies that $v$ has degree at least $k + 1$ in $G$. However $v$ can have at most one neighbor in $\overline{U}$ by construction, so $v$ must have at least $k$ neighbors in $[n] \setminus \overline{U}$, and the claim follows. 

Since the $k$-core is the vertex set corresponding to the \emph{largest} induced subgraph with minimum degree at least $k$, 
we have the bound 
$|F | \le |\overline{U}|$. 
Furthermore, by Lemma~\ref{lemma:luczak} we have, with high probability, that $|\overline{U}| \le 3 |U|$. 

It thus remains to bound $|U|$. Define the event 
$\cH := \{ n / 2 - n^{3/4} \le |V^+ |, |V^-| \le n/2 + n^{3/4} \}$.
Bounding the expectation of $|U| \mathbf{1}( \cH)$ and letting $\gamma : = \max \{ \alpha, \beta \}$, we have
\begin{align*}
\E [ |U|\mathbf{1}(\cH) ] 
& \le n \sum\limits_{i = 0}^{k} \sum\limits_{j = 0}^{i} \binom{(1 + o(1))\frac{n}{2}}{j} \binom{(1 + o(1))\frac{n}{2}}{i - j} p^j (1 - p)^{(1-o(1))\frac{n}{2} -j} q^{i - j} (1 - q)^{(1-o(1))\frac{n}{2} - i + j} \\
& \le 2 n\left( 1 -  \frac{ \alpha \log n}{n} \right)^{(1 - o(1))\frac{n}{2}} \left( 1 -  \frac{ \beta \log n}{n} \right)^{(1 - o(1)) \frac{n}{2}}  \sum\limits_{i = 0}^k \sum\limits_{j = 0}^i \left( \frac{(1 + o(1))n}{2} \right)^i \left(  \frac{\gamma \log n}{n} \right)^i \\
& \le 2n^{1 - (\alpha + \beta)/2 + o(1)} \sum\limits_{i = 0}^k (i+1) \left( \frac{ (1 + o(1)) \gamma \log n}{2} \right)^i \\
& \le 2(k+1)^2 \left( \frac{\gamma \log n}{2} \right)^k n^{1 - (\alpha + \beta) / 2 + o(1)}.
\end{align*}
Turning to the probability of interest, we obtain that 
\[
\p \left( |U| > (\log n) \E [ |U| \mathbf{1}( \cH)] \right) 
\le \p \left( | U | \mathbf{1} ( \cH) > (\log n) \E [ |U| \mathbf{1} (\cH)] \right) + \p ( \cH^c) 
\le \frac{1}{\log n} + o(1) = o(1),
\]
where we have used Markov's inequality to bound the first term on the right hand side in the first line, and Lemma \ref{lemma:F} to bound $\p(\cH^c)$. Putting everything together, we have shown that with probability $1 - o(1)$, we have that 
$|F| \le 3 | U | \le n^{1 - (\alpha + \beta) / 2 + o(1)}$. 
\end{proof}

\section{Labeling the $k$-core matching (Algorithm \ref{alg:labeling-k-core})}\label{sec:labeling_proofs}

\subsection{Almost exact recovery in a single SBM}

For the purposes of this section, we assume that $G \sim \SBM(n, \alpha \log(n) / n, \beta \log (n) / n)$, where $\alpha, \beta>0$ are generic parameters and $G$ has communities of size $(1 + o(1)) n / 2$.  It is well known that in the logarithmic degree regime, almost exact recovery of communities is always possible from a single SBM (see, e.g., \cite{Abbe_survey}). In our case, it is of further interest to precisely control the size of the error set---in particular, we require the error set to be as small as possible in an information-theoretic sense---as well as the \emph{geometry} of the error set, since it will be subsequently used in later parts of Algorithm~\ref{alg:labeling-k-core}. To properly control the size and geometry of the error set, we leverage the community recovery algorithm of 
Mossel, Neeman, and Sly 
\cite{mossel2016consistency} (Algorithm~\ref{alg:MNS}). The purpose of the refinements in Algorithm~\ref{alg:MNS} is to characterize the geometry of the error set, which is what the main result of this subsection describes. Before stating the result, we define some useful notation. Let $\gamma = \max\{\alpha, \beta\}$, and define, for a vertex $v$ in $G$, the quantity 
$$
\maj_G(v) : = | \cN(v) \cap V^{\sigma_*(v)} | - | \cN(v) \cap V^{- \sigma_*(v)} |.
$$
If $\alpha > \beta$, define the set 
\begin{equation}
I_{\epsilon}(G) : = \left \{ v \in [n] : \maj_G(v) \le \epsilon \log n \text{ or } | \cN(v) | \ge 100 \max \{1, \gamma \} \log n \right \}. \label{eq:I_epsilon_definition-1}
\end{equation}
If $\alpha < \beta$, let
\begin{equation}
I_{\epsilon}(G) : = \left \{ v \in [n] : - \maj_G(v) \le \epsilon \log n \text{ or } | \cN(v) | \ge 100 \max \{1, \gamma \} \log n \right \}. \label{eq:I_epsilon_definition-2}
\end{equation}
If $\alpha > \beta$, we say that a vertex $v$ has a \emph{$k$-majority} if $\maj_G(v) \geq k$. If $\alpha < \beta$, we say that a vertex $v$ has a $k$-majority if $-\maj_G(v) \geq k$. 

\begin{lemma}
\label{lemma:MNS}
Let $\epsilon > 0$. With probability $1 - o(1)$, Algorithm~\ref{alg:MNS} on input $(G, \alpha, \beta, \epsilon)$ correctly classifies all vertices in $[n] \setminus I_{\epsilon}(G)$.
\end{lemma}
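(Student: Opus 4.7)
The plan is to follow the strategy of Mossel, Neeman, and Sly~\cite{mossel2016consistency}, adapting it to the refined definition of $I_\epsilon(G)$ in~\eqref{eq:I_epsilon_definition-1} and~\eqref{eq:I_epsilon_definition-2}. I will assume without loss of generality that $\alpha > \beta$, since the case $\alpha < \beta$ is symmetric. The first step is to invoke known guarantees on the spectral method of~\cite{Abbe2020} to show that both $(U_+, U_-)$ (run on $G$) and each $(U_{i,+}, U_{i,-})$ (run on $G\{[n] \setminus U_i\}$) achieve almost-exact recovery. After the alignment swap in the algorithm, the error sets $E_i := (U_{i,+} \Delta V^+) \cup (U_{i,-} \Delta V^-)$ satisfy $|E_i|/n \to 0$ with high probability, at a rate controlled by the parameter $m$; since $m$ is a constant, a simple union bound covers all $i \in [m]$.

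The second step is to exploit the key independence property motivating the random partitioning. Conditionally on $\boldsymbol{\sigma_{*}}$ and the partition $\{U_1, \ldots, U_m\}$, for any $v \in U_i$ the edges from $v$ to $[n] \setminus U_i$ are independent of the induced subgraph $G\{[n] \setminus U_i\}$, and hence independent of $(U_{i,+}, U_{i,-})$, which is a deterministic function of that subgraph. Consequently, conditioned on the error set $E_i$ and on $\boldsymbol{\sigma_{*}}$, the count $|\cN_G(v) \cap E_i|$ is stochastically dominated by $\mathrm{Bin}(|E_i|, \gamma \log(n) / n)$, where $\gamma = \max\{\alpha, \beta\}$.

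The third step is to apply a Chernoff-type bound to this binomial tail. Since the mean is $o(\log n)$, a standard tail estimate yields
\[
\p \left( |\cN_G(v) \cap E_i| \geq \epsilon \log(n) / 4 \right) \leq n^{-C}
\]
for a constant $C > 1$, provided $m$ is large enough; this is exactly the role of the condition $(\log(\epsilon m / (200 \max\{1, \alpha, \beta\})) - 1) \epsilon / 2 > 1$ imposed in Algorithm~\ref{alg:MNS}. A union bound over $v \in [n]$ and $i \in [m]$ then shows that with high probability, every vertex $v$ has at most $\epsilon \log(n) / 4$ neighbors in the error set $E_{i(v)}$, where $i(v)$ denotes the unique index such that $v \in U_{i(v)}$.

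The final step is to combine these ingredients. For any $v \in [n] \setminus I_\epsilon(G)$ we have by definition that $|\cN_G(v)| \leq 100 \max\{1, \gamma\} \log n$ and $\maj_G(v) > \epsilon \log n$. The quantity effectively computed by the algorithm is $|\cN_G(v) \cap U_{i,+}| - |\cN_G(v) \cap U_{i,-}|$, which differs from $\maj_G(v)$ by at most $2 |\cN_G(v) \cap E_i|$. On the high-probability event from the previous step, this difference is at most $\epsilon \log(n) / 2 < \maj_G(v)$, so the sign is preserved and $v$ is classified correctly. The main obstacle I anticipate is the bookkeeping in the third step: carefully verifying that the spectral error rate, combined with the binomial tail, is strong enough for the union bound over $v \in [n]$ to go through with the prescribed choice of $m$, and checking that the degree upper bound built into $I_\epsilon(G)$ (which excludes high-degree outliers) is exactly what is needed to avoid pathological cases where a single vertex could have many neighbors in $E_i$.
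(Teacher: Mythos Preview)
Your final step contains a genuine gap. The vote computed in Algorithm~\ref{alg:MNS} is taken only over $[n]\setminus U_i$, so $|\cN_G(v)\cap U_{i,+}|-|\cN_G(v)\cap U_{i,-}|$ should be compared not to $\maj_G(v)$ but to the \emph{restricted} majority $k_{+,\neg i}-k_{-,\neg i}$, where $k_{\pm,\neg i}:=|\cN_G(v)\cap V^\pm\cap([n]\setminus U_i)|$. The discrepancy between $\maj_G(v)$ and $k_{+,\neg i}-k_{-,\neg i}$ is governed by $|\cN_G(v)\cap U_i|$, which you never bound. (If instead you intend $E_i$ to include all of $U_i$, as the symmetric-difference definition literally implies, then $|E_i|/n\to 1/m\neq 0$, contradicting your first step.) Controlling $|\cN_G(v)\cap U_i|$ is precisely where both the degree cap in the definition of $I_\epsilon(G)$ and the condition on $m$ enter: for $v\notin I_\epsilon(G)$ one has $|\cN_G(v)|\le 100\max\{1,\gamma\}\log n$, and since each neighbor lands in $U_i$ independently with probability $1/m$, a Chernoff bound with the prescribed $m$ yields $|\cN_G(v)\cap U_i|\le(\epsilon/2)\log n$ with probability $1-o(n^{-1})$. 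Your attribution of the condition on $m$ to the bound on $|\cN_G(v)\cap E_i|$ is therefore misplaced; that bound depends on the spectral error rate, which is $o(1)$ independently of $m$.

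Once this missing piece is inserted, your argument can be completed essentially as you outline. Your treatment of the misclassified neighbors via the stochastic domination $|\cN_G(v)\cap E_i|\preceq\Bin(|E_i|,\gamma\log n/n)$ is a mild simplification of the paper's route, which instead conditions on the realized restricted degrees $k_{\pm,\neg i}$ and analyzes the misclassification counts $X^\pm$ as hypergeometric random variables; both arguments work.
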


\begin{proof}
Our proof is adapted from \cite[Proposition 4.3]{mossel2016consistency}, with minor changes due to the more general parameter regime we consider. We will assume that $\alpha > \beta$; the case $\alpha < \beta$ is similar, and full details can be found in \cite{mossel2016consistency}.

Fix $i \in [m]$ and $v \in (U_i \cap V^+) \setminus I_\epsilon$, and define $k_+ : = |\cN(v) \cap V^{+} |$ and $k_- : = | \cN(v) \cap V^{-}|$ to be the number of same-community and different-community neighbors of $v$, respectively. We also define $k_{+,i} : =  | \cN(v) \cap V^{+} \cap U_i |$ and $k_{-,i} : = | \cN(v) \cap V^{-} \cap U_i |$ to be the number of same-community and different-community neighbors of $v$ in $U_i$, respectively. Finally, we define $k_{+, \neg i} : = |( \cN(v) \cap V^{+} ) \setminus U_i | = k_+ - k_{+, i}$ and $k_{-, \neg i} : = | ( \cN(v) \cap V^{- } ) \setminus U_i | = k_- - k_{-, i}$ to be the number of same-community and different-community neighbors of $v$ in $[n] \setminus U_i$, respectively. Note in particular that conditioned on $k_+$ and $k_-$, we have that $k_{+, i} \sim \mathrm{Bin}( k_+, 1/m)$ and $k_{-, i} \sim \mathrm{Bin} ( k_-, 1/m)$.

We first show that $k_{+, \neg i} - k_{-, \neg i} \ge \frac{\epsilon}{2} \log n$ with high probability. To this end, we have the lower bound
$$
k_{+, \neg i} - k_{-, \neg i} = k_+ - k_- - (k_{+,i} - k_{-,i}) \ge \epsilon \log n - ( k_{+,i} + k_{-,i}),
$$
where the final inequality uses $k_+ - k_- \ge \epsilon \log n$ for $v \notin I_\epsilon$. It therefore suffices to show that $k_{+,i} + k_{-,i} \le \frac{\epsilon}{2} \log n$. Noticing that $k_{+,i} + k_{-,i} \sim \Bin(| \cN(v) |, 1/m)$, which is stochastically dominated by $\Bin( 100 \max \{ 1, \gamma \} \log n , 1/m )$ for $v \notin I_\epsilon$, a Chernoff bound implies that 
\begin{align*}
\p \left( k_{+,i} + k_{-,i} \ge \frac{\epsilon}{2} \log n \right) & \le \inf\limits_{\theta > 0} \left \{ e^{- \theta \frac{\epsilon}{2}\log n} \left( 1 + \frac{1}{m} ( e^\theta - 1) \right)^{ 100 \max \{ 1, \gamma\} \log n} \right \} \\
& \le \mathrm{exp} \left( - \sup\limits_{\theta > 0} \left \{ \frac{\theta \epsilon}{2} - \frac{100 \max \{ 1, \gamma \}}{m} ( e^\theta - 1) \right \} \log n \right) \\
& \le \mathrm{exp} \left( - \left( \log \left( \frac{\epsilon m}{200 \max \{ 1, \gamma \}} \right) - 1 \right) \frac{\epsilon}{2} \log n \right) = o(n^{-1}).
\end{align*}
In the display above, the first inequality on the third line uses  $\theta = \log ( \epsilon m  / (200 \max \{ 1, \gamma \}))$ and the final equality follows from our choice of $m$ in Algorithm \ref{alg:MNS}. It therefore holds that $k_{+, \neg i} - k_{-, \neg i} \ge \frac{\epsilon}{2} \log n$ for all $v \notin I_\epsilon$ with probability $1 - o(1)$.

From this point, the proof of the lemma is identical to the one provided in \cite{mossel2016consistency}; we state a brief overview of the proof here and defer the interested reader to \cite[Proposition 4.3]{mossel2016consistency} for the details. Consider the random variables $X^+ : = | \cN(v) \cap U_{i,+} \cap V^-|$ and $X^{-} : = | \cN(v) \cap U_{i,-} \cap V^+|$, which correspond to the number of neighbors of $v$ in $[n] \setminus U_i$ that are misclassified by the partition $(U_{i,+}, U_{i,-})$. We can write 
\begin{align*}
| \cN(v) \cap U_{i,+} | & = k_{+, \neg i} - X^+ + X^-, \\
| \cN(v) \cap U_{i,-} | & = k_{-, \neg i} - X^- + X^+.
\end{align*}
It follows that $v$ is correctly classified if $|X^+ - X^-| < | k_{+, \neg i} - k_{-, \neg i} |/2$. Moving forward, our goal is to show that this inequality holds with high probability.

Following \cite{mossel2016consistency}, we let $E^- : = | U_{i,-} \cap V^+|$ and $E^+ : = | U_{i,+} \cap V^-|$ denote the total number of vertices of each type misclassified by the partition $(U_{i, +}, U_{i,-})$ that are potential neighbors of $v$. In particular, $E^+ = o(n)$ and $E^- = o(n)$. As this is a partition corresponding to the structure of $G \{[n] \setminus U_i\}$, the neighbors of $v$ in $[n] \setminus U_i$ are independent of the partition. Moreover, the vertices in $U_{i,-} \cap V^+$ are equally likely to be neighbors of $v$, with the same holding for vertices in $U_{i,+} \cap V^-$. We can therefore generate $X^-$ by randomly choosing $k_{+, \neg i}$ vertices of $V^+ \setminus U_{i}$ without replacement, with $X^-$ being equal to the number of sampled vertices in $U_{i, -} \cap V^+$. Conditioned on $E^-$, $E^+$, $k_{+, \neg i}$, and $k_{-, \neg i}$, we thus have the distributional representations
\begin{align*}
X^- & \stackrel{d}{=} \mathrm{HyperGeom} ( |V^+ \setminus U_i|, k_{+, \neg i}, E^-) \hspace{0.5cm} \text{and} \hspace{0.5cm} X^+ \stackrel{d}{=} \mathrm{HyperGeom} ( |V^- \setminus U_i |, k_{-, \neg i}, E^+ ).
\end{align*}
From here, one can utilize properties of Hypergeometric random variables to show that with high probability,
$$
\left| \E \left[ X^- \, \middle| \, |V^+ \setminus U_i |, k_{+, \neg i}, E^- \right] - \E \left[ X^+ \, \middle| \, | V^- \setminus U_i |, k_{-, \neg i}, E^+ \right] \right| 
= o(1) | k_{+, \neg i} - k_{-, \neg i} |.
$$
Finally, the lemma follows from showing that $X^+$ and $X^-$ concentrate around their means and by taking a union bound over all vertices. An identical analysis holds for the case where $v \in (U_i \cap V^-) \setminus I_\epsilon(G)$. 
\end{proof}

The next few results further characterize the set $I_{\epsilon}(G)$. 

\begin{lemma}
\label{lemma:vertex_degree_bound}
Denote $\gamma : = \max \{\alpha, \beta\}$. Then for every $\boldsymbol{\sigma_{*}}$ we have that 
$$
\p \left( \forall i \in [n], | \cN(i) | \le 100 \max \{1, \gamma \} \log(n) \, \middle| \, \boldsymbol{\sigma_{*}} \right) \geq 1- n^{-99}.
$$
\end{lemma}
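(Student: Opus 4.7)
The plan is to prove this via a standard Chernoff bound on the degree of each vertex, followed by a union bound over the $n$ vertices. Fix any community labeling $\boldsymbol{\sigma_{*}}$ and fix a vertex $i \in [n]$. Since each potential edge $\{i,j\}$ is present independently with probability at most $\gamma \log(n)/n$, the degree $|\cN(i)|$ is stochastically dominated by $X \sim \mathrm{Bin}(n-1, \gamma \log(n)/n)$. In particular, $\E[X] \le \gamma \log n$.

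Set $t := 100 \max\{1,\gamma\} \log n$ and note that $t \ge 100\, \E[X]$. I would then apply the standard multiplicative Chernoff bound in the form
\[
\p\left( X \ge t \right) \le \left( \frac{e\, \E[X]}{t} \right)^{t},
\]
valid whenever $t \ge e\, \E[X]$. If $\gamma \ge 1$, then $\E[X] \le \gamma \log n$ and $t = 100 \gamma \log n$, so the ratio $e\E[X]/t \le e/100$, giving
\[
\p(X \ge t) \le (e/100)^{100 \gamma \log n} \le n^{-100 \gamma \log(100/e)} \le n^{-100}.
\]
If $\gamma < 1$, then $\E[X] \le \gamma \log n$ and $t = 100 \log n$, so $e\E[X]/t \le e\gamma/100 \le e/100$, and the same estimate gives $\p(X \ge t) \le n^{-100}$. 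In either case, $\p(|\cN(i)| \ge t \mid \boldsymbol{\sigma_{*}}) \le n^{-100}$.

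Applying a union bound over all $i \in [n]$,
\[
\p\left( \exists i \in [n] : |\cN(i)| > 100 \max\{1,\gamma\} \log n \,\middle|\, \boldsymbol{\sigma_{*}} \right) \le n \cdot n^{-100} = n^{-99},
\]
which yields the claim. This argument is essentially routine: the only step requiring minor care is verifying the Chernoff tail uniformly in the two regimes $\gamma \ge 1$ and $\gamma < 1$, so that the factor $\max\{1,\gamma\}$ in the threshold $t$ is what makes the bound hold in both cases without losing the desired $n^{-99}$ rate.
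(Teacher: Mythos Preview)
Your proof is correct and follows essentially the same approach as the paper: stochastic domination of $|\cN(i)|$ by a binomial, a concentration bound giving $n^{-100}$ per vertex, and a union bound. The only cosmetic difference is that the paper invokes Bernstein's inequality rather than the multiplicative Chernoff bound you use, but this is immaterial.
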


\begin{proof}
Fix $i \in [n]$, and let $X \sim \Bin\left(n, \gamma \log(n)/n\right)$. Then for every $\boldsymbol{\sigma_{*}}$ we have that 
\begin{align*}
\p \left( | \cN(i) | \ge 100 \max \{1, \gamma \} \log n \, \middle| \, \boldsymbol{\sigma_{*}} \right) & \le \p ( X \ge 100 \max \{1, \gamma \} \log n ) \\
& \le \mathrm{exp} \left( - \frac{ ( 100 \max \{1, \gamma \} \log n )^2 }{ 2 \gamma \log n + \frac{200}{3} \max \{1, \gamma \} \log n } \right) \\
& \le \mathrm{exp} \left( - 100 \max \{1, \gamma \} \log n \right)
\le n^{- 100}. 
\end{align*}
The second line uses Bernstein's inequality and the third uses $2 \gamma + \frac{200}{3} \max \{1, \gamma \} \le 100 \max \{1, \gamma \}$. We conclude by taking a union bound.
\end{proof}

\begin{lemma}
\label{lemma:number-incorrectly-classified}
Suppose that $\dchab < 99$. Then for every $\epsilon > 0$ we have that  
$$
\E [ | I_\epsilon(G) | ] \le 3n^{ 1 - \dchab + \epsilon | \log(\alpha / \beta) |}.
$$
\end{lemma}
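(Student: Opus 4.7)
My plan is to split $I_{\epsilon}(G)$ into two parts according to which of the two defining conditions is responsible. Define
\[
I_{\epsilon}^{\deg}(G) := \{v \in [n]: |\cN(v)| \ge 100 \max\{1,\gamma\} \log n\}
\qquad \text{and} \qquad
I_{\epsilon}^{\maj}(G) := I_{\epsilon}(G) \setminus I_{\epsilon}^{\deg}(G),
\]
so that every vertex in $I_{\epsilon}^{\maj}(G)$ has $|\cN(v)| < 100 \max\{1,\gamma\} \log n$ and $|\maj_G(v)| \le \epsilon \log n$ (or $-\maj_G(v) \le \epsilon \log n$ if $\alpha < \beta$). The term $\E[|I_{\epsilon}^{\deg}(G)|]$ is handled immediately by Lemma~\ref{lemma:vertex_degree_bound} and linearity of expectation: $\E[|I_{\epsilon}^{\deg}(G)|] \le n \cdot n^{-99} = n^{-98}$. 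Since we assume $\dchab < 99$, this contribution is dominated by $n^{1 - \dchab + \epsilon |\log(\alpha/\beta)|}$.

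For $\E[|I_{\epsilon}^{\maj}(G)|]$, fix a vertex $v$ and assume without loss of generality that $\alpha > \beta$. Condition on the community-label vector $\boldsymbol{\sigma_{*}}$. Then, by independence of edge variables in the SBM, $Y := |\cN(v) \cap V^{\sigma_{*}(v)}|$ and $Z := |\cN(v) \cap V^{-\sigma_{*}(v)}|$ are independent with
$Y \sim \Bin(|V^{\sigma_{*}(v)}|-1, \alpha \log(n)/n)$
and
$Z \sim \Bin(|V^{-\sigma_{*}(v)}|, \beta \log(n)/n)$,
and $\maj_G(v) = Y - Z$. On the good event $\cG := \{n/2 - n^{3/4} \le |V^+|, |V^-| \le n/2 + n^{3/4}\}$ (which has probability $1 - O(e^{-\sqrt{n}})$ by Hoeffding's inequality, as in the proof of Lemma~\ref{lemma:F}), the parameters $m_{+}$ and $m_{-}$ of these binomials are both $(1+o(1))n/2$, so Lemma~\ref{lemma:binomial-tail} applies and yields
\[
\p \left( \maj_G(v) \le \epsilon \log n \,\middle|\, \boldsymbol{\sigma_{*}}\right) \mathbf{1}(\cG) \le n^{-\dchab + \epsilon \log(\alpha/\beta)/2 + o(1)},
\]
uniformly in $v$. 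Summing over $v \in [n]$ and accounting for the bad event (on which $|I_{\epsilon}^{\maj}(G)| \le n$) gives
\[
\E [|I_{\epsilon}^{\maj}(G)|] \le n^{1 - \dchab + \epsilon \log(\alpha/\beta)/2 + o(1)} + n \cdot \p(\cG^c) \le 2 n^{1 - \dchab + \epsilon |\log(\alpha/\beta)|}
\]
for $n$ sufficiently large, where in the last step we use that $\log(\alpha/\beta)/2 = |\log(\alpha/\beta)|/2 < |\log(\alpha/\beta)|$ (when $\alpha \neq \beta$) provides strict slack to absorb the $o(1)$ in the exponent, and that $n e^{-\sqrt{n}}$ is superpolynomially small. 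The case $\alpha < \beta$ is completely symmetric: the roles of $Y$ and $Z$ are reversed and one applies Lemma~\ref{lemma:binomial-tail} to $Z - Y \le \epsilon \log n$, producing the same bound with $\log(\beta/\alpha)/2 = |\log(\alpha/\beta)|/2$. The degenerate case $\alpha = \beta$ is vacuous since then the target bound is $3n$, which is trivial. Adding the two contributions yields $\E[|I_{\epsilon}(G)|] \le 3 n^{1 - \dchab + \epsilon |\log(\alpha/\beta)|}$.

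There is no real obstacle here; the main step is simply applying the already established binomial tail bound of Lemma~\ref{lemma:binomial-tail} after conditioning on $\boldsymbol{\sigma_{*}}$ and the balanced-community event $\cG$, and then checking that the resulting exponent $\epsilon \log(\alpha/\beta)/2$ fits inside the target exponent $\epsilon |\log(\alpha/\beta)|$ with enough slack to absorb the $o(1)$. The hypothesis $\dchab < 99$ is used exactly to ensure that the high-degree contribution $n^{-98}$ is dominated by $n^{1-\dchab + \epsilon |\log(\alpha/\beta)|}$.
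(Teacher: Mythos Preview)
Your proof is correct and follows essentially the same route as the paper: both arguments condition on the balanced-community event $\cG$, invoke Lemma~\ref{lemma:binomial-tail} to bound $\p(\maj_G(v)\le\epsilon\log n)$ by $n^{-\dchab+\epsilon|\log(\alpha/\beta)|/2+o(1)}$, and handle the high-degree contribution via Lemma~\ref{lemma:vertex_degree_bound} together with the hypothesis $\dchab<99$. The only cosmetic difference is that you split $I_\epsilon(G)$ explicitly into $I_\epsilon^{\deg}$ and $I_\epsilon^{\maj}$, whereas the paper achieves the same effect by intersecting with the event $\cH$ of Lemma~\ref{lemma:vertex_degree_bound}.
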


\begin{proof}
Assume that $\alpha > \beta$. 
Let $\cG$ be the event that 
$n/ 2 - n^{3/4} \le | V^+ |, | V^-| \le n/2 + n^{3/4}$ 
and let $\cH$ be the event defined in Lemma~\ref{lemma:vertex_degree_bound}. 
For any $i \in [n]$, we have that
$$
\p \left( \{ i \in I_\epsilon(G) \} \cap \cH \, \middle| \, \boldsymbol{\sigma_{*}} \right) 
\le \p \left( \maj_G(i) \le \epsilon \log n \, \middle| \, \boldsymbol{\sigma_{*}} \right).
$$
Notice that $\maj_G(i) \stackrel{d}{=} Y - Z$, where $Y, Z$ are independent with 
$$
Y \sim \mathrm{Bin} \left( |V^{\sigma_*(i)}| - 1, \alpha \frac{\log n}{n} \right) \qquad \text{and} \qquad Z \sim \mathrm{Bin} \left( |V^{-\sigma_*(i)} |, \beta \frac{\log n}{n} \right).
$$
Since $|V^+| = (1 - o(1)) n / 2$ and $|V^-| = (1 - o(1)) n / 2$ on the event $\cG$, Lemma \ref{lemma:binomial-tail} implies that
$$
\p \left( \maj_G(i) \le \epsilon \log n \, \middle| \, \boldsymbol{\sigma_{*}} \right) \mathbf{1}(\cG)  
\le n^{ - \dch(\alpha, \beta) + \frac{\epsilon \log ( \alpha / \beta )}{2} + o(1) } 
\le n^{ - \dchab + \epsilon \log (\alpha / \beta)},
$$
where the final inequality holds for $n$ sufficiently large. Putting everything together, we can bound the probability that $i \in I_\epsilon(G)$ as
\begin{align*}
\p ( i \in I_\epsilon(G) ) & \le \p ( \{ i \in I_\epsilon(G) \} \cap \cG \cap \cH ) + \p (\cG^c) + \p( \cH^c) \\
& \le \E \left[ \p \left( \{ i \in I_\epsilon(G) \} \cap \cH \, \middle| \, \boldsymbol{\sigma_*} \right) \mathbf{1} ( \cG ) \right] + 2n^{-99} \\
& \le n^{ - \dchab + \epsilon \log(\alpha / \beta) } + 2 n^{ - 99} \le 3n^{ - \dchab + \epsilon \log ( \alpha / \beta) },
\end{align*}
where the inequality on the second line uses Lemmas~\ref{lemma:F} and~\ref{lemma:vertex_degree_bound} and the final inequality follows from $\dchab - \epsilon \log (\alpha / \beta) < 99$ under our assumption that $\dchab < 99$. Finally, to bound the expectation of $|I_\epsilon(G)|$, we can write
$$
\E [ | I_\epsilon (G) |] = \sum\limits_{i \in [n] } \p ( i \in I_\epsilon(G) ) \le 3n^{1 - \dchab + \epsilon \log (\alpha / \beta)}.
$$
The case $\alpha < \beta$ follows from identical arguments. 
\end{proof}

\begin{lemma}
\label{lemma:I_internal}
If  $0 < \epsilon < \frac{\dchab}{2 |\log (\alpha / \beta)|}$, then
$$
\p \left( \forall i \in [n], | \cN(i) \cap I_\epsilon(G) | \le 2 \left \lceil \dchab^{-1} \right \rceil \, \middle| \, \boldsymbol{\sigma_{*}} \right) = 1 - o(1).
$$
\end{lemma}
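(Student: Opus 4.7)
The plan is to combine a union bound with a decoupling argument that removes the mild dependence between ``bad'' neighborhoods. Set $k := 2 \lceil \dchab^{-1} \rceil$; the task is to show that, conditional on $\boldsymbol{\sigma_*}$, with probability $1-o(1)$ no vertex has $k+1$ neighbors in $I_\epsilon(G)$. Taking a union bound over $i \in [n]$ and over all $(k+1)$-subsets $\{j_1,\ldots,j_{k+1}\} \subseteq [n] \setminus \{i\}$, this reduces to controlling, for each such tuple, $\p\bigl( \bigcap_{\ell=1}^{k+1} \{i \sim j_\ell\} \cap \{j_\ell \in I_\epsilon(G)\} \,\big|\, \boldsymbol{\sigma_*} \bigr)$. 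Lemma~\ref{lemma:vertex_degree_bound} lets us restrict to the event that all degrees are below $100 \max\{1,\gamma\} \log n$, at an additive cost of $n^{-99}$; on this event, membership in $I_\epsilon(G)$ is determined purely by the majority condition.

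Assume $\alpha > \beta$ (the other case is symmetric). The main technical difficulty is that the events $\{j_\ell \in I_\epsilon(G)\}$ are correlated, since the neighborhoods of the $j_\ell$'s share vertices within the set $S := \{i, j_1, \ldots, j_{k+1}\}$. To decouple, let $G'$ denote $G$ with all edges inside $S$ deleted. Each $j_\ell$ loses at most $|S|-1 = k+1$ incident edges, each shifting its majority by at most $1$, so $\maj_G(j_\ell) \le \epsilon \log n$ implies $\maj_{G'}(j_\ell) \le \epsilon \log n + (k+1)$. Crucially, the $k+1$ single-edge events $\{i \sim j_\ell\}$ and the $k+1$ events $\{\maj_{G'}(j_\ell) \le \epsilon \log n + k+1\}$ (each depending only on edges from $j_\ell$ to $[n] \setminus S$) involve pairwise disjoint sets of potential edges, and are therefore mutually independent conditional on $\boldsymbol{\sigma_*}$.

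With independence in hand, the computation is direct: $\p(i \sim j_\ell \mid \boldsymbol{\sigma_*}) \le \gamma \log n / n$, while Lemma~\ref{lemma:binomial-tail} applied to $\maj_{G'}(j_\ell)$ (still a difference of independent binomials with parameters $(1+o(1))n/2$, since $|S| = O(1)$) at threshold $(\epsilon + o(1)) \log n$ gives $\p(\maj_{G'}(j_\ell) \le \epsilon \log n + k+1 \mid \boldsymbol{\sigma_*}) \le n^{-\dchab + \epsilon |\log(\alpha/\beta)| + o(1)}$. Multiplying the $k+1$ factors and then applying the union bound over the $n \cdot \binom{n}{k+1} \le n^{k+2}$ choices of $(i, \{j_\ell\})$ yields
\begin{equation*}
\p \bigl( \exists i:\ |\cN(i) \cap I_\epsilon(G)| \ge k+1 \,\big|\, \boldsymbol{\sigma_*} \bigr) \;\le\; n^{\,1 - (k+1)\bigl( \dchab - \epsilon |\log(\alpha/\beta)| \bigr) + o(1)} + n^{-99}.
\end{equation*}

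Under the hypothesis $\epsilon < \dchab / (2 |\log(\alpha/\beta)|)$ we have $\dchab - \epsilon|\log(\alpha/\beta)| > \dchab/2$, and the choice $k = 2 \lceil \dchab^{-1} \rceil$ ensures $k+1 > 2/\dchab$, giving $(k+1)(\dchab - \epsilon |\log(\alpha/\beta)|) > 1$ strictly. Hence the exponent is bounded away from $0$ by a positive constant for large $n$, and the right-hand side is $o(1)$. The only delicate step is the decoupling: one must verify that excising the $O(1)$ intra-$S$ edges alters neither the threshold used in Lemma~\ref{lemma:binomial-tail} nor the binomial parameters in any essential way, both of which remain $(\epsilon + o(1)) \log n$ and $(1+o(1))n/2$, respectively.
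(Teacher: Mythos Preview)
Your proposal is correct and follows essentially the same approach as the paper's proof: union bound over a center vertex $i$ and a size-$(k+1)$ subset $S$ of candidate bad neighbors, decouple the majority events by deleting the $O(1)$ edges internal to $S \cup \{i\}$ (shifting the threshold by a constant, hence still $(\epsilon + o(1))\log n$), then apply Lemma~\ref{lemma:binomial-tail} to each now-independent factor and conclude via the same numerology $(k+1)\cdot\dchab/2 > 1$. The only cosmetic differences are that the paper packages $\{i \sim j_\ell\}$ and the modified majority event into a single event $E_{j_\ell}$ before invoking independence, and explicitly carries along the balanced-communities event $\cG$ and the degree-bound event $\cH$ rather than absorbing them into $o(1)$ terms.
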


\begin{proof}
Let us assume $\alpha > \beta$, let $\cG$ be the event where $n/2 - n^{3/4} \le |V^+ |, |V^- | \le n/2 + n^{3/4}$, and let $\cH$ be the event defined in Lemma \ref{lemma:vertex_degree_bound}. Fix $S \subset [n]$, and assume that $|S|$ is of constant size with respect to $n$. In the calculations below, we will use the representation 
$$
\maj_G(j) = \sigma_*(j) \sum\limits_{k \in \cN(j)} \sigma_*(k).
$$
We can then write
\begin{align*}
\p \left(  \{ S \subset \cN(i) \cap I_\epsilon(G) \} \cap \cH \, \middle| \, \boldsymbol{\sigma_{*}} \right) 
& \le \p \left( \forall j \in S,  i \sim j \text{ and } \sigma_*(j) \sum\limits_{k \in \cN(j) }  \sigma_*(k) \le  \epsilon \log n \, \middle| \, \boldsymbol{\sigma_{*}} \right ) \\
& \le \p \left( \forall j \in S, i \sim j \text{ and } \sigma_*(j) \sum\limits_{k \in \cN(j) \setminus ( S \cup \{i \})}  \sigma_*(k) \le 2 \epsilon \log n \, \middle| \, \boldsymbol{\sigma_{*}} \right),
\end{align*}
where the inequality on the second line is due to $| S \cup \{i \}| \le \epsilon \log n$ for $n$ sufficiently large. To simplify the right hand side, let us introduce the events
$$
E_j : = \left \{ i \sim j \text{ and } \sigma_*(j) \sum\limits_{k \in \cN(j) \setminus ( S \cup \{i \})}  \sigma_*(k) \le 2 \epsilon \log n \right \}, \qquad j \in S.
$$
Notice in particular that conditioned on $\boldsymbol{\sigma_{*}}$, the event $E_j$ depends on the neighbors of $j$ outside of the set $S$, hence the $E_j$'s are conditionally independent with respect to $\boldsymbol{\sigma_{*}}$. We can therefore bound
\begin{equation}
\label{eq:Ej_independence_bound}
\p \left( \{ S \subset \cN(i) \cap I_\epsilon(G)  \} \cap \cH \, \middle| \, \boldsymbol{\sigma_{*}} \right) 
\le \prod\limits_{j \in S} \p \left( E_j \, \middle| \, \boldsymbol{\sigma_{*}} \right).
\end{equation}
To bound $\p\left(E_j \, \middle| \, \boldsymbol{\sigma_{*}} \right)$, 
notice that the events 
$\{i \sim j \}$ and 
$\{ \sigma_*(j) \sum_{k \in \cN(j) \setminus ( S \cup \{i \}) } \sigma_*(k) \le 2 \epsilon \log n \}$ 
are independent conditioned on $\boldsymbol{\sigma_{*}}$. Furthermore, on $\cG$ we have the distributional representation 
$$
\sigma_*(j) \sum\limits_{k \in \cN(j) \setminus (S \cup \{ i \}) } \sigma_*(k) \stackrel{d} = Y - Z,
$$
where, since $|S| = o(n)$, $Y$ and $Z$ are independent with 
$$
Y \sim \mathrm{Bin} \left( (1 - o(1))\frac{n}{2} - 1, \alpha \frac{\log n}{n} \right), \qquad Z \sim \mathrm{Bin} \left((1 - o(1)) \frac{n}{2}, \beta \frac{\log n}{n} \right).
$$
Using $\p \left(i \sim j \, \middle| \, \boldsymbol{\sigma_{*}} \right) \le \gamma \log(n) / n$, as well as Lemma~\ref{lemma:binomial-tail}, shows that 
$$
\p \left( E_j \, \middle| \, \boldsymbol{\sigma_{*}} \right) \mathbf{1}(\cG)
\le \frac{\gamma \log n}{n} \cdot n^{ - \dchab + \epsilon \log(\alpha / \beta) + o(1)} = n^{ - 1 - \dchab + \epsilon \log(\alpha / \beta) + o(1)}.
$$
Combining the bound in the display above with~\eqref{eq:Ej_independence_bound} shows that 
$$
\p \left( \{ S \subset \cN(i) \cap I_\epsilon(G) \} \cap \cH \, \middle| \, \boldsymbol{\sigma_{*}} \right) \mathbf{1}(\cG)  
\le n^{ - |S| ( 1 + \dchab - \epsilon \log(\alpha / \beta) - o(1))}.
$$
We may now take a union bound over subsets $S$ of size $m$ to obtain  
\begin{align*}
\p \left( \{| \cN(i) \cap I_\epsilon(G) | \ge m \} \cap \cH \, \middle| \, \boldsymbol{\sigma_{*}} \right) \mathbf{1}(\cG)
& \le \sum\limits_{S \subset [n] : |S| = m} \p \left(\{ S \subset \cN(i) \cap I_\epsilon(G) \} \cap \cH \, \middle| \, \boldsymbol{\sigma_{*}} \right) \mathbf{1}(\cG) \\
& \le n^{- m \left ( \dch(\alpha, \beta) - \epsilon \log \left( \alpha / \beta \right) - o(1) \right)}.
\end{align*}
The final inequality uses the fact that the number of $m$-element subsets of $[n]$ is at most $n^m$. Taking a union bound over $i \in [n]$ now shows that 
\begin{align}
\p \left( \{ \exists i \in [n] : | \cN(i) \cap I_\epsilon(G) | \ge m \} \cap \cH \, \middle| \, \boldsymbol{\sigma_{*}} \right) \mathbf{1}(\cG) 
&\le n^{1 -m \left ( \dch(\alpha, \beta) - \epsilon \log \left( \alpha / \beta \right) - o(1) \right)} \nonumber \\
&\le n^{1 - m \dchab / 2 + o(1)}, \label{eq:errors_in_neighborhood_conditional_prob}
\end{align}
where the final inequality uses the assumption that $\epsilon \le \frac{\dchab }{2 \log(\alpha / \beta)}$. Moreover, the right hand side is $o(1)$ provided $m > 2\dch(\alpha, \beta)^{-1}$, so we may set $m := \lceil 2 \dchab^{-1} \rceil + 1$ for all of the arguments to hold. The desired result follows from the bound
\begin{align*}
\p ( \exists i \in [n] : | \cN(i) \cap I_\epsilon(G) | \ge m ) & \le \p ( \{ \exists i \in [n]: | \cN(i) \cap I_\epsilon(G) | \ge m \} \cap \cG \cap \cH ) + \p ( \cG^c) + \p ( \cH^c) \\
& \le \E \left[ \p \left( \{ \exists i \in [n] : | \cN(i) \cap I_\epsilon(G) | \ge m \} \cap \cH \, \middle| \, \boldsymbol{\sigma_*} \right) \mathbf{1}(\cG) \right] + o(1) \\
& = o(1),
\end{align*}
where the inequality on the first line is due to a union bound, the inequality on the second line uses the tower rule as well as Lemmas \ref{lemma:vertex_degree_bound} and \ref{lemma:F}, and the final inequality is due to \eqref{eq:errors_in_neighborhood_conditional_prob} as well as our assumption on $\epsilon$. The case $\alpha < \beta$ follows from identical arguments. 
\end{proof}

\subsection{From almost exact to exact recovery in $[n] \setminus \overline{F}$}

In this subsection, our main result is the following.

\begin{lemma}
\label{lemma:alg_outside_Fbar} Suppose that
$$
\left(1 - (1 - s)^{2} \right) \dchab > 1 + 2 \epsilon | \log (\alpha / \beta) | \hspace{1cm} \text{ and } \hspace{1cm}  0 < \epsilon \le \frac{ s \dchab}{4 | \log (\alpha / \beta) |}.
$$
Then, with probability $1 - o(1)$, Algorithm \ref{alg:labeling-k-core} run on the input $(G_1, G_2, (\widehat{M}, \wh{\mu}), \alpha, \beta, s, \epsilon)$ correctly labels all vertices in $[n] \setminus \overline{F}$. 
\end{lemma}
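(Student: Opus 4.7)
The plan is to reduce the lemma to three building blocks: (i) an almost-exact labeling $\widehat{\boldsymbol{\sigma}}_1$ of $G_1$ whose error set lies in $I_\epsilon(G_1)$; (ii) a robust $\epsilon \log n$ true-label majority at every vertex of the full union graph $G_1 \lor_{\pi_*} G_2$; (iii) a proof that restricting to $[n] \setminus \overline{F}$ and replacing $\boldsymbol{\sigma_*}$ by $\widehat{\boldsymbol{\sigma}}_1$ distorts each majority by only $o(\log n)$. Throughout, I work on the high-probability event $(\widehat{M}, \wh{\mu}) = (M_*, \pi_*\{M_*\})$ supplied by Lemma~\ref{lemma:k_core_sbm_v2} (cf.\ Remark~\ref{remark:M_hat_star_swap}), so $\overline{F}$ is effectively defined relative to $(M_*, \pi_*)$.

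\textbf{Step 1 (almost-exact labeling of $G_1$).} Since marginally $G_1 \sim \mathrm{SBM}(n, s\alpha \log n / n, s\beta \log n / n)$ and the hypothesis $\epsilon \le s\dchab / (4 |\log(\alpha/\beta)|)$ is well below the thresholds of Lemmas~\ref{lemma:MNS} and~\ref{lemma:I_internal} applied with $\dch$-parameter $\dch(s\alpha,s\beta) = s\dchab$, I invoke both lemmas to obtain, with probability $1 - o(1)$: (a) $\widehat{\sigma}_1(v) = \sigma_*(v)$ for every $v \in [n] \setminus I_\epsilon(G_1)$, and (b) $|\cN_{G_1}(v) \cap I_\epsilon(G_1)| \le 2 \lceil (s\dchab)^{-1} \rceil = O(1)$ uniformly in $v$.

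\textbf{Step 2 (strong majority in the union graph).} Since marginally $G_1 \lor_{\pi_*} G_2 \sim \mathrm{SBM}(n, (1-(1-s)^2)\alpha \log n / n, (1-(1-s)^2)\beta \log n / n)$, the hypothesis $(1 - (1-s)^2) \dchab > 1 + 2 \epsilon |\log(\alpha/\beta)|$ applied to Lemma~\ref{lemma:number-incorrectly-classified} yields
\[
\E \bigl[ \bigl| I_\epsilon(G_1 \lor_{\pi_*} G_2) \bigr| \bigr] \le 3 n^{1 - (1-(1-s)^2)\dchab + \epsilon|\log(\alpha/\beta)|} = o(1),
\]
so by Markov's inequality $I_\epsilon(G_1 \lor_{\pi_*} G_2) = \emptyset$ with high probability. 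On this event every $v$ has $\mathrm{maj}_{G_1 \lor_{\pi_*} G_2}(v) \ge \epsilon \log n$ and $|\cN_{G_1 \lor_{\pi_*} G_2}(v)| \le 100 \max\{1, (1-(1-s)^2)\max\{\alpha,\beta\}\} \log n$. This supplies an explicit absolute cushion of $\epsilon \log n$ for the subsequent perturbation analysis and will be formalized in a companion statement of the form of Lemma~\ref{lemma:union_graph_muhat_majority}.

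\textbf{Step 3 (perturbation from $\overline{F}$ and from labeling errors).} For $v \in [n] \setminus \overline{F}$, write $\Delta_F(v) := |\cN_{G_1 \lor_{\pi_*} G_2}(v) \cap \overline{F}|$ and $\Delta_I(v) := |\cN_{G_1 \lor_{\pi_*} G_2}(v) \cap I_\epsilon(G_1) \cap ([n] \setminus \overline{F})|$. The majority of $\widehat{\boldsymbol{\sigma}}_1$ used by the algorithm differs from the true $\boldsymbol{\sigma_*}$-majority in the full union graph by at most $2(\Delta_F(v) + \Delta_I(v))$, so it suffices to show $\Delta_F(v) + \Delta_I(v) < (\epsilon/2) \log n$ uniformly. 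For $\Delta_F(v)$: the Luczak expansion (Algorithm~\ref{alg:Luczak} and Lemma~\ref{lemma:luczak}) bounds by $1$ the $G_2'$-contribution. For the $G_1$-contribution, the key observation is that $\overline{F}$ is measurable with respect to $(\boldsymbol{\sigma_*}, \cE_{11}, \cE_{01}, G_1 \land_{\pi_*} G_2, G_2 \setminus_{\pi_*} G_1)$, while the edges of $G_1 \setminus_{\pi_*} G_2$ live in $\cE_{10}$ and are, by Lemma~\ref{lemma:random_partition}, conditionally independent of $\overline{F}$; combined with $|\overline{F}| \le n^{1 - s^2 \dconnab + o(1)}$ (Lemmas~\ref{lemma:number-outside-core} and~\ref{lemma:luczak} applied to the $13$-core of $G_1 \land_{\pi_*} G_2$, which is a sub-SBM with parameter $s^2 \dconnab$), a Chernoff bound plus a union bound over $v$ give $\Delta_F(v) = O(1)$ uniformly. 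An analogous conditional-independence argument, combined with (b) of Step~1 for the intra-$G_1$ contribution, yields $\Delta_I(v) = O(1)$ uniformly.

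\textbf{Main obstacle.} The subtle part is Step 3: $\overline{F}$ is not quite independent of $G_1$, and $I_\epsilon(G_1)$ is not quite independent of $G_2$, because both sides share the common edges in $\cE_{11}$. Cleanly decoupling these dependencies is precisely why Algorithm~\ref{alg:labeling-k-core} builds $\overline{F}$ from $G_2$ (and not from $G_1 \lor_{\wh{\mu}} G_2$), and is why the random-partition alternate construction of Section~\ref{sec:alt_construction} is indispensable for this step.
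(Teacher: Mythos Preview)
Your proposal is correct and follows essentially the same route as the paper. You reduce to $(M_*,\pi_*\{M_*\})$, use Lemma~\ref{lemma:MNS} and Lemma~\ref{lemma:I_internal} for Step~1, establish a robust true-label majority in $G_1\lor_{\pi_*}G_2$ for Step~2, and then bound the two perturbations $\Delta_F$ and $\Delta_I$ exactly as the paper does: the $G_2'$-part of $\Delta_F$ by the {\L}uczak property, the $G_1\setminus_{\pi_*}G_2$-part via conditional independence of $\cE_{10}$-edges from $\overline{F}_*$ (this is the content of the paper's Lemma~\ref{lemma:union_graph_muhat_majority}), and $\Delta_I$ via Lemma~\ref{lemma:I_internal} plus the $\cE_{01}$-independence argument (this is Lemma~\ref{lemma:neighbors_g2_minus_g1_Iepsilon}). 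Two small remarks: in Step~2 the paper takes the more direct route of Lemma~\ref{lemma:binomial-tail} plus a union bound (your use of Lemma~\ref{lemma:number-incorrectly-classified} imports a harmless $\dch<99$ side condition), and in Step~3 your conditioning should explicitly include $\pi_*$ and the full partition $\{\cE_{ab}\}$ to invoke Lemma~\ref{lemma:random_partition} cleanly.
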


The proof of Lemma~\ref{lemma:alg_outside_Fbar} follows from several intermediate steps. First, we show in Lemmas~\ref{lemma:union_graph_majority} and~\ref{lemma:union_graph_muhat_majority} that all vertices in $(G_1 \lor_{\widehat{\mu}} G_2) \{[n] \setminus \overline{F}\}$ have an $\epsilon \log n$ majority that is aligned with the ground-truth community label. 

\begin{lemma}
\label{lemma:union_graph_majority}
If $\left(1 - (1 - s)^{2} \right) \dchab > 1 + \epsilon | \log(\alpha / \beta) |$, then with probability $1 - o(1)$, we have for all $i \in [n]$ that $\maj_{G_1 \lor_{\pi_*} G_2} (i) \ge \epsilon \log n$. 
\end{lemma}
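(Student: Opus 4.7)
The plan is to identify the correctly-aligned union graph $G_1 \lor_{\pi_*} G_2$ as itself a stochastic block model. Indeed, a pair $(i,j)$ is an edge of $G_1 \lor_{\pi_*} G_2$ precisely when $(i,j)$ is an edge of the parent graph $G$ and it survives at least one of the two independent subsampling steps, which happens with conditional probability $1-(1-s)^2$. Hence
$$G_1 \lor_{\pi_*} G_2 \;\sim\; \SBM\!\left(n,\, \tfrac{\alpha' \log n}{n},\, \tfrac{\beta' \log n}{n}\right), \qquad \alpha' := (1-(1-s)^2)\alpha, \quad \beta' := (1-(1-s)^2)\beta,$$
so that $\dch(\alpha',\beta') = (1-(1-s)^2)\dchab$ while $\log(\alpha'/\beta') = \log(\alpha/\beta)$.

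With this observation in hand, the claim reduces to the binomial tail estimate in Lemma~\ref{lemma:binomial-tail}. Assume without loss of generality that $\alpha > \beta$ (the case $\alpha < \beta$ is symmetric after replacing $\maj$ with $-\maj$). Fix $i \in [n]$, condition on $\boldsymbol{\sigma_*}$, and work on the high-probability event $\cG := \{|V^+|, |V^-| = (1+o(1))n/2\}$, which holds with high probability by Lemma~\ref{lemma:F}. Then $\maj_{G_1 \lor_{\pi_*} G_2}(i) \stackrel{d}{=} Y - Z$ for independent $Y \sim \Bin(|V^{\sigma_*(i)}|-1,\, \alpha' \log(n)/n)$ and $Z \sim \Bin(|V^{-\sigma_*(i)}|,\, \beta' \log(n)/n)$, and Lemma~\ref{lemma:binomial-tail} (applied with the primed parameters) yields
$$\P\!\left(\maj_{G_1 \lor_{\pi_*} G_2}(i) \le \epsilon \log n \,\big|\, \boldsymbol{\sigma_*}\right) \mathbf{1}(\cG) \;\le\; n^{-\left((1-(1-s)^2)\dchab \,-\, \epsilon \log(\alpha/\beta)/2\right) + o(1)}.$$
The hypothesis $(1-(1-s)^2)\dchab > 1 + \epsilon|\log(\alpha/\beta)|$ forces the exponent to strictly exceed $1 + \epsilon|\log(\alpha/\beta)|/2$, giving a per-vertex failure probability of $o(1/n)$. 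A union bound over $i \in [n]$ together with $\P(\cG^c) = o(1)$ completes the argument.

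I do not expect any real obstacle. The only genuine idea is the structural observation that the correctly-matched union graph is itself an SBM with effective intra-/inter-community parameters $\alpha'$ and $\beta'$; everything after that is a standard concentration estimate already packaged in Lemma~\ref{lemma:binomial-tail}. The margin $\epsilon|\log(\alpha/\beta)|$ built into the hypothesis is chosen precisely to absorb the $\epsilon\log(\alpha/\beta)/2$ penalty coming from the tail bound and still leave room to close the union bound over the $n$ vertices.
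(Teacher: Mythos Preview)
Your proposal is correct and is precisely the expansion the paper intends: its own proof is the single sentence ``The proof follows directly from Lemma~\ref{lemma:binomial-tail} and a union bound,'' and you have filled in exactly those details---identifying $G_1 \lor_{\pi_*} G_2$ as an SBM with rescaled parameters $\alpha',\beta'$, applying the binomial tail bound per vertex, and closing with the union bound.
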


\begin{proof}
The proof follows directly from Lemma \ref{lemma:binomial-tail} and a union bound.
\end{proof}

\begin{lemma}
\label{lemma:union_graph_muhat_majority}
Suppose that $(1 - (1 - s)^2) \dchab > 1 + 2\epsilon | \log (\alpha / \beta) |$. Then with probability $1 - o(1)$, all vertices in $[n] \setminus \overline{F}$ have an $\epsilon \log n$ majority in $(G_1 \lor_{\widehat{\mu}} G_2) \{ [n] \setminus \overline{F} \}$. 
\end{lemma}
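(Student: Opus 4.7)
The plan is to apply Lemma~\ref{lemma:union_graph_majority} with $2\epsilon$ in place of $\epsilon$ and then show that passing from $G_{1}\vee_{\pi_{*}}G_{2}$ on all of $[n]$ to $(G_{1}\vee_{\widehat{\mu}}G_{2})\{[n]\setminus\overline{F}\}$ decreases the majority at any vertex by at most $\epsilon\log n$. First, by Lemma~\ref{lemma:k_core_sbm_v2}, with high probability $(\widehat{M},\widehat{\mu})=(M_{*},\pi_{*}\{M_{*}\})$, so $\widehat{\mu}$ agrees with $\pi_{*}$ on $\widehat{M}\supseteq [n]\setminus\overline{F}$; hence on this event the graphs $(G_{1}\vee_{\widehat{\mu}}G_{2})\{[n]\setminus\overline{F}\}$ and $(G_{1}\vee_{\pi_{*}}G_{2})\{[n]\setminus\overline{F}\}$ coincide. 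Lemma~\ref{lemma:union_graph_majority} applied with $2\epsilon$ (whose hypothesis is exactly ours) gives, with high probability, $\maj_{G_{1}\vee_{\pi_{*}}G_{2}}(i)\geq 2\epsilon\log n$ for every $i\in[n]$. Since restricting the vertex set to $[n]\setminus\overline{F}$ can change the majority at any $i\in[n]\setminus\overline{F}$ by at most $|\cN_{G_{1}\vee_{\pi_{*}}G_{2}}(i)\cap\overline{F}|$, it suffices to show that this neighborhood count is at most $\epsilon\log n$ uniformly for $i\in[n]\setminus\overline{F}$ with high probability.

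I would bound this count via the split $|\cN_{G_{1}\vee_{\pi_{*}}G_{2}}(i)\cap\overline{F}|\leq |\cN_{G_{2}'}(i)\cap\overline{F}|+|\cN_{G_{1}\setminus_{\pi_{*}}G_{2}}(i)\cap\overline{F}|$. The first term is at most $1$ by construction: on the event that $\widehat{\mu}$ agrees with $\pi_{*}$ on $\widehat{M}$ one has $\pi_{*}(\overline{F})=F'$, the \Luczak expansion of $\pi_{*}(F)$ in $G_{2}$, and by the output guarantee of Algorithm~\ref{alg:Luczak} every vertex of $G_{2}$ outside $F'$ has at most one $G_{2}$-neighbor in $F'$. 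For the second term I would exploit the alternate construction of Section~\ref{sec:alt_construction}: after conditioning on $\boldsymbol{\sigma_{*}}$, $\pi_{*}$, the random partition $\{\cE_{ab}\}$, and all edges in $\cE_{01}\cup\cE_{11}$, the set $\overline{F}$ is determined while the edges of $G_{1}\setminus_{\pi_{*}}G_{2}$ are still independent Bernoullis on $\cE_{10}$. Combining Lemma~\ref{lemma:number-outside-core} applied to the intersection SBM $G_{1}\wedge_{\pi_{*}}G_{2}\sim\SBM(n,\alpha s^{2}\log n/n,\beta s^{2}\log n/n)$ with Lemma~\ref{lemma:luczak} yields $|\overline{F}|\leq 3|F|\leq n^{1-s^{2}\dconnab+o(1)}$ with high probability. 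On this event, the conditional independence above together with a first-moment/union bound gives, for any fixed integer $C$,
\[
\p\Bigl(\exists\,i\in[n]:\,|\cN_{G_{1}\setminus_{\pi_{*}}G_{2}}(i)\cap\overline{F}|\geq C\Bigr)\ \leq\ n\left(|\overline{F}|\cdot\max\{p,q\}\right)^{C}+o(1)\ \leq\ n^{1-C\,s^{2}\dconnab+o(1)},
\]
which is $o(1)$ as soon as $C>1/(s^{2}\dconnab)$. Choosing such a constant $C$ (independent of $n$) and adding the unit bound from the $G_{2}'$ side gives $|\cN_{G_{1}\vee_{\pi_{*}}G_{2}}(i)\cap\overline{F}|\leq C+1$, which is trivially below $\epsilon\log n$ for large $n$, completing the argument.

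The main technical subtlety is controlling the dependency between $\overline{F}$ and the edges of $G_{1}$ used in the counting step. The fact that the \Luczak expansion in Algorithm~\ref{alg:labeling-k-core} is performed purely inside $G_{2}$ (rather than, say, in $G_{1}\vee_{\widehat{\mu}}G_{2}$) is the key design choice that allows the conditioning above to cleanly decouple $\overline{F}$ from the Bernoulli randomness of $G_{1}\setminus_{\pi_{*}}G_{2}$ needed for the first-moment estimate; without this decoupling, one would have to deal with the circular dependence that the edges used to build $\overline{F}$ are the same edges one wants to count.
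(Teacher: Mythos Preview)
Your proposal is correct and follows essentially the same approach as the paper's proof: reduce to $\pi_*$ via Lemma~\ref{lemma:k_core_sbm_v2}, invoke Lemma~\ref{lemma:union_graph_majority} at level $2\epsilon$, and then split $|\cN_{G_1\vee_{\pi_*}G_2}(i)\cap\overline{F}|$ into a $G_2$-piece (bounded by $1$ via the \Luczak property) and a $G_1\setminus_{\pi_*}G_2$-piece (bounded by a constant via conditional independence with $\overline{F}$ and the size bound $|\overline{F}|\le n^{1-s^2\dconnab+o(1)}$). The only cosmetic differences are that the paper uses Poisson domination plus a Chernoff bound where you use the direct subset union bound $\binom{|\overline{F}|}{C}(\max\{p,q\})^C$, and the paper writes the neighborhood decomposition as an equality $\cN_{G_1\vee_{\pi_*}G_2}(i)=\cN_{G_1\setminus_{\pi_*}G_2}(i)\sqcup\cN_{G_2'}(i)$ rather than your inequality; both yield the same conclusion.
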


\begin{proof}
Recall that $F_* : = [n] \setminus M_*$ is the set of vertices outside the 13-core of $G_1 \land_{\pi_*} G_2$, and that $\overline{F}_*$ is the \Luczak expansion of $F_*$ with respect to $G_2$. In light of Lemma~\ref{lemma:k_core_sbm_v2} and Remark~\ref{remark:M_hat_star_swap}, it suffices to replace $\overline{F}$ with $\overline{F}_*$ and $G_1 \lor_{\widehat{\mu}} G_2$ with $( G_1 \lor_{\pi_*} G_2 ) \{ [n] \setminus F_* \}$ in our analysis. For brevity, we denote $H : = (G_1 \lor_{\pi_*} G_2) \{ [n] \setminus \overline{F}_* \}$ in the remainder of this proof.

To begin, notice that we can lower bound the neighborhood majority of $i \in [n] \setminus \overline{F}_*$ in $H$ as 
\begin{align}
\maj_H(i) & = \sigma_*(i)\sum\limits_{j \in \cN_H(i)} \sigma_*(j) \nonumber 
\ge \sigma_*(i) \sum\limits_{j \in \cN_{G_1 \lor_{\pi_*} G_2 } (i) } \sigma_*(j) - | \cN_{G_1 \lor_{\pi_*} G_2}(i) \cap \overline{F}_* | \nonumber \\
\label{eq:majH_lower_bound}
& = \maj_{G_1 \lor_{\pi_*} G_2} (i) - | \cN_{G_1 \setminus_{\pi_*} G_2 }(i) \cap \overline{F}_* | - | \cN_{G_2}(i) \cap \overline{F}_* |.
\end{align}
Through identical arguments, we also have the upper bound 
\begin{equation}
\label{eq:majH_upper_bound}
\maj_H(i) \le \maj_{G_1 \lor_{\pi_*} G_2}(i) + | \cN_{G_1 \setminus_{\pi_*} G_2 }(i) \cap \overline{F}_* | + | \cN_{G_2}(i) \cap \overline{F}_* |.
\end{equation}
Combining \eqref{eq:majH_lower_bound} and \eqref{eq:majH_upper_bound}, we obtain 
\begin{equation}
\label{eq:majH_absolute_value}
\left| \maj_H(i) - \maj_{G_1 \lor_{\pi_*} G_2} (i) \right| \le | \cN_{G_1 \setminus_{\pi_*} G_1} (i) \cap \overline{F}_* | + | \cN_{G_2} (i) \cap \overline{F}_* |. 
\end{equation}
In particular, since $| \maj_{G_1 \lor_{\pi_*} G_2}(i) | \ge 2 \epsilon \log n$ for all $i \in [n]$ with probability $1 - o(1)$ under the condition $(1 - (1 - s)^2) \dchab > 1 + 2 \epsilon | \log (\alpha / \beta) |$ by Lemma \ref{lemma:union_graph_majority}, it is ensured that $\maj_H(i) \ge \epsilon \log n$ provided the right hand side of \eqref{eq:majH_absolute_value} is at most $\epsilon \log n$ for all $i \in [n]$. We show that this is indeed the case by bounding the two terms on the right hand side of \eqref{eq:majH_absolute_value} separately.

We start by bounding $| \cN_{G_1 \setminus_{\pi_*} G_2} (i) \cap \overline{F}_*|$. Conditioned on $\pi_*$, $\boldsymbol{\sigma_*}$, and $\boldsymbol{\cE} := \{ \cE_{00}, \cE_{01}, \cE_{10}, \cE_{11} \}$, 
the graph $G_1 \setminus_{\pi_*} G_2$ 
is independent of $\overline{F}_*$ by Lemma~\ref{lemma:random_partition}, 
since $\overline{F}_*$ depends only on $G_1 \land_{\pi_*} G_2$ and~$G_2$. Thus we can stochastically dominate $| \cN_{G_1 \setminus_{\pi_*} G_2} (i) \cap \overline{F}_* |$ by a Poisson random variable $X$ with mean $\lambda_n$ given~by 
$$
\lambda_n := \gamma \frac{\log n}{n} \left| \{ j \in \overline{F}_*: \{ i,j \} \in \cE_{10} \} \right | \le \gamma \frac{\log n}{n} | \overline{F}_* |, 
$$
where $\gamma : = \max \{\alpha, \beta \}$. Next, for a fixed $\delta > 0$, define the event 
$$
\cG : = \left \{ | \overline{F}_* | \le n^{1 - s^2 \dconnab + \delta}  \right \},
$$
and notice that on $\cG$, $\lambda_n \le n^{ - s^2 \dconnab + \delta + o(1) }$. Thus for any positive integer $m$ we have that 
\begin{align*}
\p \left( \{ | \cN_{G_1 \setminus_{\pi_*} G_2 }(i) \cap \overline{F}_* | \ge m \} \cap \cG \right) 
& \le \p ( \{ X \ge m \} \cap \cG  ) 
= \E \left [ \p \left( X \ge m \, \middle| \, | \overline{F}_* |, \boldsymbol{\cE}, \boldsymbol{\sigma_*}, \pi_* \right) \mathbf{1} ( \cG ) \right ] \\
& \le \E \left [ \left( \inf\limits_{\theta > 0} e^{-\theta m + \lambda_n ( e^\theta - 1) } \right) \mathbf{1}( \cG) \right ] \\
& \le \E [ e \lambda_n^m \mathbf{1} ( \cG ) ]
\le n^{- m ( s^2 \dconnab - \delta - o(1)) }.
\end{align*}
Above, the equality on the first line is due to the tower rule and since $\cG$ is measurable with respect to $\overline{F}_*$; the inequality on the second line is due to a Chernoff bound; the inequality on the third line follows from setting $\theta = \log (1 / \lambda_n)$ (which is valid since $\lambda_n = o(1)$ if $\cG$ holds); the final inequality uses the upper bound for $\lambda_n$ on $\cG$. Taking a union bound over $i \in [n]$ shows that 
\begin{equation*}
\label{eq:neighbors_g1_minus_g2_fbar_1}
\p \left( \{ \exists i \in [n]: | \cN_{G_1 \setminus_{\pi_*} G_2}(i) \cap \overline{F}_* | \ge m \} \cap \cG \right) \le n^{ 1- m ( s^2 \dconnab - \delta - o(1))}.
\end{equation*}
In particular, for $\delta$ taken to be sufficiently small, the right hand side is $o(1)$ if $m > ( s^2 \dconnab )^{-1}$ (it suffices to set $m = \lceil ( s^2 \dconnab )^{-1} \rceil + 1$). Finally, since $\p ( \cG) = 1 - o(1)$ in light of Lemmas \ref{lemma:number-outside-core} and \ref{lemma:luczak}, we obtain 
\begin{equation}
\label{eq:neighbors_g1_minus_g2_fbar_2}
\p \left( \forall i \in [n], | \cN_{G_1 \setminus_{\pi_*} G_2} (i) \cap \overline{F}_* | \le \left \lceil ( s^2 \dconnab )^{-1} \right \rceil  \right) = 1 - o(1).
\end{equation}
Turning to the second term on the right hand side of \eqref{eq:majH_absolute_value}, notice that by the definition of the \Luczak expansion, every $i \in [n] \setminus \overline{F}_*$ can have at most one neighbor in $\overline{F}_*$ with respect to the graph $G_2$, hence $| \cN_{G_2}(i) \cap \overline{F}_* | \le 1$. This observation coupled with \eqref{eq:neighbors_g1_minus_g2_fbar_2} shows that with probability $1 - o(1)$, it holds for all $i \in [n] \setminus \overline{F}_*$ that $| \maj_H(i) - \maj_{G_1 \lor_{\pi_*} G_2} | \le \epsilon \log n$. 
\end{proof}

Next, in Lemma \ref{lemma:neighbors_g2_minus_g1_Iepsilon}, we show that with probability $1 - o(1)$, each vertex in $G_2 \setminus_{\pi_*} G_1$ (and therefore also $G_2 \setminus_{\widehat{\mu}} G_1$) has a small number of neighbors in $I_\epsilon(G_1)$. 

\begin{lemma}
\label{lemma:neighbors_g2_minus_g1_Iepsilon}
If $0 < \epsilon \le \frac{ s \dchab }{4 | \log (\alpha / \beta) |}$, then 
$$
\p \left( \forall i \in [n], | \cN_{G_2 \setminus_{\pi_*} G_1 }(i) \cap I_\epsilon(G_1) | \le 2 \left \lceil (s \dchab )^{-1} \right \rceil  \right) = 1 - o(1).
$$
\end{lemma}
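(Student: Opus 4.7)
The proof mirrors that of Lemma~\ref{lemma:I_internal}, with the new ingredient being the conditional independence between $G_2 \setminus_{\pi_*} G_1$ and the ``bulk'' structure of $G_1$ at any fixed vertex, which follows because the correlation channel between $G_1$ and $G_2$ acts independently across edge pairs. Without loss of generality, assume $\alpha > \beta$; the case $\alpha < \beta$ is symmetric. Let $\cG$ be the event that $|V^+|, |V^-| \in [n/2 - n^{3/4}, n/2 + n^{3/4}]$, and let $\cH$ be the event that every vertex of $G_1$ has degree at most $100 \max\{1, s\alpha, s\beta\} \log n$; by Lemmas~\ref{lemma:F} and~\ref{lemma:vertex_degree_bound} (applied to $G_1 \sim \SBM(n, s\alpha \log(n)/n, s\beta \log(n)/n)$), both occur with probability $1 - o(1)$.

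Fix $i \in [n]$ and a set $S \subseteq [n] \setminus \{i\}$ of constant size $m$ (to be chosen later). On the event $\cH$ the degree threshold in the definition of $I_\epsilon(G_1)$ is inactive, so $j \in I_\epsilon(G_1)$ implies $\maj_{G_1}(j) \le \epsilon \log n$; since $|S \cup \{i\}| = O(1)$, this in turn implies $\maj_{G_1 \setminus (S \cup \{i\})}(j) \le 2 \epsilon \log n$ for $n$ large. Introduce the event
\[
E_j^{*} := \left\{ j \in \cN_{G_2 \setminus_{\pi_*} G_1}(i) \right\} \cap \left\{ \maj_{G_1 \setminus (S \cup \{i\})}(j) \le 2\epsilon \log n \right\}, \qquad j \in S.
\]
The events $\{E_j^{*}\}_{j \in S}$ are mutually conditionally independent given $(\boldsymbol{\sigma_{*}}, \pi_*)$: indeed, $E_j^{*}$ is determined by $(A_{ij}, B_{\pi_*(i) \pi_*(j)})$ together with $\{A_{jk} : k \notin S \cup \{i\}\}$, and since every other $j' \in S$ is excluded from these neighborhoods and $\pi_*$ is a bijection, the underlying edge-index sets for distinct $j \neq j' \in S$ are disjoint. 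The same reasoning shows that the two sub-events comprising a single $E_j^{*}$ are themselves conditionally independent. Hence
\[
\p\left( \{S \subseteq \cN_{G_2 \setminus_{\pi_*} G_1}(i) \cap I_\epsilon(G_1)\} \cap \cH \,\middle|\, \boldsymbol{\sigma_{*}}, \pi_* \right) \le \prod_{j \in S} \p\left(E_j^{*} \,\middle|\, \boldsymbol{\sigma_{*}}, \pi_*\right).
\]

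To bound each factor, the joint probabilities from Section~\ref{sec:notation} give $\p(j \in \cN_{G_2 \setminus_{\pi_*} G_1}(i) \mid \boldsymbol{\sigma_{*}}, \pi_*) \le s(1-s) \gamma \log(n)/n$ with $\gamma = \max\{\alpha, \beta\}$. On $\cG$, $\maj_{G_1 \setminus (S \cup \{i\})}(j) \stackrel{d}{=} Y - Z$ with independent $Y \sim \Bin((1 - o(1)) n/2, s\alpha \log(n)/n)$ and $Z \sim \Bin((1 - o(1))n/2, s\beta \log(n)/n)$, so Lemma~\ref{lemma:binomial-tail} (noting $\dch(s\alpha, s\beta) = s\dchab$ and $\log(s\alpha/(s\beta)) = \log(\alpha/\beta)$) gives a tail of $n^{-s\dchab + \epsilon|\log(\alpha/\beta)| + o(1)}$. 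Combining, $\p(E_j^{*} \mid \boldsymbol{\sigma_{*}}, \pi_*) \mathbf{1}(\cG) \le n^{-1 - s\dchab + \epsilon|\log(\alpha/\beta)| + o(1)}$.

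Raising to the $m$-th power, union bounding over the at most $n^m$ choices of $S$ and over $i \in [n]$, and absorbing $\p(\cG^c) + \p(\cH^c) = o(1)$, yields
\[
\p\left( \exists\, i \in [n] :\, |\cN_{G_2 \setminus_{\pi_*} G_1}(i) \cap I_\epsilon(G_1)| \ge m \right) \le n^{1 - m(s\dchab - \epsilon|\log(\alpha/\beta)|) + o(1)} + o(1).
\]
Using the hypothesis $\epsilon \le \frac{s\dchab}{4|\log(\alpha/\beta)|}$, the right-hand side is at most $n^{1 - 3ms\dchab/4 + o(1)} + o(1)$, which is $o(1)$ whenever $m > 4/(3 s\dchab)$; choosing $m = 2\lceil (s\dchab)^{-1} \rceil + 1$ suffices and yields the stated bound of $2\lceil (s\dchab)^{-1} \rceil$. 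The main delicacy is the conditional independence structure: the correlation between $G_1$ and $G_2$ lives entirely on the single edge pair $\{i,j\}$ at each $j$, while $j \in I_\epsilon(G_1)$ is a macroscopic property of $j$'s neighborhood in $G_1$, and the two become decoupled once the $O(1)$-many ``problematic'' edges are excluded from the majority count.
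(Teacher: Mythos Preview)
Your proof is correct, but it takes a genuinely different route from the paper's. The paper's proof invokes the random partition construction (Lemma~\ref{lemma:random_partition}): conditioning on $\pi_*$, $\boldsymbol{\sigma_*}$, and $\boldsymbol{\cE}$, the entire graph $G_2 \setminus_{\pi_*} G_1$ is independent of $G_1$ and hence of $I_\epsilon(G_1)$. One then stochastically dominates $|\cN_{G_2 \setminus_{\pi_*} G_1}(i) \cap I_\epsilon(G_1)|$ by a Poisson with mean at most $\gamma \frac{\log n}{n}|I_\epsilon(G_1)|$, invokes the global size bound on $|I_\epsilon(G_1)|$ from Lemma~\ref{lemma:number-incorrectly-classified}, and finishes with a Chernoff bound. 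By contrast, you work purely at the edge level, as in Lemma~\ref{lemma:I_internal}: for each fixed $S$ you factor the event over $j \in S$ using that the variables $(A_{ij}, B'_{ij})$ and $\{A_{jk} : k \notin S \cup \{i\}\}$ are disjoint across $j$ and across the two sub-events. This avoids both the random partition machinery and Lemma~\ref{lemma:number-incorrectly-classified}, at the cost of repeating the union-bound-over-$S$ calculation. The paper's argument is more modular (it cleanly reuses the size bound on $I_\epsilon(G_1)$), while yours is more elementary and self-contained.
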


\begin{proof}
Since $I_\epsilon(G_1)$ depends on $G_1$ alone, it follows that $I_\epsilon(G_1)$ and $G_2 \setminus_{\pi_*} G_1$ are conditionally independent given $\pi_{*}$, $\boldsymbol{\sigma_*}$, and $\boldsymbol{\cE}$. Hence we can stochastically dominate $| \cN_{G_2 \setminus_{\pi_*} G_1}(i) \cap I_\epsilon(G_1) |$ by a Poisson random variable $X$ with mean $\lambda_n$ given by 
$$
\lambda_n := \gamma \frac{\log n}{n} \left | \{ j \in I_\epsilon(G_1) :  \{i,j \} \in \cE_{01} \} \right | \le \gamma \frac{\log n}{n} | I_\epsilon(G_1) |.
$$
Next, define the event 
$$
\cG : = \left \{ | I_\epsilon(G_1) | \le n^{1 - s \dchab + 2 \epsilon | \log (\alpha / \beta) |} \right \}.
$$
Notice that $\p ( \cG) = 1 - o(1)$ by Lemma \ref{lemma:number-incorrectly-classified} and Markov's inequality, provided $s \dchab < 99$. We may assume so without loss of generality, since if $s \dchab > 1$ we could simply recover all communities from $G_1$ or $G_2$ alone. Following identical arguments as the proof of Lemma \ref{lemma:union_graph_muhat_majority}, we arrive at 
$$
\p \left( \exists i \in [n] : | \cN_{G_2 \setminus_{\pi_*} G_1}(i) \cap I_\epsilon(G_1) | \ge m \right) = o(1), 
$$
provided $m > ( s \dchab - 2 \epsilon | \log (\alpha / \beta) |)^{-1}$. In particular, if $0 < \epsilon \le \frac{ s \dchab }{4 | \log (\alpha / \beta) |}$ then it suffices to set $m = 2 \lceil ( s \dchab )^{-1} \rceil + 1$. 
\end{proof}

Putting all of our intermediate results together shows that for each vertex in $(G_2 \setminus_{\widehat{\mu}} G_1) \{[n] \setminus \overline{F} \}$, the number of incorrectly-classified neighbors is small, hence Algorithm \ref{alg:labeling-k-core} succeeds with probability $1 - o(1)$. We prove this formally below. 

\begin{proof}[Proof of Lemma \ref{lemma:alg_outside_Fbar}]
In light of Lemma \ref{lemma:k_core_sbm_v2}, it suffices to prove the claim with $G_1 \lor_{\widehat{\mu}} G_2$ replaced by $(G_1 \lor_{\pi_*} G_2) \{ [n] \setminus \overline{F}_* \}$. As a shorthand, we denote the latter graph by $H$. We also set $H_1 : = G_1 \{ [n] \setminus \overline{F}_* \}$ and $H_{2 \setminus 1} : = ( G_2 \setminus_{\pi_* } G_1) \{ [n] \setminus \overline{F}_* \}$. Let $\widehat{\boldsymbol{\sigma}}_1$ be the almost-exact community labels for $G_1$ produced by Algorithm \ref{alg:MNS}, and notice that in light of Lemma \ref{lemma:MNS}, the set of errors are contained in $I_\epsilon(G_1)$ with probability $1 - o(1)$. Moving forward, we may therefore condition on this event. 

We can now compare the neighborhood majorities in $H$ corresponding to $\widehat{\boldsymbol{\sigma}}_1$ with the true majority in $H$ as follows. For any $i \in [n] \setminus \overline{F}_*$, we have that 
\begin{align*}
\left| \sigma_*(i) \sum\limits_{j \in \cN_H(i)} \widehat{{\sigma}}_1(j) - \maj_H(i) \right| 
& = \left | \sum\limits_{j \in \cN_H(i) } \sigma_*(i) \left( \widehat{\sigma}_1(j) - \sigma_*(j) \right) \right |  
\le | \cN_H(i) \cap I_\epsilon(G_1) | \\
& = | \cN_{H_1}(i) \cap I_\epsilon(G_1) | + | \cN_{H_{2 \setminus 1}}(i) \cap I_\epsilon(G_1) | \\
& \le | \cN_{G_1}(i) \cap I_\epsilon(G_1) | + | \cN_{G_2 \setminus_{\pi_*} G_1}(i) \cap I_\epsilon(G_1) | 
\le \frac{\epsilon}{2} \log n.
\end{align*}
Above, the first inequality follows since $\widehat{\sigma}_1(j) \neq \sigma_*(j)$ implies that $j \in I_\epsilon(G_1)$; the next inequality follows since $H_1$ is a subgraph of $G_1$ and $H_{2 \setminus 1}$ is a subgraph of $G_2 \setminus_{\pi_*} G_1$; and the final inequality is due to Lemmas~\ref{lemma:I_internal} and~\ref{lemma:neighbors_g2_minus_g1_Iepsilon}. To conclude the proof, notice that since $\maj_H(i)  \ge \epsilon \log n$ for $i \in [n] \setminus \overline{F}_*$ by Lemma~\ref{lemma:union_graph_muhat_majority}, it follows that the sign of all neighborhood majorities are equal to the ground truth community label for all vertices in $H$, with probability $1 - o(1)$. 
\end{proof}

\subsection{Classifying $\overline{F} \setminus F$}

The remaining set to classify is $\overline{F} \setminus F$, which we can do via a simple majority vote. 

\begin{lemma}
\label{lemma:C2_minus_C1}
Suppose that $\alpha, \beta, \epsilon > 0$ satisfy the following conditions:
\[
(1 - (1 - s)^2) \dchab > 1 + 2 \epsilon | \log(\alpha / \beta) |, 
\qquad \qquad \qquad
0 < \epsilon \le \frac{ s \dchab }{4 | \log(\alpha / \beta)| }, 
\]
\[
s^2 \dconnab + s(1 - s) \dchab > 1.
\] 
Then, with high probability, Step \ref{step:alg2-9} of Algorithm \ref{alg:labeling-k-core} with input $(G_1, G_2, (\widehat{M}, \wh{\mu}), \alpha, \beta, s, \epsilon)$ correctly labels all vertices in $\overline{F} \setminus F$.
\end{lemma}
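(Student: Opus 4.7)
The plan is to leverage the conditional independence structure from Lemma~\ref{lemma:random_partition} to show that the majority vote in Step~\ref{step:alg2-9} succeeds at every vertex in $\overline{F} \setminus F$ with high probability. First, by Lemma~\ref{lemma:k_core_sbm_v2} and Remark~\ref{remark:M_hat_star_swap}, I replace $(\widehat{M}, \widehat{\mu})$ by $(M_*, \pi_*\{M_*\})$, so that $\overline{F}$ becomes $\overline{F}_*$ (the \Luczak expansion of $F_* := [n] \setminus M_*$ with respect to $G_2$) and $G_1 \setminus_{\widehat{\mu}} G_2$ becomes $G_1 \setminus_{\pi_*} G_2$. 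By Lemma~\ref{lemma:alg_outside_Fbar}, I may further condition on the high-probability event that $\widehat{\sigma}(j) = \sigma_*(j)$ for every $j \in [n] \setminus \overline{F}_*$, so that the majority vote in Step~\ref{step:alg2-9} effectively uses ground-truth labels on $[n] \setminus \overline{F}_*$. It then suffices to show that for every $i \in \overline{F}_* \setminus F_*$, the quantity $\sigma_*(i) \sum_{j \in \cN(i)} \sigma_*(j)$ is strictly positive when $\alpha > \beta$ (and strictly negative when $\alpha < \beta$), where $\cN(i) := \cN_{G_1 \setminus_{\pi_*} G_2}(i) \cap ([n] \setminus \overline{F}_*)$.

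Next, I unpack the random partition construction of Section~\ref{sec:alt_construction}. The set $F_*$ is determined by $G_1 \land_{\pi_*} G_2$ (edges drawn within $\cE_{11}$), and the \Luczak expansion producing $\overline{F}_*$ additionally uses $G_2 \setminus_{\pi_*} G_1$ (edges within $\cE_{01}$). By Lemma~\ref{lemma:random_partition}, conditioned on $\pi_*$, $\boldsymbol{\sigma_*}$, and $\boldsymbol{\cE} = \{\cE_{00}, \cE_{01}, \cE_{10}, \cE_{11}\}$, the graph $G_1 \setminus_{\pi_*} G_2$ (edges within $\cE_{10}$) is independent of the pair $(G_1 \land_{\pi_*} G_2, G_2 \setminus_{\pi_*} G_1)$, and hence of $(F_*, \overline{F}_*)$. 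Restricting attention to the event $\cF$ (Definition~\ref{def:F}, Lemma~\ref{lemma:F}), fixing $i \in \overline{F}_* \setminus F_*$ with (WLOG) $\alpha > \beta$ and $\sigma_*(i) = +1$, the neighborhood counts in $\cN(i)$ decompose as $Y - Z$ with $Y \sim \Bin(m^+, p)$ and $Z \sim \Bin(m^-, q)$ independent, where
\[
m^{\pm} = \left| V^{\pm} \cap ([n] \setminus \overline{F}_*) \cap \{j : \{i,j\} \in \cE_{10}\} \right|.
\]

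The third step is a Hellinger-type tail bound. On $\cF$, together with $|\overline{F}_*| = o(n)$ (Lemmas~\ref{lemma:number-outside-core} and~\ref{lemma:luczak}), I have $m^{\pm} = (1 + o(1)) s(1-s) n / 2$. Setting $\tilde n := s(1-s) n$, so that $p = (1 + o(1)) \alpha s(1-s) \log \tilde n / \tilde n$ and $q = (1 + o(1)) \beta s(1-s) \log \tilde n / \tilde n$, Lemma~\ref{lemma:majority_probability} yields
\[
\p\left( Y < Z \,\middle|\, \pi_*, \boldsymbol{\sigma_*}, \boldsymbol{\cE}, \overline{F}_* \right) = n^{-\dch(\alpha s(1-s), \beta s(1-s)) + o(1)} = n^{-s(1-s) \dchab + o(1)}.
\]
Finally, since $|\overline{F}_*| \le n^{1 - s^2 \dconnab + o(1)}$ with high probability, a union bound over $i \in \overline{F}_* \setminus F_*$ gives
\[
\p\left( \exists i \in \overline{F}_* \setminus F_* : \text{Step~\ref{step:alg2-9} misclassifies } i \right) \le n^{1 - s^2 \dconnab + o(1)} \cdot n^{-s(1-s) \dchab + o(1)} = o(1),
\]
using the hypothesis $s^2 \dconnab + s(1-s)\dchab > 1$.

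The main obstacle will be executing the conditional independence cleanly: one must carefully track that $\overline{F}_*$ and the ``voting edges'' of $G_1 \setminus_{\pi_*} G_2$ draw on \emph{disjoint} parts of the random partition $\boldsymbol{\cE}$, and verify that $m^{\pm}$ concentrate to $(1+o(1)) s(1-s) n/2$ on $\cF$ even though $\overline{F}_*$ is itself random (but of size $o(n)$). The case $\alpha < \beta$ is symmetric, swapping majority with minority throughout.
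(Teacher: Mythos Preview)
Your proposal is correct and follows essentially the same argument as the paper: reduce to $(M_*,\pi_*\{M_*\})$ and ground-truth labels on $[n]\setminus\overline{F}_*$ via Lemmas~\ref{lemma:k_core_sbm_v2} and~\ref{lemma:alg_outside_Fbar}, exploit the conditional independence of $G_1\setminus_{\pi_*}G_2$ from $\overline{F}_*$ given $(\pi_*,\boldsymbol{\sigma_*},\boldsymbol{\cE})$, apply a binomial tail bound with effective sample sizes $(1+o(1))s(1-s)n/2$, and union-bound over $|\overline{F}_*|\le n^{1-s^2\dconnab+o(1)}$. The only cosmetic differences are that the paper invokes Lemma~\ref{lemma:binomial-tail} with an auxiliary threshold $\epsilon'$ (then sent to zero) rather than Lemma~\ref{lemma:majority_probability}, and formalizes the union bound over the random set $\overline{F}_*$ via the tower rule, writing $\sum_i \E[\p(E_{*,i}\mid \pi_*,\boldsymbol{\sigma_*},\boldsymbol{\cE},\overline{F}_*)\mathbf{1}(i\in\overline{F}_*)\mathbf{1}(\cdots)]$ before extracting the uniform bound; you should make this step explicit when writing up, but the content is the same.
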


\begin{proof}
We provide a proof for the case $\alpha > \beta$; the case $\alpha < \beta$ follows from identical arguments. For $i \in [n]$, define the graphs $\widehat{H}_i : = ( G_1 \setminus_{\widehat{\mu}} G_2) \{ \widehat{M} \cup \{i \} \}$ and $H_i : = (G_1 \setminus_{\pi_*} G_2) \{ M_* \cup \{i \} \}$. We also define $E_i$ to be the event where $i$ has a majority of at most $\epsilon' \log n$ in $\widehat{H}_i$ with respect to the community labeling $\widehat{\boldsymbol{\sigma}}$. Our goal is to upper-bound $\mathbb{P}\left(\cup_i (\{i \in \overline{F}\} \cap E_i) \right)$. 

To study this probability, it will be useful to define a ``nice'' event based on our previous results on Algorithm \ref{alg:labeling-k-core}. Let $F_* := [n] \setminus M_*$, and let $\overline{F}_*'$ be the result of applying Algorithm \ref{alg:Luczak} to $(G_2, \pi_*(F_*))$. Let $\overline{F}_* := \pi_*^{-1}(\overline{F_*}')$. That is, $\overline{F}_*$ is the analogue of $\overline{F}$. Finally, let $\widehat{\boldsymbol{\sigma}}$ be the labeling produced by Step \ref{step:classify_bulk} of Algorithm \ref{alg:labeling-k-core}. For a fixed $\delta > 0$, we now define the event $\cH$, which holds if and only if:
\begin{itemize}

\item $\overline{F} = \overline{F}_*$;

\item $G_1 \setminus_{\widehat{\mu}} G_2 = (G_1 \setminus_{\pi_*} G_2 ) \{ M_* \}$; 

\item $\widehat{\sigma}(i) = \sigma_*(i)$ for all $i \in [n] \setminus \overline{F}_*$; 

\item The event $\cF$ holds (see Definition \ref{def:F}); 

\item $| \overline{F}_*| \le n^{ 1 - s^2 \dconnab + \delta }$.

\end{itemize}
By Lemmas \ref{lemma:F}, \ref{lemma:k_core_sbm_v2}, \ref{lemma:luczak}, \ref{lemma:number-outside-core},  and \ref{lemma:alg_outside_Fbar}, we have that $\p( \cH) = 1  - o(1)$. Furthermore, if we define the event $E_{*,i} : = \{ \maj_{H_i}(i) \le \epsilon' \log n \}$, we have that
\begin{align}
\p \left( \bigcup\limits_{i \in [n]} (\{ i \in \overline{F} \} \cap E_i) \right) & \le \p \left( \left( \bigcup\limits_{i \in [n]} (\{ i \in \overline{F} \} \cap E_i) \right) \cap \cH \right) + \p ( \cH^c) \nonumber \\
& \le \p \left( \bigcup\limits_{i \in [n]} (\{ i \in \overline{F}_*\} \cap E_{*,i} \cap \{ | \overline{F}_* | \le n^{1 - s^2 \dconnab + \delta} \} \cap \cF ) \right) + o(1) \nonumber \\
\label{eq:Fbar_prob_decomposition}
& \le \sum\limits_{i = 1}^n \p \left( \{ i \in \overline{F}_* \} \cap E_{*,i} \cap \{ | \overline{F}_* | \le n^{1 - s^2 \dconnab + \delta} \} \cap \cF \right) + o(1).
\end{align}
Using the tower rule, we may rewrite the terms in the summation on the right hand side as 
\begin{equation}
\label{eq:E*_tower_rule}
\E \left[ \p \left( E_{*,i} \, \middle| \, \pi_*, \boldsymbol{\sigma_*}, \boldsymbol{\cE}, \overline{F}_* \right) \mathbf{1}( i \in \overline{F}_*) \mathbf{1} ( \{ | \overline{F}_* | \le n^{1 - s^2 \dconnab + \delta } \} \cap \cF ) \right].
\end{equation}
An important goal is to therefore understand $\p\left( E_{*,i} \, \middle| \, \pi_*, \boldsymbol{\sigma_*}, \boldsymbol{\cE}, \overline{F}_* \right)$. To this end, notice that conditionally on $\pi_*$, $\boldsymbol{\sigma_*}$, and $\boldsymbol{\cE}$, we have the distributional representation $\maj_{H_i}(i) \stackrel{d}{=} Y - Z$, where $Y$ and $Z$ are independent with 
\begin{align*}
Y & \sim \Bin \left ( | \{ j \in M_* : \{i,j \} \in \cE_{10} \cap \cE^+ ( \boldsymbol{\sigma_*^1} ) \}|, \alpha \frac{\log n}{n} \right), \\
Z & \sim \Bin \left ( | \{ j \in M_* : \{i,j \} \in \cE_{10} \cap \cE^- ( \boldsymbol{\sigma_*^1} ) \}|, \beta \frac{\log n}{n} \right).
\end{align*}
Fixing a sufficiently small $\delta > 0$, it holds on the event $\{ | \overline{F}_* | \le n^{1 - s^2 \dconnab + \delta } \} \cap \cF$ (see Definition~\ref{def:F} for a definition of the event $\cF$) that $ | \{ j \in M_* : \{i,j \} \in \cE_{10} \cap \cE^+ ( \boldsymbol{\sigma_*^1} ) \} | = (1 - o(1)) s(1 - s) n / 2$ and $| \{ j \in M_* : \{ i,j \} \in \cE_{10} \cap \cE^-(\boldsymbol{\sigma_*^1}) \} | = (1 - o(1)) s(1 - s) n / 2$. Lemma \ref{lemma:binomial-tail} therefore implies
$$
\p \left( E_{*, i} \, \middle| \, \pi_* , \boldsymbol{\sigma_*} , \boldsymbol{\cE}, \overline{F}_* \right)\mathbf{1} (\{ | \overline{F}_* | \le n^{1 - s^2 \dconnab + \delta} \} \cap \cF) 
\le n^{ - s(1 - s) \dchab + \frac{\epsilon'}{2} \log ( \alpha / \beta) + o(1)}.
$$
Utilizing the expression in \eqref{eq:E*_tower_rule} and taking a union bound over $1 \le i \le n$, we obtain 
\begin{align*}
& \sum\limits_{i = 1}^n  \p ( \{ i \in \overline{F}_* \} \cap E_{*,i} \cap \{ | \overline{F}_* | \le n^{1 - s^2 \dconnab + \delta } \} \cap \cF ) \\
 & \hspace{2cm} \le n^{-s(1 - s) \dchab + \frac{\epsilon'}{2} \log(\alpha / \beta) + o(1)} \sum\limits_{i = 1}^n \E [ \mathbf{1}( i \in \overline{F}_* ) \mathbf{1} ( | \overline{F}_* | \le n^{1 - s^2 \dconnab + \delta} )] \\
& \hspace{2cm} = n^{-s(1 - s) \dchab + \frac{\epsilon'}{2} \log(\alpha / \beta) + o(1)} \E [ | \overline{F}_* | \mathbf{1} ( | \overline{F}_* | \le n^{1 - s^2 \dconnab + \delta} )] \\
& \hspace{2cm} \le n^{ 1 - s(1 - s) \dchab - s^2 \dconnab + \frac{\epsilon'}{2} \log(\alpha / \beta) + \delta + o(1) }.
\end{align*}
Under the condition $s(1 - s) \dchab + s^2 \dconnab > 1$, we can choose $\epsilon'$ and $\delta$ to be small enough so that the bound on the right hand side is $o(1)$. In light of \eqref{eq:Fbar_prob_decomposition}, this proves the lemma. 
\end{proof}

\section{Classifying the rest (Algorithm \ref{alg:main})}\label{sec:classify_rest}
What remains is to classify the set of vertices outside of the $k$-core matching. Again, this can be handled through a simple majority vote.

\begin{lemma}\label{lemma:classifying-F}
Suppose that $\alpha, \beta, \epsilon$ satisfy the same conditions stated in Lemma~\ref{lemma:C2_minus_C1}. Then Step~\ref{step:alg3-3} in Algorithm~\ref{alg:main} on input $(G_1, G_2, 13, \epsilon)$ correctly labels all $i \in F$ with high probability. 
\end{lemma}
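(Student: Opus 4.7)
By Lemmas~\ref{lemma:k_core_sbm_v2}, \ref{lemma:alg_outside_Fbar}, and \ref{lemma:C2_minus_C1}, we may condition on a high-probability event $\cH'$ under which $(\widehat{M},\wh{\mu}) = (M_*, \pi_*\{M_*\})$, $F = F_* := [n]\setminus M_*$, and $\wh{\boldsymbol{\sigma}}(j) = \sigma_*(j)$ for every $j \in \widehat{M}$. Assume $\alpha > \beta$ (the opposite case is symmetric). On $\cH'$, Step~\ref{step:alg3-3} correctly classifies $i \in F$ if and only if $T(i) > 0$, where
\[
T(i) := \sigma_*(i) \sum_{j \in \cN_{G_1}(i) \cap M_*} \sigma_*(j) = T_{\cap}(i) + T_{\setminus}(i),
\]
and $T_{\cap}(i)$, $T_{\setminus}(i)$ are the analogous signed sums restricted to the neighbors of $i$ in $M_*$ via the graphs $G_1 \land_{\pi_*} G_2$ and $G_1 \setminus_{\pi_*} G_2$, respectively. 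Since $i \notin M_*$ and (on $\cH'$) $M_*$ is the vertex set of the $13$-core of $G_1 \land_{\pi_*} G_2$, the vertex $i$ can have at most $12$ neighbors in $M_*$ via $G_1 \land_{\pi_*} G_2$ (otherwise $M_* \cup \{i\}$ would itself satisfy the $13$-core property, contradicting maximality). Hence $|T_\cap(i)| \le 12$, and it suffices to show that with probability $1 - o(1)$ we have $T_\setminus(i) > 12$ for every $i \in F_*$.

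\textbf{Per-vertex bound via the random partition.} Let $\boldsymbol{\cE} := \{\cE_{00}, \cE_{01}, \cE_{10}, \cE_{11}\}$ be the random partition of $\binom{[n]}{2}$ from Section~\ref{sec:alt_construction}. By Lemma~\ref{lemma:random_partition}, conditionally on $\pi_*$, $\boldsymbol{\sigma_*}$, and $\boldsymbol{\cE}$, the graph $G_1 \setminus_{\pi_*} G_2$ (whose edges live on $\cE_{10}$) is independent of $G_1 \land_{\pi_*} G_2$ and of $G_2$, and therefore independent of $F_*$ and $M_*$. On the event $\cF$ of Definition~\ref{def:F} together with the high-probability event $|F_*| = o(n)$ (via Lemma~\ref{lemma:number-outside-core} applied to $G_1 \land_{\pi_*} G_2 \sim \SBM(n, \alpha s^2 \log(n)/n, \beta s^2 \log(n)/n)$), the numbers of potential same- and opposite-community $\cE_{10}$-neighbors of $i$ inside $M_*$ are each $(1+o(1))s(1-s)n/2$. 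Hence $T_\setminus(i) \stackrel{d}{=} Y - Z$ with independent $Y \sim \Bin((1+o(1))s(1-s)n/2, \alpha\log(n)/n)$ and $Z \sim \Bin((1+o(1))s(1-s)n/2, \beta\log(n)/n)$, and Lemma~\ref{lemma:binomial-tail} (with a slowly vanishing $\epsilon_n \to 0$ so that $12 \le \epsilon_n \log n$ for large $n$) yields
\[
\p\bigl(T_\setminus(i) \le 12 \,\bigm|\, \pi_*, \boldsymbol{\sigma_*}, \boldsymbol{\cE}\bigr)\,\mathbf{1}(\cF) \;\le\; n^{-s(1-s)\dchab + o(1)}.
\]

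\textbf{Union bound and the main obstacle.} Fix $\delta > 0$ small and let $\cG := \{|F_*| \le n^{1 - s^2\dconnab + \delta}\}$; by Lemma~\ref{lemma:number-outside-core}, $\p(\cG) = 1 - o(1)$. Since $F_*$ is measurable with respect to $(G_1 \land_{\pi_*} G_2, G_2)$, the conditional independence above lets us apply the per-vertex bound inside the expectation and then sum over $i$:
\[
\p\bigl(\exists\, i \in F_* : T_\setminus(i) \le 12\bigr) \;\le\; \E\bigl[|F_*|\,\mathbf{1}(\cF \cap \cG)\bigr]\cdot n^{-s(1-s)\dchab + o(1)} + \p(\cF^c) + \p(\cG^c),
\]
which is at most $n^{1 - s^2\dconnab - s(1-s)\dchab + \delta + o(1)} + o(1) = o(1)$ once $\delta$ is chosen small enough, thanks to the hypothesis $s^2\dconnab + s(1-s)\dchab > 1$. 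The only real subtlety is that $F_*$ is a globally-defined random set depending intricately on $G_1 \land_{\pi_*} G_2$ and $G_2$, so a direct majority-vote analysis on $G_1$ would be tangled with the definition of $F_*$; the random-partition construction of Section~\ref{sec:alt_construction} is precisely what decouples $F_*$ from the $G_1 \setminus_{\pi_*} G_2$ edges driving the vote, enabling the clean Chernoff-type estimate above.
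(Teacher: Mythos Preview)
Your proposal is correct and follows essentially the same approach as the paper: split the $G_1$-majority into the $G_1 \land_{\pi_*} G_2$ contribution (bounded by $12$ via maximality of the $13$-core) and the $G_1 \setminus_{\pi_*} G_2$ contribution, the latter controlled using the random-partition independence of Lemma~\ref{lemma:random_partition}, Lemma~\ref{lemma:binomial-tail}, and a union bound against $|F_*| \le n^{1-s^2\dconnab+o(1)}$. One minor notational point: in your displayed per-vertex bound the conditioning should also include $F_*$ (since $T_\setminus(i)$ is a sum over $j \in M_*$), exactly as in the paper's proof of Lemma~\ref{lemma:C2_minus_C1}; your subsequent sentence shows you intend this, so the argument is sound.
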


\begin{proof}
For $\epsilon' > 0$, let $E_i$ be the event that $i$ does not have an $\epsilon' \log n$ neighborhood majority in the graph $(G_1 \setminus_{\pi_{*}} G_2) \{ ([n] \setminus F) \cup \{i\} \}$. By nearly identical arguments to the proof of Lemma \ref{lemma:C2_minus_C1}, we can show that 
\[
\mathbb{P}\left(\cup_{i \in [n]} (\{i \in F\} \cap E_i) \right) \leq n^{1-s^2 \dconnab -s(1-s) \dchab + O(\epsilon')} + o(1).
\]
For $\epsilon'$ sufficiently small, the right hand side is $o(1)$ by the assumption on the parameters.

Next, due to the maximality of the $13$-core, any $i \in F$ has at most $12$ neighbors in the graph $(G_1 \land_{\pi_{*}} G_2) \{([n] \setminus F) \cup \{i\} \}$. Therefore, any $i \in F$ has an $(\epsilon' \log n - 12)$ majority in $G_1\{([n] \setminus F) \cup \{i\} \}$, with high probability. 
\end{proof}


\section{Proof of Theorem~\ref{thm:comm_recovery_reduction}} \label{sec:thm_proof_altogether} 
Finally, we prove Theorem~\ref{thm:comm_recovery_reduction}, from which Theorem~\ref{thm:comm_recovery} directly follows.
\begin{proof}[Proof of Theorem \ref{thm:comm_recovery_reduction}] 
By Lemma \ref{lemma:k_core_sbm_v2}, the matching $(\widehat{M}, \wh{\mu})$ obtained in Step \ref{step:alg3-1} coincides with the matching $(M_*, \pi_*\{M_*\})$, with high probability. Subsequently, Lemmas~\ref{lemma:alg_outside_Fbar},~\ref{lemma:C2_minus_C1}, and~\ref{lemma:classifying-F} respectively show that the vertices in the sets $[n] \setminus \overline{F}$, $\overline{F} \setminus F$, and $F$ are correctly labeled with high probability.
\end{proof}



\section{Impossibility of Exact Community Recovery}\label{sec:impossibility_proof}

In this section we prove Theorem~\ref{thm:comm_recovery_impossibility}, which states the conditions under which exact community recovery is impossible. Since impossibility under the condition 
$\left(1 - (1 - s)^{2} \right) \dchab < 1$ 
was already proven in~\cite{RS21}, here we prove impossibility in the regime
\begin{equation}
\label{eq:impossibility_2}
s^{2} \dconnab + s(1-s) \dchab < 1.
\end{equation}
To do so, we study the performance of the maximum a posteriori (MAP) estimator for the communities in $G_1$. We will show that, even when supplied with additional information, such as all the correct community labels in~$G_{2}$ and most of the true vertex matching $\pi_*$, the MAP estimator fails to exactly recover communities with probability bounded away from zero if the condition~\eqref{eq:impossibility_2} holds. Since the MAP estimator is \emph{optimal} in the sense that it maximizes the probability of correctness over all estimators (see, e.g., \cite[Chapter 4]{poor_book}), the result of Theorem~\ref{thm:comm_recovery_impossibility} follows.

\subsection{Notation}

We briefly review and introduce some notation. Let $\boldsymbol{\sigma_{*}^{1}}$ be the ground-truth community labels in~$G_1$, so $\sigma_*^1(i)$ is the ground-truth community label of $i \in [n]$. Correspondingly, we define the ground-truth community partition $(V_1^+, V_1^-)$, where
$$
V_1^+ : = \{ i \in [n] : \sigma_*^1(i) = +1 \} \qquad \text{ and } \qquad V_1^- : = \{ i \in [n] : \sigma_*^1(i) = -1 \}.
$$
We define $\boldsymbol{\sigma_{*}^{2}}$, $V_2^+$, and $V_2^-$ to be the analogous quantities for $G_2$. 
Since $\sigma_{*}^{2} (i) = \sigma_{*}^{1} ( \pi_{*}^{-1}(i))$, 
we have that ${\sigma}_*^2(\pi_*(i)) = \sigma_*^1(i)$, 
and also that 
$V_2^+ = \pi_*( V_1^+)$ and $V_2^- = \pi_*( V_1^-)$. 
Finally, for a given $\pi \in \cS_n$ and a set $K \subset [n]$, let $\pi \{K \} : = \{ (i, \pi(i) ) \}_{i \in K}$ denote the restriction of $\pi$ to the set $K$.

\subsection{The MAP estimator}

We begin by defining the {\it singleton set} of a permutation $\pi$ with respect to the adjacency matrices $A$ and $B$ to be 
$$
R(\pi, A, B) := \left \{ i \in [n] : \forall j \in [n], A_{i,j} B_{\pi(i), \pi(j)} = 0 \right \}.
$$
In words, $R(\pi, A, B)$ is the set of singletons in $G_1 \land_{\pi} G_2$, with respect to the labeling in $G_1$. For brevity, we also write $R_\pi : = R(\pi, A, B)$ and $R_* : = R(\pi_*, A, B)$. We will also consider a pruned version of $R_\pi$ that is formally described below. 

\begin{definition}[The set $S(\pi, A, B)$]
\label{def:S}
We have that $i \in S(\pi, A, B)$ if and only if the following conditions hold:
\begin{enumerate}

\item \label{item:S_T}
We have that $i \in R_\pi$.

\item \label{item:S_singleton}
Vertex $i$ is a singleton in $G_1 \{R_\pi \}$ (equivalently, $A_{i,j} = 0$ for all $j \in R_\pi$).

\item \label{item:S_neighbors}
If $j \in \cN_1(i)$, then $\pi(j) \notin \cN_2(\pi(R_\pi))$ (equivalently, if $A_{i,j} = 1$, then $B_{k, \pi(j) } = 0$ for all $k \in \pi(R_\pi)$). 

\end{enumerate}
For brevity, we also write $S_\pi := S(\pi, A, B)$ and $S_* := S(\pi_*, A, B)$. We remark that if we define the set $\overline{R}_\pi : = R_\pi \cup \pi^{-1} ( \cN_2( \pi(R_\pi)) )$, then conditions \#\ref{item:S_singleton} and \#\ref{item:S_neighbors} above can be succinctly stated as $A_{i,j} = 0$ for all $j \in \overline{R}_\pi$. As per our conventions, when $\pi = \pi_*$ we write $\overline{R}_*$.
\end{definition}

At a high level, $S_*$ is a set that contains no overlapping information across $G_1$ and $G_2$ under the assumption that $\pi_* = \pi$. As we shall see through a detailed study of the posterior distribution of $\pi_*$ and $\boldsymbol{\sigma_{*}^{1}}$, information from $G_2$ provides no non-trivial information about the community structure of $S_*$ in $G_1$. 

Moving forward, our goal is to study the MAP estimate equipped with the additional knowledge of $\boldsymbol{\sigma_{*}^{2}}$, $S_*$, and $\pi_*\{ [n] \setminus S_* \}$. It is useful to note that several important quantities can be readily obtained from this revealed information, as we detail next.  
\begin{itemize}
\item For $i \in [n] \setminus S_*$, the community label $\sigma_*^1(i)$ can be deduced via $\sigma_*^1(i) = \sigma_*^2(\pi_*(i))$. 
\item The image of $S_*$ under $\pi_*$ can be found via $\pi_*(S_*) = [n] \setminus \pi_* ( [n] \setminus S_*)$.
\item We may compute $|S_* \cap V_1^+|$ via the following formula: 
\begin{align*}
|S_* \cap V_1^+| 
&= | \{ i \in [n] : \sigma_{*}^{1}(i) = +1 \} | 
- | \{ i \in [n] \setminus S_{*} : \sigma_{*}^{1}(i) = +1 \}| \\
&= | \{ i \in [n] : \sigma_{*}^{2}(i) = +1 \} | 
- | \{ i \in [n] \setminus S_{*} : \sigma_{*}^{2}(\pi_{*}(i)) = +1 \}|, 
\end{align*}
where in the second equality we used that the communities have the same sizes in $G_{1}$ and~$G_{2}$. 
\item We may compute $|S_* \cap V_1^-|$ similarly, or also via $|S_* \cap V_1^-| = |S_{*}| - |S_* \cap V_1^{+}|$. 
\item Finally, define 
\[
\mathsf{maj}(i) := \sum\limits_{j \in \cN_1(i)} \sigma_*^1(j),
\]
and note that for $i \in S_{*}$ the quantity $\mathsf{maj}(i)$ can also be computed using the revealed information. 
This follows from condition~\#\ref{item:S_singleton} of Definition~\ref{def:S}: 
if $i \in S_{*}$, then $\cN_1(i) \subseteq [n] \setminus S_*$, and we have already noted that $\sigma_*^1(j)$ is known when~$j \in [n] \setminus S_*$. 
\end{itemize}

With these observations made, we are now ready to describe the MAP estimator given this additional information. 

\begin{theorem}[MAP estimator]
\label{thm:map}
Let $A$, $B$, $\boldsymbol{\sigma_{*}^{2}}$, $S_*$, and $\pi_*\{ [n] \setminus S_* \}$ be given, 
and let $\wh{\boldsymbol{\sigma}}_{\MAP}$ denote the MAP estimator given this information. 
For $i \in [n] \setminus S_{*}$, we have that 
$\wh{\sigma}_{\MAP} (i) = \sigma_*^2(\pi_*(i))$. 
For the vertices in $S_{*}$, the MAP estimator depends on whether $\alpha$ or $\beta$ is larger, as follows. 
\begin{itemize}
    \item If $\alpha > \beta$, then the MAP estimator assigns the label $+1$ to the vertices corresponding to the largest $|S_* \cap V_1^+|$ values in the collection $\{\mathsf{maj}(i) \}_{i \in S_*}$ (breaking ties arbitrarily), and it assigns the label $-1$ to the remaining vertices in $S_{*}$.  
    \item If $\alpha < \beta$, then it does the opposite. That is, the MAP estimator assigns the label $+1$ to the vertices corresponding to the smallest $|S_* \cap V_1^+|$ values in the collection $\{\mathsf{maj}(i) \}_{i \in S_*}$ (breaking ties arbitrarily), and it assigns the label $-1$ to the remaining vertices in $S_{*}$.
\end{itemize}
\end{theorem}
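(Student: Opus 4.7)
The proof splits according to whether $i \in [n] \setminus S_*$ or $i \in S_*$. For $i \in [n] \setminus S_*$ the claim is tautological: since $\pi_*(i)$ is revealed and $\sigma_*^1(i) = \sigma_*^2(\pi_*(i))$ by the definition of $\boldsymbol{\sigma_{*}^{2}}$, the label $\sigma_*^1(i)$ is a deterministic function of the observed data, so any estimator (in particular the MAP) must output $\sigma_*^2(\pi_*(i))$ almost surely.

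The substantive content is the case $i \in S_*$. The unknowns here are $\tau := \boldsymbol{\sigma_{*}^{1}}\{S_*\}$ and $\rho := \pi_*\{S_*\}$, a bijection from $S_*$ onto the known set $T := \pi_*(S_*)$, linked by the identity $\tau(i) = \sigma_*^2(\rho(i))$. The plan is to compute the posterior of $\tau$ via Bayes' rule (the prior on compatible pairs $(\tau, \rho)$ is uniform, with the same number $|S_* \cap V_1^+|!\,|S_* \cap V_1^-|!$ of $\rho$'s compatible with each admissible $\tau$), and then marginalize out $\rho$. Writing $\p(A, B \mid \boldsymbol{\sigma_{*}^{1}}, \pi_*) = \prod_{\{i,j\}} \p(A_{ij}, B_{\pi_*(i), \pi_*(j)} \mid \sigma_*^1(i), \sigma_*^1(j))$ using the formulas in Section~\ref{sec:notation}, I will classify pairs by the membership of their endpoints. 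Pairs contained in $[n] \setminus S_*$ contribute constants. Pairs with $i \in S_*$ and $j \in N := \cN_1(S_*)$ simplify drastically thanks to condition~\#\ref{item:S_neighbors} of Definition~\ref{def:S}, which together with $\rho(i) \in \pi_*(R_*)$ forces $B_{\rho(i), \pi_*(j)} = 0$; the resulting factor depends only on $A_{ij}$, the known $\sigma_*^1(j)$, and $\tau(i)$, but \emph{not} on the specific value of $\rho(i)$.

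The crux of the argument will be to show that the $\rho$-dependent factors---arising from pairs $\{i,j\} \subseteq S_*$ (where condition~\#\ref{item:S_singleton} gives $A_{ij} = 0$) and from pairs with $i \in S_*$ and $j \notin S_* \cup N$---collapse, after summing over admissible $\rho$, to a factor that is constant on the set of $\tau$'s with $|\{i : \tau(i) = +1\}| = |S_* \cap V_1^+|$. For pairs $\{i,j\} \subseteq S_*$ the factor has the form $F(\rho(i), \rho(j))$ where $F(k,l)$ depends only on $B_{k,l}$ and on $\sigma_*^2(k)\sigma_*^2(l)$, so $\prod_{\{i,j\} \subseteq S_*} F(\rho(i), \rho(j)) = \prod_{\{k,l\} \subseteq T} F(k,l)$ for \emph{every} bijection $\rho$. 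A parallel argument, using the decomposition of $\rho$ into bijections $S_{\pm} \to T_{\pm}$, handles the second group of pairs: each contribution factors as a product over $k \in T$ that is then invariant under permutations of $\rho$ within $T_+$ and $T_-$. Combining these observations, the posterior of $\tau$ is, up to a lower-order correction of size $O(\log(n)/n)$ per vertex coming from $\log(p_{00}/q_{00})$ (which is negligible in the logarithmic degree regime and so does not affect the ranking), proportional to $(\alpha/\beta)^{\sum_{i \in S_*} \tau(i)\,\maj(i)/2}$. Maximizing subject to the cardinality constraint then yields the stated rule: when $\alpha > \beta$, $\log(\alpha/\beta) > 0$, so the MAP assigns $+1$ to the $|S_* \cap V_1^+|$ vertices with the largest $\maj(i)$ values (breaking ties arbitrarily), and symmetrically when $\alpha < \beta$. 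The main obstacle will be establishing the bijection-swapping identities that decouple $\rho$ from $\tau$; these are exactly what make essential use of all three defining conditions of $S_*$, capturing the intuition that vertices in $S_*$ carry non-overlapping information across $G_1$ and $G_2$ so that the $G_2$-side evidence about their labels cannot be extracted without knowing the matching $\rho$.
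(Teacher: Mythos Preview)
Your plan follows a route close in spirit to the paper's but with different bookkeeping, and there is one place where the argument, as stated, has a genuine gap.

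The paper parameterizes by $\pi \in \cA_*$ rather than by $(\tau,\rho)$, and derives the posterior of $\pi_*$ in the compact form $\p(\pi_*=\pi \mid \cdot) \propto (p_{10}/p_{00})^{\nu^+(\pi)}(q_{10}/q_{00})^{\nu^-(\pi)}\mathbf{1}(\pi\in\cA_*)$ (Lemma~\ref{lemma:posterior_A}), using that $\mu^\pm(\pi)_{11}$ are constant on $\cA_*$. Since $\nu^+ + \nu^-$ is the total edge count of $G_1$ (hence constant) and $\nu^+(\pi)-\nu^-(\pi) = C_3 + \sum_{i\in S_*}\sigma_{\mathbf X}(i)\maj(i)$ (Lemma~\ref{lemma:nu_invariance}), summing over $\cB(\mathbf X)$ yields the \emph{exact} formula $\propto (p_{10}q_{00}/(p_{00}q_{10}))^{\frac12\sum_i \sigma_{\mathbf X}(i)\maj(i)}$. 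By working only with edge-counts $\nu^\pm$, the non-edge $(p_{00},q_{00})$ contributions are absorbed into constants from the start via $|\cE^+(\boldsymbol{\sigma_*^2})|$, which is fixed once $\boldsymbol{\sigma_*^2}$ is revealed.

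Your pair-type decomposition and bijection-swapping identities (for $\{i,j\}\subseteq S_*$ and for $i\in S_*$, $j\notin S_*\cup N$) are correct and are a hands-on version of the same invariances. The gap is in the last step. The assertion that the $\log(p_{00}/q_{00})$ contribution from pairs $(i,j)$ with $i\in S_*$ and $j\in N\setminus \cN_1(i)$ is ``$O(\log(n)/n)$ per vertex \ldots\ and so does not affect the ranking'' is not justified by size alone: that contribution to the log-posterior is $\tfrac12\log(p_{00}/q_{00})\sum_{i\in S_*}\tau(i)\,c_i$ with $c_i=\sum_{j\in N\setminus\cN_1(i)}\sigma_*^1(j)$, and a small coefficient times an $i$-dependent $c_i$ could in principle re-order vertices whose $\maj$ values differ by~$1$. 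The fix is a one-line identity you did not record: since $\cN_1(i)\subseteq N$ for $i\in S_*$, one has $c_i=\bigl(\sum_{j\in N}\sigma_*^1(j)\bigr)-\maj(i)$, and since $\sum_{i\in S_*}\tau(i)=|S_*\cap V_1^+|-|S_*\cap V_1^-|$ is constant over admissible $\tau$, the ``correction'' collapses to $-\tfrac12\log(p_{00}/q_{00})\sum_i\tau(i)\maj(i)$ plus a constant. This recovers exactly the paper's base $p_{10}q_{00}/(p_{00}q_{10})$ and shows the MAP ranks precisely by $\maj(i)$ with no asymptotics needed for the ranking itself; only the sign of the base depends on whether $\alpha>\beta$ or $\alpha<\beta$.
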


The structure of the MAP estimator highlights the intuition that $G_2$ contains no relevant information about communities in $S_*$: indeed, the assignment of communities depends only on $\maj$, which is a function of $G_1$ only. In the following corollary, we provide a simple condition for the failure of the MAP estimator.

\begin{corollary}
\label{cor:map}
If $\alpha > \beta$ and there exist $i \in S_* \cap V_1^+, j \in S_* \cap V_1^-$ such that $\maj(i) < \maj(j)$, then the MAP estimator fails (i.e., $\wh{\boldsymbol{\sigma}}_{\MAP} \neq \boldsymbol{\sigma_{*}}$). 
Similarly, if $\alpha < \beta$ and there exist $i \in S_* \cap V_1^+, j \in S_* \cap V_1^-$ such that $\maj(i) > \maj(j)$, then the MAP estimator fails. 
\end{corollary}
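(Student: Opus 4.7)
The plan is to deduce the corollary directly from the explicit description of the MAP estimator given in Theorem~\ref{thm:map}, via a short case analysis on the position of $i$ in the $\maj$-ranking of the vertices in $S_*$. The strict inequality in the hypothesis is the key feature that sidesteps any subtlety arising from the arbitrary tie-breaking allowed in Theorem~\ref{thm:map}.

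First I would handle the case $\alpha > \beta$. Set $k := |S_* \cap V_1^+|$; by Theorem~\ref{thm:map}, the MAP estimator assigns the label $+1$ precisely to some choice of $k$ vertices in $S_*$ achieving the largest $k$ values of $\maj(\cdot)$. I will split into two subcases. If $i$ lies among these top $k$ vertices, then since $\maj(j) > \maj(i)$ strictly, $j$ must also lie among the top $k$ regardless of tie-breaking (any top-$k$ set must contain every vertex whose $\maj$-value strictly exceeds that of some included vertex). Hence $\wh{\sigma}_{\MAP}(j) = +1$, whereas $\sigma_*^1(j) = -1$, so $\wh{\boldsymbol{\sigma}}_{\MAP} \neq \boldsymbol{\sigma_*}$. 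If instead $i$ is not among the top $k$, then $\wh{\sigma}_{\MAP}(i) = -1 \neq +1 = \sigma_*^1(i)$, again yielding a mislabeling. Either way, the MAP estimator fails.

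The case $\alpha < \beta$ is entirely symmetric. Theorem~\ref{thm:map} now assigns $+1$ to a choice of $k$ vertices achieving the smallest $k$ values of $\maj(\cdot)$, and the hypothesis $\maj(i) > \maj(j)$ plays the role analogous to the previous case: either $i$ is among the bottom $k$ (forcing $j$ into the same set and yielding $\wh{\sigma}_{\MAP}(j) = +1 \neq -1$), or $i$ is not among the bottom $k$ (yielding $\wh{\sigma}_{\MAP}(i) = -1 \neq +1$). In both subcases the MAP estimate disagrees with $\boldsymbol{\sigma_*}$.

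There is no real obstacle to this proof beyond careful bookkeeping in the invocation of Theorem~\ref{thm:map}; the only thing that must be verified is that tie-breaking cannot rescue the estimator, and the strict inequality in the hypothesis ensures this because a vertex with strictly larger $\maj$-value than an included vertex cannot be excluded from any top-$k$ set.
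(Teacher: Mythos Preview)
Your proposal is correct and follows essentially the same approach as the paper: both argue by cases on whether the MAP estimator labels $i$ correctly, and use the strict inequality $\maj(i) < \maj(j)$ to force $j$ into the top-$k$ set whenever $i$ is there. Your version is slightly more explicit about why tie-breaking cannot rescue the estimator, but the underlying logic is identical.
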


\begin{proof}
Suppose that $\alpha > \beta$. If the MAP estimator correctly classifies $i$ as $+1$, it follows from Theorem \ref{thm:map} that the MAP estimator also classifies $j$ as $+1$, which is incorrect.
A similar argument holds for the case $\alpha < \beta$. 
\end{proof}

\subsection{Analysis of the MAP estimator: Proof of Theorem \ref{thm:comm_recovery_impossibility}}

We start by defining a few useful quantities. First, we define the sigma algebra $\cI$, which contains the information we condition on to study the performance of the MAP estimator.

\begin{definition}[The sigma algebra $\cI$]
\label{def:I}
We let $\cI$ be the sigma algebra induced by the random variables $B$, $\pi_*$, $\boldsymbol{\sigma_{*}^{1}}$, $\cE_{00}$, $\cE_{01}$, $\cE_{10}$, $\cE_{11}$, and $R_*$. 
\end{definition}

An important consequence of Definition \ref{def:I} is that $\overline{R}_* = R_* \cup \pi_*^{-1} ( \cN_2 ( \pi_* ( R_*)))$ is $\cI$-measurable. Since $\boldsymbol{\sigma_{*}^{1}}$ is $\cI$-measurable, it follows that $\overline{R}_* \cap V_1^+$ and $\overline{R}_* \cap V_1^-$ are $\cI$-measurable as well.

Next, we describe a useful $\cI$-measurable event which concerns the number of vertices of each community in $R_*$ and $\overline{R}_*$.  

\begin{definition}[The event $\cG_\delta$]
Let $\delta > 0$. We say that the event $\cG_\delta$ holds if and only if 
$$
n^{1 - s^2 \dconn(\alpha, \beta) - \delta} \le | R_* \cap V_1^+|, | R_* \cap V_1^-|, | \overline{R}_* \cap V_1^+|, | \overline{R}_* \cap V_1^- | \le n^{1 - s^2 \dconn(\alpha, \beta) + \delta}.
$$
\end{definition}

Our next result shows that $\cG_\delta$ holds with high probability. Since the proof is straightforward but tedious, we defer it to Section~\ref{subsec:bad_sets}.

\begin{lemma}
\label{lemma:T_size}
For any fixed $\delta > 0$, we have that $\p ( \cG_\delta) = 1 - o(1)$ as $n \to \infty$.
\end{lemma}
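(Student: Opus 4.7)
The plan is to establish the four inequalities defining $\cG_\delta$ by first controlling the sizes of $R_* \cap V_1^\pm$ via a first and second moment calculation, and then deducing the bounds for $\overline{R}_* \cap V_1^\pm$ as a corollary. Throughout I work on the high-probability event $\cF$ of Lemma~\ref{lemma:F}, so that $|V_1^+|, |V_1^-| = (1+o(1))n/2$; it suffices to prove the claim conditionally on $\boldsymbol{\sigma_{*}}$ on this event.

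\textbf{Step 1 (First moment).} Conditional on $\boldsymbol{\sigma_{*}}$ and $\pi_{*}$, the intersection graph $G_1 \land_{\pi_*} G_2$ is marginally an $\mathrm{SBM}(n, \alpha s^2 \log(n)/n, \beta s^2 \log(n)/n)$ with community partition $(V_1^+, V_1^-)$, because each edge of the parent $G$ survives both subsampling steps independently with probability $s^2$. For $i \in V_1^+$, independence across vertex pairs gives
\[
\p(i \in R_* \mid \boldsymbol{\sigma_{*}}) = (1 - s^2 p)^{|V_1^+|-1}(1 - s^2 q)^{|V_1^-|} = n^{-s^2 \dconnab + o(1)}
\]
on $\cF$, so $\E[|R_* \cap V_1^+| \mid \boldsymbol{\sigma_{*}}]\mathbf{1}(\cF) = n^{1 - s^2 \dconnab + o(1)}$, which is a growing polynomial in $n$ under the assumption $s^2 \dconnab < 1$ implied by~\eqref{eq:impossibility_2}. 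The same computation works for $V_1^-$.

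\textbf{Step 2 (Second moment and Chebyshev).} For distinct $i,j \in V_1^+$, the events $\{i \in R_*\}$ and $\{j \in R_*\}$ share only the single edge $\{i,j\}$, so
\[
\p(i,j \in R_* \mid \boldsymbol{\sigma_{*}}) = (1 - s^2 p)^{2(|V_1^+|-2)}(1 - s^2 q)^{2|V_1^-|}(1 - s^2 p) = (1 - s^2 p)^{-1}\, \p(i \in R_* \mid \boldsymbol{\sigma_{*}})\,\p(j \in R_* \mid \boldsymbol{\sigma_{*}}).
\]
Since $(1 - s^2 p)^{-1} - 1 = O(\log n / n)$, the covariance of the indicators is at most $O(\log n / n)\, \p(i \in R_*)\p(j \in R_*)$, yielding
\[
\Var(|R_* \cap V_1^+| \mid \boldsymbol{\sigma_{*}}) \le O\!\left(\tfrac{\log n}{n}\right)\E[|R_* \cap V_1^+| \mid \boldsymbol{\sigma_{*}}]^2 + \E[|R_* \cap V_1^+| \mid \boldsymbol{\sigma_{*}}].
\]
On $\cF$, the ratio $\Var/\E^2$ is then $O(\log n/n) + n^{-(1-s^2 \dconnab) + o(1)} = o(1)$, so Chebyshev's inequality implies $|R_* \cap V_1^+| = n^{1 - s^2 \dconnab + o(1)}$ with probability $1-o(1)$, and in particular falls in the window $[n^{1 - s^2 \dconnab - \delta}, n^{1 - s^2 \dconnab + \delta}]$ for $n$ large. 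The same argument controls $|R_* \cap V_1^-|$.

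\textbf{Step 3 ($\overline{R}_*$).} The lower bounds are immediate from $R_* \subseteq \overline{R}_*$, so
$|\overline{R}_* \cap V_1^\pm| \ge |R_* \cap V_1^\pm|$.
For the upper bounds, note that
\[
|\overline{R}_*| \le |R_*| + |\cN_2(\pi_*(R_*))| \le |R_*|(1 + D_{\max}(G_2)),
\]
where $D_{\max}(G_2)$ is the maximum degree in $G_2$. Applying Lemma~\ref{lemma:vertex_degree_bound} to $G_2$ (which is marginally an SBM with parameters $\alpha s, \beta s$) yields $D_{\max}(G_2) = O(\log n)$ with probability $1-o(1)$. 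Combining with the upper bound from Step~2 gives $|\overline{R}_* \cap V_1^\pm| \le |\overline{R}_*| \le n^{1 - s^2 \dconnab + o(1)} \cdot O(\log n) \le n^{1 - s^2 \dconnab + \delta}$ for $n$ large enough.

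\textbf{Step 4 (Conclude).} A union bound over the four events, $\cF^c$, and the event that $D_{\max}(G_2)$ is too large yields $\p(\cG_\delta) = 1-o(1)$. The main technical point is the second moment calculation of Step~2; the only nuance there is tracking that the inter-vertex correlation is small enough ($1 + O(\log n / n)$) to push the Chebyshev bound through, which is straightforward once one identifies the single shared edge.
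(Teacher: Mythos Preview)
Your proposal is correct and follows essentially the same approach as the paper: first and second moment estimates for $|R_* \cap V_1^\pm|$ (the paper uses Markov for the upper bound and Paley--Zygmund for the lower, while you package both into Chebyshev, which is an inessential variation), followed by the containment $R_* \subseteq \overline{R}_*$ and a maximum-degree bound on $G_2$ for the $\overline{R}_*$ bounds. The covariance computation via the single shared edge and the resulting $(1-s^2 p)^{-1}$ ratio matches the paper's calculation exactly.
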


We now turn to the proof of our impossibility result. For $i \in R_*$, define the indicator variable
$$
W_i : = \begin{cases}
\mathbf{1} ( i \in S_*, \maj(i) < 0  ) & \text{ if } i \in R_* \cap V_1^+, \\
\mathbf{1}( i \in S_*, \maj(i) > 0 ) & \text{ if } i \in R_* \cap V_1^-.
\end{cases}
$$
Our strategy is to show that $\sum_{i \in R_* \cap V_1^+} W_i > 0$ and $\sum_{i \in R_* \cap V_1^-} W_i > 0$ with high probability. This guarantees the existence of $i \in S_* \cap V_1^+$ and $j \in S_* \cap V_1^-$ such that $\maj(i) < 0 < \maj(j)$, which in turn implies that the MAP estimator fails in light of Corollary \ref{cor:map}. To carry out these ideas formally, we will study $\sum_{i \in R_* \cap V_1^+ } W_i$ and $\sum_{i \in R_* \cap V_1^+ } W_i$ through a second moment method. To this end, the following intermediate results establish useful bounds on the first and second moments, on the event that $\cF \cap \cG_\delta$ holds (see Definition \ref{def:F} for a formal definition of $\cF$). 

\begin{lemma}[First moment estimate]
\label{lemma:W_first_moment}
Fix $\delta > 0$ and denote $\theta : = 1 - s^2 \dconn(\alpha, \beta) - s(1 - s) \dch(\alpha, \beta)$. Then 
$$
\E \left[ \sum\limits_{i \in R_* \cap V_1^+ } W_i \, \middle| \, \cI \right] \mathbf{1}(\cF \cap \cG_\delta)  \ge \left (1 - n^{- s^2 \dconn(\alpha, \beta) + 2 \delta} \right ) n^{\theta - \delta - o(1)} \mathbf{1}( \cF \cap \cG_\delta) 
$$
and
$$
\E \left[ \sum\limits_{i \in R_* \cap V_1^- } W_i \, \middle| \, \cI \right] \mathbf{1}(\cF \cap \cG_\delta)  \ge \left (1 - n^{- s^2 \dconn(\alpha, \beta) + 2 \delta} \right ) n^{\theta - \delta- o(1)} \mathbf{1}( \cF \cap \cG_\delta).
$$
\end{lemma}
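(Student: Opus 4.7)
The plan is to exploit the fact that, conditionally on $\cI$, the only remaining randomness in $A$ lies on the entries indexed by $\cE_{10}$. First I would note that on $\cE_{11}$ the matrix $A$ is determined by $B$ and $\pi_*$ via $A_{i,j} = B_{\pi_*(i),\pi_*(j)}$, that $A$ vanishes on $\cE_{00} \cup \cE_{01}$, and that the singleton set $R_*$ is in fact a deterministic function of $B$, $\pi_*$, and $\cE_{11}$ (on $\cE_{10}$ one has $B_{\pi_{*}(i),\pi_{*}(j)} = 0$, so $\{i \in R_{*}\}$ reduces to $B_{\pi_{*}(i),\pi_{*}(j)} = 0$ for every $j$ with $(i,j) \in \cE_{11}$). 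Consequently, including $R_*$ in $\cI$ is redundant, and $\{A_{i,j} : (i,j) \in \cE_{10}\}$ remains a family of independent Bernoullis given $\cI$, with parameters $\alpha \log(n)/n$ within a community and $\beta \log(n)/n$ across communities.

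Next I would fix $i \in R_* \cap V_1^+$ and write $\E[W_i \mid \cI] = \p(i \in S_* \mid \cI)\cdot \p(\maj(i)<0 \mid \cI,\, i \in S_*)$. Recognising that, on $\{i \in R_*\}$, the event $\{i \in S_*\}$ is exactly $\{A_{i,j} = 0 \text{ for every } j \in \overline{R}_* \setminus \{i\} \text{ with } (i,j) \in \cE_{10}\}$, the first factor is a product of at most $|\overline{R}_*|$ Bernoulli survival terms, each of the form $1 - O(\log n / n)$. On $\cG_\delta$, where $|\overline{R}_*| \le n^{1 - s^2\dconnab + \delta}$, this gives $\p(i \in S_* \mid \cI) \ge 1 - n^{-s^2\dconnab + 2\delta}$, which is the prefactor appearing in the statement.

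The main technical step is to compute the second factor. The key observation is that $\{i \in S_*\}$ constrains $A_{i,\cdot}$ only through indices in $\overline{R}_*$, while $\maj(i) = \sum_{j \in \cN_1(i)} \sigma_{*}^{1}(j)$ is a function solely of $\{A_{i,j} : j \in [n] \setminus \overline{R}_*,\, (i,j) \in \cE_{10}\}$ (entries indexed by $\cE_{11}$ and by $\overline{R}_*$ are forced to zero on $\{i \in S_*\}$). Since these two index sets are disjoint, the variables supporting $\maj(i)$ remain conditionally independent Bernoullis, so $\maj(i) \stackrel{d}{=} Y - Z$ with $Y \sim \Bin(M^+, \alpha \log n / n)$ and $Z \sim \Bin(M^-, \beta \log n / n)$ independent, where $M^{\pm} = |\{j \in V_1^{\pm} \setminus \overline{R}_* : (i,j) \in \cE_{10}\}|$. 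On $\cF \cap \cG_\delta$ the conditions defining $\cF$ together with the bound on $|\overline{R}_*|$ force $M^{\pm} = (1+o(1))\,s(1-s)\,n/2$. A short Poisson large-deviation computation (equivalently, an application of Lemma~\ref{lemma:majority_probability} after rewriting the rescaled binomials with effective community parameters $s(1-s)\alpha$ and $s(1-s)\beta$) then yields $\p(\maj(i)<0 \mid \cI, i \in S_*) \ge n^{-s(1-s)\dchab - o(1)}$ on $\cF \cap \cG_\delta$. I expect the most care to be required precisely here: one must confirm that the $s(1-s)$ thinning of the potential-neighbour set cleanly translates into the Hellinger exponent $s(1-s)\dchab$.

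Combining the two factors and summing over $i \in R_* \cap V_1^+$, the lower bound $|R_* \cap V_1^+| \ge n^{1 - s^2\dconnab - \delta}$ valid on $\cG_\delta$ yields
\[
\E\!\Bigl[\sum_{i \in R_* \cap V_1^+} W_i \,\Big|\, \cI \Bigr]\,\mathbf{1}(\cF \cap \cG_\delta) \ge \bigl(1 - n^{-s^2\dconnab + 2\delta}\bigr)\, n^{\theta - \delta - o(1)}\, \mathbf{1}(\cF \cap \cG_\delta),
\]
with $\theta = 1 - s^2\dconnab - s(1-s)\dchab$ as in the statement. The case $R_* \cap V_1^-$ is symmetric: since $\sigma_*^1(i) = -1$, the roles of $\alpha$ and $\beta$ swap in the Bernoulli rates of $Y$ and $Z$, but the same Hellinger exponent $s(1-s)\dchab$ appears and the remainder of the argument carries over verbatim.
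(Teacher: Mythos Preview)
Your proposal is correct and follows essentially the same approach as the paper: factor $\E[W_i \mid \cI]$ into $\p(i \in S_* \mid \cI)$ times a majority-tail probability, bound the first factor via the size of $\overline{R}_*$ on $\cG_\delta$ (the paper packages this as Lemma~\ref{lemma:S_size}), evaluate the second factor as $\p(Y<Z)$ with $Y,Z$ independent binomials of parameters $(1+o(1))s(1-s)n/2$ via Lemma~\ref{lemma:majority_probability} (the paper packages this as Lemma~\ref{lemma:Xi}), and then sum using the lower bound on $|R_*\cap V_1^+|$ from $\cG_\delta$. The only cosmetic difference is that the paper factors by first replacing $\maj(i)$ with $\sum_{j\in[n]\setminus\overline{R}_*}A_{i,j}\sigma_*^1(j)$ and then invoking the conditional independence of Lemma~\ref{lemma:I_consequences}, whereas you condition on $\{i\in S_*\}$ and argue directly that this leaves the $\cE_{10}$-entries outside $\overline{R}_*$ untouched; the two are equivalent.
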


\begin{lemma}[Second moment estimate]
\label{lemma:W_second_moment}
Fix $\delta > 0$ and denote $\theta : = 1 - s^2 \dconn(\alpha, \beta) - s(1 - s) \dch(\alpha, \beta)$. If $\theta > 0$, it holds for sufficiently small $\delta$ and all $n$ large enough that
$$
\mathrm{Var} \left( \sum\limits_{i \in R_* \cap V_1^+ } W_i \, \middle| \, \cI \right) \mathbf{1}( \cF \cap \cG_\delta) \le n^{ 2 \theta - 3 \delta} \mathbf{1} (\cF \cap \cG_\delta)
$$
and 
$$
\mathrm{Var} \left( \sum\limits_{i \in R_* \cap V_1^- } W_i \, \middle| \, \cI \right) \mathbf{1}( \cF \cap \cG_\delta) \le n^{ 2 \theta - 3 \delta} \mathbf{1} (\cF \cap \cG_\delta).
$$
\end{lemma}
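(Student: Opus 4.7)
My plan is to decompose the conditional variance as
\begin{equation*}
\mathrm{Var}\!\left(\sum_i W_i \,\middle|\, \cI\right) = \sum_i \mathrm{Var}(W_i | \cI) + \sum_{i \ne j} \mathrm{Cov}(W_i, W_j | \cI)
\end{equation*}
and bound the two parts separately, focusing on $\sum_{i \in R_* \cap V_1^+} W_i$ (the $V_1^-$ sum is identical by symmetry, and I assume $\alpha > \beta$ without loss of generality, the other case being handled by flipping signs). For the diagonal, $\mathrm{Var}(W_i | \cI) \le \E[W_i | \cI]$ since $W_i$ is Bernoulli, and on $\cF \cap \cG_\delta$ a short computation bounds $\E[W_i | \cI] \le \p(\maj(i) < 0 | \cI) \le n^{-s(1-s)\dchab + o(1)}$ via Lemma \ref{lemma:majority_probability} (on $\{i \in S_*\}$, $\maj(i)$ is the difference of two independent binomials with parameters $p, q$ and approximately $s(1-s) n / 2$ trials each, as forced by $\cF$). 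Multiplying by $|R_* \cap V_1^+| \le n^{1 - s^2 \dconnab + \delta}$ from $\cG_\delta$ yields $\sum_i \E[W_i | \cI]\,\mathbf{1}(\cF \cap \cG_\delta) \le n^{\theta + \delta + o(1)}\,\mathbf{1}(\cF \cap \cG_\delta)$.

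The real work is the covariance sum, which I will handle by exploiting the conditional independence structure that survives conditioning on $\cI$. After conditioning, the only remaining randomness lives in the mutually independent $\cE_{10}$-edge indicators $\{A_{k\ell}^{(10)}\}_{\{k,\ell\} \in \cE_{10}}$. For each $i \in R_*$, split its potential $\cE_{10}$-neighbors as $L_i := \{k \in \overline{R}_* \setminus \{i\} : \{i,k\} \in \cE_{10}\}$ and $N_i := \{k \notin \overline{R}_* : \{i,k\} \in \cE_{10}\}$. The event $\{i \in S_*\}$, given $i \in R_*$, is exactly $\bigcap_{k \in L_i}\{A_{ik}^{(10)} = 0\}$, and on that event $\maj(i)$ reduces to $\sum_{k \in N_i} \sigma_*^1(k)\,A_{ik}^{(10)}$; since $L_i \cap N_i = \emptyset$, the singleton condition and the majority computation are conditionally independent given $\cI$. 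Moreover, for $i \ne j$ the variable families used by $W_i$ and $W_j$ overlap in at most the single coordinate $A_{ij}^{(10)}$, which is present precisely when $\{i,j\} \in \cE_{10}$ (in which case $j \in L_i$ and $i \in L_j$, because $R_* \subseteq \overline{R}_*$).

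Hence $\mathrm{Cov}(W_i, W_j | \cI) = 0$ whenever $\{i,j\} \notin \cE_{10}$. Otherwise, both $W_i = 1$ and $W_j = 1$ force $A_{ij}^{(10)} = 0$, and factoring out this shared edge (the residual variables then split into two conditionally independent pieces) gives
\begin{equation*}
\E[W_i W_j | \cI] = \frac{1}{1-p}\, \E[W_i | \cI]\, \E[W_j | \cI], \qquad \mathrm{Cov}(W_i, W_j | \cI) = \frac{p}{1-p}\, \E[W_i | \cI]\, \E[W_j | \cI].
\end{equation*}
Summing, dropping the $\cE_{10}$-constraint (which only inflates the bound), and using $p = \alpha(\log n)/n$ together with the first-moment upper bound from the previous paragraph,
\begin{equation*}
\sum_{i \ne j} \mathrm{Cov}(W_i, W_j | \cI)\, \mathbf{1}(\cF \cap \cG_\delta) \le \tfrac{p}{1-p}\!\left(\sum_i \E[W_i | \cI]\right)^{\!2}\!\mathbf{1}(\cF \cap \cG_\delta) \le n^{2\theta + 2\delta - 1 + o(1)}\,\mathbf{1}(\cF \cap \cG_\delta).
\end{equation*}
Combining, on $\cF \cap \cG_\delta$ the conditional variance is at most $n^{\theta + \delta + o(1)} + n^{2\theta + 2\delta - 1 + o(1)}$, and any $\delta \in (0, \theta/5)$ makes each exponent strictly smaller than $2\theta - 3\delta$ for all $n$ large enough, as required.

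I expect the main obstacle to be the covariance step: specifically, the bookkeeping needed to verify that $L_i$ and $N_i$ are disjoint (so that the singleton event and the majority computation decouple after conditioning on $\cI$), that $A_{ij}^{(10)}$ is the unique source of coupling between $W_i$ and $W_j$, and that the three-way factorization correctly respects the forced conditioning $A_{ij}^{(10)} = 0$. Once this structural observation is secured, the $p/(1-p)$-formula for the covariance is essentially forced, and the remaining aggregation is routine.
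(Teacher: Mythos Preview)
Your argument is correct, and it takes a genuinely different route from the paper's.

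The paper does not compute the individual covariances. Instead it bounds $\E[W_i W_j \mid \cI] \le X_i X_j$ for $i\neq j$ (simply dropping the singleton indicators $\mathbf{1}(i\in S_*),\mathbf{1}(j\in S_*)$ and using that the two majority sums over $[n]\setminus\overline{R}_*$ are conditionally independent), obtains $\E[(\sum W_i)^2\mid\cI]\le \sum X_i + (\sum X_i)^2$, and then subtracts the squared first moment via the \emph{lower} bound $\E[\sum W_i\mid\cI]\ge(1-n^{-\gamma})\sum X_i$ from Lemma~\ref{lemma:W_first_moment}, with $\gamma = s^2\dconnab-2\delta$. This produces the cross term $2n^{-\gamma}(\sum X_i)^2 \le n^{2\theta+4\delta-s^2\dconnab+o(1)}$.

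Your approach instead pinpoints the single shared Bernoulli variable $A_{ij}^{(10)}$ and extracts the exact covariance $\tfrac{p}{1-p}\E[W_i\mid\cI]\E[W_j\mid\cI]$. This is self-contained (it does not invoke Lemma~\ref{lemma:W_first_moment}) and gives the sharper cross term $n^{2\theta+2\delta-1+o(1)}$, since $s^2\dconnab<1$ whenever $\theta>0$. The paper's method has the virtue of being essentially independence-free bookkeeping (upper bound the second moment, lower bound the first moment, subtract), whereas yours requires verifying the factorization $W_i=Z_{ij}\tilde U_i$ carefully---which you do correctly, the key point being that $i\in R_*$ already forces $A_{ik}=0$ for all $\{i,k\}\in\cE_{11}$, so that conditionally on $\cI$ the residual randomness in $W_i$ lives entirely in the $\cE_{10}$-edges incident to $i$.
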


We defer the proofs of Lemmas \ref{lemma:W_first_moment} and \ref{lemma:W_second_moment} to Section \ref{subsec:moment_estimates}. We now utilize the first and second moment estimates to prove Theorem \ref{thm:comm_recovery_impossibility}.

\begin{proof}[Proof of Theorem \ref{thm:comm_recovery_impossibility}]
We start by proving that $\sum_{i \in R_* \cap V_1^+} W_i > 0$ holds with high probability. To establish a lower bound for this event, we may apply the second moment method (specifically, the Cauchy--Schwarz inequality) to obtain that 
\begin{multline*}
\label{eq:second_moment_method_1}
\p \left( \sum\limits_{i \in R_* \cap V_1^+} W_i > 0 \, \middle| \, \cI \right) 
\ge \frac{ \E \left[ \sum_{i \in R_* \cap V_1^+} W_i \, \middle| \, \cI \right]^2 }{ \E \left[ \left( \sum_{i \in R_* \cap V_1^+} W_i \right)^2 \, \middle| \, \cI \right]} \\
= \frac{\E \left[ \sum_{i \in R_* \cap V_1^+} W_i \, \middle| \, \cI \right]^2}{\E \left[ \sum_{i \in R_* \cap V_1^+} W_i \, \middle| \, \cI \right]^2 + \mathrm{Var} \left( \sum_{i \in R_* \cap V_1^+} W_i \, \middle| \, \cI \right)} 
\ge 1 - \frac{ \mathrm{Var} \left( \sum_{i \in R_* \cap V_1^+} W_i \, \middle| \, \cI \right) }{ \E \left[ \sum_{i \in R_* \cap V_1^+} W_i  \, \middle| \, \cI \right]^2 }.
\end{multline*}
To obtain a lower bound on the \emph{unconditional} probability, we can thus write, for a fixed $\delta > 0$ satisfying $\delta < s^{2} \dconnab / 2$, that 
\begin{align}
\p \left( \sum\limits_{i \in R_* \cap V_1^+} W_i > 0 \right) & \ge \p \left( \left \{ \sum\limits_{i \in R_* \cap V_1^+ } W_i > 0 \right \} \cap \cF \cap \cG_\delta \right) \nonumber \\
& = \E \left[ \p \left( \sum\limits_{i \in R_* \cap V_1^+} W_i > 0  \, \middle| \, \cI \right) \mathbf{1}(\cF \cap \cG_\delta) \right] \nonumber \\
& \ge \E \left[ \left( 1 - \frac{ \mathrm{Var} \left( \sum_{i \in R_* \cap V_1^+} W_i \, \middle| \, \cI \right) }{ \E \left[ \sum_{i \in R_* \cap V_1^+} W_i \, \middle| \, \cI \right]^2 } \right) \mathbf{1}(\cF \cap \cG_\delta) \right] \nonumber \\
& \ge \left( 1 - (1 - o(1)) n^{2 \theta - 3 \delta - (2 \theta - 2 \delta) + o(1)} \right) \p ( \cF \cap \cG_\delta) \nonumber 
= 1 - o(1), \nonumber 
\end{align}
where the equality on the second line follows since the events $\cF$ and $\cG_\delta$ are $\cI$-measurable, the inequality on the fourth line uses Lemmas~\ref{lemma:W_first_moment} and~\ref{lemma:W_second_moment}, and the final equality is due to Lemmas~\ref{lemma:F} and~\ref{lemma:T_size}. 
An identical analysis shows that $\p ( \sum_{i \in R_* \cap V_1^-} W_i > 0) = 1 - o(1)$. 
In light of Corollary~\ref{cor:map}, it follows that the MAP estimator fails with probability $1 - o(1)$ .
\end{proof}

\subsection{Properties of $S_\pi$}
\label{subsec:S_properties}

In this section, we prove some properties of the set $S_\pi$ that will be useful in studying the MAP estimator. To begin, we define the set 
\[
\cA(S_*, \pi_* \{[n] \setminus S_* \}) : = \{ \pi \in \cS_n: S_\pi = S_* \text{ and } \pi \{[n] \setminus S_{\pi} \} = \pi_* \{[n] \setminus S_* \} \}.
\]
For brevity, we sometimes write $\cA_*$ instead. The following result highlights the important property that for $\pi \in \cA_*$, the structure of $G_1 \land_{\pi} G_2$ is invariant.

\begin{lemma}
\label{lemma:A_edge_invariance}
For any $\pi\in \cA_*$, we have that 
$A_{i,j} B_{\pi(i), \pi(j)} = A_{i,j} B_{\pi_*(i), \pi_*(j)}$. 
Moreover, if $i \in S_*$ or $j \in S_*$, then $A_{i,j} B_{\pi(i), \pi(j)} = A_{i,j} B_{\pi_*(i), \pi_*(j)} = 0$.
\end{lemma}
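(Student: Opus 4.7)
The plan is to split into two cases according to whether at least one of $i,j$ lies in $S_*$, and in each case exploit the defining properties of $\cA_*$.

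First, I would handle the case where both $i,j \in [n] \setminus S_*$. By the definition of $\cA_*$, any $\pi \in \cA_*$ satisfies $\pi\{[n] \setminus S_*\} = \pi_*\{[n] \setminus S_*\}$, which means $\pi(i) = \pi_*(i)$ and $\pi(j) = \pi_*(j)$. Hence $B_{\pi(i),\pi(j)} = B_{\pi_*(i),\pi_*(j)}$, and multiplying both sides by $A_{i,j}$ gives the desired equality.

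Next, I would prove the ``moreover'' clause, which will in particular cover the remaining case of the first statement (when $i \in S_*$ or $j \in S_*$). The key observation is condition~\#\ref{item:S_T} of Definition~\ref{def:S}: $S_\pi \subseteq R_\pi$ for every $\pi$. Applying this to $\pi_*$, we have $S_* \subseteq R_*$, so by the definition of $R_*$, $i \in S_*$ implies $A_{i,j} B_{\pi_*(i),\pi_*(j)} = 0$ for every $j \in [n]$; the case $j \in S_*$ follows by the symmetry of the adjacency matrices. Applying the same observation to $\pi \in \cA_*$, we have $S_\pi \subseteq R_\pi$; but $S_\pi = S_*$ by definition of $\cA_*$, so $i \in S_*$ implies $i \in R_\pi$, which by the definition of $R_\pi$ gives $A_{i,j} B_{\pi(i),\pi(j)} = 0$ for every $j$. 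Again the case $j \in S_*$ follows by symmetry. This shows both products vanish, which simultaneously proves the ``moreover'' clause and completes the verification of the main equality in the remaining case.

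There is no real obstacle here: the proof is a direct unpacking of the definitions of $\cA_*$, $S_*$, and $R_*$, with the only mildly subtle point being that one uses $S_\pi \subseteq R_\pi$ for \emph{both} $\pi_*$ and $\pi$, rather than trying to relate $R_\pi$ to $R_*$ (which are not asserted to coincide for $\pi \in \cA_*$).
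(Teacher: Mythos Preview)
Your proposal is correct and follows essentially the same approach as the paper's proof: split into the two cases $i,j\in [n]\setminus S_*$ (where $\pi$ and $\pi_*$ agree) and $i\in S_*$ or $j\in S_*$ (where condition~\#\ref{item:S_T} of Definition~\ref{def:S}, together with $S_\pi=S_*$, forces both products to vanish). You simply spell out more explicitly the step $S_*=S_\pi\subseteq R_\pi$ that the paper leaves implicit in its citation of condition~\#\ref{item:S_T}.
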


\begin{proof}
If $i,j \in [n] \setminus S_*$, then $\pi(i) = \pi_*(i)$ and $\pi(j) = \pi_*(j)$, hence $A_{i,j} B_{\pi(i), \pi(j)} = A_{i,j} B_{\pi_*(i), \pi_*(j) }$. 
On the other hand, if $i \in S_*$ or $j \in S_*$, then $A_{i,j} B_{\pi(i), \pi(j)} = A_{i,j} B_{\pi_*(i), \pi_*(j)} = 0$ by the definition of $S_\pi = S_*$ (see Definition~\ref{def:S}, in particular condition~\#\ref{item:S_T}).
\end{proof}

The following lemma shows that $\cA_*$ is closed under permutations of $S_*$. For a more formal discussion, it will be useful to introduce the following definition. 

\begin{definition}[The permutation $P_{\pi, \rho}$]
\label{def:P}
Let $\rho$ be a permutation of $S_*$. The permutation $P_{\pi, \rho}$ is given by 
\[
P_{\pi, \rho}(i) := 
\begin{cases}
\pi(i) &\text{ if } i \in [n] \setminus S_{*}, \\
\pi(\rho(i)) &\text{ if } i \in S_{*}.
\end{cases}
\]
\end{definition}

\begin{lemma}
\label{lemma:A_permutation}
Let $\rho$ be a permutation of $S_*$. Then $P_{\pi_*, \rho} \in \cA_*$. 
\end{lemma}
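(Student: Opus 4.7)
The plan is to verify the two defining conditions of $\cA_*$ for $\pi = P_{\pi_*, \rho}$: $S_{P_{\pi_*, \rho}} = S_*$ and $P_{\pi_*, \rho}\{[n] \setminus S_{P_{\pi_*, \rho}}\} = \pi_*\{[n] \setminus S_*\}$. Once the first is in hand, the second is immediate from Definition~\ref{def:P}, since $P_{\pi_*, \rho}(i) = \pi_*(i)$ for every $i \in [n] \setminus S_*$. So the whole task reduces to proving $S_{P_{\pi_*, \rho}} = S_*$, and I would proceed in two stages.

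First, I would show that $R_{P_{\pi_*, \rho}} = R_*$. The intuition is that the $\rho$-rewiring touches only vertices in $S_* \subseteq R_*$, and conditions~\#\ref{item:S_singleton} and~\#\ref{item:S_neighbors} of Definition~\ref{def:S} are strong enough to guarantee that this rewiring neither creates nor destroys edges in the intersection graph. For the inclusion $R_* \subseteq R_{P_{\pi_*, \rho}}$, consider $i \in R_*$ and any $j \in [n]$. If $i \in S_*$, then any $j$ with $A_{i,j} = 1$ must lie outside $R_*$ by condition~\#\ref{item:S_singleton}, so $P_{\pi_*, \rho}(j) = \pi_*(j)$; condition~\#\ref{item:S_neighbors} together with $\pi_*(\rho(i)) \in \pi_*(S_*) \subseteq \pi_*(R_*)$ then gives $B_{P_{\pi_*, \rho}(i), \pi_*(j)} = 0$. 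If $i \in R_* \setminus S_*$, condition~\#\ref{item:S_singleton} applied to any $j \in S_*$ forces $A_{i,j} = 0$, while for $j \notin S_*$ both endpoints are fixed by $P_{\pi_*,\rho}$ and the requirement is just $i \in R_*$. For the reverse inclusion, if $i \notin R_*$ and $j$ witnesses this, then $j$ cannot lie in $S_*$: conditions~\#\ref{item:S_singleton} and~\#\ref{item:S_neighbors} applied to $j$ would otherwise force $B_{\pi_*(i), \pi_*(j)} = 0$, contradicting the choice of $j$. Since $i \notin R_*$ also gives $i \notin S_*$, both endpoints are fixed by $P_{\pi_*, \rho}$, and the witness carries over to show $i \notin R_{P_{\pi_*, \rho}}$.

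Second, given $R_{P_{\pi_*, \rho}} = R_*$, I would verify the three defining conditions of $S_\pi$ for $\pi = P_{\pi_*, \rho}$. Conditions~\#\ref{item:S_T} and~\#\ref{item:S_singleton} depend only on $R_\pi$ and $A$, and therefore agree between the two permutations. For condition~\#\ref{item:S_neighbors} the point is that $P_{\pi_*, \rho}(R_*) = \pi_*(R_*)$, since $\rho$ permutes $S_*$ and fixes $R_* \setminus S_*$; and for any $i \in S_*$ and any $j$ with $A_{i,j} = 1$, the same appeal to condition~\#\ref{item:S_singleton} shows $j \notin S_*$ and hence $P_{\pi_*, \rho}(j) = \pi_*(j)$. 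So condition~\#\ref{item:S_neighbors} collapses to the identical statement for both permutations, yielding $S_{P_{\pi_*, \rho}} = S_*$.

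The main obstacle is the case analysis in the first stage, where one juggles conditions~\#\ref{item:S_singleton} and~\#\ref{item:S_neighbors} simultaneously and tracks, for each pair $(i,j)$, which endpoints lie in $S_*$ in order to describe the action of $P_{\pi_*, \rho}$. No individual subcase is difficult, but several must be disposed of cleanly to cover all pairs.
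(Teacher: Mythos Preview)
Your proposal is correct and follows essentially the same approach as the paper: both reduce the claim to $R_{P_{\pi_*,\rho}} = R_*$ via the same case analysis, then deduce $S_{P_{\pi_*,\rho}} = S_*$. The only cosmetic difference is that the paper dispatches the second step in one line by observing $R_P = R_*$ forces $\overline{R}_P = \overline{R}_*$ (via the formula in Definition~\ref{def:S}), whereas you verify conditions~\#\ref{item:S_T}--\#\ref{item:S_neighbors} directly; these are equivalent.
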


\begin{proof}
We abbreviate $P = P_{\pi_*, \rho}$. It is clear from the construction of $P$ that $P \{[n] \setminus S_* \} = \pi_* \{[n] \setminus S_*\}$. To prove $S_P = S_*$, it is sufficient to show that $R_P = R_*$. Indeed, once this has been established, it follows readily that $\overline{R}_P = \overline{R}_*$, and thus $S_P = S_*$ by Definition \ref{def:S}.

We start by proving $R_* \subseteq R_P$; we do so by proving separately that $R_* \setminus S_* \subseteq R_P$ and $ S_* \subseteq R_P$. 

To prove the first claim, let $i \in R_* \setminus S_*$ and notice that $P(i) = \pi_*(i)$. Fix $j \in S_*$. Then, by condition~\#\ref{item:S_singleton} of Definition~\ref{def:S}, $A_{i,j} = 0$, so $A_{i,j} B_{P(i), P(j)} = 0$. On the other hand, if $j \in [n] \setminus S_*$, then $P(j) = \pi_*(j)$, so $A_{i,j} B_{P(i), P(j)} = A_{i,j} B_{\pi_*(i), \pi_*(j)} = 0$. Since we have shown that $A_{i,j} B_{P(i), P(j)} = 0$ for all $i \in R_* \setminus S_*$ and for all $j \in [n]$, it follows that $R_* \setminus S_* \subseteq R_P$ as desired. 

We now prove that $S_* \subseteq R_P$. Let $i \in S_*$. If $j \in R_*$, then by condition~\#\ref{item:S_singleton} of Definition~\ref{def:S} we have $A_{i,j} = 0$, hence $A_{i,j} B_{P(i), P(j)} = 0$. On the other hand if $j \in [n] \setminus R_*$ then by condition~\#\ref{item:S_neighbors} of Definition~\ref{def:S}, $A_{i,j} = 1$ implies $B_{P(i), P(j)} = 0$ since $\pi_*(j) = P(j)$ and $P(i) \in \pi_*( R_*)$; hence $A_{i,j} B_{P(i), P(j)} = 0$. Since we have shown that $A_{i,j} B_{P(i), P(j)} = 0$ for all $i \in S_*$ and $j \in [n]$, it follows that $S_* \subseteq R_P$. Putting both results together shows that $R_* \subseteq R_P$ as desired. 

Next, we prove that $R_P \subseteq R_*$. Suppose by way of contradiction that there exists $i \in R_P \setminus R_*$. Then there must exist $j \in [n]$ such that $A_{i,j} B_{P(i), P(j)} = 0$ and $A_{i,j} B_{\pi_*(i), \pi_*(j)} = 1$, which in turn implies that $(A_{i,j}, B_{P(i), P(j)}, B_{\pi_*(i), \pi_*(j)}) = (1,0,1)$. 
We consider two possibilities for $j$. 
If $j \in R_*$, then $A_{i,j} = 1$ implies $B_{\pi_*(i), \pi_*(j)} = 0$ by condition~\#\ref{item:S_neighbors} of Definition~\ref{def:S}, which is a contradiction. 
On the other hand, if $j \in [n] \setminus R_* \subseteq [n] \setminus S_*$, it follows that $P(j) = \pi_*(j)$. Since $i \in [n] \setminus R_* \subseteq [n] \setminus S_*$ as well, $P(i) = \pi_*(i)$ as well. However, this contradicts $(B_{P(i), P(j)}, B_{\pi_*(i), \pi_*(j)} ) = (0,1)$. 
Since all cases for $j$ lead to a contradiction, we have that $R_P \subseteq R_*$. 
\end{proof}

A useful consequence of Lemma \ref{lemma:A_permutation} is that the elements of $\cA_*$ can be parametrized by permutations of $S$. This is captured in the following corollary.

\begin{corollary}
\label{cor:A_permutation}
We have the representation
\begin{equation}
\label{eq:A_set_equality}
\cA_* = \left \{ P_{\pi_*, \rho} : \rho \text{ is a permutation of $S_*$} \right \}.
\end{equation}
\end{corollary}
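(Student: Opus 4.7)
The plan is to prove the two inclusions separately, with the $\supseteq$ direction following immediately from Lemma~\ref{lemma:A_permutation} and the $\subseteq$ direction requiring only a short bijectivity argument.

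For the $\supseteq$ direction, any $\pi = P_{\pi_*, \rho}$ satisfies $\pi \in \cA_*$ by Lemma~\ref{lemma:A_permutation}, so every element of the right-hand side of~\eqref{eq:A_set_equality} lies in $\cA_*$.

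For the $\subseteq$ direction, I would take an arbitrary $\pi \in \cA_*$ and explicitly construct the permutation $\rho$ of $S_*$ such that $\pi = P_{\pi_*, \rho}$. By the definition of $\cA_*$ we have $S_\pi = S_*$ and $\pi\{[n] \setminus S_*\} = \pi_*\{[n] \setminus S_*\}$; the latter means $\pi(i) = \pi_*(i)$ for all $i \in [n] \setminus S_*$. Since $\pi$ is a bijection on $[n]$, taking complements gives $\pi(S_*) = [n] \setminus \pi([n] \setminus S_*) = [n] \setminus \pi_*([n] \setminus S_*) = \pi_*(S_*)$. Thus the restriction $\pi|_{S_*}$ is a bijection from $S_*$ onto $\pi_*(S_*)$, and defining $\rho := \pi_*^{-1} \circ \pi|_{S_*}$ yields a permutation of $S_*$. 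By construction $\pi(i) = \pi_*(\rho(i))$ for $i \in S_*$, and $\pi(i) = \pi_*(i)$ for $i \in [n] \setminus S_*$, so $\pi = P_{\pi_*, \rho}$ as required.

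There is no substantive obstacle here; the entire corollary is essentially a restatement of Lemma~\ref{lemma:A_permutation} plus the observation that $\pi|_{S_*}$ must permute $\pi_*(S_*)$ whenever $\pi$ agrees with $\pi_*$ on the complement. The only point worth being careful about is to justify why $\pi(S_*) = \pi_*(S_*)$, which is the one place bijectivity is used.
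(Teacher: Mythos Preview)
Your proposal is correct and follows essentially the same approach as the paper: both directions are handled identically, with $\supseteq$ via Lemma~\ref{lemma:A_permutation} and $\subseteq$ by constructing $\rho$ from the fact that $\pi$ and $\pi_*$ agree on $[n]\setminus S_*$. You supply slightly more detail than the paper (explicitly writing $\rho = \pi_*^{-1}\circ \pi|_{S_*}$ and justifying $\pi(S_*)=\pi_*(S_*)$ via bijectivity), whereas the paper simply asserts that such a $\rho$ exists.
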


\begin{proof}
Lemma \ref{lemma:A_permutation} shows that the right hand side of \eqref{eq:A_set_equality} is a subset of the left hand side of \eqref{eq:A_set_equality}. Moreover, from the definition of the set $\cA_*$, if $\pi \in \cA_*$, then $\pi$ and $\pi_*$ can only disagree on inputs from $S_*$. This implies that we can find a permutation $\rho$ on $S_*$ such that $\pi = P_{\pi_*, \rho}$. Hence the set on the left hand side of \eqref{eq:A_set_equality} is a subset of the right hand side of \eqref{eq:A_set_equality}. 
\end{proof}

\subsection{Deriving the MAP estimator: Proof of Theorem \ref{thm:map}}

\subsubsection{The posterior distribution of $\pi_*$}

We start by defining some notation. For a given permutation $\pi \in \cS_n$, define
\begin{align*}
\mu^+(\pi)_{ab} & := \sum\limits_{\{\pi(i), \pi(j) \} \in \cE^+(\boldsymbol{\sigma_{*}^{2}})} \mathbf{1}((A_{i, j},  B_{\pi(i),\pi(j)}) = (a,b)), \qquad \text{ for } a,b \in \{0,1 \}, \\
\mu^-(\pi)_{ab} & := \sum\limits_{\{\pi(i), \pi(j) \} \in \cE^-(\boldsymbol{\sigma_{*}^{2}})} \mathbf{1}((A_{i, j} ,B_{\pi(i),\pi(j)}) = (a,b)), \qquad \text{ for } a,b, \in \{0,1\}, \\
\nu^+(\pi) & := \sum\limits_{\{\pi(i), \pi(j) \} \in \cE^+(\boldsymbol{\sigma_{*}^{2}})} A_{i, j}, \\
\nu^-(\pi) & := \sum\limits_{\{\pi(i), \pi(j) \} \in \cE^-(\boldsymbol{\sigma_{*}^{2}})} A_{i, j}.
\end{align*}
In words, $\mu^+(\pi)_{ab}$ and $\mu^-(\pi)_{ab}$ capture the empirical joint distribution of correlated edges assuming $\pi_* = \pi$, and $\nu^+(\pi)$ and $\nu^-(\pi)$ count the number of intra-community and inter-community edges in $G_1$, respectively. Using these quantities, we can derive an exact expression for the posterior distribution of $\pi_*$ given $A$, $B$, and $\boldsymbol{\sigma_{*}^{2}}$. The proof is nearly identical to \cite[Lemma 3.1]{RS21}, but we include it here for completeness since there are a few small changes (e.g., we condition on the community labels in $G_2$ rather than in $G_1$) and the proof is short. 

\begin{lemma}
\label{lemma:posterior_v1}
Let $\pi \in \cS_n$. There is a constant $C_1 = C_1 ( A, B, \boldsymbol{\sigma_{*}^{2}})$ such that 
\[
\p \left( \pi_* = \pi \, \middle| \, A, B, \boldsymbol{\sigma_{*}^{2}} \right) = C_1 \left( \frac{p_{00} p_{11}}{ p_{01} p_{10}} \right)^{\mu^+(\pi)_{11}}\left( \frac{q_{00} q_{11}}{q_{01} q_{10}} \right)^{\mu^-(\pi)_{11}}  \left(\frac{p_{10}}{p_{00}} \right)^{\nu^+(\pi)}  \left( \frac{q_{10}}{q_{00}} \right)^{\nu^-(\pi)}.
\]
\end{lemma}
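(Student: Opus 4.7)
My plan is to apply Bayes' rule and then carry out an algebraic simplification of the resulting likelihood. Since $\pi_*$ is uniform on $\cS_n$ and independent of $\boldsymbol{\sigma_{*}^{1}}$, a one-line computation shows that $\pi_*$ is also independent of $\boldsymbol{\sigma_{*}^{2}} = \boldsymbol{\sigma_{*}^{1}} \circ \pi_*^{-1}$ (indeed, $\p(\pi_* = \pi, \boldsymbol{\sigma_{*}^{2}} = \boldsymbol{s}) = \p(\pi_* = \pi) \p(\boldsymbol{\sigma_{*}^{1}} = \boldsymbol{s} \circ \pi) = \tfrac{1}{n!}\cdot 2^{-n}$ factors as $\p(\pi_* = \pi) \p(\boldsymbol{\sigma_{*}^{2}} = \boldsymbol{s})$). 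Therefore the prior $\p(\pi_* = \pi \mid \boldsymbol{\sigma_{*}^{2}}) = 1/n!$ is constant in $\pi$, and Bayes' rule reduces the problem to computing the likelihood $\p(A, B \mid \pi_* = \pi, \boldsymbol{\sigma_{*}^{2}})$ up to a factor depending only on $(A, B, \boldsymbol{\sigma_{*}^{2}})$.

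To get a product form for the likelihood, I would use the random partition construction together with the identity $B_{\pi_*(i), \pi_*(j)} = B'_{i,j}$, which together imply that conditional on $\pi_* = \pi$ and $\boldsymbol{\sigma_{*}^{2}}$ the pairs $\{(A_{i,j}, B_{\pi(i),\pi(j)})\}_{\{i,j\} \in \binom{[n]}{2}}$ are mutually independent with marginal $p_{ab}$ when $\{\pi(i),\pi(j)\} \in \cE^+(\boldsymbol{\sigma_{*}^{2}})$ (equivalently, $\sigma_*^1(i) = \sigma_*^1(j)$) and $q_{ab}$ when $\{\pi(i),\pi(j)\} \in \cE^-(\boldsymbol{\sigma_{*}^{2}})$. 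Grouping the factors by the value of $(a,b)$ gives
\[
\p(A, B \mid \pi_* = \pi, \boldsymbol{\sigma_{*}^{2}}) = \prod_{a,b \in \{0,1\}} p_{ab}^{\mu^+(\pi)_{ab}} q_{ab}^{\mu^-(\pi)_{ab}}.
\]

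The final step is to rewrite this product in the stated form by eliminating the $\mu^\pm(\pi)_{00}, \mu^\pm(\pi)_{01}, \mu^\pm(\pi)_{10}$ counts in favor of $\mu^\pm(\pi)_{11}$ and $\nu^\pm(\pi)$, modulo $\pi$-independent constants. I would use three linear relations: (i) $\sum_{a,b} \mu^\pm(\pi)_{ab} = |\cE^\pm(\boldsymbol{\sigma_{*}^{2}})|$, which is independent of $\pi$; (ii) reindexing the defining sum via $\{k,\ell\} = \{\pi(i),\pi(j)\}$ shows $\mu^\pm(\pi)_{01} + \mu^\pm(\pi)_{11} = \sum_{\{k,\ell\} \in \cE^\pm(\boldsymbol{\sigma_{*}^{2}})} B_{k,\ell}$, which depends only on $B$ and $\boldsymbol{\sigma_{*}^{2}}$ and so is likewise $\pi$-independent; and (iii) $\mu^\pm(\pi)_{10} + \mu^\pm(\pi)_{11} = \nu^\pm(\pi)$ by definition. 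Solving these for $\mu^\pm(\pi)_{00}, \mu^\pm(\pi)_{01}, \mu^\pm(\pi)_{10}$ and substituting produces, after collecting exponents, exactly the ratios $(p_{00}p_{11})/(p_{01}p_{10})$ and $p_{10}/p_{00}$ (and their $q$-analogues) with the stated exponents, times a factor depending only on $(A, B, \boldsymbol{\sigma_{*}^{2}})$ which is absorbed into $C_1$ along with $1/n!$ and the Bayes-rule normalization. The whole argument is a routine conditional probability computation followed by bookkeeping, so I do not expect any genuine obstacle; the only thing to watch is that the two ``hidden'' $\pi$-independent statistics in (i) and (ii) are correctly identified so that only the two factors appearing in the claim remain $\pi$-dependent.
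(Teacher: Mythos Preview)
Your proposal is correct and follows essentially the same route as the paper: Bayes' rule with the uniform prior, the product-form likelihood $\prod_{a,b} p_{ab}^{\mu^+(\pi)_{ab}} q_{ab}^{\mu^-(\pi)_{ab}}$, and then the same three linear relations (your (i)--(iii) are exactly what the paper uses) to eliminate $\mu^\pm(\pi)_{00}, \mu^\pm(\pi)_{01}, \mu^\pm(\pi)_{10}$ in favor of $\mu^\pm(\pi)_{11}$ and $\nu^\pm(\pi)$. The only cosmetic difference is that you invoke the random partition construction for the conditional independence, whereas the paper just appeals directly to the edge-independence in the correlated SBM model; either justification is fine.
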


\begin{proof}
By Bayes' rule, we have that 
\[
\p \left(\pi_* = \pi \, \middle| \, A, B, \boldsymbol{\sigma_{*}^{2}} \right) = \frac{ \p \left( A, B \, \middle| \, \pi_* = \pi, \boldsymbol{\sigma_{*}^{2}} \right) \p \left(\pi_* = \pi \, \middle| \, \boldsymbol{\sigma_{*}^{2}} \right) }{ \p \left( A, B, \boldsymbol{\sigma_{*}^{2}} \right)}.
\]
Recall that in the construction of the correlated pair of SBMs $(G_1, G_2)$, the permutation $\pi_*$ is chosen independently of everything else, including the community labeling $\boldsymbol{\sigma_{*}^{2}}$. Hence we can write
\begin{equation}
\label{eq:posterior_bayes}
\p\left( \pi_* = \pi \, \middle| \, A, B, \boldsymbol{\sigma_{*}^{2}} \right) = c_1 \left(A, B, \boldsymbol{\sigma_{*}^{2}} \right) \cdot \p \left(A, B \, \middle| \, \pi_* = \pi, \boldsymbol{\sigma_{*}^{2}} \right),
\end{equation}
where $c_1 \left(A,B, \boldsymbol{\sigma_{*}^{2}} \right) := \left( n! \p \left(A, B, \boldsymbol{\sigma_{*}^{2}} \right) \right)^{-1}$. 
We now analyze $\p \left( A, B \, \middle| \, \pi_* = \pi, \boldsymbol{\sigma_{*}^{2}} \right)$. 
Recall that, given $\boldsymbol{\sigma_{*}^{2}}$ and $\pi_*$, the edge formation process in $G_1$ and $G_2$ is mutually independent across all vertex pairs. Hence we have that
\begin{equation}
\label{eq:p_q_prod}
\p \left(A, B \, \middle| \, \pi_* = \pi, \boldsymbol{\sigma_{*}^{2}} \right) = \left( p_{00}^{\mu^+(\pi)_{00}} p_{01}^{\mu^+(\pi)_{01}} p_{10}^{\mu^+(\pi)_{10}} p_{11}^{\mu^+(\pi)_{11}} \right) \left( q_{00}^{\mu^-(\pi)_{00}} q_{01}^{\mu^-(\pi)_{01}} q_{10}^{\mu^-(\pi)_{10}} q_{11}^{\mu^-(\pi)_{11}} \right).
\end{equation}
To simplify \eqref{eq:p_q_prod}, we can write
\begin{align*}
\mu^+(\pi)_{01} & = \sum\limits_{\{\pi(i), \pi(j) \} \in \cE^+(\boldsymbol{\sigma_{*}^{2}})} (1 - A_{i,j} ) B_{\pi(i), \pi(j)}  = \sum\limits_{\{i,j\} \in \cE^+(\boldsymbol{\sigma_{*}^{2}})} B_{i,j} - \mu^+(\pi)_{11}; \\
\mu^+(\pi)_{10} & = \sum\limits_{\{\pi(i), \pi(j)\} \in \cE^+(\boldsymbol{\sigma_{*}^{2}})} A_{i, j} (1 - B_{\pi(i), \pi(j)})  = \nu^+(\pi) - \mu^+(\pi)_{11}; \\
\mu^+(\pi)_{00} & = \sum\limits_{\{\pi(i), \pi(j) \} \in \cE^+(\boldsymbol{\sigma_{*}^{2}}) } ( 1 - A_{i,j} ) (1 - B_{\pi(i), \pi(j)}) \\ 
&= | \cE^+(\boldsymbol{\sigma_{*}^{2}}) | - \sum\limits_{(i,j) \in \cE^+(\boldsymbol{\sigma_{*}^{2}}) } B_{i,j} - \nu^+(\pi) + \mu^+(\pi)_{11}.
\end{align*}
In particular, note that the quantities $\sum_{\{i,j \} \in \cE^+(\boldsymbol{\sigma_{*}^{2}})} B_{i,j}$ and $|\cE^+(\boldsymbol{\sigma_{*}^{2}})|$ are measurable with respect to $B$ and $\boldsymbol{\sigma_{*}^{2}}$. Hence we can write
\begin{equation}
\label{eq:p_simplified}
 p_{00}^{\mu^+(\pi)_{00}} p_{01}^{\mu^+(\pi)_{01}} p_{10}^{\mu^+(\pi)_{10}} p_{11}^{\mu^+(\pi)_{11}} = c_2^+ \left( \frac{p_{00} p_{11}}{ p_{01} p_{10}} \right)^{\mu^+(\pi)_{11}} \left(\frac{p_{10}}{p_{00}} \right)^{\nu^+(\pi)},
\end{equation}
where 
$$
c_2^+ := p_{00}^{ | \cE^+(\boldsymbol{\sigma_{*}^{2}})|} \left( \frac{p_{01}}{p_{00}} \right)^{\sum_{\{i,j\} \in \cE^+(\boldsymbol{\sigma_{*}^{2}}) } B_{i,j} }.
$$
Replicating these arguments for $\mu^-(\pi)_{ab}$, we have that 
\begin{equation}
\label{eq:q_simplified}
q_{00}^{\mu^-(\pi)_{00}} q_{01}^{\mu^-(\pi)_{01}} q_{10}^{\mu^-(\pi)_{10}} q_{11}^{\mu^-(\pi)_{11}} = c_2^- \left( \frac{q_{00} q_{11}}{q_{01} q_{10}} \right)^{\mu^-(\pi)_{11}} \left( \frac{q_{10}}{q_{00}} \right)^{\nu^-(\pi)},
\end{equation}
where 
$$
c_2^- : = q_{00}^{ | \cE^-(\boldsymbol{\sigma_{*}^{2}})|} \left( \frac{q_{01}}{q_{00}} \right)^{\sum_{\{i,j\} \in \cE^-(\boldsymbol{\sigma_{*}^{2}}) } B_{i,j} }.
$$
Combining \eqref{eq:posterior_bayes}, \eqref{eq:p_q_prod}, \eqref{eq:p_simplified}, and \eqref{eq:q_simplified} proves the statement of the lemma, with $C_1 := c_1 c_2^+ c_2^-$. 
\end{proof}

Next, we build on Lemma \ref{lemma:posterior_v1} to obtain the posterior distribution of $\pi_*$ 
given not only $A$, $B$, and $\boldsymbol{\sigma_{*}^{2}}$, 
but also $S_*$ and $\pi_*\{[n] \setminus S_* \}$. To show this formally, we recall the definition of the set $\cA_*$ from Section~\ref{subsec:S_properties}.

\begin{lemma}[Posterior distribution]
\label{lemma:posterior_A}
There is a constant $C_2 = C_2(A, B, \boldsymbol{\sigma_{*}^{2}}, S_*, \pi_*\{[n] \setminus S_*\})$ such that
$$
\p \left( \pi_* = \pi \, \middle| \, A, B, \boldsymbol{\sigma_{*}^{2}}, S_*, \pi_* \{ [n] \setminus S_* \} \right) = C_2 \left( \sqrt{\frac{ p_{10} q_{00}}{p_{00} q_{10}}} \right)^{\nu^+(\pi) - \nu^-(\pi)} \mathbf{1}(\pi \in \cA_* ).
$$
\end{lemma}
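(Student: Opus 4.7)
The plan is to derive the claim from Lemma~\ref{lemma:posterior_v1} by conditioning on the additional information $\bigl(S_*, \pi_*\{[n] \setminus S_*\}\bigr)$, which is a deterministic function of the triple $(\pi_*, A, B)$. Since the event $\{\pi_* = \pi\}$ is consistent with the observed values of $S_*$ and $\pi_*\{[n] \setminus S_*\}$ if and only if $\pi \in \cA_*$, Bayes' rule gives
$$
\p\bigl(\pi_* = \pi \mid A, B, \boldsymbol{\sigma_{*}^{2}}, S_*, \pi_*\{[n]\setminus S_*\}\bigr) \;=\; D \cdot \p\bigl(\pi_* = \pi \mid A, B, \boldsymbol{\sigma_{*}^{2}}\bigr) \,\mathbf{1}(\pi \in \cA_*),
$$
where $D = D\bigl(A, B, \boldsymbol{\sigma_{*}^{2}}, S_*, \pi_*\{[n]\setminus S_*\}\bigr)$ is a normalization constant. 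Substituting the formula from Lemma~\ref{lemma:posterior_v1}, the task reduces to simplifying the product of the four factors appearing there, when $\pi$ is restricted to $\cA_*$.

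Next, I would show that both $\mu^+(\pi)_{11}$ and $\mu^-(\pi)_{11}$ are constant on $\cA_*$, so that they can be absorbed into~$D$. Suppose $\pi \in \cA_*$ and $A_{i,j} B_{\pi(i), \pi(j)} = 1$. By Lemma~\ref{lemma:A_edge_invariance}, neither $i$ nor $j$ can lie in $S_*$, hence $\pi(i) = \pi_*(i)$ and $\pi(j) = \pi_*(j)$; in particular $\{\pi(i), \pi(j)\} \in \cE^\pm(\boldsymbol{\sigma_{*}^{2}})$ if and only if $\{\pi_*(i), \pi_*(j)\} \in \cE^\pm(\boldsymbol{\sigma_{*}^{2}})$. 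Combined with the identity $A_{i,j} B_{\pi(i),\pi(j)} = A_{i,j} B_{\pi_*(i), \pi_*(j)}$ from Lemma~\ref{lemma:A_edge_invariance}, this yields $\mu^\pm(\pi)_{11} = \mu^\pm(\pi_*)_{11}$ for every $\pi \in \cA_*$, as desired.

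Finally, I would eliminate $\nu^-(\pi)$ using the identity $\nu^+(\pi) + \nu^-(\pi) = \sum_{\{i,j\}} A_{i,j} =: E$, which depends on $A$ alone and so is constant in $\pi$. Writing $\nu^+(\pi) = \tfrac{1}{2}\bigl(E + (\nu^+(\pi) - \nu^-(\pi))\bigr)$ and $\nu^-(\pi) = \tfrac{1}{2}\bigl(E - (\nu^+(\pi) - \nu^-(\pi))\bigr)$ and regrouping, a direct computation yields
$$
\left(\frac{p_{10}}{p_{00}}\right)^{\nu^+(\pi)}\left(\frac{q_{10}}{q_{00}}\right)^{\nu^-(\pi)} \;=\; \left(\frac{p_{10}\,q_{10}}{p_{00}\,q_{00}}\right)^{E/2}\left(\sqrt{\frac{p_{10}\,q_{00}}{p_{00}\,q_{10}}}\right)^{\nu^+(\pi) - \nu^-(\pi)}.
$$
The first factor is another function of $A$ alone, hence can also be absorbed into the constant. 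Collecting all $\pi$-independent factors into $C_2$ gives the claimed formula.

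The only real obstacle is the invariance check in the second step, and even that reduces to unpacking the definitions via Lemma~\ref{lemma:A_edge_invariance}; everything else is routine algebra. No new probabilistic ideas are required beyond those already developed in Lemmas~\ref{lemma:posterior_v1} and~\ref{lemma:A_edge_invariance}.
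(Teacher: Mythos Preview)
Your proposal is correct and follows essentially the same route as the paper's proof: Bayes' rule to restrict to $\cA_*$, invariance of $\mu^{\pm}(\pi)_{11}$ on $\cA_*$ via Lemma~\ref{lemma:A_edge_invariance}, and the algebraic rewriting using $\nu^+(\pi)+\nu^-(\pi)=\sum_{\{i,j\}}A_{i,j}$. If anything, your second step is slightly more explicit than the paper's, since you spell out why the nonzero terms contribute to the same side $\mu^+$ or $\mu^-$ (because $\pi$ and $\pi_*$ agree outside $S_*$), whereas the paper simply cites Lemma~\ref{lemma:A_edge_invariance} for this.
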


\begin{proof}
By Bayes' rule we have that 
\begin{align*}
\p \left( \pi_* = \pi \, \middle| \, A, B, \boldsymbol{\sigma_{*}^{2}}, S_*, \pi_* \{[n] \setminus S_* \} \right) 
&= 
\frac{\p \left( \pi_* = \pi \, \middle| \, A, B, \boldsymbol{\sigma_{*}^{2}} \right) \p \left( S_*, \pi_* \{[n] \setminus S_* \} \, \middle| \, \pi_{*} = \pi, A, B, \boldsymbol{\sigma_{*}^{2}} \right)}{\p \left( S_*, \pi_* \{[n] \setminus S_* \} \, \middle| \, A, B, \boldsymbol{\sigma_{*}^{2}} \right)} \\
&= 
\frac{\p \left( \pi_* = \pi \, \middle| \, A, B, \boldsymbol{\sigma_{*}^{2}} \right)}{\p \left( S_*, \pi_* \{[n] \setminus S_* \} \, \middle| \, A, B, \boldsymbol{\sigma_{*}^{2}} \right)} \mathbf{1}( \pi \in \cA_* ).
\end{align*}
The 
probability in the denominator is a function of $A$, $B$, $\boldsymbol{\sigma_{*}^{2}}$, $S_*$, and $\pi_* \{[n] \setminus S_* \}$. Furthermore, by Lemma~\ref{lemma:A_edge_invariance}, $\mu^+(\pi)_{11}$ and $\mu^-(\pi)_{11}$ are constant over $\pi \in \cA_*$. Hence by Lemma~\ref{lemma:posterior_v1} we can write
\begin{equation}
\label{eq:posterior_v2}
\p \left( \pi_* = \pi \, \middle| \, A, B, \boldsymbol{\sigma_{*}^{2}}, S_*, \pi_* \{[n] \setminus S_* \} \right) = c_1 \left( \frac{p_{10}}{p_{00}} \right)^{\nu^+(\pi)} \left( \frac{q_{10}}{q_{00}} \right)^{\nu^-(\pi)} \mathbf{1}(\pi \in \cA_*),
\end{equation}
where 
\[
c_1 = \frac{C_1}{\p \left( S_*, \pi_* \{[n] \setminus S_* \} \, \middle| \, A, B, \boldsymbol{\sigma_{*}^{2}} \right)} \left( \frac{p_{00} p_{11}}{ p_{01} p_{10} } \right)^{\mu^+(\pi_*)_{11}} \left( \frac{q_{00} q_{11}}{ q_{01} q_{10}} \right)^{\mu^-(\pi_*)_{11}}.
\]
Above, $C_1$ is the same constant as in Lemma~\ref{lemma:posterior_v1}. In particular, $c_1$ depends only on $A$, $B$, $\boldsymbol{\sigma_{*}^{2}}$, $S_*$, and $\pi_* \{[n] \setminus S_* \}$. To simplify the right hand side of~\eqref{eq:posterior_v2} further, we can first write
\begin{equation}
\label{eq:posterior_v3}
\left( \frac{p_{10}}{p_{00}} \right)^{\nu^+(\pi)} \left( \frac{q_{10}}{q_{00}} \right)^{\nu^-(\pi)} = \left( \sqrt{\frac{p_{10} q_{10}}{p_{00} q_{00}} } \right)^{\nu^+(\pi) + \nu^-(\pi)} \left( \sqrt{ \frac{p_{10} q_{00}}{ p_{00} q_{10}} } \right)^{\nu^+(\pi) - \nu^-(\pi)}.
\end{equation}
Thus a simplification can be obtained by noting that $\nu^+(\pi) + \nu^-(\pi)$ depends only on $A$. Indeed, 
\begin{align*}
\nu^+(\pi) + \nu^-(\pi) & = \sum\limits_{\{\pi(i), \pi(j) \} \in \cE^+(\boldsymbol{\sigma_{*}^{2}})} A_{i,j} + \sum\limits_{\{\pi(i), \pi(j) \} \in \cE^-(\boldsymbol{\sigma_{*}^{2}})} A_{i,j}  = \sum\limits_{\{i,j \} \in \binom{[n]}{2}} A_{i,j}.
\end{align*}
The desired result now follows with 
\[
C_2 : = c_1 \left( \sqrt{ \frac{p_{10} q_{10}}{ p_{00} q_{00}} } \right)^{\sum_{\{i,j \} \in \binom{[n]}{2}} A_{i,j} }. \qedhere
\]
\end{proof}

\subsubsection{The posterior distribution of the community partition}

Our next few results will allow us to translate our characterization of the posterior distribution of $\pi_*$ to the posterior distribution of the community labeling. We proceed by defining some further notation. For a community partition $\mathbf{X} = (X^+, X^-)$ of $[n]$ in $G_1$, define the set 
$$
\cB(\mathbf{X}) : = \left \{ \pi \in \cA_*: \pi(X^+) = V_2^+, \pi(X^-) = V_2^- \right \}.
$$
In words, $\cB(\mathbf{X})$ is the set of permutations in $\cA_*$ which induce the community partition $\mathbf{X}$ in~$G_1$. 
Notice that if $\cB( \mathbf{X}) \neq \emptyset$, 
then the partition $\mathbf{X}$ must be compatible with $A$, $B$, $\boldsymbol{\sigma_{*}^{2}}$, $S_*$, and $\pi_* \{[n] \setminus S_* \}$. In particular, if $\boldsymbol{\sigma}_{\mathbf{X}}$ denotes the community memberships associated with $\mathbf{X}$, 
the following must~hold:
\begin{itemize}
    \item $\sigma_{\mathbf{X}}(i) = \sigma_*^2(\pi_*(i)) = \sigma_*^1(i)$ for $i \in [n] \setminus S_*$;
    \item $|S_* \cap X^+ | = |S_* \cap V_1^+|$ and $|S_* \cap X^-| = |S_* \cap V_1^-|$. 
\end{itemize}
The first condition must hold since we know the true vertex correspondence---and therefore the true community labels---outside of the set $S_*$. The second condition must hold since the number of vertices of each community in $S_*$ can be deduced by examining the community labels of $\pi_*(S_*)$ with respect to $\boldsymbol{\sigma_{*}^{2}}$. 

We proceed by establishing a few more useful results related to $\cB( \mathbf{X})$. The following lemma shows that the size of $\cB(\mathbf{X})$ does not depend on the specific choice of $\mathbf{X}$. 
\begin{lemma}
\label{lemma:B_size}
If $\cB( \mathbf{X})$ is nonempty, then $|\cB( \mathbf{X}) | = |S_* \cap V_1^+|! | S_* \cap V_1^-|!$.
\end{lemma}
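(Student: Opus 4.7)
The plan is to parametrize $\cA_{*}$ using Corollary~\ref{cor:A_permutation} and then count how many such parametrizations induce the prescribed community partition $\mathbf{X}$. By Corollary~\ref{cor:A_permutation}, every element of $\cA_{*}$ has the form $P_{\pi_{*},\rho}$ for some permutation $\rho$ of $S_{*}$, and distinct $\rho$'s yield distinct permutations $P_{\pi_{*},\rho}$. So the question reduces to: how many permutations $\rho$ of $S_{*}$ have the property that $P_{\pi_{*},\rho}(X^{+}) = V_{2}^{+}$ and $P_{\pi_{*},\rho}(X^{-}) = V_{2}^{-}$?

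The key observation is that $P_{\pi_{*},\rho}$ agrees with $\pi_{*}$ on $[n] \setminus S_{*}$. Since $\cB(\mathbf{X})$ is nonempty, the compatibility conditions noted just before the lemma force $\sigma_{\mathbf{X}}(i) = \sigma_{*}^{1}(i)$ for all $i \in [n] \setminus S_{*}$, so $X^{+} \setminus S_{*} = V_{1}^{+} \setminus S_{*}$ and $X^{-} \setminus S_{*} = V_{1}^{-} \setminus S_{*}$. Thus $P_{\pi_{*},\rho}$ automatically sends $X^{+} \setminus S_{*}$ to $\pi_{*}(V_{1}^{+} \setminus S_{*}) = V_{2}^{+} \setminus \pi_{*}(S_{*})$, and similarly for the $-$ part. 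Consequently the condition $P_{\pi_{*},\rho}(X^{+}) = V_{2}^{+}$ reduces to the condition
\[
\pi_{*}(\rho(X^{+} \cap S_{*})) = V_{2}^{+} \cap \pi_{*}(S_{*}) = \pi_{*}(V_{1}^{+} \cap S_{*}),
\]
which, since $\pi_{*}$ is a bijection, is equivalent to $\rho(X^{+} \cap S_{*}) = V_{1}^{+} \cap S_{*}$. The analogous reduction handles the $-$ side.

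Finally, the second compatibility condition gives $|X^{+} \cap S_{*}| = |V_{1}^{+} \cap S_{*}|$ and $|X^{-} \cap S_{*}| = |V_{1}^{-} \cap S_{*}|$, so a valid $\rho$ is precisely a pair consisting of a bijection $X^{+} \cap S_{*} \to V_{1}^{+} \cap S_{*}$ and a bijection $X^{-} \cap S_{*} \to V_{1}^{-} \cap S_{*}$. The number of such pairs is $|S_{*} \cap V_{1}^{+}|! \cdot |S_{*} \cap V_{1}^{-}|!$, which yields the claimed formula. I do not anticipate any real obstacle here; the only thing to be careful about is cleanly using the two compatibility conditions implicit in the nonemptiness of $\cB(\mathbf{X})$ to reduce the global constraint on $\pi$ to a pair of independent constraints on the restriction of $\rho$ to $X^{+} \cap S_{*}$ and $X^{-} \cap S_{*}$.
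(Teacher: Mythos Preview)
Your proof is correct and follows essentially the same approach as the paper's: both use Corollary~\ref{cor:A_permutation} to parametrize $\cA_*$ by permutations $\rho$ of $S_*$, then count those $\rho$ that induce the partition $\mathbf{X}$. The only cosmetic difference is that the paper fixes a base point $\pi_0 \in \cB(\mathbf{X})$ and argues that $\rho$ must preserve $S_* \cap X^+$ and $S_* \cap X^-$, whereas you work directly with $\pi_*$ and reduce to counting bijections $X^{+}\cap S_* \to V_1^{+}\cap S_*$ and $X^{-}\cap S_* \to V_1^{-}\cap S_*$; these are equivalent formulations of the same count.
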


\begin{proof}
Suppose that $\pi_0, \pi_1 \in \cB(\mathbf{X})$. By Corollary \ref{cor:A_permutation}, we can write $\pi_1 = P_{\pi_0, \rho}$ for some permutation $\rho$ on $S_*$. Notice that 
if $i \in S_* \cap X^+$, 
then $\rho(i) \in S_* \cap X^+$,  
and  
if $i \in S_* \cap X^-$, 
then $\rho(i) \in S_* \cap X^-$; otherwise, $\pi_0(i)$ and $\pi_1(i) = \pi_0(\rho(i))$ would have different community labels with respect to $\boldsymbol{\sigma_{*}^{2}}$, which would violate our assumption that $\pi_0, \pi_1 \in \cB(\mathbf{X})$. We can therefore decompose $\rho$ into two disjoint permutations $\rho^+$ and $\rho^-$, where $\rho^+$ is a permutation of $S_* \cap X^+$ and $\rho^-$ is a permutation of $S_* \cap X^-$. Since there are $|S_* \cap X^+|! = |S_* \cap V_1^+ |!$ choices for $\rho^+$ and $|S_* \cap X^-|! = |S_* \cap V_1^-|!$ choices for $\rho^-$, the desired result follows. 
\end{proof}

Our next result shows that $\nu^+(\pi) - \nu^-(\pi)$ is invariant over elements of $\cB(\mathbf{X})$, where recall that 
$$
\nu^+(\pi)  := \sum\limits_{\{\pi(i), \pi(j) \} \in \cE^+(\boldsymbol{\sigma_{*}^{2}})} A_{i, j} \qquad \text{and} \qquad \nu^-(\pi)  := \sum\limits_{\{\pi(i), \pi(j) \} \in \cE^-(\boldsymbol{\sigma_{*}^{2}})} A_{i, j}.
$$
We briefly recall some relevant notation. Let $\boldsymbol{\sigma}_{\mathbf{X}}$ be the community labels induced by $\mathbf{X}$, so that $\sigma_{\mathbf{X}}(i) = +1$ if $i \in X^+$ and $\sigma_X(i) = -1$ if $i \in X^-$. We also recall that 
$$
\maj(i) = \sum\limits_{j \in \cN_1(i)} \sigma_*^1(j) = \sum\limits_{j \in [n]} A_{i,j} \sigma_*^1(j).
$$
While $\maj(i)$ is generally not measurable with respect to $\left\{ A, B, \boldsymbol{\sigma_{*}^{2}}, S_*, \pi_* \{ [n] \setminus S_* \} \right\}$, it \emph{is} measurable if $i \in S_*$. The reason is that for $j \in [n] \setminus S_*$, $\pi_*(j)$ is known so we can deduce $\sigma_*^1(j) = \sigma_*^2(\pi_*(j))$. In addition, if $i \in S_*$, then all neighbors of $i$ in $G_1$ are in $[n] \setminus S_*$, so $\maj(i)$ is indeed measurable.

\begin{lemma}
\label{lemma:nu_invariance}
For all $\pi \in \cB(\mathbf{X})$, we have that 
$$
\nu^+(\pi) - \nu^-(\pi) = C_3 + \sum\limits_{i \in S_*} \sigma_{\mathbf{X}}(i) \maj(i),
$$
where $C_3$ depends on $A$, $B$, $\boldsymbol{\sigma_{*}^{2}}$, $S_*$, and  $\pi_* \{ [n] \setminus S_*\}$, but not on $\mathbf{X}$. 
\end{lemma}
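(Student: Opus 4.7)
The plan is to rewrite $\nu^+(\pi) - \nu^-(\pi)$ as an edge-weighted sum over $\binom{[n]}{2}$ and then split the sum according to whether endpoints lie in $S_*$. Concretely, since $\pi \in \cB(\mathbf{X})$ means $\pi(X^+) = V_2^+$ and $\pi(X^-) = V_2^-$, we have $\sigma_*^2(\pi(i)) = \sigma_{\mathbf{X}}(i)$ for every $i \in [n]$. Hence $\{\pi(i),\pi(j)\} \in \cE^{\pm}(\boldsymbol{\sigma_{*}^{2}})$ is equivalent to $\{i,j\} \in \cE^{\pm}(\boldsymbol{\sigma}_{\mathbf{X}})$, and I can write
\[
\nu^+(\pi) - \nu^-(\pi) = \sum_{\{i,j\} \in \binom{[n]}{2}} A_{i,j}\, \sigma_{\mathbf{X}}(i)\, \sigma_{\mathbf{X}}(j).
\]

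Next, I partition the sum into three pieces: pairs $\{i,j\} \subseteq [n] \setminus S_*$, pairs $\{i,j\} \subseteq S_*$, and cross pairs with exactly one endpoint in $S_*$. For the first piece, $\sigma_{\mathbf{X}}(i) = \sigma_*^1(i) = \sigma_*^2(\pi_*(i))$ for $i \in [n] \setminus S_*$ (a consequence of the compatibility conditions noted just before the lemma), so this sum equals $\sum_{\{i,j\} \subseteq [n] \setminus S_*} A_{i,j}\,\sigma_*^2(\pi_*(i))\,\sigma_*^2(\pi_*(j))$, which is measurable with respect to $(A, B, \boldsymbol{\sigma_{*}^{2}}, S_*, \pi_*\{[n] \setminus S_*\})$ but does not involve $\mathbf{X}$; I define this to be $C_3$. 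For the second piece, I invoke condition \#\ref{item:S_singleton} of Definition~\ref{def:S}: since $S_* \subseteq R_*$, any $i \in S_*$ satisfies $A_{i,j} = 0$ for all $j \in R_* \supseteq S_*$, so this piece vanishes identically.

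For the cross piece, writing $i \in S_*$ and $j \in [n] \setminus S_*$, the contribution is
\[
\sum_{i \in S_*} \sigma_{\mathbf{X}}(i) \sum_{j \in [n] \setminus S_*} A_{i,j}\, \sigma_*^1(j).
\]
Again by condition \#\ref{item:S_singleton} of Definition~\ref{def:S}, the inner sum can be extended to all of $[n]$ at no cost (the summands with $j \in S_*$ are zero), giving $\sum_{j \in [n]} A_{i,j}\,\sigma_*^1(j) = \maj(i)$. Combining the three pieces yields exactly $C_3 + \sum_{i \in S_*} \sigma_{\mathbf{X}}(i)\,\maj(i)$, as required.

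I do not anticipate a real obstacle here — the argument is essentially bookkeeping. The only point that demands care is verifying that every quantity absorbed into $C_3$ is indeed $\mathbf{X}$-free: the labels $\sigma_*^1(i)$ for $i \in [n] \setminus S_*$ are determined by $\boldsymbol{\sigma_{*}^{2}}$ together with $\pi_*\{[n] \setminus S_*\}$, and the indices summed over are determined by $S_*$, which confirms the claimed measurability of $C_3$.
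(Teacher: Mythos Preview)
Your proposal is correct and follows essentially the same argument as the paper: rewrite $\nu^+(\pi)-\nu^-(\pi)$ as $\sum_{\{i,j\}} A_{i,j}\,\sigma_{\mathbf{X}}(i)\,\sigma_{\mathbf{X}}(j)$, split according to whether endpoints lie in $S_*$, use condition~\#\ref{item:S_singleton} of Definition~\ref{def:S} to kill the $S_*$--$S_*$ piece, and identify the cross piece with $\sum_{i\in S_*}\sigma_{\mathbf{X}}(i)\,\maj(i)$. The paper's proof is the same computation with the same $C_3$.
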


\begin{proof}
Noting that 
$\sigma_{\mathbf{X}}(i) \sigma_{\mathbf{X}}(j) = 1$ 
for $\{\pi(i), \pi(j) \} \in \cE^+( \boldsymbol{\sigma_{*}^{2}})$ 
and that 
$\sigma_{\mathbf{X}}(i) \sigma_{\mathbf{X}}(j) = -1$ 
for $\{\pi(i), \pi(j) \} \in \cE^-(\boldsymbol{\sigma_{*}^{2}})$, 
we can write
\[
\nu^+(\pi) - \nu^-(\pi) = \sum\limits_{\{i,j \} \in \binom{[n]}{2}} A_{i,j} \sigma_{\mathbf{X}}(i) \sigma_{\mathbf{X}}(j).
\]
Since $\pi \in \cA_*$, 
we have $\pi(i) = \pi_*(i)$ for $i \in [n] \setminus S_*$, 
and hence $\sigma_{\mathbf{X}}(i) = \sigma_*^2(\pi_*(i)) = \sigma_*^1(i)$. 
Defining 
\[
C_3 : = \sum\limits_{\substack{\{i,j \} \in \binom{[n]}{2}: \\
i,j \in [n] \setminus S_* }} A_{i,j} \sigma_{\mathbf{X}}(i) \sigma_{\mathbf{X}}(j) = \sum\limits_{\substack{\{i,j \} \in \binom{[n]}{2}: \\
i,j \in [n] \setminus S_* }} A_{i,j} \sigma_*^1(i) \sigma_*^1(j),
\] 
it is clear that $C_3$ depends only on 
$A$, $\boldsymbol{\sigma_{*}^{2}}$, $S_*$, and $\pi_* \{[n] \setminus S_* \}$, and not on $\mathbf{X}$. 
Noting further that $A_{i,j} = 0$ when $i, j \in S_*$ by the construction of $S_*$, we have that 
\begin{align*}
\nu^+(\pi) - \nu^-(\pi) & = C_3 + \sum\limits_{i \in S_*, j \in [n] \setminus S_*} A_{i,j} \sigma_{\mathbf{X}}(i) \sigma_{\mathbf{X}}(j) 
= C_3 + \sum\limits_{i \in S_*} \sigma_{\mathbf{X}}(i) \sum\limits_{j \in [n] \setminus S_*} A_{i,j} \sigma_*^1(j) \\
& = C_3 + \sum\limits_{i \in S_*} \sigma_{\mathbf{X}}(i) \maj(i). \qedhere
\end{align*}
\end{proof}

We can now put everything together to derive the posterior probability of a given community partition. 

\begin{lemma}
\label{lemma:posterior_communities}
If $\cB(\mathbf{X})$ is nonempty, then
\[
\p \left( (V_1^+, V_1^-) = (X^+, X^-) \, \middle| \, A, B, \boldsymbol{\sigma_{*}^{2}}, S_*, \pi_* \{ [n] \setminus S_* \} \right) 
= C_4 \left( \frac{p_{10} q_{00}}{ p_{00} q_{10}} \right)^{\frac{1}{2} \sum_{i \in S_*} \sigma_{\mathbf{X}}(i) \maj(i)},
\]
where $C_4$ is a constant depending on $A$, $B$, $\boldsymbol{\sigma_{*}^{2}}$, $S_{*}$, and $\pi_* \{ [n] \setminus S_* \}$, but \emph{not} on the partition $\mathbf{X}$.
\end{lemma}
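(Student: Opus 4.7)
The plan is to reduce this statement to a direct consequence of the three previously established lemmas: Lemma~\ref{lemma:posterior_A} (posterior on $\pi_*$), Lemma~\ref{lemma:B_size} (size of $\cB(\mathbf{X})$), and Lemma~\ref{lemma:nu_invariance} (invariance of $\nu^+-\nu^-$ over $\cB(\mathbf{X})$). The key observation that ties everything together is that, conditioned on $A$, $B$, $\boldsymbol{\sigma_{*}^{2}}$, $S_*$, and $\pi_*\{[n]\setminus S_*\}$, the event $\{(V_1^+, V_1^-) = (X^+, X^-)\}$ coincides with the event $\{\pi_* \in \cB(\mathbf{X})\}$. This follows because $V_2^\pm = \pi_*(V_1^\pm)$ by construction, so once the conditioning fixes $S_*$ and $\pi_*\{[n]\setminus S_*\}$ (forcing $\pi_* \in \cA_*$), the induced partition in $G_1$ equals $(X^+, X^-)$ precisely when $\pi_*(X^+) = V_2^+$ and $\pi_*(X^-) = V_2^-$.

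With that identification in hand, the first step is to write the posterior probability of the partition as the sum
\[
\p \left( (V_1^+, V_1^-) = (X^+, X^-) \, \middle| \, A, B, \boldsymbol{\sigma_{*}^{2}}, S_*, \pi_*\{[n]\setminus S_*\} \right)
= \sum_{\pi \in \cB(\mathbf{X})} \p \left( \pi_* = \pi \, \middle| \, A, B, \boldsymbol{\sigma_{*}^{2}}, S_*, \pi_*\{[n]\setminus S_*\} \right).
\]
Substituting the formula from Lemma~\ref{lemma:posterior_A}, each summand equals $C_2 \left( \sqrt{ p_{10} q_{00} / (p_{00} q_{10}) } \right)^{\nu^+(\pi) - \nu^-(\pi)}$, since $\cB(\mathbf{X}) \subseteq \cA_*$.

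Next, I would apply Lemma~\ref{lemma:nu_invariance}, which says that $\nu^+(\pi) - \nu^-(\pi) = C_3 + \sum_{i \in S_*} \sigma_{\mathbf{X}}(i) \maj(i)$ for every $\pi \in \cB(\mathbf{X})$, where $C_3$ does not depend on $\mathbf{X}$. Hence every summand takes the same value, and the sum collapses to $|\cB(\mathbf{X})|$ times that common value. Lemma~\ref{lemma:B_size} then gives $|\cB(\mathbf{X})| = |S_* \cap V_1^+|! \, |S_* \cap V_1^-|!$, and crucially this count does not depend on $\mathbf{X}$ either: the cardinalities $|S_* \cap V_1^+|$ and $|S_* \cap V_1^-|$ are determined by the conditioning information (as explained just before Lemma~\ref{lemma:posterior_A}).

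Finally, I would collect everything that is either a function of the conditioning information alone or a constant independent of $\mathbf{X}$ into a single prefactor
\[
C_4 := |S_* \cap V_1^+|! \, |S_* \cap V_1^-|! \cdot C_2 \cdot \left( \sqrt{\frac{p_{10} q_{00}}{p_{00} q_{10}}} \right)^{C_3},
\]
leaving only the $\mathbf{X}$-dependent factor $\left( p_{10} q_{00}/(p_{00} q_{10}) \right)^{\frac{1}{2} \sum_{i \in S_*} \sigma_{\mathbf{X}}(i) \maj(i)}$, which matches the claimed expression (the factor of $1/2$ arising from the square root in Lemma~\ref{lemma:posterior_A}). There is no real obstacle here, as all the heavy lifting has already been done; the only thing to be careful about is checking that each piece absorbed into $C_4$ genuinely depends only on $A$, $B$, $\boldsymbol{\sigma_{*}^{2}}$, $S_*$, and $\pi_*\{[n]\setminus S_*\}$, and not on the particular partition $\mathbf{X}$.
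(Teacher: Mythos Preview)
Your proposal is correct and follows essentially the same approach as the paper's proof: both identify the partition event with $\{\pi_* \in \cB(\mathbf{X})\}$, sum the posterior from Lemma~\ref{lemma:posterior_A} over $\cB(\mathbf{X})$, invoke Lemma~\ref{lemma:nu_invariance} to make the summand constant, and use Lemma~\ref{lemma:B_size} to count the terms, arriving at the same explicit formula for $C_4$.
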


\begin{proof} 
To compute the posterior probability of a community partition, 
we may equivalently compute the posterior probability of the set of permutations that generate the community partition under consideration. 
We thus have that 
\begin{multline*}
\p \left( (V_1^+, V_1^-) = (X^+, X^-) \, \middle| \, A, B, \boldsymbol{\sigma_{*}^{2}}, S_*, \pi_* \{ [n] \setminus S_* \} \right) \\
= \sum_{\pi \in \cB( \mathbf{X} )} \p \left( \pi_{*} = \pi \, \middle| \,  A, B, \boldsymbol{\sigma_{*}^{2}}, S_*, \pi_* \{ [n] \setminus S_* \} \right) 
= \sum_{\pi \in \cB( \mathbf{X} )} C_{2} \left( \frac{p_{10} q_{00}}{p_{00}q_{10}} \right)^{\left( \nu^{+}(\pi) - \nu^{-}(\pi) \right)/2}, 
\end{multline*}
where the last equality follows by Lemma~\ref{lemma:posterior_A} and $C_{2}$ is the constant appearing in Lemma~\ref{lemma:posterior_A} (and note that $\pi \in \cA_{*}$ for every $\pi \in \cB(\mathbf{X})$). 
Now by Lemma~\ref{lemma:nu_invariance}, this sum is equal to 
\[
C_{2} \left( \frac{p_{10} q_{00}}{p_{00}q_{10}} \right)^{C_{3}/2} \sum_{\pi \in \cB( \mathbf{X} )} \left( \frac{p_{10} q_{00}}{p_{00}q_{10}} \right)^{\frac{1}{2} \sum_{i \in S_*} \sigma_{\mathbf{X}}(i) \maj(i)}. 
\]
Since the summand in the display above does not depend on $\pi$, 
and 
$|\cB( \mathbf{X}) | = |S_* \cap V_1^+|! | S_* \cap V_1^-|!$ 
by Lemma~\ref{lemma:B_size}, 
the desired result follows with 
\[
C_4 : = C_{2} \left( \frac{p_{10} q_{00}}{p_{00} q_{10}} \right)^{C_3 / 2} |S_* \cap V_1^+ |! | S_* \cap V_1^-|!  . \qedhere
\]
\end{proof}

The characterization of the MAP estimator follows as a corollary. 

\begin{proof}[Proof of Theorem \ref{thm:map}]
Suppose that $A$, $B$, $\boldsymbol{\sigma_{*}^{2}}$, $S_*$, and $\pi_* \{ [n] \setminus S_* \}$ are given. 
First of all, we have that 
$\wh{\sigma}_{\MAP}(i) = \sigma_{*}^{2}(\pi_{*}(i))$ 
for all $i \in [n] \setminus S_{*}$ 
(since for any community labeling not satisfying this, the posterior probability is $0$). 
For vertices in $S_{*}$, first 
note that 
$$
\frac{p_{10} q_{00} }{ p_{00} q_{10}} = \frac{\alpha}{\beta}(1 + o(1)).
$$
Thus, by Lemma~\ref{lemma:posterior_communities}, 
the MAP estimator for the community partition of $G_1$ maximizes (resp., minimizes) $\sum_{i \in S_*} \sigma_{\mathbf{X}}(i) \maj(i)$ if $\alpha > \beta$ (resp., $\alpha < \beta$), 
while respecting the constraint that 
$\left|S_* \cap X^+ \right| = |S_* \cap V_1^+|$ 
and $|S_* \cap X^-| = |S_* \cap V_1^-|$. 
The maximum is obtained by setting $\sigma_{\mathbf{X}}(i) = +1$ for $i \in S_{*}$ corresponding to the $|S_* \cap V_1^+|$ largest values of $\{\maj(i)\}_{i \in S_{*}}$ (breaking ties arbitrarily), and setting $\sigma_{\mathbf{X}}(i) = -1$ for the rest. The minimum is obtained by setting $\sigma_{\mathbf{X}}(i)$ in an opposite manner. 
\end{proof}

\subsection{Bounding the size of ``bad'' sets: Proof of Lemma \ref{lemma:T_size}}
\label{subsec:bad_sets}

\begin{proof}[Proof of Lemma \ref{lemma:T_size}]
For a given vertex $i \in [n]$, let $E_i$ be the event that $i$ is a singleton in~$G_1 \land_{\pi_*} G_2$. 
Throughout the proof, we assume that the communities are approximately balanced; 
specifically, we assume that the event 
$\cH : = \{ n/2 - n^{3/4} \le |V^+ |, |V^-| \le n/ 2 + n^{3/4} \}$ holds. 
Note that $\p(\cH) = 1-o(1)$ by Lemma~\ref{lemma:F}.

Conditioning on $\boldsymbol{\sigma_{*}^{1}}$, 
if $i \in V_1^+$, then we have that 
\begin{align}
\p \left( E_i \, \middle| \, \boldsymbol{\sigma_{*}^{1}} \right) \mathbf{1}(\cH) & = \left( 1 - s^2 \alpha \frac{\log n}{n} \right)^{ |V_1^+ | - 1} \left( 1 - s^2 \beta \frac{\log n}{n} \right)^{ |V_1^-|} \mathbf{1}( \cH) \nonumber \\
& \le \mathrm{exp} \left( - s^2 \frac{\log n}{n} ( \alpha ( |V_1^+| - 1) + \beta |V_1^-| ) \right) \mathbf{1}(\cH) \nonumber \\
\label{eq:pei_upper_bound}
& = \mathrm{exp} \left( - (1 - o(1)) s^2 \left( \frac{\alpha + \beta}{2} \right) \log n  \right) \mathbf{1} (\cH ) 
= n^{- s^2 (\alpha + \beta) / 2 + o(1)} \mathbf{1}( \cH),
\end{align}
where in the inequality on the second line we used that $1 - x \le e^{-x}$, 
and subsequently we used that 
$|V_{1}^{+}|, |V_{1}^{-}| = (1+o(1))n/2$ on the event $\cH$. The bound in~\eqref{eq:pei_upper_bound} implies that
\[
\E \left[ | R_* \cap V_1^+ | \, \middle| \, \boldsymbol{\sigma_{*}^{1}} \right] \mathbf{1} ( \cH) 
= \sum\limits_{i \in V_1^+} \p \left( E_i \, \middle| \, \boldsymbol{\sigma_{*}^{1}} \right) \mathbf{1} ( \cH)  \le | V_1^+ | n^{- s^2 ( \alpha + \beta) / 2 + o(1)} \mathbf{1} ( \cH) 
\leq n^{1 - s^2( \alpha + \beta) / 2 + o(1)}.
\]
Markov's inequality now implies that
\begin{equation}
\label{eq:T_upper_bound}
\p \left( | R_* \cap V_1^+ | \ge n^{1 - s^2 ( \alpha + \beta ) / 2 + \delta} \, \middle| \, \boldsymbol{\sigma_{*}^{1}} \right) \mathbf{1}( \cH)  
\le n^{-\delta + o(1)} = o(1),
\end{equation}
which in turn implies the unconditional bound 
$\p ( | R_* \cap V_1^+ | \ge n^{1 - s^2 (\alpha + \beta) / 2 + \delta} ) = o(1)$ 
as well. 

We now focus on deriving a probabilistic \emph{lower} bound for $|R_* \cap V_1^+|$. To this end, we start by deriving a matching lower bound for $\p \left(E_i \, \middle| \, \boldsymbol{\sigma_{*}^{1}} \right)$. A useful fact we shall use is that by Taylor's theorem, $\log(1 - x) \ge - (1 + \epsilon) x$ provided $0 < x < \epsilon / (1 + \epsilon)$. Thus for $\epsilon = \epsilon_n$ suitably small,
\begin{align*}
\left( \log \p \left( E_i \, \middle| \, \boldsymbol{\sigma_{*}^{1}} \right) \right) \mathbf{1}(\cH) & \ge \left( |V_1^+ | \log \left( 1 - s^2 \alpha \frac{\log n}{n} \right) + |V_1^-| \log \left( 1 - s^2 \beta \frac{\log n}{n} \right) \right) \mathbf{1} ( \cH)  \\
& \ge - (1 + \epsilon) s^2 \frac{\log n}{n} \left( \alpha |V_1^+| + \beta |V_2^-| \right) \mathbf{1} ( \cH)  \\
& = - (1 - o(1)) (1 + \epsilon)s^2 \left( \frac{\alpha + \beta}{2} \right) (\log n ) \mathbf{1}( \cH) .
\end{align*}
Since we used Taylor's theorem for $x = O( \log(n) / n)$, the inequality holds for $\epsilon = n^{-1/2}$. Hence
\begin{equation}
\label{eq:T_expectation_lower_bound}
\E \left[ | R_* \cap V_1^+ | \, \middle| \, \boldsymbol{\sigma_{*}^{1}} \right] \mathbf{1}( \cH)  = \sum\limits_{i \in V_1^+} \p \left( E_i \, \middle| \, \boldsymbol{\sigma_{*}^{1}} \right)  \mathbf{1} ( \cH) 
\ge n^{1 - s^2 ( \alpha + \beta) / 2 - o(1) } \mathbf{1} ( \cH).
\end{equation}
We next upper bound the variance of $|R_* \cap V_1^+|$. For distinct $i,j \in V_1^+$, we have that
\begin{align*}
\mathrm{Cov} \left( \mathbf{1}( E_i), \mathbf{1}(E_j) \, \middle| \, \boldsymbol{\sigma_{*}^{1}} \right)  & =  \p \left( E_i \cap E_j \, \middle| \, \boldsymbol{\sigma_{*}^{1}} \right) - \p \left(E_i \, \middle| \, \boldsymbol{\sigma_{*}^{1}} \right) \p \left( E_j \, \middle| \, \boldsymbol{\sigma_{*}^{1}} \right)   \\
& = \left( 1 - s^2 \alpha \frac{\log n}{n} \right)^{ 2 | V^+ | - 3} \left( 1 - s^2 \beta \frac{\log n}{n} \right)^{2 | V^-|} \left( 1 - \left( 1 - s^2\alpha \frac{\log n}{n} \right) \right).
\end{align*}
Notice that on the event $\cH$, we have that $2 | V^+| - 3 = (1 + o(1)) n$ and $2 | V^- | = (1 + o(1)) n$. Hence, using the inequality $1 - x \le e^{-x}$, we have that
\begin{align}
\mathrm{Cov} \left( \mathbf{1} ( E_i), \mathbf{1}(E_j) \, \middle| \, \boldsymbol{\sigma_{*}^{1}} \right) \mathbf{1} ( \cH) & \le s^2 \alpha \frac{\log n}{n} \mathrm{exp} \left( - (1 - o(1)) s^2 ( \alpha + \beta) \log n \right) \mathbf{1}(\cH) \nonumber \\
\label{eq:cov_eiej_upper_bound}
& = n^{-1-s^2(\alpha + \beta) + o(1)} \mathbf{1}( \cH).
\end{align}
It follows that 
\begin{align}
\mathrm{Var} \left( | R_* \cap V_1^+ | \, \middle| \, \boldsymbol{\sigma_{*}^{1}} \right) \mathbf{1}(\cH) 
& = \mathrm{Var} \left( \sum\limits_{i \in V_1^+ } \mathbf{1}(E_i) \, \middle| \, \boldsymbol{\sigma_{*}^{1}} \right) \mathbf{1}(\cH)  
= \sum\limits_{i,j \in V_1^+} \mathrm{Cov} \left( \mathbf{1} ( E_i), \mathbf{1}(E_j) \, \middle| \, \boldsymbol{\sigma_{*}^{1}}\right) \mathbf{1}(\cH) \nonumber \\
& \le \sum\limits_{i \in V_1^+ } \p \left( E_i \, \middle| \, \boldsymbol{\sigma_{*}^{1}} \right) \mathbf{1}( \cH) + \sum\limits_{i,j \in V_1^+ : i \neq j} \mathrm{Cov} \left( \mathbf{1}( E_i), \mathbf{1}( E_j) \, \middle| \, \boldsymbol{\sigma_{*}^{1}} \right)  \mathbf{1}(\cH) \nonumber \\
& \leq \left( n^{1 - s^2(\alpha + \beta) / 2 + o(1)} + n^{1 - s^2 (\alpha + \beta) + o(1)} \right) \mathbf{1}( \cH) \nonumber  \\
\label{eq:T_second_moment_upper_bound}
& = n^{1 - s^2 (\alpha + \beta) / 2 + o(1)} \mathbf{1}( \cH).
\end{align}
Above, we have used \eqref{eq:pei_upper_bound} and \eqref{eq:cov_eiej_upper_bound} to bound the terms of the summations in the second line. 
The bounds in~\eqref{eq:T_expectation_lower_bound} and~\eqref{eq:T_second_moment_upper_bound}, along with the Paley-Zygmund inequality, imply that
\begin{align}
\p  \left( |R_* \cap V_1^+ | \ge n^{1 - s^2(\alpha + \beta) / 2- \delta}  \, \middle| \, \boldsymbol{\sigma_{*}^{1}} \right) \mathbf{1}(\cH) 
& \ge \left (1 - n^{-\delta + o(1)}\right )^2 \frac{ \E \left[ | R_* \cap V_1^+ | \, \middle| \, \boldsymbol{\sigma_{*}^{1}} \right]^2 }{ \E \left[ | R_* \cap V_1^+ |^2 \, \middle| \, \boldsymbol{\sigma_{*}^{1}} \right]} \mathbf{1} ( \cH) \nonumber \\
&\geq \left( 1 - n^{- \delta + o(1)} \right)^2 \left( 1 - \frac{ \mathrm{Var} \left( | R_* \cap V_1^+ | \, \middle| \, \boldsymbol{\sigma_{*}^{1}} \right) }{\E \left[ |R_* \cap V_1^+ | \, \middle| \, \boldsymbol{\sigma_{*}^{1}} \right]^2} \right) \mathbf{1} ( \cH) \nonumber \\
& \ge  \left (1 - n^{-\delta + o(1)} \right)^2 \left(1 - n^{- \left(1 - s^2(\alpha + \beta) / 2 \right) + o(1)} \right) \mathbf{1}( \cH) \nonumber \\ 
\label{eq:T_lower_bound}
& = (1 - o(1))\mathbf{1}(\cH).
\end{align}
Together, \eqref{eq:T_upper_bound} and \eqref{eq:T_lower_bound}, along with $\p(\cH) = 1 - o(1)$, show that
\[
\p \left( n^{ 1 - s^2 (\alpha + \beta)/ 2 - \delta} \le | R_* \cap V_1^+ | \le n^{1 - s^2 (\alpha + \beta) / 2 + \delta}  \right) = 1 - o(1).
\]
Identical arguments show that the same holds when $|R_* \cap V_1^+|$ is replaced with $|R_* \cap V_1^-|$. 

We now study the size of $|\overline{R}_* \cap V_1^+|$. 
Since $\overline{R}_* \supseteq R_*$, 
with probability $1 - o(1)$ we have the lower bound 
$|\overline{R}_* \cap V_1^+ | \ge | R_* \cap V_1^+ | \geq n^{1 - s^2 ( \alpha + \beta ) / 2 - \delta}$. 
To establish an upper bound, we use the bound 
\[
| \overline{R}_* \cap V_1^+ |
\leq | \overline{R}_* |
\leq \sum_{i \in R_{*}} \left( 1 + | \cN_{2}(\pi_{*}(i)) | \right) 
\leq | \overline{R}_* | \left( 1 + \max_{j \in [n]} | \cN_{2}(j)| \right).
\]
In light of Lemma \ref{lemma:vertex_degree_bound}, we have that $\max_{j \in [n]} | \cN_2(j) | \le 100s \max\{\alpha, \beta\} \log n$ with probability at least $1 - o(1)$. Hence, with probability at least $1 - o(1)$, we have that 
\begin{align*}
|\overline{R}_* \cap V_1^+ | 
& \le \left( 1 + 100 s \max \{ \alpha, \beta \} \log n \right) | R_* | \\
&\leq 2 (1 + 100 s \max \{ \alpha, \beta\} \log n ) n^{1 - s^2 ( \alpha + \beta ) / 2 + \delta} 
\le n^{1 - s^2 ( \alpha + \beta) / 2 + 2 \delta}.
\end{align*}
Identical steps show that $|\overline{R}_* \cap V_1^-| \le n^{1 - s^2 ( \alpha + \beta ) / 2 + 2 \delta}$ with probability $1 - o(1)$ as well. 
\end{proof}

\subsection{First and second moment estimates for $W_i$: Proofs of Lemmas \ref{lemma:W_first_moment} and \ref{lemma:W_second_moment}}
\label{subsec:moment_estimates}

We first prove some useful intermediate results. Our first result establishes some useful conditional independence properties given the sigma algebra $\cI$. 

\begin{lemma}
\label{lemma:I_consequences}
The following hold: 
\begin{enumerate}

\item The sets $\overline{R}_*$, $\overline{R}_* \cap V_1^+$, and $\overline{R}_* \cap V_1^-$ are $\cI$-measurable.

\item \label{item:A_I_independence}
Let $i \in R_*$. 
Conditioned on $\cI$, $\{A_{i,j} : \{i, j \} \in \cE_{10} \}$ is a collection of mutually independent random variables where 
\begin{equation}
\label{eq:A_distribution}
A_{i,j} \sim \begin{cases}
\mathrm{Bern} \left( \alpha \frac{\log n}{n} \right) &\text{ if } \sigma_*^1(i) = \sigma_*^1(j), \\
\mathrm{Bern} \left( \beta \frac{\log n}{n} \right) &\text{ if } \sigma_*^1(i) = - \sigma_*^1(j).
\end{cases}
\end{equation}

\item \label{item:S_independence}
The random variables $\mathbf{1}(i \in S_*)$ and $\sum_{j \in [n] \setminus \overline{R}_*} A_{i,j} \sigma_*^1(j)$ are conditionally independent given~$\cI$.
\end{enumerate}
\end{lemma}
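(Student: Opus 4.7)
The plan is to prove the three parts in order, with the random-partition construction from Section~\ref{sec:alt_construction} serving as the main technical tool. Item~(1) is essentially definitional: by the remark after Definition~\ref{def:S}, $\overline{R}_* = R_* \cup \pi_*^{-1}(\cN_2(\pi_*(R_*)))$, which is built out of $R_*$, $\pi_*$, and $B$ (which determines $\cN_2$)---all of which are $\cI$-measurable. Since $V_1^\pm$ are determined by $\boldsymbol{\sigma_{*}^{1}}$, also in $\cI$, the intersections $\overline{R}_* \cap V_1^\pm$ are $\cI$-measurable as well.

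For item~(2), I will use Lemma~\ref{lemma:random_partition} to realize the CSBM via independent pair-indicators $X_{i,j}$ (with parameter $p$ if $\sigma_*^1(i) = \sigma_*^1(j)$ and $q$ otherwise), together with the independent partition $\{\cE_{ab}\}$, satisfying $A_{i,j} = X_{i,j} \mathbf{1}(\{i,j\} \in \cE_{10} \cup \cE_{11})$ and $B'_{i,j} = X_{i,j} \mathbf{1}(\{i,j\} \in \cE_{01} \cup \cE_{11})$. The key structural point is that, conditionally on $\pi_*, \boldsymbol{\sigma_*}, \cE$, the family $\{A_{i,j} : \{i,j\} \in \cE_{10}\}$ consists exactly of the $X$-variables supported on $\cE_{10}$, whereas $B$ is a function of the $X$-variables supported on $\cE_{01} \cup \cE_{11}$ (together with $\pi_*$ and $\cE$). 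By mutual independence of the $X$'s across pairs, $\{A_{i,j} : \{i,j\} \in \cE_{10}\}$ is conditionally independent of $B$, so its conditional distribution given $\cI$ matches its distribution given $(\pi_*, \boldsymbol{\sigma_*}, \cE)$, which yields the stated independent Bernoulli law~\eqref{eq:A_distribution}. It remains to justify that additionally conditioning on $R_*$ is harmless; this follows because on $\cE_{11}$ one has $A_{i,j} = B_{\pi_*(i),\pi_*(j)}$, while on the complement the product $A_{i,j} B_{\pi_*(i),\pi_*(j)}$ vanishes identically, so $R_*$ is already a deterministic function of $(B, \pi_*, \cE_{11})$ and contributes no new information.

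For item~(3), I will bootstrap item~(2) together with the following observation. For $i \in R_*$, $A_{i,j}=0$ whenever $\{i,j\} \in \cE_{00} \cup \cE_{01}$ by construction, and also whenever $\{i,j\} \in \cE_{11}$ since then $A_{i,j} = B_{\pi_*(i),\pi_*(j)}$ and the definition of $R_*$ forces their product to vanish. Therefore the only potentially non-zero $A_{i,j}$'s are indexed by $j$ with $\{i,j\} \in \cE_{10}$, yielding
\[
\mathbf{1}(i \in S_*) \;=\; \prod_{\substack{j \in \overline{R}_*:\\ \{i,j\} \in \cE_{10}}} \mathbf{1}(A_{i,j}=0), \qquad \sum_{j \in [n]\setminus \overline{R}_*} A_{i,j}\,\sigma_*^1(j) \;=\; \sum_{\substack{j \in [n]\setminus \overline{R}_*:\\ \{i,j\} \in \cE_{10}}} A_{i,j}\,\sigma_*^1(j).
\]
These two quantities are measurable with respect to disjoint sub-collections of $\{A_{i,j} : \{i,j\} \in \cE_{10}\}$, which are conditionally independent given $\cI$ by item~(2), and the $\sigma_*^1(j)$ factors are $\cI$-measurable constants; hence the two quantities are conditionally independent given $\cI$. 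The case $i \notin R_*$ is trivial since then $\mathbf{1}(i \in S_*) = 0$ is $\cI$-measurable.

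The only real technical subtlety is handling the conditioning on $R_*$ cleanly in item~(2)---once one notices that $R_*$ is redundant given $(B, \pi_*, \cE_{11})$, everything reduces to the independence structure inherent in the random-partition construction, and items~(1) and~(3) are then short consequences.
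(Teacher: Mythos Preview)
Your proposal is correct and follows essentially the same approach as the paper's proof: both use the random-partition construction to argue that $\{A_{i,j}:\{i,j\}\in\cE_{10}\}$ is conditionally independent of everything in $\cI$ (since $B$ and $R_*$ are functions of the $\cE_{01}\cup\cE_{11}$ variables together with $\pi_*$ and $\cE$), and then deduce item~(3) by observing that $\mathbf{1}(i\in S_*)$ and $\sum_{j\in[n]\setminus\overline{R}_*}A_{i,j}\sigma_*^1(j)$ depend on disjoint sub-collections of these conditionally independent Bernoullis. Your explicit observation that $R_*$ is already determined by $(B,\pi_*,\cE_{11})$ is a nice way to phrase what the paper expresses as ``$R_*$ depends only on $G_1\land_{\pi_*}G_2$, $G_2$, and $\pi_*$.''
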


\begin{proof}
From the formula $\overline{R}_* = R_* \cup \pi_*^{-1} ( \cN_2(\pi_*(R_*)) )$ provided in Definition \ref{def:S}, it is clear that $\overline{R}_*$ is $\cI$-measurable since it depends only on $R_*$, $\pi_*$, and $B$. The sets $\overline{R}_* \cap V_1^+$ and $\overline{R}_* \cap V_1^-$ can be readily obtained from $\overline{R}_*$ and $\boldsymbol{\sigma_{*}^{1}}$. 

We now prove Item \#\ref{item:A_I_independence}. 
Notice that the sets $R_*$ and $\overline{R}_*$ depend only on $G_1 \land_{\pi_*} G_2$, $G_2$, and~$\pi_{*}$. 
Thus, by Lemma~\ref{lemma:random_partition}, 
$G_1 \setminus_{\pi_*} G_2$ is conditionally independent of $R_*$ and $\overline{R}_*$ 
given $\pi_{*}$, $\boldsymbol{\sigma_{*}}$, and the partition $\{\cE_{00}, \cE_{01}, \cE_{10}, \cE_{11} \}$. 
In particular, the collection $\{A_{i, j} : \{i,j \} \in \cE_{10} \}$ is conditionally independent of $\cI$, and~\eqref{eq:A_distribution} follows.

Finally, we prove Item \#\ref{item:S_independence}. By Definition \ref{def:S}, we can write
$$
\mathbf{1}(i \in S_*) = \mathbf{1}(i \in R_*) \mathbf{1}( A_{i,j} = 0 \text{ for all } j \in \overline{R}_* ).
$$
In particular, $\mathbf{1}(i \in S_*)$ is measurable with respect to the sigma-algebra generated by $\cI$ and the collection $\cC_1 : = \{ A_{i,j} : j \in \overline{R}_* \text{ and } \{i,j \} \in \cE_{10} \}$. 
On the other hand, since $\overline{R}_*$ is $\cI$-measurable, $\sum_{j \in [n] \setminus \overline{R}_*} A_{i,j} \sigma_*^1(j)$ is measurable with respect to the sigma-algebra generated by $\cI$ and the collection $\cC_2 : = \{ A_{i,j}: j \in [n] \setminus \overline{R}_* \text{ and } \{ i,j \} \in \cE_{10} \}$. Since $\cC_1 \cap \cC_2 = \emptyset$, Item~\#\ref{item:A_I_independence} implies that the two random variables are conditionally independent given~$\cI$.
\end{proof}

Our next result shows that, with high probability, vertices in $R_*$ are also in $S_*$. 

\begin{lemma}
\label{lemma:S_size}
For any $\delta > 0$ and $i \in R_*$, it holds for sufficiently large $n$ that 
$$
\p \left( i \in S_* \, \middle| \, \cI \right) \mathbf{1}(\cG_\delta) 
\ge \left(1 - n^{- s^2 \dconn(\alpha, \beta) + 2\delta} \right) \mathbf{1}(\cG_\delta).
$$
\end{lemma}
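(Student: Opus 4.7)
The plan is to reduce the event $\{i \in S_*\}$, assuming $\{i \in R_*\}$, to a conjunction over a small and conditionally independent collection of $A_{i,j}$ variables, and then apply the size bound from $\cG_\delta$. Note that $\{i \in R_*\}$ is $\cI$-measurable, and when $i \notin R_*$ the claim is vacuous since $S_* \subseteq R_*$, so throughout I assume the event $\{i \in R_*\}$ holds.

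First I would unpack Definition~\ref{def:S}: given $\{i \in R_*\}$, the only remaining requirement for $i \in S_*$ is that $A_{i,j} = 0$ for every $j \in \overline{R}_*$ (combining conditions~\#\ref{item:S_singleton} and~\#\ref{item:S_neighbors} via $\overline{R}_* = R_* \cup \pi_*^{-1}(\cN_2(\pi_*(R_*)))$). I would then argue that among such $j$, only those with $\{i,j\} \in \cE_{10}$ can contribute nontrivially. For $\{i,j\} \in \cE_{00} \cup \cE_{01}$, $A_{i,j}$ is deterministically zero under the random partition construction (Lemma~\ref{lemma:random_partition}). For $\{i,j\} \in \cE_{11}$, the same construction forces $A_{i,j} = B_{\pi_*(i), \pi_*(j)}$ (both record whether the parent-graph edge $\{i,j\}$ was formed), so $\{i \in R_*\}$, which stipulates $A_{i,j} B_{\pi_*(i),\pi_*(j)} = 0$, already yields $A_{i,j} = 0$.

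Next I would invoke Lemma~\ref{lemma:I_consequences}(\#\ref{item:A_I_independence}): conditionally on $\cI$, the variables $\{A_{i,j} : j \in \overline{R}_*,\ \{i,j\} \in \cE_{10}\}$ are independent Bernoullis of parameter at most $\gamma \log(n)/n$, where $\gamma := \max\{\alpha,\beta\}$. Since $\overline{R}_*$ is $\cI$-measurable, this yields
\[
\p(i \in S_* \mid \cI)\, \mathbf{1}(i \in R_*) \ge \left(1 - \gamma \log(n)/n\right)^{|\overline{R}_*|} \mathbf{1}(i \in R_*).
\]

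Finally, on $\cG_\delta$ I will use $|\overline{R}_*| \le 2 n^{1 - s^2 \dconnab + \delta}$. A standard Taylor expansion then gives
\[
\left(1 - \gamma \log(n)/n\right)^{|\overline{R}_*|} \ge 1 - 2\gamma (\log n)\, n^{-s^2 \dconnab + \delta}(1+o(1)) \ge 1 - n^{-s^2 \dconnab + 2\delta}
\]
for $n$ sufficiently large, absorbing the factor $2\gamma \log n$ into the extra slack $n^\delta$. No serious technical obstacle arises; the only subtlety is correctly identifying which edge variables are already forced to zero by $\{i \in R_*\}$, and which must be controlled via conditional independence under $\cI$.
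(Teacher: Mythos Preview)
Your proposal is correct and follows essentially the same approach as the paper's proof: both reduce $\{i \in S_*\}$ (given $i \in R_*$) to the event that $A_{i,j}=0$ for $j \in \overline{R}_*$ with $\{i,j\}\in\cE_{10}$, invoke the conditional independence in Lemma~\ref{lemma:I_consequences}(\#\ref{item:A_I_independence}), and then apply Bernoulli's inequality together with the size bound $|\overline{R}_*|\le 2n^{1-s^2\dconnab+\delta}$ from $\cG_\delta$. The only cosmetic differences are that the paper splits the relevant index set into $\cC^+(i)$ and $\cC^-(i)$ (tracking $\alpha$ and $\beta$ separately) whereas you bound everything by $\gamma=\max\{\alpha,\beta\}$, and you spell out explicitly why the cases $\{i,j\}\in\cE_{00}\cup\cE_{01}$ and $\{i,j\}\in\cE_{11}$ are already forced, which the paper leaves implicit.
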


\begin{proof}
For $i \in R_*$, define the following $\cI$-measurable random sets:
\begin{align*}
\cC^+(i) & : = \left \{ j \in \overline{R}_* : \{i,j \} \in \cE_{10} \cap \cE^+(\boldsymbol{\sigma_{*}^{1}}) \right \}, \\
\cC^-(i) & : = \left \{ j \in \overline{R}_* : \{i,j \} \in \cE_{10} \cap \cE^-(\boldsymbol{\sigma_{*}^{1}}) \right \}.
\end{align*}
By Definition \ref{def:S}, $i \in S_*$ if and only if $i \in R_*$ and $A_{i,j} = 0$ for all $j \in \overline{R}_*$. 
Phrased differently, $i \in S_*$ if and only if 
$i \in R_*$ and 
$A_{i,j} = 0$ for all $j \in \cC^+(i) \cup \cC^-(i)$. Item~\#\ref{item:A_I_independence} of Lemma~\ref{lemma:I_consequences} then implies, for $i \in R_{*}$, that   
\begin{align*}
\p \left( i \in S_* \, \middle| \,  \cI \right) 
& = \p \left( A_{i,j} = 0 \text{ for all } j \in \cC^+(i) \cup \cC^-(i) \, \middle| \, \cI \right) 
= \left( 1 - \alpha \frac{\log n}{n} \right)^{ |\cC^+(i)|} \left( 1 - \beta \frac{\log n}{n} \right)^{| \cC^-(i) |} \\
& \ge \left( 1 - \alpha | \cC^+(i) | \frac{\log n}{n} \right) \left( 1 - \beta | \cC^-(i) | \frac{\log n}{n} \right) 
 \ge 1 - (\alpha | \cC^+(i) | + \beta | \cC^-(i) | ) \frac{\log n}{n}.
\end{align*}
Above, the first inequality is due to Bernoulli's inequality. 
To further simplify the lower bound, note that $| \cC^+(i)|, | \cC^-(i)| \le | \overline{R}_*$|, which is at most $2 n^{1 - s^2 \dconn (\alpha, \beta) + \delta}$ on the event $\cG_\delta$. Thus
\[
\p \left( i \in S_* \, \middle| \, \cI \right) \mathbf{1}(\cG_\delta) \ge \left( 1 - 2(\alpha + \beta) (\log n) n^{- s^2 \dconn(\alpha, \beta) + \delta} \right) \mathbf{1}(\cG_\delta).
\]
Since $2(\alpha + \beta)\log n \le n^{\delta}$ for sufficiently large $n$, the desired result follows. 
\end{proof}

Next, for $i \in R_*$, we define the random variable
$$
X_i : = \begin{cases}
\p \left( \sum\limits_{j \in [n] \setminus \overline{R}_*} A_{i,j} \sigma_*^1(j) < 0  \, \middle| \, \cI \right) & \text{ if } i \in R_* \cap V_1^+, \\
\p \left( \sum\limits_{j \in [n] \setminus \overline{R}_*} A_{i,j} \sigma_*^1(j) > 0 \, \middle| \, \cI \right) & \text{ if } i \in R_* \cap V_1^-.
\end{cases}
$$
As we shall see in the proofs of Lemmas \ref{lemma:W_first_moment} and \ref{lemma:W_second_moment}, we can bound the first and second moments of $\sum_{i \in R_* \cap V_1^+} W_i$ and $\sum_{i \in R_* \cap V_1^-} W_i$ by functions of the $X_i$'s. Our next result characterizes the behavior of the $X_i$'s on the high-probability $\cI$-measurable event $\cF \cap \cG_\delta$. 

\begin{lemma}
\label{lemma:Xi}
For $i \in R_*$, we have that 
\[
X_i \mathbf{1}(\cF \cap \cG_\delta) = n^{ - s(1 - s) \dch(\alpha, \beta) + o(1)} \mathbf{1}( \cF \cap \cG_\delta).
\]
\end{lemma}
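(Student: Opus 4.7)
The plan is to identify, on the event $\cF \cap \cG_\delta$, the conditional distribution of
$S := \sum_{j \in [n] \setminus \overline{R}_*} A_{i,j} \sigma_*^1(j)$ given $\cI$ as a difference of two independent binomials whose parameters are deterministically close to what is required to apply (a mild extension of) Lemma~\ref{lemma:majority_probability}. Without loss of generality I would treat $i \in R_* \cap V_1^+$, as the case $i \in R_* \cap V_1^-$ is symmetric.

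First I would show that only $j$'s with $\{i,j\} \in \cE_{10}$ contribute to $S$. If $\{i,j\} \in \cE_{00} \cup \cE_{01}$, then $A_{i,j} = 0$ deterministically from the random partition construction (Section~\ref{sec:alt_construction}). If $\{i,j\} \in \cE_{11}$, then $A_{i,j} = B_{\pi_*(i),\pi_*(j)}$, and since $i \in R_*$ is a singleton of $G_1 \land_{\pi_*} G_2$ this common value must be $0$; this too is encoded in $\cI$. Therefore, conditional on $\cI$, Item~\#\ref{item:A_I_independence} of Lemma~\ref{lemma:I_consequences} yields
\[
S \;\stackrel{d}{=}\; Y - Z, \qquad Y \sim \Bin\!\left(M^+, \alpha \tfrac{\log n}{n}\right), \quad Z \sim \Bin\!\left(M^-, \beta \tfrac{\log n}{n}\right),
\]
independently, where $M^\pm := \bigl|\{j \in V_1^\pm \setminus \overline{R}_* : \{i,j\} \in \cE_{10}\}\bigr|$.

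Next I would control $M^\pm$ on $\cF \cap \cG_\delta$. The event $\cF$ (Definition~\ref{def:F}) with $(a,b)=(1,0)$ and $s_{10} = s(1-s)$ gives $|\{j : \{i,j\} \in \cE_{10} \cap \cE^\pm(\boldsymbol{\sigma_{*}^{1}})\}| = (1+o(1))\, s(1-s)\, n/2$, uniformly in $i \in [n]$. The event $\cG_\delta$ gives $|\overline{R}_*| \le n^{1 - s^2 \dconnab + \delta} = o(n)$. Subtracting $\overline{R}_*$ therefore changes the count only by a lower-order term, and I obtain that on $\cF \cap \cG_\delta$,
\[
M^+ = (1+o(1))\,\tfrac{s(1-s)\, n}{2}, \qquad M^- = (1+o(1))\,\tfrac{s(1-s)\, n}{2},
\]
uniformly over realizations in the event.

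Finally I would apply a slight extension of Lemma~\ref{lemma:majority_probability} to conclude that $X_i = \p(Y < Z \mid \cI) = n^{-s(1-s)\dchab + o(1)}$ on $\cF \cap \cG_\delta$. The main (and only nontrivial) obstacle is that Lemma~\ref{lemma:majority_probability} is stated for $m^\pm = (1+o(1))\,n/2$, whereas here $m^\pm = (1+o(1))\, s(1-s)\, n/2$. However, inspection of the Chernoff-type proof in~\cite{mossel2016consistency} shows that the sharp rate depends only on the means $m^+ p$ and $m^- q$: applying Chernoff with the tilt $\theta = \tfrac12 \log(\alpha/\beta)$ gives the upper bound, while a matching lower bound follows from a local CLT / tilted measure computation in the same way. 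With our parameters, $m^\pm \cdot \alpha \log(n)/n$ and $m^\pm \cdot \beta \log(n)/n$ are each a factor of $s(1-s)$ smaller than in the statement of Lemma~\ref{lemma:majority_probability}, so the exponent $\dch(\alpha,\beta)$ is replaced by $\dch(\alpha s(1-s), \beta s(1-s)) = s(1-s)\, \dchab$. Uniformity of the $o(1)$ term over $\omega \in \cF \cap \cG_\delta$ is automatic, since the only dependence on $\omega$ comes through $M^\pm$, which is uniformly $(1+o(1))\, s(1-s)\, n/2$ on this event. This gives the claimed equality.
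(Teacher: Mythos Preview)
Your proof is correct and follows essentially the same approach as the paper: identify the conditional law of $S$ given $\cI$ as $Y-Z$ with $Y,Z$ independent binomials supported on $\cE_{10}$, use $\cF \cap \cG_\delta$ to pin down $M^\pm = (1+o(1))\,s(1-s)\,n/2$, and then invoke Lemma~\ref{lemma:majority_probability}. The one place where the paper is slightly cleaner is the final step: rather than appealing to an inspection of the Chernoff proof, the paper simply rewrites $p = (1+o(1))\, s_{10}\alpha \,\log(s_{10} n)/(s_{10} n)$ and $q$ analogously, so that Lemma~\ref{lemma:majority_probability} applies verbatim with $n$ replaced by $s_{10} n$ and $(\alpha,\beta)$ replaced by $(s_{10}\alpha, s_{10}\beta)$, yielding the exponent $\dch(s_{10}\alpha,s_{10}\beta)=s_{10}\,\dchab$ directly.
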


\begin{proof}
Let $i \in R_* \cap V_1^+$. Since $\overline{R}_*$ and  $\boldsymbol{\sigma_{*}^{1}}$ are $\cI$-measurable, Item~\#\ref{item:A_I_independence} of Lemma~\ref{lemma:I_consequences} implies that
\begin{equation}
\label{eq:maj_distributional_representation}
\sum\limits_{j \in [n] \setminus \overline{R}_*} A_{i,j} \sigma_*^1(j) \stackrel{d}{=} Y - Z,
\end{equation}
where $Y \sim \mathrm{Bin}(m^+, p)$ and $Z \sim \mathrm{Bin}(m^-, q)$ are independent, with $p = \alpha \log(n) / n$, $q = \beta \log(n) / n$, and 
\begin{align*}
m^+ & : = \left | \left \{ j \in [n] \setminus \overline{R}_*: \{i,j \} \in \cE_{10} \cap \cE^+(\boldsymbol{\sigma_{*}^{1}}) \right \} \right|, \\
m^- & : = \left| \left \{ j \in [n] \setminus \overline{R}_* : \{ i,j \} \in \cE_{10} \cap \cE^-(\boldsymbol{\sigma_{*}^{1}}) \right \} \right|.
\end{align*}
We use the shorthand $s_{10} : = s(1 - s)$. On the event $\cF \cap \cG_\delta$, we have the upper bound
$$
m^+ \le \left | \left \{ j : \{ i,j \} \in \cE_{10} \cap \cE^+(\boldsymbol{\sigma_{*}^{1}}) \right \} \right| \le s_{10} \left( \frac{n}{2} + 2n^{3/4} \right) =  (1 + o(1)) s_{10} \frac{n}{2}
$$
and the matching lower bound
$$
m^+ \ge \left | \left \{ j : \{i,j \} \in \cE_{10} \cap \cE^+(\boldsymbol{\sigma_{*}^{1}}) \right \} \right |  - | \overline{R}_* | \ge s_{10} \left( \frac{n}{2} - 2n^{3/4} \right) - n^{1 - s^2 \dconn(\alpha, \beta) + \delta} = (1 - o(1)) s_{10}  \frac{n}{2}.
$$
Similarly, we can show that on the event $\cF\cap \cG_\delta$, we have that 
$$
(1 - o(1)) s_{10} \frac{n}{2} \le m^- \le (1 + o(1)) s_{10} \frac{n}{2}.
$$
Noting that $p = (1 + o(1)) s_{10} \alpha \log (s_{10} n) / (s_{10} n)$ and $q = (1 + o(1)) s_{10} \beta \log( s_{10} n) / (s_{10} n)$, Lemma~\ref{lemma:majority_probability} and \eqref{eq:maj_distributional_representation} now imply that for $i \in R_* \cap V_1^+$, we have that 
\begin{equation}
\label{eq:maj_prob_bound}
\p \left( \sum\limits_{j \in [n] \setminus \overline{T}_*} A_{i,j} \sigma_*^1(j) < 0 \, \middle| \, \cI \right) \mathbf{1}(\cF \cap \cG_\delta) 
= \p \left( Y < Z \, \middle| \, \cI \right) \mathbf{1}(\cF \cap \cG_\delta) 
= n^{- s_{10} \dch(\alpha, \beta) + o(1)} \mathbf{1}( \cF \cap \cG_\delta).
\end{equation}
An identical analysis can be done for $i \in R_* \cap V_1^-$.
\end{proof}

We now turn to the proofs of the moment estimates. 

\begin{proof}[Proof of Lemma \ref{lemma:W_first_moment}]
As a shorthand, denote $\gamma : = s^2 \dconn(\alpha, \beta) - 2 \delta$. Let $i \in R_* \cap V_1^+$. Then,  
\begin{align*}
\E \left[ W_i \, \middle| \, \cI \right] \mathbf{1}(\cF \cap \cG_\delta) 
&= \p \left ( i \in S_* \text{ and } \sum\limits_{j \in [n] \setminus \overline{R}_*} A_{i,j} \sigma_*^1(j) < 0 \, \middle| \, \cI \right) \mathbf{1}(\cF \cap \cG_\delta) \\
&= \p \left( i \in S_* \, \middle| \, \cI \right) \cdot \p \left( \sum\limits_{j \in [n] \setminus \overline{R}_*} A_{i,j} \sigma_*^1(j) < 0 \, \middle| \, \cI \right) \mathbf{1}( \cF \cap \cG_\delta),
\end{align*}
where the first equality follows since 
$\maj(i) = \sum_{j \in [n]} A_{i,j} \sigma_*^1(j)$ 
and $A_{i,j} = 0$ when $i \in S_*$ and $j \in \overline{R}_*$; the second equality is due to Item~\#\ref{item:S_independence} of Lemma~\ref{lemma:I_consequences}. 
Now using Lemma~\ref{lemma:S_size}, 
the definition of $X_{i}$, 
and Lemma~\ref{lemma:Xi}, 
we have that 
\begin{align}
\E \left[ W_i \, \middle| \, \cI \right] \mathbf{1}(\cF \cap \cG_\delta) 
\label{eq:W_X_lower_bound}
& \ge (1 - n^{- \gamma}) X_i \mathbf{1}( \cF \cap \cG_\delta) \\
\label{eq:impossibility_prob_bound_1}
& \ge (1 - n^{- \gamma} ) n^{ - s(1 - s) \dch(\alpha, \beta) + o(1)} \mathbf{1}( \cF \cap \cG_\delta).
\end{align}
Summing over $i \in R_* \cap V_1^+$, we obtain that
\begin{align*}
\E \left[ \sum\limits_{i \in R_* \cap V_1^+} W_i \, \middle| \, \cI \right] \mathbf{1}( \cF \cap \cG_\delta ) 
& \ge (1 - n^{- \gamma} ) \left( | R_* \cap V_1^+ | n^{ - s(1 - s) \dch(\alpha, \beta) + o(1)} \right) \mathbf{1}(\cF \cap \cG_\delta) \\
& \ge (1 - n^{- \gamma}) n^{1- s^2 \dconn(\alpha, \beta) - s(1 - s)\dch(\alpha, \beta) - \delta + o(1)} \mathbf{1} ( \cF \cap \cG_\delta),
\end{align*}
where the inequality on the second line uses the fact that $| R_* \cap V_1^+ | \ge n^{1 - s^2 \dconn(\alpha, \beta) - \delta}$ on $\cG_\delta$. Identical arguments yield the same lower bound for $\E \left[ \sum_{i \in R_* \cap V_1^-} W_i \, \middle| \, \cI \right] \mathbf{1}(\cF \cap \cG_\delta)$. 
\end{proof}

\begin{proof}[Proof of Lemma \ref{lemma:W_second_moment}]
Fix $i, j \in R_* \cap V_1^+$, where $i \neq j$. 
We have that 
\begin{align*}
\E \left[ W_i W_j \, \middle| \, \cI \right] 
&= \p \left( i, j \in S_* \text{ and } \sum\limits_{k \in [n] \setminus  \overline{R}_*} A_{i,k} \sigma_*^1(k), \sum\limits_{k \in [n] \setminus \overline{R}_*} A_{j,k} \sigma_*^1(k) <0 \, \middle| \, \cI \right) \\
& \le \p \left( \sum\limits_{k \in [n] \setminus  \overline{R}_*} A_{i,k} \sigma_*^1(k), \sum\limits_{k \in [n] \setminus \overline{R}_*} A_{j,k} \sigma_*^1(k) <  0 \, \middle| \, \cI \right)  \\
&=  \p \left( \sum\limits_{k \in [n] \setminus  \overline{R}_*} A_{i,k} \sigma_*^1(k) <  0 \, \middle| \, \cI \right) \p \left( \sum\limits_{k \in [n] \setminus  \overline{R}_*} A_{j,k} \sigma_*^1(k) <  0 \, \middle| \, \cI \right) \\
& = X_i X_j.
\end{align*}
Above, the equality on the first line follows since $A_{i,k} = A_{j,k} = 0$ for $i,j \in S_*$ and $k \in \overline{R}_*$. 
The equality on the third line follows from noting that 
$\sum_{k \in [n] \setminus \overline{R}_*} A_{i,k} \sigma_*^1(k)$ is a measurable function of $\cI$ and $\{ A_{i,k} \}_{k \in [n] \setminus \overline{R}_*}$, 
$\sum_{k \in [n] \setminus \overline{R}_*} A_{j,k} \sigma_*^1(k)$ is a measurable function of $\cI$ and $\{A_{j,k} \}_{k \in [n] \setminus \overline{R}_*}$, 
and the collections 
$\{A_{i,k }\}_{k \in [n] \setminus \overline{R}_*}$ 
and 
$\{ A_{j,k} \}_{k \in [n] \setminus \overline{R}_*}$ 
are conditionally independent given $\cI$ (by Item~\#\ref{item:A_I_independence} of Lemma~\ref{lemma:I_consequences}). 
On the other hand, for the case $i = j$ we have through similar arguments that 
\[
\E \left[ W_i^2 \, \middle| \, \cI \right] 
= \E \left[ W_i \, \middle| \, \cI \right] 
= \p \left( i \in S_* \text{ and } \sum\limits_{k \in [n] \setminus \overline{R}_*} A_{i,k} \sigma_*^1(k) <  0 \, \middle| \, \cI \right) 
\le X_i.
\]
We can then write
\begin{align}
\E \left[ \left( \sum\limits_{i \in R_* \cap V_1^+} W_i \right)^2 \, \middle| \, \cI \right] 
& = \sum\limits_{i \in R_* \cap V_1^+} \E \left[ W_i \, \middle| \, \cI \right] + \sum\limits_{i,j \in R_* \cap V_1^+ : i \neq j} \E \left[ W_i W_j \, \middle| \, \cI \right] \nonumber \\
& \le \sum\limits_{i \in R_* \cap V_1^+} X_i + \sum\limits_{i,j \in R_* \cap V_1^+ : i \neq j} X_i X_j 
\label{eq:W_second_moment}
\le \sum\limits_{i \in R_* \cap V_1^+} X_i + \left( \sum\limits_{i \in R_* \cap V_1^+} X_i \right)^2.
\end{align}
In light of the lower bound in \eqref{eq:W_X_lower_bound}, we also have that 
\begin{equation}
\label{eq:EW_squared_lower_bound}
\E \left[ \sum\limits_{i \in R_* \cap V_1^+} W_i \, \middle| \, \cI \right]^2 \mathbf{1}(\cF \cap \cG_\delta) \ge \left( 1 - n^{-\gamma} \right)^2 \left( \sum\limits_{i \in R_* \cap V_1^+ } X_i \right)^2 \mathbf{1}( \cF \cap \cG_\delta),
\end{equation}
where we recall that $\gamma : = s^2 \dconn(\alpha, \beta) - 2 \delta$. 
On the event $\cF \cap \cG_\delta$, we also have the upper bound 
\begin{equation}
\left( \sum\limits_{i \in R_* \cap V_1^+} X_i \right) \mathbf{1} (\cF \cap \cG_\delta) 
= | R_* \cap V_1^+ | n^{ - s(1 - s) \dch(\alpha, \beta) + o(1)} \mathbf{1} ( \cF \cap \cG_\delta) 
\le n^{ \theta + \delta + o(1)} \mathbf{1} (\cF \cap \cG_\delta),
\end{equation}
where the first equality is due to Lemma~\ref{lemma:Xi} and we recall that $\theta : = 1 - s^2 \dconn(\alpha, \beta) - s(1 - s) \dch(\alpha, \beta)$. Combining~\eqref{eq:W_second_moment} and~\eqref{eq:EW_squared_lower_bound} and using $(1 - n^{- \gamma})^2 \ge 1 - 2 n^{- \gamma}$ shows that
\begin{align*}
\mathrm{Var} \left( \sum\limits_{i \in R_* \cap V_1^+} W_i \, \middle| \, \cI \right) \mathbf{1}( \cF \cap \cG_\delta) 
& \le \left( \sum\limits_{i \in R_* \cap V_1^+ } X_i + 2n^{-\gamma} \left( \sum\limits_{i \in R_* \cap V_1^+ } X_i \right)^2 \right) \mathbf{1}(\cF \cap \cG_\delta) \\
& \le \left( n^{\theta + \delta + o(1)} + 2n^{ 2 \theta + 2 \delta - \gamma + o(1)} \right) \mathbf{1} ( \cF \cap \cG_\delta).
\end{align*}
To simplify the bound on the right hand side further, 
notice that if $\delta < \theta / 4$ and $\delta \le \gamma / 6$ (i.e., $\delta \leq s^{2} \dconnab /8$), 
then we have 
$\theta + \delta + o(1) < 2 \theta - 3 \delta$ 
and $2 \theta + 2 \delta - \gamma + o(1) < 2 \theta - 3 \delta$ for $n$ large enough. Hence
\[
\mathrm{Var} \left( \sum\limits_{i \in R_* \cap V_1^+ } W_i \, \middle| \, \cI \right) \mathbf{1} ( \cF \cap \cG_\delta) \le n^{2 \theta - 3 \delta} \mathbf{1}(\cF \cap \cG_\delta).
\]
Through identical steps, the same result holds for $\sum_{i \in R_* \cap V_1^-} W_i$.
\end{proof}




{\small
\bibliographystyle{abbrv}
\bibliography{references}

\begin{thebibliography}{10}

\bibitem{Abbe_survey}
E.~Abbe.
\newblock Community detection and stochastic block models: recent developments.
\newblock {\em Journal of Machine Learning Research}, 18(1):6446--6531, 2017.

\bibitem{abbe2016exact}
E.~Abbe, A.~S. Bandeira, and G.~Hall.
\newblock Exact recovery in the stochastic block model.
\newblock {\em IEEE Transactions on Information Theory}, 62(1):471--487, 2016.

\bibitem{abbe2020ell_p}
E.~Abbe, J.~Fan, and K.~Wang.
\newblock An $\ell_p$ theory of {PCA} and spectral clustering.
\newblock Preprint available at \url{https://arxiv.org/abs/2006.14062}, 2020.

\bibitem{Abbe2020}
E.~Abbe, J.~Fan, K.~Wang, and Y.~Zhong.
\newblock Entrywise eigenvector analysis of random matrices with low expected
  rank.
\newblock {\em Annals of Statistics}, 48(3):1452--1474, 2020.

\bibitem{abbe2015community}
E.~Abbe and C.~Sandon.
\newblock Community detection in general stochastic block models: Fundamental
  limits and efficient algorithms for recovery.
\newblock In {\em 2015 IEEE 56th Annual Symposium on Foundations of Computer
  Science (FOCS)}, pages 670--688, 2015.

\bibitem{ali2019latent}
H.~T. Ali, S.~Liu, Y.~Yilmaz, R.~Couillet, I.~Rajapakse, and A.~Hero.
\newblock Latent heterogeneous multilayer community detection.
\newblock In {\em 2019 IEEE International Conference on Acoustics, Speech and
  Signal Processing (ICASSP)}, pages 8142--8146, 2019.

\bibitem{arroyo2020inference}
J.~Arroyo, A.~Athreya, J.~Cape, G.~Chen, C.~E. Priebe, and J.~T. Vogelstein.
\newblock {Inference for multiple heterogeneous networks with a common
  invariant subspace}.
\newblock {\em Journal of Machine Learning Research}, 22(142):1--49, 2021.

\bibitem{barak2019}
B.~Barak, C.-N. Chou, Z.~Lei, T.~Schramm, and Y.~Sheng.
\newblock {(Nearly) Efficient Algorithms for the Graph Matching Problem on
  Correlated Random Graphs}.
\newblock In {\em Advances in Neural Information Processing Systems (NeurIPS)},
  pages 9190--9198, 2019.

\bibitem{bhattacharya2020consistent}
S.~Bhattacharyya and S.~Chatterjee.
\newblock {Consistent Recovery of Communities from Sparse Multi-relational
  Networks: A Scalable Algorithm with Optimal Recovery Conditions}.
\newblock In {\em Complex Networks XI}, pages 92--103, 2020.

\bibitem{binkiewicz2017covariate}
N.~Binkiewicz, J.~T. Vogelstein, and K.~Rohe.
\newblock Covariate-assisted spectral clustering.
\newblock {\em Biometrika}, 104(2):361--377, 2017.

\bibitem{bopanna1987eigenvalues}
R.~B. Boppana.
\newblock Eigenvalues and graph bisection: An average-case analysis.
\newblock In {\em 28th Annual Symposium on Foundations of Computer Science
  (FOCS)}, pages 280--285, 1987.

\bibitem{bordenave2015nonbacktracking}
C.~Bordenave, M.~Lelarge, and L.~Massoulié.
\newblock Non-backtracking spectrum of random graphs: Community detection and
  non-regular ramanujan graphs.
\newblock In {\em 2015 IEEE 56th Annual Symposium on Foundations of Computer
  Science (FOCS)}, pages 1347--1357, 2015.

\bibitem{bothorel2015clustering}
C.~Bothorel, J.~D. Cruz, M.~Magnani, and B.~Micenkov{\'a}.
\newblock Clustering attributed graphs: models, measures and methods.
\newblock {\em Network Science}, 3(3):408--444, 2015.

\bibitem{bui1984graph}
T.~Bui, S.~Chaudhuri, T.~Leighton, and M.~Sipser.
\newblock {Graph Bisection Algorithms With Good Average Case Behavior}.
\newblock In {\em 25th Annual Symposium on Foundations of Computer Science
  (FOCS)}, pages 181--192, 1984.

\bibitem{chen2020global}
S.~Chen, S.~Liu, and Z.~Ma.
\newblock {Global and Individualized Community Detection in Inhomogeneous
  Multilayer Networks}.
\newblock Preprint available at \url{https://arxiv.org/abs/2012.00933}, 2020.

\bibitem{cullina2016improved}
D.~Cullina and N.~Kiyavash.
\newblock {Improved Achievability and Converse Bounds for Erd\H{o}s-R\'enyi
  Graph Matching}.
\newblock In {\em ACM SIGMETRICS}, volume~44, pages 63--72, 2016.

\bibitem{cullina2018exact}
D.~Cullina and N.~Kiyavash.
\newblock {Exact alignment recovery for correlated Erd\H{o}s-R\'enyi graphs}.
\newblock Preprint available at \url{https://arxiv.org/abs/1711.06783}, 2018.

\bibitem{cullina2020partial}
D.~Cullina, N.~Kiyavash, P.~Mittal, and H.~V. Poor.
\newblock {Partial Recovery of Erd\H{o}s-R\'{e}nyi Graph Alignment via k-Core
  Alignment}.
\newblock {\em SIGMETRICS Perform. Eval. Rev.}, 48(1):99–100, July 2020.

\bibitem{cullina2016simultaneous}
D.~Cullina, K.~Singhal, N.~Kiyavash, and P.~Mittal.
\newblock On the simultaneous preservation of privacy and community structure
  in anonymized networks.
\newblock Preprint available at \url{https://arxiv.org/abs/1603.08028}, 2016.

\bibitem{DKMZ11}
A.~Decelle, F.~Krzakala, C.~Moore, and L.~Zdeborov{\'a}.
\newblock Asymptotic analysis of the stochastic block model for modular
  networks and its algorithmic applications.
\newblock {\em Physical Review E}, 84(6):066106, 2011.

\bibitem{deshpande2018contextual}
Y.~Deshpande, S.~Sen, A.~Montanari, and E.~Mossel.
\newblock {Contextual Stochastic Block Models}.
\newblock In {\em Advances in Neural Information Processing Systems (NeurIPS)},
  pages 8581--8593, 2018.

\bibitem{ding2021efficient}
J.~Ding, Z.~Ma, Y.~Wu, and J.~Xu.
\newblock Efficient random graph matching via degree profiles.
\newblock {\em Probability Theory and Related Fields}, 179(1):29--115, 2021.

\bibitem{dyer1989solution}
M.~Dyer and A.~Frieze.
\newblock The solution of some random {NP}-hard problems in polynomial expected
  time.
\newblock {\em Journal of Algorithms}, 10(4):451--489, 1989.

\bibitem{fan2020spectral}
Z.~Fan, C.~Mao, Y.~Wu, and J.~Xu.
\newblock Spectral graph matching and regularized quadratic relaxations:
  Algorithm and theory.
\newblock In {\em Proceedings of the 37th International Conference on Machine
  Learning (ICML)}, volume 119 of {\em Proceedings of Machine Learning
  Research}, pages 2985--2995. PMLR, 13--18 Jul 2020.

\bibitem{ganassali2020tree}
L.~Ganassali and L.~Massouli\'e.
\newblock From tree matching to sparse graph alignment.
\newblock In {\em Proceedings of the Thirty Third Conference on Learning Theory
  (COLT)}, volume 125 of {\em Proceedings of Machine Learning Research}, pages
  1633--1665. PMLR, 09--12 Jul 2020.

\bibitem{ganassali2021impossibility}
L.~Ganassali, L.~Massouli{\'e}, and M.~Lelarge.
\newblock {Impossibility of Partial Recovery in the Graph Alignment Problem}.
\newblock In {\em Conference on Learning Theory}, pages 2080--2102. PMLR, 2021.

\bibitem{hall2020partial}
G.~Hall and L.~Massouli{\'e}.
\newblock {Partial Recovery in the Graph Alignment Problem}.
\newblock Preprint available at \url{https://arxiv.org/abs/2007.00533}, 2020.

\bibitem{han2015consistent}
Q.~Han, K.~Xu, and E.~Airoldi.
\newblock Consistent estimation of dynamic and multi-layer block models.
\newblock In {\em International Conference on Machine Learning (ICML)}, pages
  1511--1520. PMLR, 2015.

\bibitem{HLL83}
P.~W. Holland, K.~B. Laskey, and S.~Leinhardt.
\newblock Stochastic blockmodels: First steps.
\newblock {\em Social {N}etworks}, 5(2):109--137, 1983.

\bibitem{kanade2016global}
V.~Kanade, E.~Mossel, and T.~Schramm.
\newblock {Global and Local Information in Clustering Labeled Block Models}.
\newblock {\em IEEE Transactions on Information Theory}, 62(10):5906--5917,
  2016.

\bibitem{korula2014efficient}
N.~Korula and S.~Lattanzi.
\newblock An efficient reconciliation algorithm for social networks.
\newblock {\em Proceedings of the VLDB Endowment}, 7(5):377--388, 2014.

\bibitem{lei2019consistent}
J.~Lei, K.~Chen, and B.~Lynch.
\newblock {Consistent community detection in multi-layer network data}.
\newblock {\em Biometrika}, 107(1):61--73, 12 2019.

\bibitem{lu2020contextual}
C.~Lu and S.~Sen.
\newblock Contextual stochastic block model: Sharp thresholds and contiguity.
\newblock Preprint available at \url{https://arxiv.org/abs/2011.09841}, 2020.

\bibitem{Luczak1991}
T.~\L{}uczak.
\newblock Size and connectivity of the k-core of a random graph.
\newblock {\em Discrete Mathematics}, 91(1):61--68, 1991.

\bibitem{lyzinski2018information}
V.~Lyzinski.
\newblock {Information Recovery in Shuffled Graphs via Graph Matching}.
\newblock {\em IEEE Transactions on Information Theory}, 64(5):3254--3273,
  2018.

\bibitem{ma2021community}
Z.~Ma and S.~Nandy.
\newblock {Community Detection with Contextual Multilayer Networks}.
\newblock Preprint available at \url{https://arxiv.org/abs/2104.02960}, 2021.

\bibitem{mao2021exact}
C.~Mao, M.~Rudelson, and K.~Tikhomirov.
\newblock Exact matching of random graphs with constant correlation.
\newblock Preprint available at \url{https://arxiv.org/abs/2110.05000}, 2021.

\bibitem{mao2021random}
C.~Mao, M.~Rudelson, and K.~Tikhomirov.
\newblock {Random Graph Matching with Improved Noise Robustness}.
\newblock In {\em Proceedings of the 34th Conference on Learning Theory
  (COLT)}, pages 3296--3329. PMLR, 2021.

\bibitem{massoulie2014community}
L.~Massouli{\'e}.
\newblock Community detection thresholds and the weak {R}amanujan property.
\newblock In {\em Proceedings of the 46th Annual ACM Symposium on Theory of
  Computing (STOC)}, pages 694--703. ACM, 2014.

\bibitem{mayya2019mutual}
V.~Mayya and G.~Reeves.
\newblock Mutual information in community detection with covariate information
  and correlated networks.
\newblock In {\em 2019 57th Annual Allerton Conference on Communication,
  Control, and Computing (Allerton)}, pages 602--607, 2019.

\bibitem{mossel2014reconstruction}
E.~Mossel, J.~Neeman, and A.~Sly.
\newblock Reconstruction and estimation in the planted partition model.
\newblock {\em Probability Theory and Related Fields}, 162, 07 2014.

\bibitem{mossel2016consistency}
E.~Mossel, J.~Neeman, and A.~Sly.
\newblock {Consistency thresholds for the planted bisection model}.
\newblock {\em Electronic Journal of Probability}, 21(none):1 -- 24, 2016.

\bibitem{mossel2018proof}
E.~Mossel, J.~Neeman, and A.~Sly.
\newblock A proof of the block model threshold conjecture.
\newblock {\em Combinatorica}, 38(3):665--708, 2018.

\bibitem{mossel2016local}
E.~Mossel and J.~Xu.
\newblock {Local Algorithms for Block Models with Side Information}.
\newblock In {\em Proceedings of the 2016 ACM Conference on Innovations in
  Theoretical Computer Science (ITCS)}, pages 71--80, 2016.

\bibitem{mossel2019seeded}
E.~Mossel and J.~Xu.
\newblock Seeded graph matching via large neighborhood statistics.
\newblock In {\em Proceedings of the Thirtieth Annual ACM-SIAM Symposium on
  Discrete Algorithms (SODA)}, pages 1005--1014, 2019.

\bibitem{onaran2016optimal}
E.~Onaran, S.~Garg, and E.~Erkip.
\newblock Optimal de-anonymization in random graphs with community structure.
\newblock In {\em 2016 50th Asilomar Conference on Signals, Systems and
  Computers}, pages 709--713. IEEE, 2016.

\bibitem{paul2020spectral}
S.~Paul and Y.~Chen.
\newblock {Spectral and matrix factorization methods for consistent community
  detection in multi-layer networks}.
\newblock {\em The Annals of Statistics}, 48(1):230 -- 250, 2020.

\bibitem{paul2021null}
S.~Paul and Y.~Chen.
\newblock {Null Models and Community Detection in Multi-Layer Networks}.
\newblock {\em Sankhya A}, pages 1--55, 2021.

\bibitem{pedarsani2011privacy}
P.~Pedarsani and M.~Grossglauser.
\newblock On the privacy of anonymized networks.
\newblock In {\em Proceedings of the 17th ACM SIGKDD International Conference
  on Knowledge Discovery and Data Mining (KDD)}, pages 1235--1243, 2011.

\bibitem{poor_book}
H.~V. Poor.
\newblock {\em An Introduction to Signal Detection and Estimation (2nd Ed.)}.
\newblock Springer-Verlag, Berlin, Heidelberg, 1994.

\bibitem{RS21}
M.~Z. R\'{a}cz and A.~Sridhar.
\newblock {Correlated Stochastic Block Models: Exact Graph Matching with
  Applications to Recovering Communities}.
\newblock In {\em Advances in Neural Information Processing Systems (NeurIPS)},
  2021.

\bibitem{RS22}
M.~Z. R{\'a}cz and A.~Sridhar.
\newblock Correlated randomly growing graphs.
\newblock {\em Annals of Applied Probability}, to appear, 2022.

\bibitem{saad2020sideinfo}
H.~Saad and A.~Nosratinia.
\newblock Recovering a single community with side information.
\newblock {\em IEEE Transactions on Information Theory}, 66(12):7939--7966,
  2020.

\bibitem{shirani2021concentration}
F.~Shirani, S.~Garg, and E.~Erkip.
\newblock A concentration of measure approach to correlated graph matching.
\newblock {\em IEEE Journal on Selected Areas in Information Theory},
  2(1):338--351, 2021.

\bibitem{wu2021settling}
Y.~Wu, J.~Xu, and S.~H. Yu.
\newblock {Settling the Sharp Reconstruction Thresholds of Random Graph
  Matching}.
\newblock Preprint available at \url{https://arxiv.org/abs/2102.00082}, 2021.

\bibitem{yan2021covariate}
B.~Yan and P.~Sarkar.
\newblock Covariate regularized community detection in sparse graphs.
\newblock {\em Journal of the American Statistical Association},
  116(534):734--745, 2021.

\bibitem{yu2021power}
L.~Yu, J.~Xu, and X.~Lin.
\newblock {The Power of $D$-hops in Matching Power-Law Graphs}.
\newblock {\em Proceedings of the ACM on Measurement and Analysis of Computing
  Systems}, 5(2):1--43, 2021.

\bibitem{zhang2016community}
Y.~Zhang, E.~Levina, and J.~Zhu.
\newblock Community detection in networks with node features.
\newblock {\em Electronic Journal of Statistics}, 10(2):3153--3178, 2016.

\end{thebibliography}
}




\end{document}